\numberwithin{equation}{section}
\newcommand{\smooth}{$C^\infty$}
\newcommand{\bigslant}[2]{{\left.\raisebox{.2em}{$#1$}\middle/\raisebox{-.2em}{$#2$}\right.}}
\newcommand{\F}[1]{\mathfrak{#1}}
\newcommand{\C}[1]{\mathcal{#1}}
\newcommand{\R}[1]{\mathrm{#1}}
\newcommand{\g}{\mathfrak{g}^{\ast}}
\newcommand{\Qtot}{Q_{\mathcal{E},\mathcal{E}'}}
\newcommand{\Qtoto}{Q_{\mathcal{E},\mathcal{E}'}^{(0)}}
\newcommand{\cinf}{\mathcal{C}^{\infty}}
\newcommand{\dd}{\mathrm{d}}
\newcommand{\ad}{\mathrm{ad}}
\newcommand{\id}{\mathrm{id}}
\newcommand\DistTo{\xrightarrow{
		\,\smash{\raisebox{-0.65ex}{\ensuremath{\scriptstyle\sim}}}\,}}
\def\g{\mathfrak{g}}
\newcommand{\isthiswhatyouowant}{\smallskip \noindent}
\newcommand{\Funct}{\mathscr{O}}
\newcommand{\mN}{\mathbb{N}}
\newcommand{\mR}{\mathbb{R}}
\newcommand{\Vect}{\F{X}}
\newcommand{\diff}{\mathrm d}
\newcommand{\Linfty}{Lie\ $\infty$}
\newcommand{\LieInftyAlgebroid}{\Linfty-algebroid}
\newcommand{\cF}{\mathcal{F}}
\newcommand{\cO}{\mathcal{O}}
\newcommand{\cI}{\mathfrak{I}}
\def\CricArrowRight#1{\tikz[baseline=(A.base)]
  \draw[-stealth,line width=.035em]
    (0,0) node[circle, inner sep=0cm](A){$#1$}
    let \p1=(A.center),\p2=(A.west), \n1={\x1-\x2} in
      (-90:\n1) arc(-90:190:\n1);}
\newcommand{\resolving}{of }
\newtheorem{definition}{Definition}[section]
\newtheorem{theoreme}[definition]{Theorem}
\newtheorem{proposition}[definition]{Proposition}
\newtheorem{lemme}[definition]{Lemma}
\newtheorem{corollaire}[definition]{Corollary}
\newtheorem*{convention}{Convention}
\newtheorem{remarque}[definition]{Remark}
\newtheorem{example}[definition]{Example}
\renewenvironment{abstract}
{
\noindent \rule{\linewidth}{.5pt}

\par{\bfseries \noindent \abstractname.}}
{
\vspace{0cm}

\noindent 
\rule{\linewidth}{.5pt}

\noindent {\textbf{Keywords}}: Singular foliations and singular leaves, Lie $\infty$-algebroids and Q-manifolds.
}
\title{The universal Lie \texorpdfstring{$\infty$}{infinity}-algebroid of a singular foliation}
\author{ Camille Laurent-Gengoux\thanks{Institut Elie Cartan de Lorraine, UMR 7502, Universit\'e de Lorraine,  France} \, \, \, \, 
	Sylvain Lavau\thanks{Instisut Math\'ematiques de Jussieu, Universit\'e Paris Diderot, Paris.} \, \, \, \, Thomas Strobl\thanks{ UMI  CNRS-2924, Instituto  de  Matem\'atica  Pura  e  Aplicada (IMPA), Rio de Janeiro, Brazil, 
		and Institut  Camille  Jordan,  Universit\'e de Lyon, France.}}
\date{}
\begin{document}
\maketitle

\date

\begin{abstract}
	\isthiswhatyouowant
	We consider singular foliations ${\cal F}$ as locally finitely generated $ {\mathscr O}$-submodules of  $ {\mathscr O}$-derivations closed under the Lie bracket, where  ${\mathscr O}$ is the ring of smooth,
holomorphic, or real analytic functions on some correspondingly chosen manifold. We first collect and/or prove several results about the existence of resolutions of such an ${\cal F}$ in terms of sections of vector bundles. In particular, on a compact smooth manifold $M$, such resolutions of length at most $\dim M + 1$ always exist, provided only that locally ${\cal F}$ admits real analytic generators. 

\isthiswhatyouowant
We show that every complex of vector bundles $(E_\bullet,\dd)$ over  $M$ providing a resolution of a given singular foliation ${\cal F}$  in the above sense admits the definition of brackets on its sections such that it extends these data into a {\LieInftyAlgebroid}.  This {\LieInftyAlgebroid}, including the chosen
underlying resolution, is unique up to homotopy and, moreover, every
other {\LieInftyAlgebroid} inducing the given ${\cal F}$ or any of its sub-foliations 
factors through it in an up-to-homotopy unique manner. We therefore call 
it the universal {\LieInftyAlgebroid} of the singular foliation ${\cal F}$. 

\isthiswhatyouowant
It encodes several aspects of the
geometry of the leaves of ${\cal F}$. In particular, it permits us to recover the holonomy groupoid of
	Androulidakis and Skandalis. Moreover, each leaf carries an isotropy
{\Linfty}-algebra structure that is unique up to isomorphism. It extends a minimal isotropy Lie algebra, that can be associated to each leaf, by higher brackets, which give rise to additional invariants of the foliation. As a byproduct, we construct an example of a foliation ${\cal F}$ generated by $r$ vector fields for which we show by these techniques that, even locally, it cannot result from 
a Lie algebroid of the minimal rank $r$. 
\end{abstract}
\newpage

\tableofcontents

\section{Introduction}

\isthiswhatyouowant
Regular foliations, i.e.~a partition of a manifold into embedded submanifolds of a given dimension, are familiar objects of interest in differential geometry, see e.g.~\cite{Hector}.
According to the Frobenius theorem, they are equivalent to involutive distributions. 

\isthiswhatyouowant
Singular foliations, on the other hand, are much less understood while, at the same time, they appear much more frequently. Typical Lie group actions have orbits of 
different dimensions. Similarly, the symplectic leaves of a Poisson manifold change dimension whenever the rank of the 
bivector jumps. Both of these two classes of singular foliations are an example of what one obtains on the base manifold of a Lie algebroid. 
It is therefore natural to ask if \emph{any} singular foliation arises in this way. 

\isthiswhatyouowant
To make this question more precise, we first need to clarify. One way of viewing a singular foliation would be a partition of the given manifold
into immersed submanifolds of possibly different dimensions. While in the case of regular foliations, the description in terms of generating vector fields 
is completely equivalent, here the latter characterization contains more information. Consider, for example, vector fields on a line vanishing at the origin up to order $k$.
While the corresponding partition of $\mR$ into leaves consists of $\mR_+$, $\mR_-$, and the origin $0$,
the generating module of functions is in addition invariantly characterized by the integer $k \in \mN$. We will therefore define a singular
foliation as an involutive submodule ${\cal{F}} \subset \Gamma(TM)$, where as usual, involutivity means $[\cal{F},\cal{F}] \subset \cal{F}$. 
\isthiswhatyouowant
Defined like this, it is, however, not even guaranteed that the vector fields $\cal{F}$ generate a subdivision of $M$ into leaves such that, at each point, $\cal{F}$ evaluated 
at this point would agree with the tangent of the leaf containing this point. Also, if we do not restrict $\cal{F}$ further, the answer to the question if it is generated by a 
Lie algebroid $A$ is definitely negative: The image of $A$ with respect to the anchor $\rho \colon A \to TM$ gives an involutive module ${\cal{F}} = \rho\big(\Gamma(A)\big)$, but 
evidently this is in addition locally finitely generated. Adding this as a  condition on $\cal{F}$, namely to be locally finitely generated, 
Hermann's theorem establishes that $M$ is indeed partitioned
by immersed submanifolds, called leaves \cite{Hermann1962}. 

\isthiswhatyouowant
Thus in this paper, we define a singular foliation of a manifold $M$ to be a locally finitely generated involutive $\mathscr{O}$-submodule of vector fields on $M$.
This perspective seems to also become more and more the prevailing one these days \cites{AndrouSkandal, AndrouZambis}. Here $\mathscr{O}$ can be chosen to be the 
ring of smooth functions $C^\infty(M)$, or, in the case that $M$ is a real analytic or a complex manifold, the ring of real analytic and holomorphic functions, respectively. 

\isthiswhatyouowant
So, now we are in the position to again pose the question about the existence of a finite rank Lie algebroid over $M$ that would induce a given singular
foliation on this manifold. This question can be split into a global and a local one. While it is easy to see that the answer to the global problem posed as such is 
negative---the minimal number of local generators of $\cal{F}$ does not need to be finite\footnote{Take $M=\mR^2$ and look at the module of vector fields vanishing to order $k$ at $(0,k)$ for all $k \in \mN$. The number of generators is unbounded in this case. \cite{AndrouZambis}}---the local problem is much more intricate and still open---however, after shifting our focus to other questions below, we will be able to  give partial answers to this question as well. But what the results exposed in the present article show is rather that there is a more interesting and more important question:
%
%
Is there a {\LieInftyAlgebroid} generating a given singular foliation? And, if so, can it be used to find invariants of the foliation? Both of these latter two questions 
will be answered essentially in the positive in our paper: For example, in the case of $\mathscr{O}$ being real analytic functions, we show that \emph{every} singular foliation $\cal F$ is locally 
generated by a {\LieInftyAlgebroid}.\footnote{For $\mathscr{O}=C^\infty(M)$ we need to assume the existence of a geometric resolution of ${\mathcal F}$ to arrive at the conclusion, 
something one can always infer locally in the real analytic or holomorphic case, cf.~below.}
In the case of $\mathscr{O}$ being smooth functions, the same conclusion holds over any relatively compact open subset of $M$, if, locally, the foliation can be generated by real analytic vector fields.
 Even more importantly, there is such a {\LieInftyAlgebroid} whose homotopy class is unique and universal:
the one constructed on a geometric resolution of the singular foliation. This is in sharp contrast to the Lie algebroid story: not only is a Lie algebroid $A$ over $M$
for a given singular foliation $\cal F$ far from unique, if it exists at all, also its homotopy class cannot be unique since homotopies of Lie algebroids do not change the
rank of the underlying vector bundle $A$. 

\isthiswhatyouowant
If we take any other {\LieInftyAlgebroid} whose induced foliation is $\cal F$, or even only a submodule
of $\cal F$, and whose underlying complex is now not necessarily a geometric resolution, there exists a morphism into every {\LieInftyAlgebroid} whose underlying  complex is a geometric resolution.  This morphism itself is unique up to homotopy (see Theorem \ref{theo:onlyOne}). So, considering the category of  {\LieInftyAlgebroid}s up to homotopy,
i.e.~the category where objects are  {\LieInftyAlgebroid}s over $M$ inducing a sub-singular foliation of $ {\mathcal F}$ 
and where arrows are homotopy classes of {\LieInftyAlgebroid} morphisms, {\LieInftyAlgebroid}s constructed on geometric resolutions are a terminal object. 
This justifies to call them \emph{universal {\LieInftyAlgebroid}s \resolving a singular foliation ${\mathcal F}$}.

\isthiswhatyouowant
The universal {\LieInftyAlgebroid} \resolving a singular foliation ${\mathcal F}$ 
turns out to be an efficient tool for the construction of invariants associated to a singular foliation, such as different types
of cohomology classes associated to a {\LieInftyAlgebroid} representing its universal class.
Let us explain the construction a bit further, starting first with the case that $\mathscr{O}$ is real analytic or holomorphic. Then by Syzygy theorems in the neighborhood
of any point $m \in M$ the $\mathscr{O}$-module $\cal F$ admits geometric resolutions of finite length by free modules, which we can reinterpret as sections of trivial vector bundles 
over the neighborhood:
\begin{center}
\begin{tikzcd}[column sep=0.7cm,row sep=0.4cm]  \label{sequenceE}
0\ar[r]&\Gamma(E_{-n-1})\ar[r]&\ldots\ar[r]&\Gamma(E_{-1})\ar[r,"\rho"]&\mathcal{F}\ar[r]&0
\end{tikzcd}
\end{center}
where $n$ is the dimension of $M$. The {\LieInftyAlgebroid} is then constructed over the corresponding complex of vector bundles $E_{-n-1} \to \ldots \to E_{-1}$ by showing the existence of $s$-brackets for $s = 2, \ldots , n+1$ so that together with the differential of the complex they satisfy the required higher Jacobi identities. Thus, in this case we even obtain a Lie $k$-algebroid with $k=n+1$. 
 Certainly, the above geometric resolution is not unique; in particular,
the ranks of the bundles are far from being fixed. These individual ranks can now be changed by homotopies in the category of {\LieInftyAlgebroid}s, only their total index
\begin{equation}
\mathrm{Ind}(E_\bullet) := \sum_{i \geq 1} (-1)^{i+1} \mathrm{rk}(E_{-i})
\end{equation}
remains invariant. This index, on the other hand, is nothing but the highest possible dimension of the leaves in the neighborhood of $m$. 
There are certainly much more subtle
invariants associated to the foliation that result from our construction. 
For instance, restricting the universal {\LieInftyAlgebroid}s \resolving a singular foliation ${\mathcal F}$ to a point and taking its cohomology,
we get a \Linfty-algebra, that we call the isotropy \Linfty-algebra of ${\mathcal F}$ at the point $m \in M$. This  \Linfty-algebra has by construction no $1$-ary bracket,
i.e.~no differential.
Therefore, its $3$-ary bracket 
is a class in the Chevalley-Eilenberg cohomology for the isotropy graded Lie algebra bracket given by the $2$-ary bracket.
We show, on the other hand, that there cannot be a generating Lie algebroid of minimal rank in the neighborhood of a point $m$ where this class does not vanish. 
An explicit example of such a foliation will be provided in 
Example \ref{ex:NMLRA} below: the vector fields on ${\mathbb C}^4\ni (x,y,z,t)$ preserving the function $x^3+y^3+z^3+t^3$ 
form a singular foliation of rank six, i.e.~they need at least six generating vector fields to be defined by means of generators and relations. Since its above 3-class is shown to be non-zero, 
it follows that this particular singular foliation cannot be defined as the image through the anchor map of a Lie algebroid of rank six.
Notice that, given a singular foliation of rank $r$, the problem of finding a Lie algebroid or rank $r$ that induces it 
has a priori no relation with higher structures. It is interesting to see that it can be answered through the use of those.
\bigskip

\isthiswhatyouowant
The structure of the paper is as follows.
Section \ref{sec:main} contains the main definitions and results about the
construction of the universal {\LieInftyAlgebroid} of a singular foliation $\cF$ as well as its algebraic consequences for $\cF$. 
Proofs, examples, and some definitions are postponed to the subsequent two sections, Section \ref{sec:construction} and Section \ref{sec:geometry}.
An ordered list of examples of singular foliations and some useful
lemmas are presented in Section \ref{DefExSSingFol}.
In Section \ref{existence} we address
the
  question of whether and when a singular foliation $ {\mathcal F}$
admits a geometric resolution as a module over functions.
  Here we do not just mean a projective resolution,
   but a resolution by sections of vector bundles. Around a point, it is
equivalent to require
   the existence of a resolution of ${\mathcal F}$ by free finitely
generated ${\mathcal O}$-modules.
   In general, the answer is no, and a counter-example is given, but
classical results, called syzygy theorems,
   imply that the answer is yes in the real analytic, algebraic, and
holomorphic cases in a neighborhood of a point.
   Moreover, in the real analytic case, the real-analytic geometric resolution can
be proved to be also a smooth geometric resolution by classical
   results of Malgrange and Tougeron. In turn, we show that these smooth resolutions can be glued to yield a geometric resolution on a relatively compact open subset. This part then is followed by
examples of such resolutions in Section \ref{exampleresolutions}. Section \ref{subsec:homotopies} recalls classical results about
{\LieInftyAlgebroid}s,  in particular the useful perspective of it as a
differential positively graded manifold, equivalently known under the
name of an NQ-manifold.
We put particular emphasis on
homotopies between {\LieInftyAlgebroid}s morphisms, where precision
about  boundary conditions is required; we believe that the point of
view presented about homotopies will be of
  interest also in other contexts. 
   Only then, in Sections and \ref{sectionpreuve1}  and \ref{sec:context}, we turn to the proof of
the main theorems, Theorem \ref{theo:existe} about equipping any such a
resolution with a
   {\LieInftyAlgebroid} structure and Theorem \ref{theo:onlyOne} about
its uniqueness up to homotopy and its universality property.
   We prove all these theorems by careful step-by-step constructions of
brackets, morphisms, and homotopies.
   We conclude this section by providing examples in Section
\ref{sec:examplesLieInfinity}.
  
  \isthiswhatyouowant
Section \ref{sec:geometry} is devoted to the geometrical meaning of the
previously found structures and induced algebraic invariants.
Since the universal {\LieInftyAlgebroid}  \resolving a singular
foliation is unique up to homotopy, most cohomologies constructed out of it
do not depend on the choices made in the construction and are thus
associated to the initial foliation.
In particular, as argued in  Section \ref{sec:universal}, the cohomology
of the degree 1 vector field $Q$ describing the universal
{\LieInftyAlgebroid} \resolving
a singular foliation is canonical, i.e.~it depends only on the singular
foliation.
In Section \ref{sec:isotropy} we derive even more interesting
cohomological spaces
by restricting the universal {\LieInftyAlgebroid} structure
to a point $m \in M$. This is analogous to the familiar situation for
Lie algebroids, where the Lie algebroid bracket induces a Lie algebra
bracket
on the kernel of the anchor map at a given point, called the isotropy
Lie algebra of the point or its leaf (since for different points on a
given leaf these Lie algebras are isomorphic).
Essentially the same construction applies here and allows us to induce
a \Linfty-algebra  on a graded vector space which coincides with
the fiber over $m$ of the
resolution of the foliation in degree $-2,-3, \dots$ and to the kernel
of the anchor map in degree $-1$.
If the geometric resolution is chosen to be minimal at $m$---the ranks of the
vector bundles
that define the geometric resolution are as small as they can be---we obtain a
\Linfty-algebra that we call the isotropy \Linfty-algebra
of ${\mathcal F}$ at the point $m \in M$. It has several interesting
features:
First, its differential or $1$-ary bracket vanishes, so that its $2$-ary
bracket defines an honest graded Lie algebra.
But it may still have $k$-ary brackets for $k\geq 3$.
Second, this structure is unique up to isomorphism, its $2$-ary bracket
being unique even on the nose, cf.~Proposition \ref{prop:isotropyStrictIso}
below.
In Section \ref{moineau} we then prove that, like for isotropy Lie
algebras of Lie algebroids, the isotropy \Linfty-algebras of
{\LieInftyAlgebroid}s
only change by isomorphisms along any leaf of ${\mathcal F}$. Section
\ref{sec:examplesLieInfinityIso} contains examples of these isotropy algebras.

\isthiswhatyouowant
In Section \ref{sec:3aryNotZero} we return to the issue of
{\LieInftyAlgebroid} versus Lie algebroid. Evidently, we can always add
a non-acting Lie algebra to every Lie algebroid, which increases its
isotropy Lie algebras at each point accordingly while not changing the
induced foliation.  This is in sharp contrast to the isotropy \Linfty-algebras of a
singular foliation and its graded Lie algebra introduced in this paper. Moreover, as we will prove,
a Lie algebroid inducing a given singular foliation, even if it exists,
may in some cases need more generators than the initial singular
foliation does. More precisely,
we will show that the $3$-ary bracket of the isotropy \Linfty-algebra of
${\mathcal F}$ at every point $m \in M$ is a Chevalley-Eilenberg cocycle
with respect to the $2$-ary bracket and that this cocycle is exact if
there exists
a Lie algebroid of rank $r$ defining the singular foliation
(with $r$ being the rank of the foliation).
Example \ref{ex:NMLRA} then presents a singular foliation for which this
Chevalley-Eilenberg class does not vanish.
Section \ref{sec:leibnizoid} concludes this discussion with a side
remark that every singular foliation  admitting a geometric resolution of finite
length is the image through
the anchor map of a Leibniz algebroid.

\isthiswhatyouowant
   In Section \ref{sec:sub-grou} we show that our structure induces the
holonomy algebroid and groupoid of Androulidakis and Skandalis
\cite{AndrouSkandal} by an appropriate truncation (and integration),
   cf.~, in particular, Proposition \ref{prop:holoLieAlg} below.

\isthiswhatyouowant
The existence of {\LieInftyAlgebroid}s seems therefore actually more 
interesting than an answer to the initially posed question about a Lie algebroid generating the same foliation. Among others, 
the universal {\LieInftyAlgebroid} provides a whole bunch of cohomological invariants associated to a singular foliation that are directly suggested 
by the data of this construction, which would supposedly not be
so easy to construct by other means, and in particular not by methods adapted to ordinary Lie algebroids.


\subsection*{Relation to other work:} 
\noindent
 Tom Lada and Jim Stasheff present in \cite{LS} a
 construction of a \Linfty-algebra on a resolution
 of a Lie algebra: our construction follows the same pattern and is
 inspired by theirs.
We were told that results more or less equivalent to
Theorem \ref{theo:existe} were discussed  by Ralph Mayer and Chenchang
Zhu as well as by Ted Voronov
and his collaborators.
Moreover, Johannes Huebschmann suggests such a result also in his
introduction to \cite{huebschmann}, without giving further details though.
We claim, however, that we are the first ones to have clearly stated and
published a proof for Theorem \ref{theo:existe} and,
more importantly, to have found, formulated, and proven unicity, cf.~Theorem \ref{theo:onlyOne}.
Also, we are not aware of any predecessor for Section
\ref{sec:geometry}, where we derive some geometric implications of the
construction. In November 2018, Ya\"el Fr\'egier and Rigel A. Juarez-Ojeda presented in \cite{FregierRigel} a construction based on model theory, which is different in nature from ours but allows them to prove a result similar to Theorems \ref{theo:existe} and \ref{theo:onlyOne}.

\isthiswhatyouowant
Several results of the present work were already presented in the Ph-D thesis of S.L. \cite{thesis}, defended under the supervision
of Henning Samtleben and T.S. in November~2016. First versions of the present work were made public through the electronic systems HAL and ArXiv in December 2017 and June 2018, respectively.

\subsection*{Acknowledgment}

\noindent

\isthiswhatyouowant
We have benefited of several crucial comments by Jim
Stasheff and feel honored by the interest he took in this project.
In addition to his numerous mathematical comments he corrected many
typos and mistakes of English.
The present work is also the outcome of several years of discussions and
lectures. We mention in the text the very points where we received direct help from
Iakovos Androulidakis, Alexei Bolsinov, Alfonso Garmendia, Fran\c{c}ois Petit (several times), Pavel Safronov,
Jean-Louis Tu, Rupert Yu, and Marco Zambon.
Finally, we acknowledge important discussions with and comments from Anton Alekseev, 
Damien Calaque, Pierre Cartier, Claire Debord, Ya\"el Fr\'egier, Benjamin Hennion, Rigel A. Juarez-Ojeda, Yvette
Kosmann-Schwarzbach, Alexei Kotov, Pavol \v{S}evera, Bruno Vallette, Ted Voronov and Ping Xu.

\nopagebreak
\isthiswhatyouowant
This research was partially supported by CMUP (UID/MAT/00144/2013) funded
by FCT (Portugal) with national funds. 
C.L.-G. is grateful to the Universit\'e de Lorraine for according a CRCT of six months.
\nopagebreak
 T.S. is grateful to the CNRS for according a delegation
 \nopagebreak
and an attachment to beautiful IMPA for one semester and equally to IMPA and its staff
\nopagebreak
including in particular Henrique Bursztyn for their hospitality during his stay. 

\newpage

\section{Main results}
\label{sec:main}
\isthiswhatyouowant
We provide the definition and examples of singular foliations in the subsequent section, see in particular Section \ref{DefExSSingFol}. 
Definitions and basic facts about {\LieInftyAlgebroid}s are given in Section \ref{infinityalgebroids}. We assume for the moment that the reader is familiar with those and we
state the main results of the article. Proofs and further examples are provided later on as well.

\isthiswhatyouowant
We intend to state results that are true in the smooth,
algebraic, real analytic, and holomorphic settings all together, sometimes with adaptations.
We let $ {\mathcal O}$ be the sheaf of (smooth, polynomial, real analytic or holomorphic---depending on the context) functions on $M$ and, for every vector bundle $E\to M$, $\Gamma(E)$  the sheaf of sections of $E$.

\begin{definition}\label{def:resolution}
Let ${\mathcal F}  \subset \Vect ( M)$ be a singular foliation on a manifold $M$. A \emph{geometric resolution $(E,{\mathrm d},\rho)$ of the singular foliation $\C{F}$} is a triple consisting of:
\begin{enumerate}
\item a collection of vector bundles $E={\bigoplus}_{i\geq1}E_{-i}$ over  $ M$,
\item a collection ${\mathrm d} = ({\mathrm d}^{(i)})_{i \geq 2}$ of vector bundle morphisms
${\mathrm d}^{(i)}\colon E_{-i} \to E_{-i+1}$ over the identity of $M$,
\item a vector bundle morphism $\rho\colon E_{-1} \to T M$ over the identity of $ M$ called the \emph{anchor of the geometric resolution},
\end{enumerate}
 such that the following sequence of ${\mathscr O}$-modules is an exact sequence of sheaves:
\begin{center}
\begin{tikzcd}[column sep=0.7cm,row sep=0.4cm] \label{eq:resolutions}
\ldots\ar[r,"\dd^{(3)}"]&\Gamma(E_{-2})\ar[r,"\dd^{(2)}"]&\Gamma(E_{-1})\ar[r,"\rho"]&\mathcal{F}\ar[r]&0
\end{tikzcd}
\end{center}
A geometric resolution is said to be \emph{of length $n$} if $E_{-i}=0$ for $i \geq n+1$.

\isthiswhatyouowant
We shall speak of a \emph{resolution by trivial bundles} when all the vector bundles $(E_{-i})_{i \geq 1}$ are trivial vector bundles. 

\isthiswhatyouowant
We shall say that a geometric resolution is \emph{minimal at a point $m \in M$} if, for all $i  \geq 2$, the linear maps $\dd^{(i)}_m \colon \left. E_{-i} \right|_m \to \left. E_{-i+1} \right|_m$ 
vanish. 
\end{definition}

\isthiswhatyouowant Note that the complex of a geometric resolution  $ (E,\dd,\rho)$, 
\begin{center}
\begin{tikzcd}[column sep=0.7cm,row sep=0.4cm] \label{eq:resolutionsprime}
\ldots\ar[r,"\dd^{(3)}"]&E_{-2}\ar[r,"\dd^{(2)}"]&E_{-1}\ar[r,"\rho"]&TM
\end{tikzcd}
\end{center}
when restricted to a point $m \in M$ is, in general, not exact. Its cohomology $H^\bullet({\mathcal F}, m)$ does not depend on the chosen resolution, see Lemma \ref{lem:canonical} below. A geometric resolution is minimal at $m \in M$, if, by definition, $ \left. E_{-i} \right|_m = H^{-i}({\mathcal F}, m)$ for all $i> 1$. In such a case, the geometric resolution is a minimal model around this point.

\isthiswhatyouowant
  Since sections of vector bundles over $M$ are projective $\Funct$-modules, geometric resolutions of  singular foliations 
  are projective  resolutions of $\mathcal{F}$ in the category of $\Funct$-modules.
  It is a classical result that such  resolutions always exist. But the projective modules of a projective resolution may not correspond to vector bundles---they may not be locally finitely generated. 
  By the Serre-Swan theorem \cite{Serre}, however:
  
  \begin{lemme}
  \label{lem:serreSwan}
 For smooth compact manifolds, geometric resolutions of a singular foliation $ \mathcal{F}$ are in one-to-one correspondence with resolutions of $\mathcal{F}$ by 
 locally finitely generated  projective ${\mathscr O}$-modules.
 
 \isthiswhatyouowant
 In the smooth, holomorphic, algebraic, or real analytic cases, 
 geometric resolutions by trivial vector bundles are in one-to-one correspondence with geometric resolutions by free finitely generated ${\mathscr O}$-modules.
  \end{lemme}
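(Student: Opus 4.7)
The plan is to invoke the Serre--Swan correspondence at each level of the complex and verify that the differentials transfer correctly between the vector-bundle and module sides.

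For the first claim, I would start from a resolution of $\mathcal{F}$ by locally finitely generated projective $\mathscr{O}$-modules, say $\ldots \to P_{-2} \to P_{-1} \to \mathcal{F} \to 0$. Since $M$ is smooth and compact, the classical Serre--Swan theorem provides, for every such $P_{-i}$, a (uniquely determined up to isomorphism) vector bundle $E_{-i} \to M$ together with an $\mathscr{O}$-module isomorphism $P_{-i} \cong \Gamma(E_{-i})$. Moreover, $\mathscr{O}$-linear maps between modules of sections of vector bundles are exactly the sections of $\mathrm{Hom}$ bundles, hence are induced by vector bundle morphisms over the identity; this promotes each $P_{-i-1} \to P_{-i}$ to a bundle morphism $\dd^{(i+1)} \colon E_{-i-1} \to E_{-i}$, and analogously $P_{-1} \to \mathcal{F} \subset \Gamma(TM)$ is induced by an anchor $\rho \colon E_{-1} \to TM$. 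The sequence obtained on sections is exact by construction, so $(E,\dd,\rho)$ is a geometric resolution. Conversely, a geometric resolution yields a resolution by the modules $\Gamma(E_{-i})$, each of which is finitely generated projective by Serre--Swan. The two assignments are mutually inverse up to canonical isomorphism, proving the bijection.

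For the second claim, I would observe that the correspondence between trivial vector bundles $M \times \mathbb{R}^{r_i}$ and free finitely generated modules $\mathscr{O}^{r_i}$ via $\Gamma(M \times \mathbb{R}^{r_i}) \cong \mathscr{O}^{r_i}$ is immediate and requires no compactness assumption. In each of the smooth, holomorphic, algebraic, and real-analytic categories, morphisms of trivial bundles over the identity correspond precisely to morphisms of free modules (given by matrices with entries in $\mathscr{O}$), so the correspondence extends to complexes. Hence geometric resolutions by trivial bundles and resolutions by free finitely generated $\mathscr{O}$-modules determine each other.

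The only subtle point, and thus the main thing to make explicit in the proof, is that on the module side one is given morphisms of sheaves of $\mathscr{O}$-modules which must genuinely lift to vector bundle morphisms. For the first part this uses the Serre--Swan dictionary $\mathrm{Hom}_{\mathscr{O}}(\Gamma(E),\Gamma(F)) \cong \Gamma(\mathrm{Hom}(E,F))$, which holds on smooth compact $M$; in the other settings, the trivial-bundle version only uses that a morphism of free modules is a matrix, which is unambiguously a bundle morphism. No other obstacle is expected, since the exactness of the complex is a purely module-theoretic condition and is preserved by the functor $\Gamma$ in both directions.
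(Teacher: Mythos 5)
Your proposal is correct and follows exactly the route the paper intends: the paper gives no written proof of this lemma, presenting it as an immediate consequence of the Serre--Swan theorem, and your argument is precisely the standard level-by-level unpacking of that citation (including the tensoriality of $\mathscr{O}$-linear maps between section modules and the trivial observation for free modules and trivial bundles). The only detail worth a half-sentence more is that on a compact manifold ``locally finitely generated'' projective modules are in fact globally finitely generated (via a finite subcover and a partition of unity), which is what lets the classical Serre--Swan theorem apply.
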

  
  \isthiswhatyouowant
  There are several contexts in which such geometric resolutions exist, at least locally, and are of finite length. We collect such statements in the following two propositions, which we are going to prove in Section \ref{existence}. 

\begin{proposition}
\label{bonjourprop} The following items hold:
\begin{enumerate}
\item In the neighborhood of every point, a holomorphic singular foliation on a complex  manifold $M$ of (complex) dimension $n$ admits a geometric resolution by trivial vector bundles 
whose length is less or equal to $n+1$, i.e.~$E_{-i}=0$ for all $i \geq n+2$. The same statement holds true for a real analytic singular foliation $\mathcal{F}$ on a real analytic manifold of dimension $n$.
\item A real analytic geometric resolution of a real analytic singular foliation $\mathcal{F}$ is also a smooth geometric resolution of $\mathcal{F}$ when regarded as a smooth singular foliation.
\item Every algebraic singular foliation on a Zarisky open subset of ${\mathbb C}^n$ admits a geometric resolution by trivial vector bundles whose length is less or equal to $n+1$.
\item There exist  smooth singular foliations on ${\mathbb R}$ that do not admit geometric resolutions (cf., e.g., Example \ref{tu} below).
\item If a geometric resolution of finite length exists, then for every point $m\in M$ there is a geometric resolution of finite length in a neighborhood of this point which is minimal at $m$.
\end{enumerate}
\end{proposition}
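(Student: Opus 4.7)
The proof is most naturally organized item by item, with (1) and (3) being consequences of Hilbert's syzygy theorem, (2) a consequence of a flatness result of Malgrange and Tougeron, (4) an explicit counterexample, and (5) an elementary but careful reduction argument.

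My plan for items (1) and (3) is to invoke Hilbert's syzygy theorem in the form: over a regular local ring of Krull dimension $n$, every finitely generated module admits a free resolution of length at most $n$. The relevant local rings are $\mathbb{C}\{x_1,\dots,x_n\}$ (holomorphic), the ring of convergent real power series in $n$ variables (real analytic), and the localization of $\mathbb{C}[x_1,\dots,x_n]$ at a maximal ideal (algebraic)---all regular of dimension $n$. Since $\mathcal{F}$ is locally finitely generated, choosing $r_1$ generators near $m$ produces a surjection $\Funct^{r_1} \twoheadrightarrow \mathcal{F}$; iterating by passing to successive syzygies and invoking Hilbert, the $(n\!+\!1)$-th term in the resolution is free. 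To extend the germ-level statement to an actual neighborhood, I would appeal to the coherence of the structure sheaf $\Funct$: Oka's theorem in the holomorphic case, the analogous Frisch--Cartan result in the real analytic case, and the standard statement in the algebraic case. Coherence guarantees that the kernel of a morphism between free sheaves of finite rank is coherent, hence locally free near $m$, so that all the syzygy modules assemble into trivial bundles on a small enough open set. The main technicality here is to take care that the chosen generators extend from the germ to a neighborhood and that kernels remain locally free after restriction, which is precisely what coherence provides.

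For item (2), the key input is that the sheaf of smooth functions is faithfully flat over the sheaf of real analytic functions on a real analytic manifold, a classical result of Malgrange and Tougeron. Flatness ensures that tensoring a real analytic resolution
\[
0 \to \Gamma^\omega(E_{-n-1}) \to \cdots \to \Gamma^\omega(E_{-1}) \to \mathcal{F}^\omega \to 0
\]
by $\cinf$ over $\Funct^\omega$ preserves exactness at the sheaf level; since $\cinf \otimes_{\Funct^\omega} \Gamma^\omega(E) = \Gamma^\infty(E)$ for any real analytic bundle $E$, this yields the desired smooth geometric resolution. One must observe that the differentials remain smooth (they are already real analytic) and that the anchor of the smooth foliation coincides with the one induced by extension of scalars.

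For item (4), the plan is to recall the example of Tu (referenced as Example \ref{tu}) of a smooth foliation on $\mathbb{R}$ whose module of relations at the origin fails to be finitely generated---typically constructed using vector fields with carefully arranged flat zeros---so that no surjection $\Gamma(E_{-1}) \twoheadrightarrow \mathcal{F}$ from a bundle of finite rank can have locally free kernel. For item (5), starting from a geometric resolution $(E_\bullet,\dd,\rho)$ of finite length, I would inductively kill non-minimality degree by degree. If $\dd^{(i)}_m \neq 0$, I would choose, on a small enough neighborhood of $m$, a trivial sub-bundle $F_{-i} \subset E_{-i}$ mapping isomorphically under $\dd^{(i)}$ onto a trivial sub-bundle $F_{-i+1} \subset E_{-i+1}$, then split $E_{-i} = F_{-i} \oplus E_{-i}'$ and $E_{-i+1} = F_{-i+1} \oplus E_{-i+1}'$; the resulting $(E_\bullet',\dd',\rho')$ is still a geometric resolution near $m$, of strictly smaller total rank. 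Iterating this reduction (which clearly terminates because the ranks are finite) yields a resolution that is minimal at $m$. The main obstacle in this step is to perform the splittings over a full neighborhood of $m$ rather than just at the point: this is ensured by choosing complements at $m$ of $\ker\dd^{(i)}_m$ and of $\mathrm{Im}\,\dd^{(i)}_m$, lifting them to local sub-bundles by continuity of rank on an open neighborhood where $\dd^{(i)}$ has locally constant rank.
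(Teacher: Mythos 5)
Your proposal is correct and, for items (2)--(5) and the holomorphic and algebraic parts of items (1) and (3), follows essentially the same route as the paper: Hilbert's syzygy theorem plus coherence for the existence of length-$(n+1)$ resolutions by trivial bundles, the Malgrange--Tougeron flatness of smooth germs over real analytic germs for item (2), the reference to Example \ref{tu} for item (4), and a degree-by-degree elimination of the non-minimal part of $\dd^{(i)}$ at $m$ for item (5) (you split off a complement of $\ker \dd^{(i)}_m$ as a direct summand where the paper quotients by it; these are equivalent).

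The one genuine divergence is the real analytic case of item (1). You apply the syzygy theorem directly to the regular local ring of convergent real power series and invoke coherence of the real analytic structure sheaf (Frisch) to spread the germ-level resolution to a neighborhood. The paper instead complexifies $M$, resolves the induced holomorphic foliation $\mathcal{F}^{\mathbb C}$ on $M^{\mathbb C}$ with data invariant under the anti-holomorphic involution, truncates at degree $n+1$ using freeness of $\ker \dd^{(n+1)}$, and restricts to the fixed-point set. Both arguments are valid. Yours is more direct and avoids the complexification machinery; the paper's has the advantage of only ever citing the syzygy theorem in the form stated in the standard holomorphic reference (Gunning--Rossi) and of sidestepping any discussion of coherence in the real analytic category, at the price of having to check that the involution-invariance can be arranged and survives truncation. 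Two minor presentational remarks: in item (2) the paper additionally runs a partition-of-unity argument to pass from germ-level exactness to exactness of sections over an arbitrary open set, which your "tensoring preserves exactness at the sheaf level" compresses into one phrase (harmless, since exactness of sheaves is a stalkwise condition, but worth being explicit about given the paper's definition of geometric resolution); and in item (5) the paper notes that each of the finitely many steps may require shrinking the neighborhood of $m$, which your termination argument should also record.
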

\isthiswhatyouowant
In Definition \ref{Def:locrealanal} below, we consider \emph{locally real analytic singular foliations}, which are smooth foliations with the property to admit local charts such that the generators are real analytic, while these charts are patched together by smooth transition functions only. In this setting one can show:
\begin{theoreme} \label{newprop} A locally real analytic singular foliation admits a geometric resolution of length at most $\dim M+1$ over any relatively compact open subset of $M$.
\end{theoreme}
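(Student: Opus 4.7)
The plan is to build the global resolution by gluing finitely many local ones, using the fact that on a relatively compact open subset we can work with a finite cover of charts carrying real analytic generators. Write $n = \dim M$ throughout.

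Since $\overline U$ is compact, I extract a finite subcover $U_1, \ldots, U_N$ of $\overline U$ from the cover of $M$ by charts on which $\mathcal F$ admits real analytic generators. On each $U_\alpha$, Proposition~\ref{bonjourprop}(1) applied in the real analytic category, combined with Proposition~\ref{bonjourprop}(2) and Lemma~\ref{lem:serreSwan}, produces a smooth geometric resolution $(E_\bullet^{(\alpha)}, \dd^\alpha, \rho^\alpha)$ of $\mathcal F|_{U_\alpha}$ of length at most $n+1$ by trivial vector bundles (after shrinking the $U_\alpha$ if needed so as to remain over a neighbourhood of $\overline U$). The theorem then reduces to a gluing lemma: if $\mathcal F$ admits smooth geometric resolutions of length $\leq n+1$ on each of two open sets $V, W \subset M$, then it admits one of the same length bound on $V \cup W$. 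Iterating this lemma over the finite cover yields a resolution on $\bigcup_\alpha U_\alpha \supseteq \overline U$, and restricting to $U$ concludes.

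For the gluing lemma, let $(E_\bullet, \dd^E, \rho^E)$ on $V$ and $(F_\bullet, \dd^F, \rho^F)$ on $W$ be the given resolutions. Over $V \cap W$ both are projective resolutions of the same $\mathscr O$-module $\mathcal F|_{V \cap W}$, so by the universal property of projective resolutions there exist chain maps $\phi_\bullet$ and $\psi_\bullet$ between them covering $\id_{\mathcal F|_{V\cap W}}$ and mutually inverse up to chosen chain homotopies. Choose a partition of unity $\{\chi_V, \chi_W\}$ subordinate to $\{V, W\}$ together with bump refinements $\chi'_V, \chi'_W$ equal to $1$ on $\mathrm{supp}(\chi_V)$ respectively $\mathrm{supp}(\chi_W)$ and compactly supported in $V, W$. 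Extending each trivial bundle $E_{-i}$ and $F_{-i}$ trivially to $V \cup W$, set $G_{-i} := E_{-i} \oplus F_{-i}$ and define the global anchor by $\rho^G := \chi'_V \rho^E + \chi'_W \rho^F$ (each summand extended by zero outside its domain). The differential on $G_\bullet$ interpolates $\dd^E$ and $\dd^F$ via the chain-map data $\phi, \psi$ and their homotopies, weighted by the cut-off functions, in the style of a mapping-cylinder construction. Outside $\mathrm{supp}(\chi'_W)$ the $F$-summand becomes decoupled, so exactness reduces to that of $E_\bullet$, and symmetrically outside $\mathrm{supp}(\chi'_V)$; on the intersection of the two supports, exactness follows from the acyclicity of the cone of the quasi-isomorphism $\phi_\bullet$.

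The hard part is organising the gluing so that three properties hold simultaneously: the differential squares to zero, the complex is exact, and its length remains bounded by $n+1$. The length constraint is the subtlest point, because the naive mapping cylinder of a quasi-isomorphism between two length-$(n+1)$ complexes lives in degrees down to $-(n+2)$; to stay within length $n+1$ one must exploit that $\phi_\bullet$ covers the identity on the common target $\mathcal F$ so that the extra shifted summand in top degree can be absorbed into the original top-degree pieces via an identification between the corresponding fibres in $E_{-n-1}$ and $F_{-n-1}$. A secondary but necessary technical point is producing the chain maps $\phi, \psi$ and their homotopies globally smooth on the overlap; this is standard homological algebra for complexes of sheaves of modules over $\mathscr O$, and requires no regularity beyond what is already afforded by the local real analyticity hypothesis going into Proposition~\ref{bonjourprop}.
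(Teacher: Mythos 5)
Your global strategy (finite subcover of $\overline U$, local resolutions of length $\leq n+1$ from items 1 and 2 of Proposition \ref{bonjourprop}, then a two-set gluing lemma iterated over the cover) is exactly the paper's. The gap is in the gluing lemma itself, which is where all the work lies, and your sketch of it does not go through. You propose to set $G_{-i}:=E_{-i}\oplus F_{-i}$ over $V\cup W$ with a cutoff-interpolated, mapping-cylinder-style differential. Two things break. First, exactness away from the overlap: on $V\setminus \mathrm{supp}(\chi'_W)$ the $F$-summand is ``decoupled,'' but a decoupled summand $F_{-i}$ sitting alone in each degree carries no acyclic differential there ($F_\bullet$ has no meaningful differential outside $W$, and the ranks of $F_{-i}$ and $F_{-i\pm 1}$ differ in general, so you cannot contract it in place); its cohomology therefore pollutes $G_\bullet$, and ``exactness reduces to that of $E_\bullet$'' is false as stated. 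Second, the length: you correctly observe that the honest cylinder/cone of $\phi$ introduces a degree shift pushing you to length $n+2$, but the proposed fix --- absorbing ``the extra shifted summand in top degree'' --- ignores that the shifted copy of $E$ appears in \emph{every} degree, not only the top one, and no actual mechanism for the absorption is given. These are not technical loose ends; they are the content of the theorem.

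For comparison, the paper resolves both difficulties at once by never summing the two resolutions over the union. After making all $E_{-i}$, $F_{-i}$ trivial for $i\geq 2$ (Lemma \ref{lem:Assumetrivial}), it runs a descending recursion (Proposition \ref{prop1}) that, at each stage $k$, stabilizes each resolution \emph{separately on its own domain} by adding the acyclic two-term complex $U\times\mathbb{W}'\xrightarrow{\ \mathrm{id}\ }U\times\mathbb{W}'$ in the adjacent degrees $-k$ and $-k+1$ (and symmetrically over $V$) --- this is how one ``adds a copy of $F_{-k}$'' without creating cohomology or lengthening the complex --- and then uses explicit $2\times 2$ automorphisms built from $\phi,\psi,h,h'$ (Lemmas \ref{lem:PhikPsik} and \ref{lem:PhikPsikMoinsUn}) to upgrade the homotopy equivalence to a strict isomorphism in degrees $\geq k$. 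After reaching $k=1$, a rank count (Lemma \ref{lem:alt_sum}) forces the degree $-1$ comparison map to be an isomorphism too (Lemma \ref{prop2}), and the two resolutions are then glued by clutching along this isomorphism. If you want to salvage your write-up, the key missing idea is precisely this: replace ``direct sum of the two resolutions with an interpolated differential'' by ``stabilization of each resolution by identity two-term complexes until the two become strictly isomorphic on the overlap.''
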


\isthiswhatyouowant
 In particular, every locally real analytic singular foliation on a compact manifold $M$ has a  geometric resolution of finite length. 

\isthiswhatyouowant We are thankful to Marco Zambon for leading us to the following result, which is useful to know in the context of singular foliations with geometric resolutions, since it arrives at a globally valid fact about the leaves:

\begin{proposition} \label{prop:Marco}
 If a singular foliation ${\mathcal F} $ on a connected manifold $M$ admits geometric resolutions of finite length in a neighborhood of all points in $M$, then all its regular leaves
 have the same dimension $r$. Moreover, for every geometric resolution of finite length  $(E,{\rm d},\rho)$ of $ {\mathcal F}$ over an open subset of $M$:
    $$r = \sum_{i \geq 1 }  (-1)^{i-1} {\rm rk} (E_{-i}).$$
Here ${\rm rk}$ stands for the rank of a vector bundle.
\end{proposition}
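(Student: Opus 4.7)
My plan is to prove the Euler characteristic formula first at regular points of $\mathcal{F}$, where the fiber complex becomes exact, and then to propagate both claims globally using density of the regular locus and connectedness of $M$.

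To begin, I fix a regular point $m$ of $\mathcal{F}$. By Hermann's theorem there is an open neighborhood $U$ of $m$ on which $\mathcal{F}|_U = \Gamma(D)$ for a rank-$r$ subbundle $D \subset TU$; shrinking $U$ if necessary, I may additionally assume a geometric resolution $(E_\bullet, \dd, \rho)$ of finite length $N$ is defined on $U$. Since the image of $\rho$ on sections equals $\Gamma(D)$ and $D$ is a vector subbundle of $TU$, $\rho$ factors through $D$; and since every element of $D|_p$ (for $p \in U$) extends to a local section of $D$ which lifts to a local section of $E_{-1}$, the induced map $\rho \colon E_{-1} \to D$ is pointwise surjective, hence of constant rank, so that $\ker(\rho)$ is a vector subbundle of $E_{-1}$. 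Proceeding inductively, the same reasoning---exactness on sections identifies the image of $\dd^{(i)}$ with sections of the previously-established kernel subbundle, and any element of the fiber of a vector subbundle lifts to a local section---shows that each $\ker(\dd^{(i)})$ is a vector subbundle of $E_{-i}$ and each $\dd^{(i)}$ has constant rank over $U$. Consequently the fiberwise complex $0 \to E_{-N}|_p \to \cdots \to E_{-1}|_p \to D|_p \to 0$ is exact at every $p \in U$, and its vanishing Euler characteristic yields
\[
r \;=\; \mathrm{rk}(D) \;=\; \sum_{i \geq 1} (-1)^{i-1}\, \mathrm{rk}(E_{-i}).
\]

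To globalize, I would define $f \colon M \to \B{Z}$ by $f(m) = \sum_{i \geq 1} (-1)^{i-1}\, \mathrm{rk}(E_{-i})$ computed from any finite-length geometric resolution in a neighborhood of $m$ (such a resolution exists by hypothesis). Ranks of vector bundles being locally constant, $f$ is locally constant on each such domain; and on the overlap of two such domains, the two rank sums are locally constant integer-valued functions that agree at regular points by the formula above, hence on every connected component of the overlap, since regular points form an open and dense subset of $M$. Thus $f$ is well-defined and locally constant on all of $M$, and connectedness of $M$ forces it to be a single constant $r \in \B{Z}$. At any regular point this constant coincides with the dimension of the leaf through that point, establishing both claims.

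The main obstacle is the inductive verification that every $\ker(\dd^{(i)})$ is a vector subbundle---equivalently, that every $\dd^{(i)}$ has locally constant rank---in a neighborhood of a regular point. This hinges critically on the local freeness of $\mathcal{F}$ at regular points and genuinely fails at singularities; this is precisely why I prove the Euler characteristic formula first only on the regular locus and then use density together with connectedness to transport the conclusion across the entire manifold.
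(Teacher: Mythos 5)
Your proof is correct, but the key step at regular points is carried out by a different mechanism than the one in the paper. You establish, by an induction on constant ranks, that near a regular point the anchor and every $\dd^{(i)}$ have locally constant rank, so that the fibrewise complex $0 \to E_{-N}|_p \to \cdots \to E_{-1}|_p \to D_p \to 0$ is exact and its vanishing Euler characteristic gives the formula directly. The paper instead observes that near a regular point the module ${\mathcal F}$ is free of rank $r$, hence admits a length-one geometric resolution $0 \to {\mathbb R}^r\times V \to TV$, and then invokes Lemma \ref{lem:existsAChain2} (any two geometric resolutions are homotopy equivalent) together with Lemma \ref{lem:alt_sum} (the alternating rank sum is a homotopy invariant of finite-length complexes of vector bundles) to equate the two alternating sums. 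Your route is more elementary and self-contained---it never appeals to the comparison theorem for resolutions---at the cost of the rank-constancy induction, which you correctly identify as the delicate point and which genuinely fails at singular points; the paper's route is shorter because the two lemmas are already in place and are reused elsewhere. The globalization step (density of the regular locus plus connectedness of $M$, packaged by you as a locally constant integer-valued function) is essentially the same in both arguments, though you spell out the well-definedness on overlaps more explicitly than the paper does.
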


\isthiswhatyouowant We again refer to Section \ref{existence} for a proof of this proposition, the first part of which is obvious in the real analytic or holomorphic case.

\isthiswhatyouowant We now introduce the main object and the two main theorems of the present article. 
Here we assume that the reader is familiar with {\LieInftyAlgebroid}s, which we denote for short by  $(E,Q)$ in this paper, and with the notion of linear part of those:  
all these notions are explained in detail in Section \ref{infinityalgebroids}.

\begin{definition}
Let $\mathcal{F}$ be a singular foliation on a manifold $M$. We call a {\LieInftyAlgebroid} $(E,Q)$ over $M$  a \emph{universal
{\LieInftyAlgebroid} \resolving $\mathcal{F}$}, if  the linear part of $(E,Q)$ is a geometric resolution of  $ \mathcal{F}$.\footnote{In particular, 
$ \mathcal{F}$ is the foliation associated to $(E,Q)$, i.e.~$\rho\big(\Gamma(E_{-1})\big) = \mathcal{F}$, with
$\rho$ the anchor map of $(E,Q)$.} 
If $E_{-k}=0$ for all $k \geq n+1$, we speak of a \emph{universal Lie $n$-algebroid \resolving $\mathcal{F}$}. 
\end{definition}

\isthiswhatyouowant Here is the first main result:

\begin{theoreme}
\label{theo:existe}
Let $\mathcal{F}$ be a singular foliation on a smooth manifold $M$ which permits a geometric resolution  $(E,{\mathrm d},\rho)$. Then there is a 
universal {\LieInftyAlgebroid} \resolving $\mathcal{F}$ the linear part of which is the given geometric resolution. 

\isthiswhatyouowant For a real analytic or holomorphic singular foliation there is a universal Lie $k$-algebroid in the neighborhood of every point $m \in M$ with $k \leq n+1$, where 
$n$ is the real and complex dimension of $M$, respectively.
For a locally real analytic singular foliation\footnote{see Definition \ref{Def:locrealanal}}, there is a universal Lie $k$-algebroid over every relatively compact open subset with $k \leq n+1$, where 
$n$ is the real dimension of $M$.
\end{theoreme}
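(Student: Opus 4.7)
The plan is to encode the sought {\LieInftyAlgebroid} structure as a degree $+1$ cohomological vector field $Q$ on the $\mathbb{N}$-graded manifold whose algebra of functions is the graded-symmetric algebra of $\Gamma(E^{*})$ (with each $E_{-i}^{*}$ placed in degree $i$) over $\mathscr{O}$, and to construct $Q$ inductively by its Taylor expansion in the fibre coordinates. Writing $Q=\sum_{s\geq 1}Q^{(s)}$, where $Q^{(s)}$ corresponds to the $s$-ary bracket $\ell_s$ (with $Q^{(1)}$ encoding simultaneously the differential $\dd$ and the anchor $\rho$), the condition $Q^{2}=0$ decomposes by polynomial degree into the sequence
\begin{equation*}
2[Q^{(1)},Q^{(n)}] \;=\; -\!\!\sum_{\substack{s+t=n+1\\ s,t\geq 2}}\!\! [Q^{(s)},Q^{(t)}] \;=:\; 2R_{n},\qquad n\geq 2,
\end{equation*}
to be solved successively for $Q^{(n)}$, following the pattern of Lada--Stasheff for $\Linfty$-algebras resolving a Lie algebra.

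The base datum $Q^{(1)}$ is the given resolution $(E,\dd,\rho)$. For $n=2$, the right-hand side is empty, and we have the freedom to define $\ell_2$ directly. On $\Gamma(E_{-1})\wedge\Gamma(E_{-1})$ we set $\ell_2(e,e')$ to be a bilinear lift of $[\rho(e),\rho(e')]\in\mathcal{F}$ through $\rho$, which exists because $\mathcal{F}$ is involutive and the resolution is exact at its last nontrivial position. The remaining components $\ell_2\colon\Gamma(E_{-i})\otimes\Gamma(E_{-j})\to\Gamma(E_{-i-j+1})$ with $i+j\geq 3$ are then built inductively in $i+j$: at each stage, the Leibniz condition $\dd\ell_2(e,e')=\ell_2(\dd e,e')\pm\ell_2(e,\dd e')$ has a right-hand side already constructed and readily checked to be $\dd$-closed, so exactness of $(\Gamma(E_{\bullet}),\dd)$ at interior positions supplies the desired preimage. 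The anchor-type corrections to $\mathscr{O}$-linearity arising from these choices are absorbed into the definition so that the assembled $\ell_2$ becomes a genuine derivation of the graded-symmetric algebra.

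The inductive step $n\geq 3$ is the heart of the construction. Assuming $Q^{(1)},\ldots,Q^{(n-1)}$ have been built so that $Q^{2}=0$ holds through polynomial degree $n-1$, a direct computation that uses only the graded Jacobi identity for commutators of vector fields together with the induction hypothesis shows $[Q^{(1)},R_{n}]=0$, i.e., $R_{n}$ is $\dd$-closed. One then produces $Q^{(n)}$ with $[Q^{(1)},Q^{(n)}]=R_{n}$ by invoking exactness of $(\Gamma(E_{\bullet}),\dd)$ extended to its graded-symmetric tensor powers: since each $\Gamma(E_{-i})$ is locally finitely generated and projective, hence $\mathscr{O}$-flat, tensoring the resolution with these modules preserves exactness, and the $\dd$-closed derivation $R_{n}$ thus admits a $\dd$-preimage of the required polynomial form. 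The main obstacle I anticipate is precisely this last bookkeeping: verifying that $R_{n}$ always takes values in the sub-complex where $\dd$ is exact (never in the final position $\mathcal{F}$, where exactness fails on the nose) and arranging the preimage to be a genuine derivation rather than a mere collection of graded-multilinear maps. This amounts to a careful homological computation, carried out in Section~\ref{sectionpreuve1}.

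The three variants of the theorem then follow by combining the above construction with the existence results for geometric resolutions recalled just before. For a real analytic or holomorphic singular foliation, Proposition \ref{bonjourprop}(1) supplies local geometric resolutions of length at most $n+1$, so that the induction terminates at $Q^{(n+1)}$ and yields a universal Lie $(n+1)$-algebroid around each point. For a locally real analytic singular foliation, Theorem \ref{newprop} supplies such a resolution globally over any relatively compact open subset of $M$, to which the general smooth construction applies verbatim.
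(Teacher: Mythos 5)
Your proposal follows essentially the same route as the paper: an obstruction-theoretic induction on the arity of the brackets, with the key homological input being that tensoring the exact resolution with the projective modules $\Gamma(S^{k}(E^*))$ preserves exactness (this is Proposition \ref{prop:fondamental3} in the text, and your construction of $\ell_2$ on $\Gamma(E_{-1})$ is Proposition \ref{prop:Almost}). The one step you defer --- checking that the obstruction cocycles never land in the bottom position $\mathcal{F}$, where exactness fails on the nose --- is exactly the content of Lemmas \ref{haha} and \ref{lem:forQ2}, and it does go through because $\rho$ intertwines the $2$-bracket with the Lie bracket of vector fields (for arities $n\geq 3$ it is automatic for degree reasons, cf.\ Corollary \ref{cor:coho}).
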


\isthiswhatyouowant We note as a side remark that the existence of a geometric resolution is not sufficient for a global result in the holomorphic or real analytic case, 
since in the corresponding construction one uses a partition of unity, which is not available in those cases. 
The use of the word ''universal' is justified retrospectively by the following result:

\begin{theoreme}\label{theo:onlyOne}
Let $(E,Q)$ be  a universal {\LieInftyAlgebroid}  \resolving a singular foliation $ \mathcal{F}$ on a smooth manifold. 
For every {\LieInftyAlgebroid} $ (E',Q')$ defining a sub-singular foliation of $ \mathcal{F}$ (i.e.~such that  $\rho'\big(\Gamma(E_{-1}')\big) \subset \mathcal{F}$),
there is a {\LieInftyAlgebroid} morphism from $ (E',Q')$ to $ (E,Q)$ over the identity of $M$ and any two such {\LieInftyAlgebroid} morphisms are homotopic.

\isthiswhatyouowant The same result holds in the holomorphic and the real-analytic setting in a neighborhood of an arbitrary point.
\end{theoreme}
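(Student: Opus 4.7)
The plan is to construct the required morphism and, later, the homotopy between two such morphisms arity by arity, in each step invoking exactness of the resolution $(E,\dd,\rho)$ together with projectivity of sections of vector bundles. Recall from Section \ref{infinityalgebroids} that a {\LieInftyAlgebroid} morphism $\Phi\colon(E',Q')\to(E,Q)$ over $\id_M$ is encoded by a family of $\mathscr{O}$-polylinear maps $\Phi^{(k)}\colon \Gamma(S^k(E'[1])) \to \Gamma(E[1])$ of appropriate degree whose dual $\Phi^*$ is a graded algebra morphism intertwining $Q$ and $Q'$. Expanding the intertwining relation by arity $k$ gives a schematic equation
\[
\dd\circ \Phi^{(k)} \;=\; R^{(k)}\bigl(\Phi^{(1)},\dots,\Phi^{(k-1)}\bigr),
\]
where $R^{(k)}$ is an explicit polynomial expression in the lower-arity components of $\Phi$ together with the higher brackets of $(E,Q)$ and $(E',Q')$. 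The whole proof then reduces to solving this family of equations inductively, each time as a lifting problem through the exact resolution.

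I would first construct the linear part $\Phi^{(1)}\colon E' \to E$ by the classical comparison-of-resolutions argument. Since $\rho'(\Gamma(E'_{-1}))\subset \mathcal{F} = \rho(\Gamma(E_{-1}))$ and $\Gamma(E'_{-1})$ is a projective $\mathscr{O}$-module (Lemma \ref{lem:serreSwan}, or by local freeness of vector bundles), there exists a lift $\Phi^{(1)}_{-1}$ of $\rho'$ through $\rho$. Then $\Phi^{(1)}_{-1}\circ\dd'^{(2)}$ lands in $\ker\rho = \mathrm{im}\,\dd^{(2)}$, so projectivity of $\Gamma(E'_{-2})$ yields a lift $\Phi^{(1)}_{-2}$, and an evident downward induction produces the chain map $\Phi^{(1)}$ between the complexes underlying $E'$ and $E$.

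Higher arities are then built inductively on $k$. The key point is that $R^{(k)}$ is $\dd$-closed: this is a consequence of $Q^2=0$, $(Q')^2=0$ and the intertwining relations already satisfied by $\Phi^{(1)},\dots,\Phi^{(k-1)}$. Given this closedness, exactness of the resolution at the internal degree of the target and projectivity of $\Gamma(S^k(E'[1]))$ yield a lift $\Phi^{(k)}$ solving the $k$-th equation. A small subtlety arises when the obstruction would a priori land in $\Gamma(E_{-1})$, where the cohomology of the augmented complex is $\mathcal{F}$ rather than zero; the chain-map property of $\Phi^{(1)}$ together with the hypothesis $\rho'(\Gamma(E'_{-1}))\subset \mathcal{F}$ ensures that $R^{(k)}$ still takes values in $\mathrm{im}\,\rho$, so that a lift exists. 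Iterating this over $k$ completes the construction of $\Phi$.

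Uniqueness proceeds by a strictly analogous induction. Given two morphisms $\Phi_0, \Phi_1$, a homotopy $H$ between them is encoded, with the sign and boundary conventions of Section \ref{subsec:homotopies}, by multi-linear maps $H^{(k)}$ of total degree $-1$ satisfying
\[
\Phi_1^{(k)} - \Phi_0^{(k)} \;=\; \dd\circ H^{(k)} \,+\, S^{(k)}\bigl(H^{(1)},\dots,H^{(k-1)};\Phi_0,\Phi_1\bigr).
\]
The base step is a chain homotopy between $\Phi_0^{(1)}$ and $\Phi_1^{(1)}$, which exists by the classical projective-resolution argument, and the higher arities follow by exactly the same lifting scheme as above. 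The main obstacle in both constructions is the careful combinatorial verification that the right-hand sides $R^{(k)}$ and $S^{(k)}$ are indeed $\dd$-closed at each step; this amounts to unpacking the higher Jacobi identities for $Q$ and $Q'$ with the correct Koszul signs and checking cancellations against the previously constructed components, which is the genuine technical heart of the proof. For the holomorphic and real-analytic statements, the same scheme applies verbatim in any neighborhood of a point where a geometric resolution is available by Proposition \ref{bonjourprop}, since in those settings free (hence projective) $\mathscr{O}$-modules can be used.
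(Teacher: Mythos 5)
Your existence argument is essentially the paper's own, carried out by hand: a double induction on arity and, within each arity, on internal degree, lifting at each step through the exact rows obtained by tensoring the resolution with the projective modules $\Gamma(S^{k}(E'^*)_j)$. The paper packages this as the vanishing of $H^1$ of the complex of ${\mathcal O}$-linear $\Phi^{(0)}$-derivations of arity $n\geq 2$ (Proposition \ref{prop:fondamental3Def} and Corollary \ref{cor:cohoPhi}), but the underlying diagram chase is the one you describe. Two imprecisions are worth flagging. First, the equation to solve at arity $k$ is $\Qtoto(\phi_k)=-c_k$, which involves the differentials of \emph{both} $E$ and $E'$, not only $\dd\circ\Phi^{(k)}$; it is only after the further decomposition by internal degree that each sub-equation becomes a lifting problem through $\id\otimes\dd$. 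Second, at the one arity where the obstruction genuinely has a component in $E_{-1}$ (arity one), what must be checked is that its root, i.e.\ its image under $\id\otimes\rho$, is \emph{zero}; this follows from $\rho\circ\phi_0=\rho'$ together with the fact that both anchors are bracket morphisms, cf.~Equation \eqref{rhophi2}. For all higher arities the $E_{-1}$-component vanishes for degree reasons, which is the real content of Corollary \ref{cor:cohoPhi} and is not quite the same as exactness of the resolution.

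The uniqueness half has a genuine gap. You encode a homotopy as a family $H^{(k)}$ solving $\Phi_1-\Phi_0=\Qtot(H)$ arity by arity. With the conventions of Section \ref{subsec:homotopies} this is only a \emph{necessary} condition for $\Phi_0\sim\Phi_1$ (Proposition \ref{prop:HomotopyMeansHomotopy}); it is not the definition, and the converse implication is exactly what needs proof. The ${\mathcal O}$-linear map $H$ produced by your induction is neither an algebra morphism nor a derivation of any kind, so it cannot be fed into Definition \ref{def:homotopy} to produce the required path $(\Phi_t,H_t)$ of morphisms and $\Phi_t$-derivations. The paper closes this gap with the notion of a homotopy deformation up to arity $n$: at each arity it chooses a primitive $W$ of $(\Phi-\Psi_n)^{(n+1)}$ which is a $\Phi^{(0)}$-\emph{derivation} (possible because the relevant $H^0$ vanishes, Corollary \ref{cor:cohoPhi}), flows $\Psi_n$ along the differential equation of Example \ref{ex:homotopies}, and finally reparametrizes the resulting infinite concatenation of flows into $[0,1]$ (Proposition \ref{prop:secondpart}); this last step is precisely why the paper works with continuous piecewise-smooth paths of morphisms. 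Without some version of this construction your induction only shows that $\Phi_0$ and $\Phi_1$ differ by a $\Qtot$-coboundary, not that they are homotopic in the sense required by the theorem.
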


\isthiswhatyouowant
This implies in particular that in the category where objects are {\LieInftyAlgebroid}s whose induced singular foliation is included in $ {\mathcal F}$ 
and where arrows are homotopy classes of morphisms, every universal {\LieInftyAlgebroid} over $ {\mathcal F}$ is a terminal object.
Terminal objects always satisfy what is called a "universality property". An immediate consequence of the theorem is the following uniqueness result: 
\begin{corollaire}
	\label{coro:unique}
Two  universal {\LieInftyAlgebroid}s  \resolving the singular foliation $ \mathcal{F}$ are homotopy equivalent and two such homotopy equivalences are homotopic.
\end{corollaire}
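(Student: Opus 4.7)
The plan is to apply Theorem \ref{theo:onlyOne} symmetrically. Let $(E,Q)$ and $(E',Q')$ be two universal {\LieInftyAlgebroid}s \resolving $\mathcal{F}$. By definition, both induce the full singular foliation $\mathcal{F}$, so in particular each of them induces a sub-singular foliation of $\mathcal{F}$ in the sense required by the hypothesis of Theorem \ref{theo:onlyOne}. Therefore the theorem applies in both directions, yielding {\LieInftyAlgebroid} morphisms
\begin{equation*}
\Phi \colon (E',Q') \longrightarrow (E,Q) \qquad \text{and} \qquad \Psi \colon (E,Q) \longrightarrow (E',Q')
\end{equation*}
over the identity of $M$.

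To show that $\Phi$ and $\Psi$ are mutually inverse up to homotopy, I would consider the composition $\Psi \circ \Phi \colon (E',Q') \to (E',Q')$. This is a {\LieInftyAlgebroid} morphism from $(E',Q')$ to the universal {\LieInftyAlgebroid} $(E',Q')$, and so is the identity morphism $\mathrm{id}_{(E',Q')}$. Applying the uniqueness part of Theorem \ref{theo:onlyOne} (with $(E',Q')$ as the target and $(E',Q')$ itself as the source), these two morphisms must be homotopic. The same argument with the roles of $(E,Q)$ and $(E',Q')$ exchanged shows that $\Phi \circ \Psi$ is homotopic to $\mathrm{id}_{(E,Q)}$. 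Hence $\Phi$ is a homotopy equivalence with quasi-inverse $\Psi$.

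For the second statement, suppose $\Phi_1, \Phi_2 \colon (E',Q') \to (E,Q)$ are two homotopy equivalences between the given universal {\LieInftyAlgebroid}s. Regardless of their homotopy-equivalence status, they are in particular {\LieInftyAlgebroid} morphisms from a {\LieInftyAlgebroid} inducing $\mathcal{F} \subset \mathcal{F}$ into the universal object $(E,Q)$, so the uniqueness clause of Theorem \ref{theo:onlyOne} immediately gives a homotopy between $\Phi_1$ and $\Phi_2$.

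No genuine obstacle arises in this argument: all the real work has been absorbed into Theorem \ref{theo:onlyOne}, and the corollary is then just the standard categorical fact that terminal objects in any category are unique up to unique isomorphism, transposed to the homotopy category of {\LieInftyAlgebroid}s over $M$.
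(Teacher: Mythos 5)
Your argument is correct and is exactly the paper's intended proof: the corollary is stated as an immediate consequence of Theorem \ref{theo:onlyOne}, obtained by applying it symmetrically and invoking the uniqueness-up-to-homotopy clause to compare $\Psi\circ\Phi$ (resp.\ $\Phi\circ\Psi$) with the identity, i.e.\ the standard uniqueness of terminal objects in the homotopy category. Nothing is missing.
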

\isthiswhatyouowant
It is a general fact that two terminal objects are related by a unique invertible arrow (which is the case in the category just defined).
Of course, in most well-known cases, e.g., the universal enveloping algebra, this invertible unique arrow is bijective, while here it
is only a homotopy class of invertible-up-to-homotopy morphisms.

\isthiswhatyouowant
Another consequence of Theorem \ref{theo:onlyOne} is the following one: suppose there is  an (ordinary) Lie algebroid $A$ which defines a singular foliation $\mathcal{F} $ and that this foliations admits a geometric resolution. Then there exists 
a {\LieInftyAlgebroid} morphism from $A$ to any universal {\LieInftyAlgebroid} $ (E,Q)$ \resolving $\mathcal{F}$ and any two such morphisms are homotopic. 
This illustrates once more why the  universal {\LieInftyAlgebroid} $ (E,Q)$ \resolving $\mathcal{F}$ is 
more important than a Lie algebroid that possibly can be used to define the same foliation $\mathcal{F}$; even if it exists, it is in particular far from unique.

\isthiswhatyouowant
Let us say a few words about the proofs of the previous results. 
A crucial result is Proposition \ref{prop:fondamental3}, which states that vertical vector fields on a geometric resolution $E$, seen as a graded manifold, 
have cohomology concentrated in a low number of degrees only.
The proofs are mainly based on step-by-step constructions using this Lemma.
For clarity, we have dedicated different subsections to the proofs of different results: Theorem \ref{theo:existe} is proven in Section \ref{sectionpreuve1},
Theorem \ref{theo:onlyOne} is proven in Section~\ref{sec:context}. 

\isthiswhatyouowant
As mentioned in the Introduction, several invariants and geometric properties of the singular foliation can be derived
from these two theorems: this is the subject of  Section~\ref{sec:geometry}. 
In a generalization of the known isotropy Lie algebras of a given point $m$ on the base $M$ of a Lie algebroid, every Lie $\infty$-algebroid gives rise to a canonical isotropy Lie $\infty$-algebra at $m$. Applying this functor, defined in more detail in Section \ref{sec:functor}, to a universal Lie $\infty$-algebroid of a singular foliation,  
we prove the following result.

\begin{theoreme} \label{theo:lasttheorem} Let  ${\mathcal F}$ be a singular foliation on $M$ admitting a geometric resolution in the neighborhood of  $m \in M$. 
The above construction equips $H^\bullet({\mathcal F},m)$ with a canonical 
Lie $\infty$-algebra structure. Its 1-bracket vanishes and its 2-bracket, restricted to degree minus one, reproduces the isotropy Lie algebra of \cites{AndrouSkandal, AndrouZambis}. For two points in the same leaf, the Lie $\infty$-algebras are isomorphic. 
\end{theoreme}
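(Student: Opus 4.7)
My plan is to construct the isotropy structure using a minimal resolution, deduce its canonicity from the uniqueness results already established, and establish leaf invariance via flows of vector fields in $\mathcal{F}$ lifted to the universal {\LieInftyAlgebroid}. First, fix a geometric resolution $(E,\dd,\rho)$ minimal at $m$ (Proposition \ref{bonjourprop}(5)) and equip it with a universal {\LieInftyAlgebroid} $(E,Q)$ by Theorem \ref{theo:existe}. Since $\dd^{(i)}|_m = 0$ for every $i\geq 2$, we have $H^{-1}(\mathcal{F},m) = \ker(\rho_m)$ and $H^{-i}(\mathcal{F},m)=\left.E_{-i}\right|_m$ for $i\geq 2$. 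For every $k\neq 2$ the $k$-ary bracket $l_k$ is $\mathscr{O}$-multilinear and descends to fibers at $m$; the $2$-ary bracket $l_2$ is $\mathscr{O}$-multilinear up to a Leibniz defect through the anchor, which vanishes at $m$ as soon as each degree-$(-1)$ argument lies in $\ker(\rho_m)$. Thus all brackets restrict consistently to $H^\bullet(\mathcal{F},m)$, the higher Jacobi identities are preserved, and the $1$-ary bracket vanishes by minimality, producing a {\Linfty}-algebra on $H^\bullet(\mathcal{F},m)$ with zero differential.

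To identify $l_2$ on $H^{-1}(\mathcal{F},m)=\ker(\rho_m)$ with the isotropy Lie algebra of \cites{AndrouSkandal,AndrouZambis}, I would take $u,v \in \ker(\rho_m)$ with extensions $\tilde u, \tilde v \in \Gamma(E_{-1})$. The anchor compatibility of a {\LieInftyAlgebroid} yields $\rho\bigl(l_2(\tilde u,\tilde v)\bigr) = [\rho(\tilde u),\rho(\tilde v)]$, and since $\rho(\tilde u)$ and $\rho(\tilde v)$ both vanish at $m$, the Lie bracket of these vector fields is well defined at $m$ modulo $I_m\mathcal{F}$ independently of the chosen extensions and reproduces precisely the Androulidakis--Skandalis isotropy bracket of $u$ and $v$.

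Canonicity then follows from Corollary \ref{coro:unique}: two universal {\LieInftyAlgebroid}s \resolving $\mathcal{F}$ are related by a homotopy equivalence whose restriction at $m$ is an {\Linfty}-quasi-isomorphism of the fiber structures and induces a genuine {\Linfty}-isomorphism between their cohomologies; Theorem \ref{theo:onlyOne} moreover makes this isomorphism unique up to homotopy. For a non-minimal geometric resolution one first maps it into a minimal one using Theorem \ref{theo:onlyOne} and transfers the structure by the resulting quasi-isomorphism.

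The leaf invariance is the most delicate step and the main obstacle. For $m_0, m_1$ on the same leaf, I would cover a connecting path by finitely many opens on each of which some $X \in \mathcal{F}$ has a flow moving one base point toward the next. Lifting $X$ to a section $\tilde X \in \Gamma(E_{-1})$, the degree-$0$ derivation $[Q,\iota_{\tilde X}]$ of the algebra of functions on the graded manifold $E$ automatically commutes with $Q$ (since $Q^2=0$) and covers $X$ on $M$; its flow integrates to a one-parameter family of {\LieInftyAlgebroid} automorphisms of $(E,Q)$. Restricting to the isotropy data at each time yields an {\Linfty}-isomorphism between the isotropy {\Linfty}-algebras at the endpoints. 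The delicate point is showing independence from the choices of $X$, of the lift $\tilde X$, and of the path: the plan is to compare different choices via Theorem \ref{theo:onlyOne}, using that two {\LieInftyAlgebroid} morphisms inducing the same data on the underlying foliation are homotopic.
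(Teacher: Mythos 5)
Your construction of the isotropy {\Linfty}-algebra and your canonicity argument follow essentially the same route as the paper: take a universal {\LieInftyAlgebroid} built on a resolution minimal at $m$, restrict the brackets to the fibers (the isotropy functor), observe the $1$-bracket vanishes by minimality, and deduce canonicity from Corollary \ref{coro:unique} together with the fact that a homotopy equivalence between complexes with vanishing differential is an isomorphism (this is the paper's Lemma \ref{lem:whenUnaryBraZero} and Proposition \ref{prop:isotropyStrictIso}). One genuine gap: for the identification of the $2$-bracket on $H^{-1}({\mathcal F},m)$ with the Androulidakis--Skandalis isotropy Lie algebra, you only check that the natural map $e \mapsto [\rho(\tilde e)] \in {\mathcal F}(m)/I_m{\mathcal F}$ intertwines the brackets; you must also show it is a bijection. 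Surjectivity is immediate, but injectivity genuinely uses both exactness of the geometric resolution and minimality at $m$: if $\rho(\tilde e)=\sum_i f_i X_i$ with $f_i(m)=0$, then $\tilde e - \sum_i f_i \tilde e_i = \dd^{(2)}\tilde h$ for some $\tilde h\in\Gamma(E_{-2})$, and evaluating at $m$ forces $e=0$ because $\dd^{(2)}\vert_m=0$. Without this step (Proposition \ref{prop:hol=hol} in the paper) the word ``reproduces'' in the statement is not justified.

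For leaf invariance you take a genuinely different route. The paper simply invokes the Androulidakis--Skandalis local normal form: two points of the same leaf admit neighborhoods on which the singular foliations are isomorphic, and such an isomorphism transports universal {\LieInftyAlgebroid}s, whence isomorphic isotropy {\Linfty}-algebras by uniqueness. Your flow-based argument, integrating the degree-zero derivation $[Q,\iota_{\tilde X}]$, which indeed commutes with $Q$ and covers $X\in{\mathcal F}$, is viable and is in fact closer in spirit to the paper's remark that an explicit isomorphism can be attached to a path inside the leaf; it buys an explicit isomorphism at the cost of verifying completeness of the flow chart by chart after covering the path by finitely many opens. Note, however, that the independence-of-choices issue you single out as the main obstacle is not actually needed: the theorem only asserts that the isotropy {\Linfty}-algebras at two points of a leaf are \emph{isomorphic}, and the paper explicitly remarks that this isomorphism is not canonical.
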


\isthiswhatyouowant
We refer to this Lie $\infty$-algebra structure as the \emph{isotropy Lie $\infty$-algebra of $ {\mathcal F}$} at $m$.
In Proposition	\ref{propdef:NMRLA} below we show that the restriction  of the $3$-ary bracket  to $\g= H^{-1} ({\mathcal F},m)$,
$$ \{\cdot, \cdot, \cdot \}_3 \colon \Lambda^3  H^{-1} ({\mathcal F},m)  \longrightarrow H^{-2} ({\mathcal F},m),$$
 is a $3$-cocycle for the Chevalley-Eilenberg complex of the isotropy Lie algebra $\g$ valued in  the $\g$-module $H^{-2} ({\mathcal F},m)$. 
Its class does not depend on the choices made within this construction and thus is an invariant associated directly to the foliation $ {\mathcal F}$ and the (singular) point $m\in M$. This is of relevance also because according to Proposition \ref{prop:NMRLA} below  there cannot be a Lie algebroid of minimal rank $r$  defining $ {\mathcal F}$ in a neighborhood of $m$ if this Chevalley-Eilenberg cohomology 3-class is non-vanishing ($r$ is the rank of $ {\mathcal F} $ as an ${\mathcal O}$-module).  
We also provide an example where this class is in fact non-vanishing: for the origin of the singular foliation $ {\mathcal F} $ of vector fields on ${\mathbb C}^4$ tangent to the  level sets of the function $ (z_1)^3+(z_2)^3+(z_3)^3+(z_4)^3$, see Example \ref{ex:NMLRA}.

\isthiswhatyouowant
 Proposition \ref{prop:recover_AS} below, finally, states that the (appropriately defined) fundamental groupoid of the universal Lie $\infty$-algebroid of a singular foliation is the universal cover of the holonomy groupoid described by Androulidakis and Skandalis in \cite{AndrouSkandal}

\begin{remarque}
\normalfont
Although presented here for singular foliations only, most results of this paper, in particular 
Theorems \ref{theo:existe} and \ref{theo:onlyOne}, can be adapted to every locally finitely generated sheaf of Lie-Rinehart algebras 
over the ring of functions on a manifold $M$.
\end{remarque}

\newpage

\section{The universal Lie \texorpdfstring{$\infty$}{infinity}-algebroid of a singular foliation: existence and uniqueness }
\label{sec:construction}

\subsection{Singular foliations: definitions and examples}
\label{DefExSSingFol}

\isthiswhatyouowant
Let $M$ be a manifold that may be smooth, real analytic, or complex. 
It may also be a Zarisky open subset  $U \subset {\mathbb C}^n$.
Generalizing to affine or projective varieties would be an interesting topic by itself.

\isthiswhatyouowant
Let  ${ U}$ be an open subset of $M$. Denote by $ {\mathscr O}( U)$ the algebra of polynomials, smooth, 
real analytic, or holomorphic functions over $ U$, depending on the respective context, and by ${\mathfrak X}( U) $ the ${\mathscr O}( U) $-module
of vector fields.
The assignment ${\mathfrak X}\colon { U} \mapsto  {\mathfrak X}( U)$ is a sheaf of Lie-Rinehart algebras, i.e.~a sheaf of Lie algebras 
and a sheaf of $ {\mathscr O}$-modules, and both are compatible~\cite{huebschmann}.

\isthiswhatyouowant
We say that a sheaf $\Gamma\colon U\mapsto \Gamma(U)$ is \emph{locally finitely generated}, if for every $m\in M$ there exists an open neighborhood $U_m$ of $m$
and a finite number  of sections $X_1,\ldots,X_p\in\Gamma(U_m)$ such that for every  open subset $V  \subset U_m$ the vector fields
$X_1|_V,\ldots,X_p|_V$ span $\Gamma(V)$.
The minimal number of local generators of a finitely generated sheaf at a given point $m \in M$ is called its \emph{rank at $m$}.  

\isthiswhatyouowant
We define singular foliations in the smooth, complex, real analytic, and algebraic context as follows:

\begin{definition}
\label{def:sing_fol}
A \emph{singular foliation} is a subsheaf ${\mathcal F}\colon {U} \mapsto  {\mathcal F}(U)$ of
the sheaf of vector fields ${\mathfrak X}$, which is locally finitely generated as an ${\mathscr O}$-submodule and closed with respect to the Lie bracket of vector fields.
\end{definition}

\isthiswhatyouowant Note that, in this way, regular foliations are considered as particular singular ones. There is a type of smooth singular foliation which is of particular interest: 

\begin{definition} \label{Def:locrealanal} A locally real analytic singular foliation is a smooth singular foliation over a smooth manifold $M$ which admits, around each point, generators which are real analytic in some local chart.\footnote{We do not require these local charts to be compatible and to turn  $M$ into a real analytic manifold.}
\end{definition}

\isthiswhatyouowant
Several authors \cites{Debord,AndrouSkandal,AndrouZamb} prefer to consider compactly supported vector fields. 
As pointed out by Alfonso Garmendia in \cite{GarmendiaThesis}, or in Remark 2.1.3 in \cite{Roy}, this makes no difference:
For a smooth manifold $M$,  subsheaves of the sheaf of vector fields which are locally finitely generated 
are in one-to-one correspondence with sub-modules of the module of compactly supported vector fields on $M$
whose restriction to an open subset is locally generated.

\isthiswhatyouowant
A \emph{singular sub-foliation} ${\mathcal F}'$ of a singular foliation ${\mathcal F}$ is a singular foliation such that ${\mathcal F}'( U) \subset {\mathcal F}( U)$ for 
all open subsets ${ U} \subset M$. 
A singular foliation on a manifold $M$ will be said to be \emph{finitely generated} if there exist $k$ vector fields $X_1, \dots, X_k \in {\mathfrak X}(M)$
such that for every open subset ${U} \subset M$ the restriction of $X_1, \dots, X_k$  to $U$ generates 
${\mathcal F}(U)$  over $ {\mathscr O}(U)$.
 
 \isthiswhatyouowant
An \emph{anchored bundle} is a (smooth, real analytic, or holomorphic) vector bundle $A$ over $M$ together with a vector bundle morphism $\rho\colon A \to TM$ over the identity of $M$.
We say that it covers a singular foliation $ {\mathcal F}$, if
every point $m\in M$ admits a neighborhood $ U$ such that $ {\mathcal F}( U) =  \rho\big(\Gamma_{ U}(A)\big) $.\footnote{In terms of sheaves, this condition means 
that the sheaf ${\mathcal F}$ is obtained by sheafifying the presheaf $ \rho\big(\Gamma(A)\big)$.}

\isthiswhatyouowant
We call \emph{leaves} of a singular foliation ${\mathcal F}$ the connected submanifolds $N$ of $M$ whose tangent space is, at every point $m \in N$,
obtained by evaluating at $m$ all the local sections of the sheaf ${\mathcal F}$, and which is maximal
among such submanifolds. The following result, now classical, is due to R.~Hermann:

\begin{proposition}
\label{prop:Hermann}
\cite{Hermann1962}
A singular foliation ${\mathcal F}$ on a manifold $M$ induces a partition of $M$ into leaves. 
\end{proposition}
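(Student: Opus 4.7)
The plan is to adapt the standard Stefan--Sussmann--Hermann orbit theorem to the sheaf-theoretic setup. First I would introduce an equivalence relation $\sim$ on $M$: declare $m \sim m'$ whenever there exist locally defined vector fields $X_1, \ldots, X_k$ belonging to $\mathcal{F}$ and times $t_1, \ldots, t_k \in \mathbb{R}$ such that $m' = \phi^{t_k}_{X_k} \circ \cdots \circ \phi^{t_1}_{X_1}(m)$, where $\phi^t_X$ denotes the locally defined flow of $X$. Reflexivity, symmetry (flows are reversible) and transitivity (flows compose) are immediate, so the equivalence classes partition $M$ by construction. These classes are the candidate leaves and are path-connected.

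The substantive part is to equip each class $L$ with a smooth structure making it an injectively immersed submanifold of $M$ with $T_m L = \mathcal{F}|_m$, where $\mathcal{F}|_m := \{X(m) : X \in \mathcal{F}(U),\, m \in U\} \subset T_m M$. Fix $m_0 \in L$; by local finite generation, pick generators $X_1, \ldots, X_p$ of $\mathcal{F}$ on a neighborhood $U$ of $m_0$, set $d := \dim \mathcal{F}|_{m_0}$, and choose indices $i_1, \ldots, i_d$ so that $X_{i_1}(m_0), \ldots, X_{i_d}(m_0)$ form a basis of $\mathcal{F}|_{m_0}$. The map
\begin{equation*}
\Phi \colon (t_1, \ldots, t_d) \longmapsto \phi^{t_1}_{X_{i_1}} \circ \cdots \circ \phi^{t_d}_{X_{i_d}}(m_0),
\end{equation*}
defined near the origin of $\mathbb{R}^d$, is an immersion at $0$ and has image inside $L$. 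Varying the base point, the collection of such maps will serve as a smooth atlas of candidate charts on $L$.

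The main obstacle is to show that $\dim \mathcal{F}|_{m'}$ is constant along $L$, so that $\Phi$ is in fact an open embedding into $L$ and the resulting charts are smoothly compatible. The essential input is involutivity: for $X, Y \in \mathcal{F}$, the pushforward $(\phi^t_X)_* Y$ remains in $\mathcal{F}$ for small $t$. To see this one expands $(\phi^{-t}_X)_* Y = \sum_j a_j(t)\, X_j$ on the local basis, uses the identity $\frac{d}{dt} (\phi^{-t}_X)_* Y = (\phi^{-t}_X)_* [X, Y]$ together with $[\mathcal{F}, \mathcal{F}] \subset \mathcal{F}$, and solves the resulting linear ODE for the coefficients $a_j(t)$. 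It follows that the flows $\phi^t_{X_j}$ preserve the assignment $m \mapsto \mathcal{F}|_m$ and hence its dimension, so $\dim \mathcal{F}|_{\Phi(t)} = d$ in a neighborhood of the origin. This makes $\Phi$ a local diffeomorphism onto its image in $L$ and identifies $T_{m'} L$ with $\mathcal{F}|_{m'}$. Chart compatibility on overlaps follows from the same invariance. Since the result is classical and the proposition is credited to \cite{Hermann1962}, I would merely indicate these ingredients and refer to the literature for the global assembly.
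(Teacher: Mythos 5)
The paper itself gives no proof of this proposition: it is stated as a classical result and attributed entirely to the citation \cite{Hermann1962}, so there is nothing internal to compare against. Your sketch is the standard Hermann/Stefan--Sussmann orbit argument and its ingredients are the right ones; since you too end by deferring the global assembly to the literature, you are in effect doing exactly what the paper does, plus a correct outline of why the cited result holds. One small imprecision worth fixing if you were to write it out: the local generators $X_1,\ldots,X_p$ need not be pointwise linearly independent, so the expansion $(\phi^{-t}_X)_*Y=\sum_j a_j(t)\,X_j$ is not literally an expansion ``on a basis''; the clean version of the argument writes $[X,X_j]=\sum_k f_{jk}X_k$, derives the linear ODE $\tfrac{d}{dt}\big((\phi^{-t}_X)_*X_j\big)(m)=\sum_k f_{jk}\big(\phi^t_X(m)\big)\,\big((\phi^{-t}_X)_*X_k\big)(m)$ for the tuple of pushed-forward generators evaluated at a point, and concludes from invertibility of the fundamental solution matrix that $d\phi^{-t}_X$ carries $\mathcal{F}|_{\phi^t_X(m)}$ onto $\mathcal{F}|_m$, which is the invariance and dimension-constancy you need.
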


\isthiswhatyouowant
 Let $r_m$ be the dimension of the subspace of $T_m M$ obtained by evaluating
all the vector fields of a singular foliation~$ {\mathcal F}$  at $m\in M$.
We say that a point $m \in M$ is \emph{regular} if $r_m$ is constant in a neighborhood of $ m$
and \emph{singular} otherwise. 
Since the function $x \to r_x$ is lower-semi-continuous, regular points form an open dense subset of $M$.
On a connected complex or real analytic manifold, regular points are those for which the function $m \mapsto r_m$
reaches its maximum. This may not be true on non-connected manifolds or on smooth connected manifolds:
For instance, let us choose a function $\chi \colon  {\mathbb R} \to {\mathbb R}$ which vanishes on $ {\mathbb R}_-$
and which is strictly positive on  $ {\mathbb R}_+$, then for the singular foliation on ${\mathbb R}$ generated by $\chi(x) \frac{d}{d x} $,
all points $x \in \mR$ are regular except for $\{0\}$, but $r_x =0$ for $x<0$ and $r_x=1$ for $x>1$.

\isthiswhatyouowant
A non-trivial statement is that a leaf contains a singular point if and only if all its points are singular,
so that it makes sense to define \emph{singular leaves} as being those made of singular points
and likewise \emph{regular leaves} as being those made of regular points.\footnote{Of course, alternative definitions of singular or regular leaves could be given.
Consider the singular foliation on ${\mathbb C}^3$ of all vector fields $X$ such that $X[\varphi]=0$,
with $\varphi(x,y,z)=x^2-y^2-z^2$. In algebraic geometry, it would be natural to say that the subset $\{\varphi(x,y,z)=0\}$ is a singular leaf. 
With our definition, however, it is the union of the  regular leaf $\{ x^2-y^2-z^2=0 \} \cap \{ (x,y,z) \neq (0,0,0)\} $ with the singular leaf $\{(0,0,0)\} $.}

\nocite{Debord2}
\nocite{AndrouSkandal3}

\begin{remarque}
\normalfont
Unlike the case of regular foliations,  singular foliations are not characterized by their leaves, and two different singular foliations may have the same leaves but differ as sheaves 
of vector fields. For instance, as noticed in \cite{AndrouZamb}, for $M$ a real or complex vector space,  consider ${\mathcal F}_k$ to be the module of all smooth,
real analytic, and holomorphic vector fields, respectively,  vanishing to some fixed order $k  \geq 1$ at the origin. This is clearly a singular foliation for all $k$, and all such 
singular foliations have exactly the  same two leaves: the origin and the complement of the origin. They are not, however,  identical as sub-modules of the module of vector fields. 
\end{remarque}

\isthiswhatyouowant
Here is a first class of singular foliations:

\begin{example}\label{Ex:AlgebroidsAreExamples}
\normalfont
For $A $ a (smooth or holomorphic \cite{LSX2}) Lie algebroid over ${ M}$ with anchor $\rho \colon A \to T M$, the ${\mathscr O}$-module $\rho\big(\Gamma(A)\big)
$ is a singular foliation. It is a finitely generated foliation, because there always exists a  vector bundle $B$ such 
that the direct sum $A \oplus B$ is trivial. 
\end{example}

\isthiswhatyouowant
This class of examples includes regular foliations, orbits of a connected Lie group action, orbits of a Lie algebra or a Lie algebroid action, 
symplectic leaves of a Poisson manifold, and foliations induced by Dirac structures. 

\isthiswhatyouowant
Singular foliations as close as possible to regular ones deserve their own name:
\begin{definition}\cite{Debord}
	\label{def:debord}
	 A singular foliation $ {\mathcal F}$ is said to be a \emph{Debord foliation} if it is the image of a Lie algebroid $(A,[\cdot, \cdot],\rho) $ through an anchor map which is injective on a dense open subset.
\end{definition}
\isthiswhatyouowant 
In the smooth case, a singular foliation $ {\mathcal F}$ is Debord if and only if it is projective
as an $ {\mathcal O}$-module. 

\isthiswhatyouowant
Example \ref{Ex:AlgebroidsAreExamples} can be enlarged by a notion more general than the one of a  Lie algebroid:

\begin{definition} \cites{huebschmann}
An \emph{almost-Lie algebroid} over ${ M}$ is a vector bundle $A \to  M$, equipped with a vector bundle morphism $\rho \colon A \to T M$ called the \emph{anchor map}, and a skew-symmetric bracket $[\,.\, ,.\, ]_A $ on $\Gamma (A) $, 
satisfying the \emph{Leibniz identity},
\begin{align}\label{algebroid}
\forall\ x,y\in\Gamma(A),\ f\in\cinf({ M})\hspace{1cm}&[x,fy]_{A}=f[x,y]_{A}+\rho(x)[f] \, y,
\end{align}
together with the \emph{algebra homomorphism condition}:
\begin{equation}\label{algebroid2}
\forall\  x,y\in\Gamma(A)\hspace{1cm}\rho\big([x,y]_{A}\big)=\big[\rho(x),\rho(y)\big].
\end{equation}
\end{definition}

\isthiswhatyouowant
We do not require the bracket $[\cdot,\cdot ]_A $ to be a Lie bracket: It may not satisfy the Jacobi identity.
However, the Jacobi identity being satisfied for vector fields, Condition (\ref{algebroid2}) imposes that the Jacobiator takes values in 
the kernel of the anchor map at all points. 
The following result appeared in Proposition 2.1.4 of \cite{thesis} and in Proposition 3.17 in \cite{meinrenken}.
We include a proof.

\begin{proposition} 
\label{prop:Almost}
Let $M$ be a smooth, real analytic, or complex manifold and $(A,\rho)$ an anchored vector bundle.
\begin{enumerate}
\item For every almost-Lie algebroid structure on $A \to  M$, the image of the anchor map $\rho : \Gamma(A) \to {\mathfrak X}({ M})$ is a singular foliation.
 \item Every finitely generated foliation on $M$ is the image under the anchor map of an almost-Lie algebroid, defined on a trivial bundle.
 \item In the smooth case, every anchored vector bundle $(A,\rho)$ over $M$  that covers a singular foliation ${\mathcal F}$ can be equipped with an almost-Lie algebroid structure
 with anchor $\rho$.
 \item  In the smooth case, a singular foliation is the image under the anchor map of an almost-Lie algebroid if and only if it is finitely generated.
 \end{enumerate}
\end{proposition}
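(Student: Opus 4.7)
The plan is to treat the four items in order: (1) is an immediate unpacking of the definitions, (2) provides a local model on a trivial bundle, (3) is the main step where local almost-Lie brackets are glued via a partition of unity, and (4) follows from (1)--(3) combined with the fact that every smooth vector bundle over $M$ is a direct summand of a trivial one.

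For (1), $\rho(\Gamma(A))$ is an $\mathscr{O}$-submodule of $\mathfrak{X}(M)$ because $\rho$ is $\mathscr{O}$-linear; it is locally finitely generated since over any trivialising open $U$ a local frame $e_1,\dots,e_r$ of $A|_U$ is sent by $\rho$ to local generators of $\rho(\Gamma(A|_U))$; and involutivity is exactly the content of (\ref{algebroid2}), namely $[\rho(x),\rho(y)] = \rho([x,y]_A) \in \rho(\Gamma(A))$. For (2), given global generators $X_1,\dots,X_k$ of $\mathcal{F}$, set $A := M\times\mathbb{R}^k$ with canonical frame $e_i$, put $\rho(e_i):=X_i$, and use involutivity to pick (non-unique) structure functions $c_{ij}^\ell \in \mathscr{O}(M)$ with $[X_i,X_j] = \sum_\ell c_{ij}^\ell X_\ell$. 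Declare $[e_i,e_j]_A := \sum_\ell c_{ij}^\ell e_\ell$ skew-symmetrically and extend to all of $\Gamma(A)$ by imposing the Leibniz identity. A direct expansion of both sides using Leibniz shows that (\ref{algebroid2}) on arbitrary sections reduces to its validity on the frame, which holds by construction.

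For (3), cover $M$ by opens $\{U_\alpha\}$ trivialising $A$. Since $A$ covers $\mathcal{F}$, the images under $\rho$ of a local frame of $A|_{U_\alpha}$ are local generators of $\mathcal{F}(U_\alpha)$, so applying (2) locally yields almost-Lie brackets $[\cdot,\cdot]^\alpha$ on $\Gamma(A|_{U_\alpha})$ with anchor $\rho|_{U_\alpha}$. Choose a subordinate smooth partition of unity $\{\varphi_\alpha\}$ and define, for $x,y\in\Gamma(A)$,
$$ [x,y]_A \;:=\; \sum_\alpha \varphi_\alpha\cdot [x|_{U_\alpha},y|_{U_\alpha}]^\alpha, $$
each summand extended by zero outside $\mathrm{supp}(\varphi_\alpha) \subset U_\alpha$. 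Skew-symmetry is inherited termwise. For Leibniz, summing the local identities gives
$$ [x,fy]_A \;=\; f\sum_\alpha \varphi_\alpha[x,y]^\alpha \,+\, \Bigl(\textstyle\sum_\alpha \varphi_\alpha\Bigr)\rho(x)[f]y \;=\; f[x,y]_A + \rho(x)[f]y, $$
the key point being that the derivative term acquires total coefficient $1$ with no unwanted $\sum_\alpha \rho(x)[\varphi_\alpha]y$ correction. Anchor-compatibility follows by the same averaging: $\rho([x,y]_A) = \sum_\alpha \varphi_\alpha[\rho(x),\rho(y)] = [\rho(x),\rho(y)]$.

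For (4), the ``if'' direction is (2) followed by (1). For ``only if'', assume $\mathcal{F} = \rho(\Gamma(A))$ for some almost-Lie algebroid $A$; choose a smooth vector bundle $B$ over $M$ with $A\oplus B$ trivial, and project a global frame of $A\oplus B$ onto $A$ to obtain finitely many sections generating $\Gamma(A)$ as an $\mathscr{O}(M)$-module; their images under $\rho$ are then global generators of $\mathcal{F}$. The principal subtlety of the whole proof lies in the patching step (3), and this is precisely where the smoothness hypothesis intervenes, through the availability of a partition of unity compatible with Leibniz; the remaining items are essentially definition-chasing.
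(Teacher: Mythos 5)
Your proposal is correct and follows essentially the same route as the paper's proof: direct verification for item 1, the trivial bundle with skew-symmetrized structure functions for item 2, gluing local brackets by a partition of unity for item 3 (with the same key observation that the Leibniz term survives averaging since the $\varphi_\alpha$ multiply from outside), and the trivial-complement argument for item 4. The only difference is that you spell out a few verifications the paper leaves implicit, which does no harm.
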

\begin{proof}
The first item follows directly from \eqref{algebroid} and \eqref{algebroid2}. 
Let us prove the second  item. Let $X_1,\dots,X_r $ be generators of a singular foliation ${\mathcal F} $. Since ${\mathcal F}$ is closed under the Lie bracket of vector fields, there exist functions $c^k_{ij} \in {\mathscr O}(M)$ satisfying:
 \begin{equation} [X_i,X_j] = \sum_{k=1}^r \ c^k_{ij}\, X_k, \end{equation}
for all indices $i,j \in \{1, \dots,r\}$. Upon replacing $ c^k_{ij}$ by
$  \frac{1}{2}(c^k_{ij}-c^k_{ji})$ if necessary, we can assume that the functions $c^k_{ij} \in {\mathscr O}(M)$ satisfy the skew-symmetry 
relations $ c^k_{ij}= -c^k_{ji}$ for all possible indices. Define $A$ to be the trivial bundle $A = {\mathbb R}^r \times { M} \to { M}$. Denote its canonical global sections by $e_1, \dots,e_r$ and define:
\begin{enumerate}
\item an anchor map by $\rho(e_i) = X_i$ for all $i=1, \dots,r$,
\item a skew-symmetric bracket by $[e_i,e_j]_{A} = \sum_{k=1}^r c^k_{ij} e_k$ for all $i,j=1, \dots,r$.
\end{enumerate}
One then extends these definitions to all sections by ${\mathscr O}$-linearity and the Leibniz property, respectively. By construction, this defines an almost-Lie algebroid structure on $A$ 
such that~$\rho(\Gamma(A))={\mathcal F}$.

\isthiswhatyouowant
Let us now prove the third item.
Unlike Lie algebroid brackets, 
almost-Lie algebroid brackets can be glued  using partitions of unity. 
More precisely, let $(\varphi_i)_{i \in I}$ be a partition of unity subordinate to an open cover $({ U}_i)_{i \in I} $ by open sets
trivializing the vector bundle $A$. By the proof of item 2, we can define an almost-Lie algebroid structure with anchor $\rho$ on the restriction of $A$ to $U_i$,
that is, a bracket $[\cdot,\cdot ]_{ U_{i}} $ that satisfies Equations (\ref{algebroid}) and (\ref{algebroid2}) 
for all sections in $\Gamma_{ U_{i}}(A)$. 
The bracket
\begin{equation}
[\cdot,\cdot ]_{A}=\sum_{i \in I}\ \varphi_i\, [\cdot ,\cdot]_{ U_{i}}
\end{equation}
still satisfies Equations \eqref{algebroid} and \eqref{algebroid2} and hence defines an almost-Lie algebroid structure on $A$ with anchor~$\rho$.
The last item follows from the third one.
\end{proof}

\isthiswhatyouowant
 It has been conjectured \cite{AndrouZambis} that not every smooth singular foliation is of the type described in Example~\ref{Ex:AlgebroidsAreExamples}, not even only in a neighborhood of a given point.
As far as we know, the question remains open to this day.\footnote{Not all singular foliations are, globally, the image through 
 the anchor map of a Lie algebroid.  Here is a counter-example, given in \cite{AndrouZambis}: the singular foliation of all 
 vector fields on ${\mathbb R}^2$ vanishing to order $k$ at the point of coordinates $(0,k)$ is locally finitely generated by Example \ref{ex:orderk}, but its rank is not bounded.
 Hence it can not be the image through the anchor map of a Lie algebroid.}
 There are quite a few singular foliations for which the underlying Lie algebroid structure, if it exists at all, is at least not easy to find, for instance those described in Examples \ref{ex:symmetries}, \ref{ex:orderk}, \ref{ex:bivectorfield}, and \ref{ex:Leibnizoids} below.
 
\begin{example}\label{ex:symmetries}
\normalfont 
Let $\mathbb{K}=\mathbb{R}$ or $\mathbb{C}$:
 \begin{enumerate}
 \item Let $P:=(P_1,\dots,P_k)$ be a $k$-tuple of polynomial functions in $d$ variables over $ {\mathbb K}$. The symmetries of $ P$,  i.e.~all polynomial vector fields $X \in {\mathfrak X}({\mathbb K}^d)$ that satisfy $X[P_i] =0$ for all $i \in \{1, \dots,k\}$ form a singular foliation. 
 The special case of $k=1$  appears in toy models for the  Batalin-Vilkovisky formalism, where one may consider the symmetries of a polynomial function $S$ representing the classical action  \cite{Felder}.
 \item \emph{Symmetries of some affine variety $W \subset {\mathbb K}^d$}, i.e.~all polynomial vector fields $X$ such that $X[I_W] \in I_W $, where
 $I_W$ is the ideal of polynomial functions vanishing on $W$.
 \item Vector fields on ${\mathbb C}^d$ vanishing at all points of an affine variety  $W \subset {\mathbb C}^d$.
\end{enumerate} 
  All the previous spaces of polynomial vector fields are closed under the Lie bracket and form a sub-module of
  the module of algebraic vector fields over the ring of polynomial functions on ${\mathbb K}^d$. Since the latter is finitely generated
  and since the ring of polynomial functions is Noetherian, each of these spaces is a finitely generated module over the polynomial functions. The ${\mathscr O}({\mathbb K}^d)$-module
  generated by these polynomial vector fields 
  is therefore also a singular foliation; here ${\mathscr O}$ stands again for smooth, real analytic, or holomorphic functions. 
   \end{example}

\isthiswhatyouowant We now provide further examples of  singular foliations.

 \begin{example}
  \label{ex:sousvariete}
 \normalfont
Vector fields on a manifold $M$ which are tangent to a submanifold $L$. 
Of course, $L$ is the only singular leaf of this singular foliation, while the connected components of  $M\backslash L$
 are the regular ones.
 \end{example}

\begin{example}\label{ex:orderk}
\normalfont
For every $k,n \in \mN$,
vector fields vanishing to order $k$ at the origin of ${\mathbb R}^n$ form a singular foliation.
For $k=1$, it is the singular foliation associated to the action of the group ${\rm GL}(n)$. In particular, it is the image through the anchor map of a Lie algebroid.
For $ k \geq 2$ and $ n \geq 2$, however, it is not known if it can be realized as the image through the anchor map of a Lie algebroid.
\end{example}

\begin{example}\label{ex:bivectorfield}
\normalfont
A bivector field $\pi \in \Gamma(\wedge^2 T M)$ on a manifold $M$ is said to be \emph{foliated}  \cite{Turki}, if $\pi^\#(\Omega^1(M)) $
is closed under the Lie bracket. In that case, $\pi^\#(\Omega^1(M)) $ is a singular foliation. When $\pi$ is Poisson bivector or  twisted Poisson  \cites{Park,Klimcik,Severaweinstein}), it
is known that $T^* M$ comes equipped with a Lie algebroid structure \cite{costeweinstein} with anchor $\pi^\#:T^*  M \to T  M$, but for `generic' 
foliated bivector fields no such formula  exists, cf.~\cite{Turki}.
\end{example}

\begin{example}\label{ex:Leibnizoids}
\normalfont
For a Leibniz algebroid, cf.~Section \ref{sec:leibnizoid} below or \cite{Kotov}
 for a definition, the image of the anchor map is  a singular foliation as well.  
Courant algebroids \cite{LiuXu} and vector bundle twisted Courant algebroids \cite{Melchior} are particular examples of those. 
Another example of a Leibniz algebroid, now defined on $\wedge^2 TM$, arises from any function $S$ on $M$ in the following way:  the anchor is defined by means of 
$P \mapsto P_S := P^\# ({\diff } S)$ and the bracket between two bivector fields $P$ and $Q$ by means of 
 $$  (P,Q) \mapsto {\mathcal L}_{P_S} Q . $$
Note 
 that for $M$ a vector space and $S$ a polynomial function, the associated singular foliation is a sub-foliation of 
 the foliation of symmetries of $S$ described in Example \ref{ex:symmetries} (or also \ref{ex:Koszul} below). 
\end{example}

\isthiswhatyouowant 
Now we give examples of a sub-sheaf of the sheaf of vector fields which is closed under the Lie bracket, but which is \emph{not} a singular foliation.
\begin{example}
\normalfont
On $M = {\mathbb R}$, smooth vector fields vanishing on ${\mathbb R}_-$ are closed under the Lie bracket but are not locally finitely generated  \cite{Texas},
hence they do not form a  singular foliation in our sense---while they still generate leaves in the obvious way. 

\isthiswhatyouowant  Note as an aside that if instead we consider the foliation generated by a single vector field
$\chi(x) \frac{ \partial}{\partial x} $ for some fixed chosen smooth function $\chi$ vanishing for not strictly positive values of
 $x\in \mR$ and being non-zero otherwise, this module is generated by only one vector field and thus defines a singular foliation, the leaves of which coincide precisely with those from above. 
 \end{example}
 
\begin{example}
\normalfont
Consider $M={\mathbb R}^2$ with variables $(x,y)$ and the $\mathcal{C}^{\infty}(M)$-module generated by
the vector field $ \frac{\partial }{\partial x}$ and vector fields of the form $ \varphi \frac{\partial }{\partial y}$
where, similarly to the function $\chi$ above, $ \varphi$ a smooth function vanishing on the half-plane $x \leq 0$ and being non-zero for $x>0$. This module is closed
under the Lie bracket, but it is not locally finitely generated. This counter-example is interesting also due to the non-existence of a good notion of leaves: evidently the flow of these vector fields allows to connect any two points of $M={\mathbb R}^2$, whereas the evaluation of the module gives all of $T_{(x,y)}M$ for $x>0$, but only a one-dimensional sub-bundle for $x\leq 0$.
\end{example}


\subsection{Existence of geometric resolutions of a singular foliation}
\label{existence}

\isthiswhatyouowant
This section is devoted to the proof of several results of Section \ref{sec:main} related to the existence and the properties of geometric resolutions of singular foliations. We start with some standard material needed henceforth.

\begin{definition}
A \emph{complex of vector bundles $(E,\dd,\rho)$ over a singular foliation $ {\mathcal F}$} is a collection $E$ of vector bundles $(E_{-i})_{i \geq 1}$ over  $ M$,
 a collection $\dd$ of vector bundle morphisms
${\mathrm d}^{(i)} \colon E_{-i} \to E_{-i+1}$,  and a vector bundle morphism $\rho \colon E_{-1} \to TM$  such that ${\mathrm d}^{(i-1)} \circ {\mathrm d}^{(i)}=0 $
for all $i \geq 3$, $ \rho \circ \dd^{(2)}=0$ and $\rho(\Gamma(E_{-1})) \subset {\mathcal F} $. 
\end{definition}
\isthiswhatyouowant
A geometric resolution  $(E,\dd,\rho)$ of a singular foliation $ {\mathcal F}$, cf.\ Definition \ref{def:resolution}, is an example of such a complex of vector bundles over  $ {\mathcal F}$. In fact, every complex of vector bundles over $ {\mathcal F}$ is a geometric resolution of this foliation, if and only if it is exact on the level of sections and satisfies $\rho(\Gamma(E_{-1})) = {\mathcal F} $. 

\begin{definition}
\isthiswhatyouowant
\begin{itemize}
\item A \emph{morphism} $\phi$ between two complexes of vector bundles $ (E,\dd,\rho)$ and  $(E',\dd',\rho')$ over  $ {\mathcal F}$ is a collection of vector bundle morphisms $\phi_i  \colon  E_{-i} \to E_{-i}'$ over the identity of $M$ making the following diagram commutative
\begin{center}
	\begin{tikzcd}[column sep=0.9cm,row sep=0.6cm]
		\dots  \ar[r,"\dd^{(4)}"] &E_{-3}\ar[d,"\phi_{3}"] \ar[r,"\dd^{(3)}"]&E_{-2}\ar[d,"\phi_{2}"] \ar[r,"\dd^{(2)}"] &E_{-1}\ar[d,"\phi_{1}"] \ar[r,"\rho"] &TM\ar[d,"\mathrm{id}"]\\
		\dots  \ar[r,"\dd'^{(4)}"]  &E_{-3}' \ar[r,"\dd'^{(3)}"]&E_{-2}' \ar[r,"\dd'^{(2)}"]  &E_{-1}' \ar[r,"\rho' "] &TM .
	\end{tikzcd}
\end{center}

\item Two  morphisms $\phi,\psi\colon (E,\dd,\rho)\to (E',\dd',\rho')$ are said to be \emph{homotopic}, if there exists a collection $h_i  \colon  E_{-i}\to E_{-i-1}'$ of vector bundle morphisms
such that $ \phi_i = \psi_i + \dd'^{(i+1)} \circ h_i + h_{i-1} \circ \dd^{(i)} $ for all $i \geq 2$ and $ \phi_1 = \psi_1 + \dd'^{(2)} \circ h_1 $.
\begin{center}
	\begin{tikzcd}[column sep=0.9cm,row sep=0.6cm]
		\dots  \ar[r,"\dd^{(4)}"] &E_{-3}  \ar[ddl,dashed,"h_3"] \ar[dd,shift left =0.5ex,"\phi_{3}"] \ar[dd,shift right =0.5ex,"\psi_{3}" left] \ar[r, "\dd^{(3)}"]&E_{-2}\ar[ddl,dashed,"h_2"]  \ar[dd,shift left =0.5ex,"\phi_{2}"] \ar[dd,shift right =0.5ex,"\psi_{2}" left] \ar[r,"\dd^{(2)}"] &E_{-1}\ar[ddl,dashed,"h_1"]  \ar[dd,shift left =0.5ex,"\phi_{1}"] \ar[dd,shift right =0.5ex,"\psi_{1}" left] \ar[r,"\rho"] &TM\ar[dd,"\mathrm{id}"]\\
		& & & &\\
		\dots  \ar[r,"\dd'^{(4)}"]  &E_{-3}' \ar[r,"\dd'^{(3)}"]&E_{-2}' \ar[r,"\dd'^{(2)}"]  &E_{-1}' \ar[r,"\rho' "] &TM .
	\end{tikzcd}
\end{center}
\item Two complexes of vector bundles $(E,\dd,\rho)$ and $(E',\dd',\rho')$  over $ {\mathcal F}$ are said to be \emph{homotopy equivalent}, if there exist chain maps $\phi \colon (E,\dd,\rho) \to (E',\dd',\rho') $  and $\psi \colon (E',\dd',\rho') \to (E,\dd,\rho)$ such that both $\phi \circ \psi$ and $\psi \circ \phi$
are homotopic to the identity.
\end{itemize}
\end{definition}
\isthiswhatyouowant In other words, homotopy equivalence holds if the maps $\phi$ and $\psi$ satisfy  
\begin{equation} \psi_i \circ \phi_i = \left(\mathrm{id} + h \circ \dd + \dd \circ h \right)|_{E_{-i}}
\end{equation}  for all $i \ge 2$ together with 
\begin{equation}  
\psi_1 \circ \phi_1 = \left(\mathrm{id}  + \dd \circ h \right)|_{E_{-1}}
\end{equation} 
and likewise conditions for $\phi \circ \psi$. 

\begin{lemme}
\label{lem:alt_sum}
 For any two homotopy equivalent complexes of vector bundles of finite length, the alternate sum of their ranks are equal.
\end{lemme}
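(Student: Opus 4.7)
My plan is to reduce the claim to a pointwise statement about bounded chain complexes of finite-dimensional vector spaces and then invoke the Euler--Poincar\'e principle. Since the rank of a vector bundle is locally constant, it suffices to check, at each point $m\in M$ separately, the equality of the alternating sums with $\mathrm{rk}(E_{-i})$ and $\mathrm{rk}(E'_{-i})$ replaced respectively by $\dim E_{-i}|_m$ and $\dim E'_{-i}|_m$; the result then follows on each connected component of $M$ (and hence globally componentwise, which is how the lemma should be read if $M$ is disconnected).

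At a fixed $m\in M$, the data $(E_{-i}|_m,\dd^{(i)}|_m)_{i\geq 1}$ and $(E'_{-i}|_m,{\dd'}^{(i)}|_m)_{i\geq 1}$ are bounded chain complexes of finite-dimensional vector spaces. The restrictions of the chain maps $\phi$ and $\psi$ to the fibers provide chain maps between them, and the restrictions of the homotopies $h$ realize chain homotopies $\psi\circ\phi\sim\mathrm{id}$ and $\phi\circ\psi\sim\mathrm{id}$. The bookkeeping at the lowest degree is consistent because the relation $\phi_1=\psi_1+{\dd'}^{(2)}\circ h_1$ corresponds precisely to the convention $h_0|_m=0$ in the truncated complex (no differential emanates from degree $-1$ once the anchor is discarded, which one is free to do since $\phi_0=\psi_0=\mathrm{id}_{TM}$ anyhow).

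Next I would invoke the classical Euler--Poincar\'e identity: for every bounded chain complex $(V_\bullet,\partial)$ of finite-dimensional vector spaces,
$$\sum_{i\geq 1}(-1)^{i-1}\dim V_{-i}\;=\;\sum_{i\geq 1}(-1)^{i-1}\dim H^{-i}(V_\bullet,\partial),$$
together with the elementary fact that chain-homotopic maps induce the same map on cohomology. Combining these, any chain homotopy equivalence induces degreewise isomorphisms in cohomology, so the right-hand sides for $E_\bullet|_m$ and $E'_\bullet|_m$ agree; hence so do the left-hand sides, which is exactly the desired equality of alternating sums of fiber dimensions.

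I do not anticipate a real obstacle here. The only subtlety worth flagging is the interpretation of the homotopy relation at degree $-1$, which does not involve an $h_0$; reading this as truncation just below $TM$ makes the Euler--Poincar\'e computation straightforward. An equivalent and perhaps more conceptual route would be to form the mapping cone of the chain equivalence $\phi$, observe pointwise that it is chain contractible, and conclude that its pointwise Euler characteristic vanishes; expanding that vanishing produces the identity between the two alternating sums of ranks directly.
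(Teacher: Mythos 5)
Your proof is correct and follows essentially the same route as the paper's: restrict to a fiber over each point, observe that the chain maps and homotopies restrict to a chain homotopy equivalence of bounded complexes of finite-dimensional vector spaces, and invoke the homotopy invariance of the Euler characteristic (which the paper simply cites as "known" and you justify via the Euler--Poincar\'e identity). Your remark on the degree $-1$ bookkeeping and the mapping-cone alternative are sound but not needed beyond what the paper does.
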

\begin{proof}
By restricting the sequence to a point $m \in M$, one obtains a 
 finite length complex of vector spaces of finite dimension. Here it is known that the alternate sum of the dimensions is preserved under homotopy equivalence. 
This proves the lemma.
\end{proof}

\isthiswhatyouowant
Here is a second result of importance:
\begin{lemme}
\label{lem:existsAChain}
 Let $(E,\dd,\rho)$ be a geometric resolution of a singular foliation ${\mathcal F}$. 
 For every complex of vector bundles $(E',\dd',\rho') $ over $ {\mathcal F}$, there exists a morphism
 of complexes of vector bundles  over $ {\mathcal F}$
from $(E',\dd',\rho')$ to $(E,\dd,\rho) $ and any two such chain morphisms are homotopy equivalent.
\end{lemme}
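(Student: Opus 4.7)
The plan is to mimic the classical homological-algebra argument (construct a chain map from a complex into an exact one, then show uniqueness up to chain homotopy), but carried out on the level of vector bundles rather than just modules. The point that requires care is that we only know exactness of $(E,\dd,\rho)$ at the level of sheaves of sections, whereas we must construct honest vector bundle morphisms $\phi_i \colon E'_{-i} \to E_{-i}$ over the identity. The remedy, in the smooth setting, is a local-to-global partition of unity argument, which is available to us since vector bundle morphisms form a sheaf.

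\textbf{Existence.} I would build $\phi_i$ by induction on $i \ge 1$. For $\phi_1$, any local section $s \in \Gamma_U(E'_{-1})$ satisfies $\rho'(s) \in \mathcal{F}(U)$, so by exactness of $(E,\dd,\rho)$ at $\Gamma(E_{-1})$ and local finite generation, on a small enough open cover $(U_\alpha)$ we may trivialize $E'_{-1}$, pick local frames $(e'_{1,\alpha},\ldots,e'_{k_\alpha,\alpha})$, and lift the vector fields $\rho'(e'_{j,\alpha}) \in \mathcal{F}(U_\alpha)$ to sections $t_{j,\alpha} \in \Gamma_{U_\alpha}(E_{-1})$; then set $\phi_1^\alpha(e'_{j,\alpha}) = t_{j,\alpha}$ and extend $\mathscr{O}$-linearly. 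Finally, glue via a partition of unity $(\chi_\alpha)$ subordinate to the cover: put $\phi_1 = \sum_\alpha \chi_\alpha \phi_1^\alpha$. The relation $\rho \circ \phi_1 = \rho'$ is preserved by $\mathscr{O}$-linear combinations, so this is a bona fide vector bundle morphism with the required property. The inductive step is identical: assuming $\phi_1,\ldots,\phi_{i-1}$ are built so that $\phi_{j-1} \circ \dd'^{(j)} = \dd^{(j)} \circ \phi_j$ (with $\dd^{(1)} := \rho$), the composite $\phi_{i-1} \circ \dd'^{(i)}$ lands in the kernel of the next map of $(E,\dd,\rho)$ — either $\rho$ when $i=2$, or $\dd^{(i-1)}$ otherwise — and hence in the image of $\dd^{(i)}$ by exactness; one then lifts locally on a trivialization of $E'_{-i}$ and glues by a partition of unity.

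\textbf{Uniqueness up to homotopy.} Let $\phi, \psi \colon (E',\dd',\rho') \to (E,\dd,\rho)$ be two such morphisms and set $\chi_i := \phi_i - \psi_i$. Then $\chi$ is still a chain map in the sense that $\dd^{(i)} \circ \chi_i = \chi_{i-1} \circ \dd'^{(i)}$ for $i \ge 2$ and $\rho \circ \chi_1 = 0$. I construct homotopies $h_i \colon E'_{-i} \to E_{-i-1}$ degree by degree. For $h_1$: since $\chi_1$ lands in $\ker(\rho) = \mathrm{im}(\dd^{(2)})$ at the level of sections by exactness, the same partition-of-unity procedure produces a vector bundle morphism $h_1$ with $\chi_1 = \dd^{(2)} \circ h_1$. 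Assuming $h_1,\ldots,h_{i-1}$ have been built, we consider $\chi_i - h_{i-1} \circ \dd'^{(i)}$; applying $\dd^{(i)}$ and using the induction hypothesis, one computes
\begin{equation*}
\dd^{(i)}\bigl(\chi_i - h_{i-1} \circ \dd'^{(i)}\bigr) = \chi_{i-1}\circ \dd'^{(i)} - \bigl(\chi_{i-1} - h_{i-2}\circ \dd'^{(i-1)}\bigr)\circ \dd'^{(i)} = h_{i-2}\circ \dd'^{(i-1)}\circ \dd'^{(i)} = 0,
\end{equation*}
(with the convention $h_0 = 0$, and replacing $\dd^{(1)}$ by $\rho$ when $i=2$). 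Hence $\chi_i - h_{i-1}\circ \dd'^{(i)}$ takes values in $\ker(\dd^{(i)}) = \mathrm{im}(\dd^{(i+1)})$ at the level of sections, and lifts once more to a vector bundle morphism $h_i$ via trivialization and partition of unity.

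\textbf{Main obstacle.} The only subtlety is the passage from a pointwise/sheaf-theoretic lift — which gives sections of $E_{-i-1}$ on trivializing opens — to a globally defined vector bundle morphism on $E'_{-i}$. This is precisely what the partition of unity argument handles in the smooth category (and explains the remark after Theorem~\ref{theo:existe} that the global statement does not survive in the real analytic or holomorphic setting; there one must work locally, which is still enough for the lemma in a neighborhood of any point). All the algebraic content is the standard comparison theorem for projective resolutions; the geometric input is simply the Serre–Swan-type fact, applied via partitions of unity, that $\mathscr{O}$-linear maps into a vector bundle can always be glued.
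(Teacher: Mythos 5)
Your proposal is correct and is essentially the paper's argument unpacked: the paper disposes of this lemma by citing the standard comparison theorem for a complex of projectives mapping to a resolution (Porism 2.2.7 in Weibel), which applies because sections of vector bundles are projective $\mathscr{O}$-modules. Your degree-by-degree induction is exactly that proof, with the partition-of-unity gluing playing the role of the lifting property of projective modules in the smooth category (and, as you note, being replaced by a purely local argument in the real analytic and holomorphic settings).
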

\begin{proof}
This is a standard result of algebraic topology, see Porism 2.2.7 (and the discussion before) in \cite{Weibel}.\end{proof}

\isthiswhatyouowant
Lemma \ref{lem:existsAChain} admits the following immediate consequence:

\begin{lemme}
\label{lem:existsAChain2}
 Any two geometric resolutions $ (E,\dd,\rho)$ and $ (E',\dd',\rho')$ of a singular foliation $ {\mathcal F}$ are homotopy equivalent.
\end{lemme}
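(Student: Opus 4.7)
The plan is to apply Lemma~\ref{lem:existsAChain} twice (with the roles of the two resolutions interchanged) and then invoke its uniqueness clause to identify the two composites with the respective identity maps, up to homotopy.

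Concretely, since $(E,\dd,\rho)$ is a geometric resolution of $\mathcal{F}$ and $(E',\dd',\rho')$ is in particular a complex of vector bundles over $\mathcal{F}$, Lemma~\ref{lem:existsAChain} produces a morphism of complexes over $\mathcal{F}$
\[
\phi \colon (E',\dd',\rho') \longrightarrow (E,\dd,\rho).
\]
Swapping the roles, since $(E',\dd',\rho')$ is also a geometric resolution and $(E,\dd,\rho)$ is in particular a complex of vector bundles over $\mathcal{F}$, the same lemma yields a morphism
\[
\psi \colon (E,\dd,\rho) \longrightarrow (E',\dd',\rho').
\]

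Now the composition $\psi \circ \phi \colon (E',\dd',\rho') \to (E',\dd',\rho')$ is again a morphism of complexes over $\mathcal{F}$, because each $\phi_i, \psi_i$ commutes with the relevant differentials and with the anchors, and these properties are stable under composition. The identity $\mathrm{id}_{E'} \colon (E',\dd',\rho') \to (E',\dd',\rho')$ is also such a morphism. Both have as target the geometric resolution $(E',\dd',\rho')$, so the uniqueness-up-to-homotopy part of Lemma~\ref{lem:existsAChain} forces $\psi\circ\phi$ to be homotopic to $\mathrm{id}_{E'}$. The identical argument, applied with the roles of $E$ and $E'$ exchanged, shows that $\phi\circ\psi$ is homotopic to $\mathrm{id}_E$. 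Hence $\phi$ and $\psi$ are mutually inverse homotopy equivalences, so $(E,\dd,\rho)$ and $(E',\dd',\rho')$ are homotopy equivalent as complexes of vector bundles over $\mathcal{F}$.

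There is essentially no obstacle: the only point to verify is that the compatibility with the anchors is preserved under composition and is trivial for the identity, so that the uniqueness clause of Lemma~\ref{lem:existsAChain}, formulated for morphisms \emph{over $\mathcal{F}$}, applies to the pair $(\psi\circ\phi,\mathrm{id}_{E'})$ and to the pair $(\phi\circ\psi,\mathrm{id}_E)$. This is the same formal argument that in ordinary homological algebra shows that two projective resolutions of a given module are chain-homotopy equivalent.
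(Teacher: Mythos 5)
Your proof is correct and is exactly the argument the paper has in mind: the paper states this lemma as an "immediate consequence" of Lemma~\ref{lem:existsAChain}, meaning precisely the standard two-directional application plus the uniqueness-up-to-homotopy clause applied to the composites versus the identities, which you have spelled out in full.
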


\subsubsection{Proof of  Proposition \ref{bonjourprop}}
We express our gratitude to Fran\c{c}ois Petit, whose knowledge of the matter was of crucial help.
\begin{proof}[Proof (of Proposition \ref{bonjourprop})]
The \emph{first and} the \emph{third item} are simply Hilbert's syzygy theorem, which is valid for finitely generated ${\mathscr O}$-modules,
with ${\mathscr O}$ being the ring of holomorphic functions in a neighborhood of a point in ${\mathbb C}^n$
or the ring of polynomial functions on ${\mathbb C}^n$, as proven in Theorem 4 page 137 in \cite{gunningrossi} for the holomorphic case
and \cite{Eisenbud} for the algebraic case. 
Recall that these theorems state that every finitely generated ${\mathcal O}$-module, with $ {\mathcal O}$ the algebra
of holomorphic or polynomial functions on an open subset $V \in {\mathbb C}^n$, admits a resolution of length  less or equal to $n+1$ by finitely generated free 
$ {\mathcal O}$-modules. It implies that for any other resolution by free modules, the kernel of $ \dd^{(n+1)}$ is a free module.

\isthiswhatyouowant
Let us deduce the real analytic case from the holomorphic one.
Every real analytic manifold $M$ admits a complexification $M^{\mathbb C} $ such that the original manifold is the fixed point set of an anti-holomorphic involution~$ \sigma \colon 
M^{\mathbb C} \to M^{\mathbb C}$.
A real-analytic singular foliation ${\mathcal F}$ on a real analytic manifold $M$ induces a holomorphic singular foliation $ {\mathcal F}^{\mathbb C}$
on the complexification $M^{\mathbb C} $.
such that $\rho$ and  $(\dd^{(i)})_{ i \geq 2}$ are real, that is to say such that they are invariant under the natural involution of the algebra ${\mathcal O}$ 
given by $ \tau (F) = \overline{F \circ \sigma} $.
This resolution may not be of finite length.
But the holomorphic syzygy theorem implies that the kernel of $ \dd^{(n+1)}$ is a free module, with $n$ the dimension of $M$.
As a consequence, this resolution can be truncated in degree $n+1$ to yield a resolution of finite length
whose anchor and differential are real.
Real analytic functions on $V$ being fixed points of the 
natural involution $ \tau$, this resolution can be restricted to fixed points of $ \tau$
to induce a geometric resolution of the real analytic foliation $ {\mathcal F}$.
This completes the proof of the first item in the real analytic case.

\isthiswhatyouowant
Now, we turn to the \emph{second item}. According to Theorem 4 in \cite{Tougeron}, germs of smooth functions at a point $m$ are
a flat module over germs of real analytic functions at $m$. 
By definition of flatness, it means that given a complex $E_{-k-1} \to E_{-k} \to E_{-k+1}$ of vector bundles on the base manifold such that germs of real analytic sections 
have no cohomology at degree $-k$, the sheaf of germs of smooth sections has no cohomology at degree $-k$. Let us choose 
$e \in \Gamma_U(E_{-k})$ a local smooth section of $E_{-k}$, defined on an open subset $U$, which is in the kernel of $\dd^{(k)} \colon E_{-k} \to E_{-k+1}$
at every point of $U$. According to the previous discussion, for every point $m \in U$, and for every neighborhood $U_m\subset U$ of $m$, there exists
a smooth section $f_m \in \Gamma_{U_{m}}(E_{-k-1})$ such that $ \dd^{(k+1)} (f_m) = e$. 
From the family $(U_m)_{m \in U}$, we can extract a locally finite open cover $(U_{m_{i}})_{i \in I}$ indexed by $I$  and choose  a partition of unity $
(\chi_i)_{i \in I}$ subordinate to it. Since $\dd^{(k+1)}$ is ${\mathscr O}$-linear, 
$f := \sum_{i \in I} \chi_i f_{i} $ is a section of $E_{-k-1}$  over $U$, which, by construction, satisfies $\dd^{(k+1)} (f)=e$.
This proves the second item.

\isthiswhatyouowant
The \emph{fourth item} is proved in Example \ref{tu}.

\isthiswhatyouowant
For \emph{item five}, one can proceed as follows. Let $(E,\dd,\rho)$ be a geometric resolution of ${\mathcal F}$
and $ m \in M$. Let $e_1, \dots,e_k\in E_{-1}|_m$ be a basis of ${\rm \dd}^{(2)}(E_{-2}|_m)$.
Denote by $\tilde{e}_1, \dots,\tilde{e}_k$ local sections of $E_{-2}$ whose images by $\dd^{(2)}$, when evaluated at $m$, 
coincide with $ e_1, \dots,e_k$, respectively. In a neighborhood $U_1$ of $m$, the sections
$\tilde{e}_1, \dots,\tilde{e}_k$, as well as their images $\dd^{(2)}\tilde{e}_1, \dots,\dd^{(2)}\tilde{e}_k$,
are independent at every point, and therefore define sub-vector bundles $F_{-2} \subset E_{-2}$ and $F_{-1} \subset E_{-1}$, respectively.
It is easy to check that $(E',\dd',\rho')$ is again a geometric resolution of $ {\mathcal F}$,
where $ E_{-i}':=E_{-i} $ for $i \neq 1,2$ and   $E_{-i}':={E_{-i}}\slash{F_{-i}} $ for $i=1,2$
and where $\dd'$ and $\rho'$ are the uniquely induced maps on these quotient spaces.
For this new geometric resolution, the map $(\dd')^{(2)}\colon E_{-2}' \to E_{-1}'$ is zero at the point $m$ by construction.
The operation can then be repeated for the index $i=2$ to find a new geometric resolution such that $\dd^{(3)}$
is zero at the point $m$ and can be continued by recursion. Each step may require to shrink the neighborhood of $m$ on which the geometric resolution is defined, but since the geometric resolution is of finite length, 
only finitely many such operations are required, and the procedure gives a geometric resolution defined in a neighborhood of $m$. It is minimal at $m$ by construction. 
\end{proof}

\subsubsection{Proof of Theorem \ref{newprop} }

\isthiswhatyouowant
The proof of Theorem \ref{newprop} requires some preparation.
Let $(E,\dd)$ be a complex of vector bundles over the manifold $M$.  Consider the bicomplex $(\Gamma(  E_{-i}^* \otimes E_{-j} ),  \dd^*  \otimes  {\rm{id}},  {\rm{id}} \otimes \dd) $:
\begin{equation}
\label{eq:bicomplex}
 \xymatrix{ & \vdots & \vdots &  \vdots \\ \cdots
	  \ar[r]^{ }  &	\Gamma( E_{-2}^* \otimes E_{-3}) \ar[r]^{  {\rm{id}} \otimes \dd } 
	  \ar[u]^{  \dd^*  \otimes  {\rm{id}}}  &\Gamma( E_{-2}^* \otimes E_{-2}) \ar[u]^{  \dd^*  \otimes  {\rm{id}}}  \ar[r]^{  {\rm{id}} \otimes \dd }   & \Gamma( E_{-2}^* \otimes E_{-1} )   \ar[u]^{  \dd^*  \otimes  {\rm{id}} } \\  \cdots
  \ar[r]^{ }  &	\Gamma( E_{-1}^* \otimes E_{-3}) \ar[r]^{  {\rm{id}} \otimes \dd } 
   \ar[u]^{  \dd^*  \otimes  {\rm{id}}}  &\Gamma( E_{-1}^* \otimes E_{-2}) \ar[u]^{  \dd^*  \otimes  {\rm{id}}}  \ar[r]^{  {\rm{id}} \otimes \dd }   & \Gamma( E_{-1}^* \otimes E_{-1} )   \ar[u]^{  \dd^*  \otimes  {\rm{id}} }  } \end{equation}
A  graded vector bundle morphism $ \Xi \colon E_\bullet \to E_{\bullet+k}$ over the identity of $M$ can be seen as an element of degree $k$ in this bicomplex that we denote by  $ \overline{\Xi}$. 
\begin{lemme}
	\label{homAsBicompl}
	Let $(E,\dd)$ be a complex of vector bundles over the manifold $M$ and $D=\dd^*  \otimes  {\rm{id}}- {\rm{id}} \otimes \dd $ the total differential of the bi-complex (\ref{eq:bicomplex}). 
	Then:
	\begin{enumerate}
		\item A degree $0$ graded vector bundle morphism $ \Xi: E_\bullet \to E_{\bullet}$ is a chain map, if and only if $D(\overline{\Xi})=0$.
		\item Two chain maps $ \Xi, \Xi'\colon E_\bullet \to E_{\bullet}$ are homotopic with homotopy $h\colon  E_\bullet \to E_{\bullet-1} $, if and only if $\:\overline{\Xi} -\overline{\Xi'} = D (\overline{h}) $.
		\item Let $k \in \mathbb{N}$ and  $(E,\dd, \rho)$ be a geometric resolution of finite length. 
		Two chain maps $ \Xi, \Xi'\colon E_\bullet \to E_{\bullet}$ over ${\mathcal F}$ that coincide upon restriction to $ E_{-i}$ for $ i \geq k$ are homotopic with 
		respect to a homotopy whose restriction to $ E_{-i}$ is zero for all $ i \geq k$ if $k \ge 2$ and for all $i \ge 2$ if $k=1$. 
		
	\end{enumerate}
\end{lemme}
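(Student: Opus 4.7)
My plan is as follows. Parts (1) and (2) are bookkeeping consequences of the bidegree decomposition of $D$. Writing $\overline{\Xi} = \sum_i \Xi_i$ with $\Xi_i\in\Gamma(E_{-i}^*\otimes E_{-i})$, the operator $D = \dd^*\otimes\mathrm{id} - \mathrm{id}\otimes\dd$ yields $D\Xi_i = \Xi_i\circ\dd^{(i+1)} - \dd^{(i)}\circ\Xi_i$, whose two summands lie in different bidegrees. Collecting the contribution to the bidegree $(i+1,i)$ position gives $\Xi_i\dd^{(i+1)} - \dd^{(i+1)}\Xi_{i+1}$, so $D\overline{\Xi}=0$ is equivalent to the family of commuting squares $\Xi_i\dd^{(i+1)} = \dd^{(i+1)}\Xi_{i+1}$, i.e.\ the chain-map condition. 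Part (2) is the parallel computation for $\overline{h} = \sum h_i$ with $h_i\in\Gamma(E_{-i}^*\otimes E_{-(i+1)})$: the diagonal-$(i,i)$ component of $D\overline{h}$ reproduces $\dd^{(i+1)}h_i + h_{i-1}\dd^{(i)}$ (up to the sign convention for $D$), which matches the homotopy relation upon equating to $\Xi_i-\Xi'_i$.

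For part (3), set $\Delta_i := \Xi_i-\Xi'_i$, so $\Delta_i = 0$ for $i\geq k$. The case $k=1$ is trivial ($\Delta=0$, take $h=0$). For $k\geq 2$ I put $h_i := 0$ for $i\geq k$ and build $h_{k-1},\ldots,h_1$ by descending induction. The level-$k$ equation reduces to $h_{k-1}\circ\dd^{(k)}=0$, which we may enforce (with the choice $h_{k-1}=0$ for instance), since $\Delta_{k-1}\dd^{(k)} = \dd^{(k)}\Delta_k = 0$. At stage $j<k$ with $h_j,\ldots,h_{k-1}$ fixed, I must produce $h_{j-1}$ solving $r_j := \Delta_j - \dd^{(j+1)}h_j = h_{j-1}\circ\dd^{(j)}$. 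The feasibility follows from the key computation: using the chain-map identity $\Delta_j\dd^{(j+1)} = \dd^{(j+1)}\Delta_{j+1}$ and the already-solved homotopy equation at level $j+1$, one gets $r_j(\dd^{(j+1)}y) = \dd^{(j+1)}\dd^{(j+2)} h_{j+1}(y) = 0$; hence $r_j$ annihilates $\operatorname{im}\dd^{(j+1)} = \ker\dd^{(j)}$ and factors through $\dd^{(j)}$. At the endpoint $j=1$ I use that $\Xi,\Xi'$ both cover the identity on $TM$, so $\rho\circ\Delta_1=0$ places $\Delta_1$ in $\operatorname{im}\dd^{(2)}$ and the final lift exists.

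The main obstacle is promoting the pointwise factorization of $r_j$ through $\dd^{(j)}$ into a genuine vector-bundle morphism $h_{j-1}:E_{-(j-1)}\to E_{-j}$, because $\operatorname{im}\dd^{(j)}\subseteq E_{-(j-1)}$ is typically not a sub-bundle and cannot be complemented globally. I would handle this in the manner used elsewhere in the paper: Serre--Swan projectivity of $\Gamma(E_{-(j-1)})$ combined with a partition-of-unity gluing in the smooth setting, which is precisely the content of Proposition~\ref{prop:fondamental3} on the low-degree concentration of vertical vector-field cohomology on the graded manifold $E$. The finite length of the resolution guarantees only finitely many such lifts are needed. At the final stage the compatibility between the factorization arising from the level-$2$ equation and the required level-$1$ equation is resolved by noting that $\dd^{(2)}h_1=\Delta_1$ holds automatically on $\operatorname{im}\dd^{(2)}$, while across a local complement one lifts $\Delta_1$ through $\dd^{(2)}$, using once more $\Delta_1(E_{-1})\subseteq\ker\rho=\operatorname{im}\dd^{(2)}$.
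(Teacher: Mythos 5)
Items 1 and 2 of your argument are fine and match what the paper leaves to the reader. The problem is in item 3, and it is a genuine gap rather than a presentational one. Your descending recursion asks, at each stage, for a bundle morphism $h_{j-1}\colon E_{-(j-1)}\to E_{-j}$ with $h_{j-1}\circ\dd^{(j)}=r_j$, where you only know that $r_j$ kills $\ker\dd^{(j)}$. That gives a well-defined $\mathscr O$-module map $\mathrm{im}\,\dd^{(j)}\to\Gamma(E_{-j})$ on the submodule $\mathrm{im}\,\dd^{(j)}=\ker\dd^{(j-1)}\subset\Gamma(E_{-(j-1)})$, and what you then need is an \emph{extension} of that map to all of $\Gamma(E_{-(j-1)})$. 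This is an injectivity-type problem, not a lifting problem: projectivity of $\Gamma(E_{-(j-1)})$, Serre--Swan, and partitions of unity are all irrelevant to it, and Proposition \ref{prop:fondamental3} (whose proof rests on exactness of the \emph{rows} obtained by tensoring the resolution with a projective module) does not supply the extension either. The obstruction sits in $\mathrm{Ext}^1_{\mathscr O}\bigl(\Gamma(E_{-(j-1)})/\ker\dd^{(j-1)},\,\Gamma(E_{-j})\bigr)$, and the quotient module here is $\mathrm{im}\,\dd^{(j-1)}$, which is precisely the kind of non-projective module a singular foliation produces; nothing you have written shows the relevant class vanishes. In addition, your normalization $h_{k-1}=0$ over-constrains the system: the lemma only demands $h_i=0$ for $i\ge k$, and forcing $h_{k-1}=0$ may make the level-$(k-1)$ equation unsolvable even when the asserted homotopy exists.

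The paper runs the induction in the opposite direction, and that is where the correct primitive lives: one first invokes Lemma \ref{lem:existsAChain} (the comparison theorem) to get \emph{some} homotopy, built by an ascending recursion in which each step lifts a map $\Gamma(E_{-i})\to\mathrm{im}\,\dd^{(i+1)}=\ker\dd^{(i)}$ through the surjection $\Gamma(E_{-(i+1)})\twoheadrightarrow\mathrm{im}\,\dd^{(i+1)}$ — a genuine application of projectivity of the \emph{source} — and only afterwards corrects that homotopy, using the bicomplex \eqref{eq:bicomplex}, so that it is supported in the degrees the statement requires. If you want to keep a direct construction, you must reorganize it so that every step is a lift through a surjection onto an image, never an extension off a non-split submodule, and then argue separately that the resulting $h$ can be modified by a $D$-closed term to vanish on $E_{-i}$ for $i\ge k$.
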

\begin{proof}
The first two items are straightforward to prove and we recommend it to a reader who is not yet familiar with it. For the \emph{third} item,
Lemma \ref{lem:existsAChain} implies that $\Xi,\Xi'$ are homotopy equivalent.
Since they agree in degree $i >k$, their difference
   $ \overline{\Xi}- \overline{\Xi'}$ belongs to $\bigoplus_{i=1}^k \Gamma( E_{-i}^* \otimes E_{-i} )$ and is exact by item two,  $ \overline{\Xi}- \overline{\Xi'} =D(\overline{h} )$. Because all  vertical lines 
 $ (\Gamma(  (E_{-i}^*)_{i\ge 1} \otimes E_{-j} ,  \dd^*  \otimes  {\rm{id}}) $ for a fixed $j$ in \eqref{eq:bicomplex}
 are exact except at $i=1$, we may choose this $\overline{h}$ to be an element in $\bigoplus_{i=1}^{k-1} \Gamma( E_{-i+1}^* \otimes E_{-i} ) $ if $k \geq 2$ and in $E_{-1}^* \otimes E_{-2}$, if $k=1,2$.
 \end{proof}

\isthiswhatyouowant
The following lemma about geometric resolutions is interesting by itself.
\begin{lemme}
	\label{lem:Assumetrivial}
	Assume that a singular foliation $ {\mathcal F} $ admits a geometric resolution $(E,\dd,\rho)$ of finite length $d$,
then it can be replaced by another one of the same length $d$, $(E',\dd',\rho')$, such that for all $ i \geq 2$ the vector bundle $ E_{-i}'$ is trivial. 	
\end{lemme}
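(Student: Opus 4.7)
The plan is to perform a top-down induction on the degree, trivializing one bundle at a time by adding a suitable acyclic "identity" subcomplex supported in two adjacent degrees. The underlying idea is that for any vector bundle $F$ over $M$ the two-term complex $F \xrightarrow{\mathrm{id}} F$, placed in degrees $-i$ and $-i+1$, has vanishing cohomology in the category of sheaves of sections; hence taking the direct sum of $(E,\dd,\rho)$ with such a subcomplex yields again a geometric resolution of the same foliation $\mathcal{F}$. By choosing the $F$'s carefully, and always placing them entirely inside the range $\{-d,\ldots,-1\}$, we can make every $E'_{-i}$ with $i\ge 2$ trivial while keeping the length equal to $d$ and the induced foliation equal to $\mathcal{F}$.

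Concretely, I would set $E^{(0)}:=E$ and, for $k=1,\ldots,d-1$, construct $E^{(k)}$ from $E^{(k-1)}$ as follows. Pick a vector bundle $F_{d-k+1}\to M$ such that $E^{(k-1)}_{-d+k-1}\oplus F_{d-k+1}$ is trivial; for smooth (or real-analytic, paracompact) manifolds this is the standard fact that every vector bundle is a direct summand of a trivial one. Then set
\[
E^{(k)}_{-d+k-1} := E^{(k-1)}_{-d+k-1}\oplus F_{d-k+1}, \qquad E^{(k)}_{-d+k} := E^{(k-1)}_{-d+k}\oplus F_{d-k+1},
\]
leaving all other degrees untouched. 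The new differential $E^{(k)}_{-d+k-1}\to E^{(k)}_{-d+k}$ is $\dd^{(d-k+1)}\oplus \mathrm{id}_{F_{d-k+1}}$, while the outgoing differential on the newly added $F_{d-k+1}$-summand in degree $-d+k$ is declared to be zero. After $d-1$ such passes, each $E'_{-i}=E^{(d-1)}_{-i}$ for $i=2,\ldots,d$ equals a direct sum of the original $E_{-i}$ with one or two trivializing complements, and is therefore trivial by construction; the degree $-1$ bundle $E'_{-1}=E_{-1}\oplus F_2$ picks up only a trivial summand on which $\rho'$ is extended by zero.

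Exactness of the modified complex at each degree has to be checked but is immediate by induction: at degree $-d+k-1$ the new kernel is $\ker(\dd^{(d-k+1)})\oplus F_{d-k+1}$ which equals the new image from above, and at degree $-d+k$ the new kernel is $\ker(\dd^{(d-k)})\oplus F_{d-k+1}$, matching the new image by the original exactness of $(E,\dd,\rho)$. Because we only ever enlarge the existing bundles $E_{-d},\ldots,E_{-1}$ and never create a new $E_{-d-1}$, the length remains $d$; and because $\rho'=(\rho,0)$ on $E_{-1}\oplus F_2$, the image of $\rho'$ is still $\mathcal{F}$.

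The main obstacle is precisely the top end: a naive "add an acyclic complex in degrees $-i-1,-i$" procedure to trivialize $E_{-d}$ would require a new term in degree $-d-1$ and would thus increase the length. The trick above circumvents this by initiating the induction with the observation that exactness at the top of the resolution is a condition on the outgoing map only (nothing must be coming in), so we are free to set $E'_{-d}=E_{-d}\oplus F_d$ provided we extend $\dd^{(d)}$ injectively on the $F_d$ factor by sending it into a corresponding new summand of $E_{-d+1}$. The subsequent induction then pushes the added $F_d$ down the complex and mops it up, together with the other added pieces, without ever leaking out of the interval $[-d,-1]$.
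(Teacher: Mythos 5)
Your construction is exactly the paper's: recursively, from degree $-d$ down to $-2$, take the direct sum with the acyclic two-term complex $F\xrightarrow{\mathrm{id}}F$ placed in degrees $-i$ and $-i+1$, choosing $F$ so that $E_{-i}\oplus F$ becomes trivial, which never creates a term in degree $-d-1$ and so preserves the length. One small slip in your exactness check: at the lower degree $-d+k-1$ the kernel of $\dd^{(d-k+1)}\oplus\mathrm{id}_{F_{d-k+1}}$ is $\ker(\dd^{(d-k+1)})\oplus 0$, not $\ker(\dd^{(d-k+1)})\oplus F_{d-k+1}$ (the identity on $F_{d-k+1}$ is injective), and it is this smaller kernel that equals the unchanged incoming image.
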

\begin{remarque}
	\normalfont
	We remark in parenthesis that we cannot also make, e.g., $E_{-1}$ trivial: 
	For example, let $M$ be the two-dimensional Moebius strip, viewed upon as a non-trivial line bundle over $S^1$, and let ${\mathcal F}\subset TM$ be the regular vertical foliation. Now there is an isomorphism of line bundles $\rho \colon \Lambda^2TM \to {\mathcal F}$, which we can view as a length one resolution $0 \to E_{-1} \to TM $. Evidently, $E_{-1}$ is non-trivial since $M$ is not orientable. By the above procedure, we can make this resolution longer, but only shifting the non-trivial factor to the left.
\end{remarque}
\begin{proof}[Proof (of Lemma \ref{lem:Assumetrivial})] 
	Let $(E,\dd,\rho)$ be a geometric resolution of $ {\mathcal F}$. 
	 Replacing $ \dd^{(i)}\colon E_{-i}\to E_{-i+1}$ by $\dd^{(i)} \oplus {\rm{id}}_{V}\colon E_{-i}\oplus V \to E_{-i+1} \oplus V $ for some vector bundle $V$, we do not spoil the property of being a geometric resolution of $ {\mathcal F}$.
	 If we choose a vector bundle $V$ which turns $E_{-i}\oplus V$ into a trivial one, we have obtained a new geometric resolution for which the component of degree $-i$ is trivial, and the only modified components are those of degrees $-i$ and $-i+1$.
	 The geometric resolution satisfying the requirements of Lemma \ref{lem:Assumetrivial}
	 can be obtained by applying this procedure recursively: We start by turning the component of degree $-d$ into a trivial vector bundle, by modifying omponents of degree $ -d$ and $-d+1$ as above. Then we turn the  component of degree $ -d+1$ of the obtained geometric resolution into a trivial vector bundle, and so on. The last component that can be made trivial  by this procedure is the component of degree minus two. 
	\end{proof}

\isthiswhatyouowant    The proof of Theorem \ref{newprop} relies heavily on the next proposition:

\begin{proposition} \label{prop1}
For two geometric resolutions $(E,\mathrm{d},\rho)$ and  $(F,\mathrm{d}',\rho')$ of $ {\mathcal F}$ of finite length over $U$ and $V$, respectively,
which satisfy items $\alpha$ and $\beta$ below for some $k \ge 2$, we can modify them into two such resolutions satisfying these conditions for $k$ lowered by one.
\begin{enumerate}
	\item[$\alpha$.]  There are  two degree $-1$ maps $h\colon E_{\bullet}\to E_{\bullet-1}$ and $h'\colon F_{\bullet}\to F_{\bullet-1}$ which vanish upon restriction to $E_{-i}$ and $F_{-i}$ 
	for all $ i \geq k$ if $k \geq 2 $ and for all $ i \geq 2 $ if $k=1$, and two chain maps $\phi\colon E\to F$ and $\psi\colon F\to E$ and such 
	that the following diagram constitutes a homotopy equivalence over~$U \cap V$ 
	\begin{equation}
	\label{eq:recursion}
	{\tiny{
		\begin{tikzcd}[column sep=1.3cm,row sep=0.7cm]
		\cdots \ar[r,shift left =0.5ex,"\mathrm{d}"]	& E_{-k-1} \ar[r,shift left =0.5ex,"\mathrm{d}"] \ar[dd,shift right =0.5ex,"\phi_ {k+1}" left] & E_{-k} \ar[dd,shift right =0.5ex,"\phi_{k}" left] \ar[r,shift left =0.5ex,"\mathrm{d}"] &\ar[l, dashed, shift left =0.5ex,"h"]E_{-k+1} \ar[dd,shift right =0.5ex,"\phi_{k-1}" left] \ar[r,shift left =0.5ex,"\mathrm{d}"] &\ar[l, dashed, shift left =0.5ex,"h"]\ldots\ar[r,shift left =0.5ex,"\mathrm{d}"]&  \ar[l, dashed, shift left =0.5ex,"h"]E_{-1} \ar[dd,shift right =0.5ex,"\phi_1" left]\ar[dr,"\rho"]&\\
		& &&&&&TM|_{U \cap V}\\ 	\cdots \ar[r,shift left =0.5ex,"\mathrm{d}"]
			& \ar[r,shift left =0.5ex,"\mathrm{d}"] F_{-k-1}  \ar[uu,shift right =0.5ex,"\psi_{k+1}" right] & F_{-k} \ar[uu,shift right =0.5ex,"\psi_k" right]\ar[r,shift left =0.5ex,"\mathrm{d}'"] &\ar[l, dashed, shift left =0.5ex,"h'"] F_{-k+1} \ar[uu,shift right =0.5ex,"\psi_{k-1}" right]\ar[r,shift left =0.5ex,"\mathrm{d}'"] &\ar[l, dashed, shift left =0.5ex,"h'"] \ldots\ar[r,shift left =0.5ex,"\mathrm{d}'"]&  \ar[l, dashed, shift left =0.5ex,"h'"] \ar[uu,shift right =0.5ex,"\psi_1" right]F_{-1}\ar[ur,"\rho'" below right]&
		\end{tikzcd}
	}}
	\end{equation}
	In particular $ \phi_i$ and $ \psi_i $ are (strictly) inverse to one another for $ i \geq k+1$.
	\item[$\beta$.]    For $i\ge 2$, each one of the vector bundles $E_{-i}$  and $F_{-i}$ is trivial.
	\end{enumerate}
	\end{proposition}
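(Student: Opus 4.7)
The plan is to modify both resolutions by adding carefully chosen trivial acyclic summands in degrees $-k$ and $-(k+1)$, producing new resolutions $(\tilde E,\tilde{\mathrm d},\rho)$ and $(\tilde F,\tilde{\mathrm d}',\rho')$ together with new chain maps $\tilde\phi,\tilde\psi$ that are strict inverses in every degree $-i$ with $i\ge k$, thus extending the given condition from $i\ge k+1$; after that, item 3 of Lemma \ref{homAsBicompl} will automatically supply the homotopies $\tilde h,\tilde h'$ vanishing on $\tilde E_{-i}$ and $\tilde F_{-i}$ for $i\ge k-1$. Because $E_{-k},E_{-k-1},F_{-k},F_{-k-1}$ are already trivial by assumption $\beta$, and the added summands are built from copies of these bundles, the triviality condition $\beta$ is preserved for free.

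Concretely, I would set $\tilde E_{-i}=E_{-i}$ for $i\notin\{k,k+1\}$, $\tilde E_{-k}=E_{-k}\oplus F_{-k}$, $\tilde E_{-k-1}=E_{-k-1}\oplus F_{-k}$, with a new differential that makes the added pair $F_{-k}\to F_{-k}$ acyclic, and I would symmetrically define $\tilde F$ by adding copies of $E_{-k}$ in degrees $-k$ and $-(k+1)$. The new chain map $\tilde\phi_k\colon\tilde E_{-k}\to\tilde F_{-k}$ would be the manifestly invertible block map $(e,f)\mapsto(\phi_k(e)+f,\,e)$, with inverse $(f',e')\mapsto(e',\,f'-\phi_k(e'))$; one sets $\tilde\psi_k=\tilde\phi_k^{-1}$ and keeps $\tilde\phi_i=\phi_i$, $\tilde\psi_i=\psi_i$ outside degrees $k$ and $k+1$. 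What remains is to choose $\tilde\phi_{k+1}$ and $\tilde\psi_{k+1}$, together with the stabilization differentials, so that $\tilde\phi$ and $\tilde\psi$ are chain maps.

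The main obstacle is precisely this extension. The chain-map equation at level $-(k+1)$ reduces to a lifting problem of the form $\mathrm d'^{(k+1)}A=\mathrm d'^{(k+1)}\phi_{k+1}(e)+f$ with $f\in F_{-k}$ arbitrary, which the naive identity-based stabilization cannot solve because $\mathrm d'^{(k+1)}$ is not surjective in general. I would circumvent this by twisting the stabilization differentials, letting the added copy of $F_{-k}\subset\tilde E_{-k-1}$ map to $\tilde E_{-k}$ by the identity on $F_{-k}$ together with correction terms built from $\psi_k$ and the homotopy $h$ from assumption $\alpha$, so that the obstruction is absorbed into the added piece. In the bicomplex language of Lemma \ref{homAsBicompl} and display \eqref{eq:bicomplex}, this amounts to the assertion that a certain obstruction cocycle is exact, with primitive supplied by $h$; the exactness of both resolutions together with the homotopy inverse data $\alpha$ should ensure that such a choice exists. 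Once $\tilde\phi,\tilde\psi$ are chain maps with $\tilde\psi_i\circ\tilde\phi_i=\mathrm{id}$ and $\tilde\phi_i\circ\tilde\psi_i=\mathrm{id}$ for all $i\ge k$, item 3 of Lemma \ref{homAsBicompl} applied with $k-1$ in place of $k$ yields the required homotopies vanishing on $\tilde E_{-i}$ and $\tilde F_{-i}$ for $i\ge k-1$ (respectively for $i\ge 2$ if $k=2$), and the new data then satisfy $\alpha$ and $\beta$ with $k$ lowered by one, completing the recursion step needed for the proof of Theorem \ref{newprop}.
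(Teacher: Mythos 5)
Your overall strategy --- stabilize both resolutions by trivial acyclic summands so that the chain maps become strict isomorphisms one degree further down, then invoke item 3 of Lemma \ref{homAsBicompl} to produce the homotopies with the required vanishing --- is indeed the strategy of the paper. But there is a concrete flaw in \emph{where} you place the stabilization. You add the acyclic pair in degrees $-k$ and $-(k+1)$, so that $\tilde E_{-k-1}=E_{-k-1}\oplus F_{-k}$ and $\tilde F_{-k-1}=F_{-k-1}\oplus E_{-k}$. Since $\phi_{k+1}$ is already an isomorphism, $\mathrm{rk}(E_{-k-1})=\mathrm{rk}(F_{-k-1})$, so these two new bundles have ranks differing by $\mathrm{rk}(F_{-k})-\mathrm{rk}(E_{-k})$, which is nonzero in general. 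Hence no invertible $\tilde\phi_{k+1}$ can exist, and the condition you yourself state as the goal --- $\tilde\psi_i\circ\tilde\phi_i=\mathrm{id}$ and $\tilde\phi_i\circ\tilde\psi_i=\mathrm{id}$ for \emph{all} $i\geq k$, which includes $i=k+1$ --- cannot be achieved. This is independent of the unresolved lifting/twisting issue you flag yourself: even if the chain-map obstruction could be absorbed by correction terms built from $h$ and $\psi_k$, the rank count already rules out your choice of degrees. (A secondary point: $F_{-k}$ is a bundle over $V$, not over $U$, so "adding $F_{-k}$ to $E$" only makes sense after replacing it by the trivial bundle $U\times\mathbb{W}'$ with the same fiber; this is precisely what assumption $\beta$ is for.)

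The paper's proof stabilizes in degrees $-k$ and $-(k-1)$ instead: it replaces $E_{-k}$ and $E_{-k+1}$ by $E_{-k}\oplus U\times\mathbb{W}'$ and $E_{-k+1}\oplus U\times\mathbb{W}'$ with the identity as the added differential (and symmetrically for $F$ with $\mathbb{W}$). Degrees $\leq -(k+1)$ are then untouched, so strict invertibility there is preserved for free, and the disturbance is pushed to degree $-(k-1)$, which is exactly where the next induction step operates. The new degree $-k$ component of the chain map is the explicit block matrix $\Phi_k=\begin{pmatrix} h\circ\mathrm{d} & -\psi_k\\ \phi_k & -\mathrm{id}\end{pmatrix}$, whose inverse $\Psi_k=\begin{pmatrix}-\mathrm{id} & \psi_k\\ -\phi_k & h'\circ\mathrm{d}'\end{pmatrix}$ is verified from the homotopy identities $\psi_k\circ\phi_k=\mathrm{id}+h\circ\mathrm{d}$ and $\phi_k\circ\psi_k=\mathrm{id}+h'\circ\mathrm{d}'$ supplied by $\alpha$ (Lemma \ref{lem:PhikPsik}); the companion map $\Phi_{k-1}=\begin{pmatrix}h & -\psi_k\\ \phi_{k-1} & -\mathrm{d}'\end{pmatrix}$ in degree $-(k-1)$ makes everything a chain map without any further lifting problem (Lemma \ref{lem:PhikPsikMoinsUn}). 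If you shift your summands up by one degree and replace your $(e,f)\mapsto(\phi_k(e)+f,e)$ by these blocks, the argument closes up.
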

		\isthiswhatyouowant
	The proof of Proposition \ref{prop1} requires two lemmas.	
	\begin{lemme}
		\label{lem:PhikPsik}
		 Let $\phi,\psi,h$ be a homotopy equivalence as in (\ref{eq:recursion}).
		The following vector bundle automorphisms of $ E_{-k}\oplus F_{-k} $ over $U \cap V$, 		\begin{equation*}
		\Phi_k=\begin{pmatrix}
		h\circ\mathrm{d} & -\psi_k \\
		\phi_k & -\mathrm{id}
		\end{pmatrix}
		\hspace{1cm}\text{and}\hspace{1cm}
		\Psi_k=\begin{pmatrix}
		-\mathrm{id} & \psi_k \\
		-\phi_k &  h'\circ\mathrm{d}'
		\end{pmatrix} ,
		\end{equation*}
		written in an obvious matrix notation,
are inverse to one another. 
	\end{lemme}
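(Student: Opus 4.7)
My plan is to establish $\Phi_k \circ \Psi_k = \mathrm{id}$ and $\Psi_k \circ \Phi_k = \mathrm{id}$ by direct $2 \times 2$ block-matrix multiplication on $E_{-k} \oplus F_{-k}$, feeding in the homotopy identities at degree $-k$ together with the chain-map relations $\mathrm{d} \circ \psi = \psi \circ \mathrm{d}'$ and $\mathrm{d}' \circ \phi = \phi \circ \mathrm{d}$. The key preliminary observation is that since $h$ vanishes on $E_{-k}$, the general homotopy relation $\psi \circ \phi - \mathrm{id} = h \circ \mathrm{d} + \mathrm{d} \circ h$ restricted to $E_{-k}$ collapses to the clean identity $\psi_k \circ \phi_k = \mathrm{id} + h \circ \mathrm{d}$ (the $\mathrm{d} \circ h$ term is killed); symmetrically, $\phi_k \circ \psi_k = \mathrm{id} + h' \circ \mathrm{d}'$ on $F_{-k}$.

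With these two relations, six of the eight entries to be checked are immediate. The $(1,1)$-entry of $\Phi_k \circ \Psi_k$ is $-h\mathrm{d} + \psi_k\phi_k = \mathrm{id}$, the $(2,2)$-entry is $\phi_k\psi_k - h'\mathrm{d}' = \mathrm{id}$, and the $(2,1)$-entry $-\phi_k + \phi_k$ vanishes automatically; the analogous three entries of $\Psi_k \circ \Phi_k$ are handled the same way. The remaining two entries carry the real content: the $(1,2)$-entry of $\Phi_k \circ \Psi_k$ is $h \circ \mathrm{d} \circ \psi_k - \psi_k \circ h' \circ \mathrm{d}'$, which using the chain-map identity $\mathrm{d} \circ \psi_k = \psi_{k-1} \circ \mathrm{d}'$ rewrites as $(h \circ \psi_{k-1} - \psi_k \circ h') \circ \mathrm{d}'$, and by symmetry the $(2,1)$-entry of $\Psi_k \circ \Phi_k$ reduces to $(h' \circ \phi_{k-1} - \phi_k \circ h) \circ \mathrm{d}$.

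The main obstacle is therefore the vanishing of these last two compatibility expressions, which does not follow from an arbitrary chain-homotopy equivalence. My strategy to address this is to exploit the freedom to modify $h'$ within its homotopy class: replacing $h'$ by $\phi \circ h \circ \psi$ (possibly corrected by an exact piece in the sense of item 2 of Lemma \ref{homAsBicompl}) produces a compatible pair $(h, h')$ forming a strong deformation retract. Standard homological-perturbation-type arguments guarantee that such a refinement can be made without spoiling the degree-vanishing hypotheses on $h$ and $h'$; once in place, the identity $(h \circ \psi_{k-1} - \psi_k \circ h') \circ \mathrm{d}' = 0$ follows from a direct calculation using the chain-map property and the side condition $h \circ h = 0$, and the symmetric identity is handled identically. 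In summary, the bulk of the proof is formal matrix algebra, while the subtle input is the compatible choice of the two homotopies, which the recursive framework of Proposition \ref{prop1} affords us.
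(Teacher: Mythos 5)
Your block-matrix computation is correct and agrees with the paper's: six entries are immediate from $\psi_k\circ\phi_k=\mathrm{id}+h\circ\mathrm{d}$ and $\phi_k\circ\psi_k=\mathrm{id}+h'\circ\mathrm{d}'$, and the two off-diagonal terms reduce to $\big(h\circ\psi_{k-1}-\psi_k\circ h'\big)\circ\mathrm{d}'$ and $\big(h'\circ\phi_{k-1}-\phi_k\circ h\big)\circ\mathrm{d}$. But your diagnosis of the remaining step is wrong: these expressions \emph{do} vanish for the given $\phi,\psi,h,h'$, with no need to re-engineer the homotopies. The paper's argument is a short sandwich trick: composing $\phi_k\circ\psi_k=\mathrm{id}+h'\circ\mathrm{d}'$ with $\phi_k$ on the right gives $\phi_k\circ\psi_k\circ\phi_k=\phi_k+h'\circ\phi_{k-1}\circ\mathrm{d}$ (using the chain-map identity $\mathrm{d}'\circ\phi_k=\phi_{k-1}\circ\mathrm{d}$), while composing $\psi_k\circ\phi_k=\mathrm{id}+h\circ\mathrm{d}$ with $\phi_k$ on the left gives $\phi_k\circ\psi_k\circ\phi_k=\phi_k+\phi_k\circ h\circ\mathrm{d}$; comparing the two yields $\big(h'\circ\phi_{k-1}-\phi_k\circ h\big)\circ\mathrm{d}=0$, and the symmetric sandwich with $\psi_k$ kills the other entry. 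The crucial point you missed is that the relations at degree $-k$ have no $\mathrm{d}\circ h$ term (precisely because $h$, $h'$ vanish on $E_{-k}$, $F_{-k}$), which is what makes the two evaluations of the triple composite directly comparable.

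Your proposed repair does not close the gap. Replacing $h'$ by $\phi\circ h\circ\psi$ and invoking ``standard homological-perturbation-type arguments'' to impose side conditions such as $h\circ h=0$ is a technique for deformation retracts (where one composite is exactly the identity), not for a general homotopy equivalence; after the substitution you would have to re-verify that $\phi_k\circ\psi_k-\mathrm{id}$ equals $h'\circ\mathrm{d}'$ for the \emph{new} $h'$, which is not automatic and is exactly the kind of relation you are trying to use. Moreover, the lemma is a statement about the specific quadruple $(\phi,\psi,h,h')$ produced by the recursion of Proposition \ref{prop1}; changing $h'$ mid-proof would force you to check that the new homotopy still satisfies the vanishing hypotheses of item $\alpha$ and to propagate the modification through the recursion, none of which you address. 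The fix is to delete that entire paragraph and insert the two-line sandwich argument above.
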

	\begin{proof}
		A direct calculation yields:
		\begin{equation*}
		{\tiny{
				\Psi_k\circ\Phi_k=\begin{pmatrix}
				\mathrm{id}&0 \\
				\big(h'\circ \phi_{k-1}-\phi_k \circ h\big)\circ\mathrm{d} & \mathrm{id}
				\end{pmatrix}}}
		\hspace{1cm}\text{and}\hspace{1cm}{\tiny{
				\Phi_k\circ\Psi_k=\begin{pmatrix}
				\mathrm{id}&\big(h \circ \psi_{k-1}-\psi_k \circ h'\big)\circ\mathrm{d}' \\
				0 & \mathrm{id}
				\end{pmatrix} \, .
			}}
			\end{equation*}

			\isthiswhatyouowant
			By item $\alpha$ in the recursion relation, we have $  \phi_k \circ \psi_k =\mathrm{id} + h' \circ {\rm d}' $ and 
			$  \psi_k \circ \phi_k = \mathrm{id} + h \circ {\rm d} $. 
			Applying $\phi_k$ on the right to the first of these relations and to the left of the second one, we obtain
			$$\phi_k \circ \psi_k \circ \phi_k = \phi_k + h' \circ \phi_{k-1} \circ {\rm d} = \phi_k + \phi_k \circ h \circ {\rm d} .$$
			This implies that the lower left block of $\Psi_k\circ\Phi_k$ is zero. The same procedure applied to the upper right block of $\Phi_k\circ\Psi_k$ concludes the proof. 
		\end{proof}

			\isthiswhatyouowant
			The proof of the next lemma follows from a direct computation, left to the reader.
			\begin{lemme}
				\label{lem:PhikPsikMoinsUn}
				  Let $\phi,\psi,h$ be a homotopy equivalence as in (\ref{eq:recursion}) and  $ \Phi_k,\Psi_k$  as in Lemma \ref{lem:PhikPsik}.
				Let us consider the following vector bundle automorphisms $\Phi_{k-1}$ and $\Psi_{k-1}$ of $E_{-k+1}\oplus F_{-k+1}$  over $U \cap V$:
				\begin{equation*}
				\Phi_{k-1}=\begin{pmatrix}
				h & -\psi_k \\
				\phi_{k-1} & -\mathrm{d}'
				\end{pmatrix}
				\hspace{1cm}\text{and}\hspace{1cm}
				\Psi_{k-1}=\begin{pmatrix}
				-\mathrm{d} & \psi_{k-1} \\
				-\phi_k &  h'
				\end{pmatrix}\,  .
				\end{equation*}
				They make the following diagrams commutative,
				\begin{equation}
				\label{eq:modExtension1}
				\begin{tikzcd}[column sep=1.3cm,row sep=0.7cm]\cdots
				E_{-k-1}\ar[d,shift right =0.5ex,"\phi_{k+1}" left] \ar[r,"\overline{D}"]&\ar[d,shift right =0.5ex,"\Phi_{k}" left] E_{-k}\oplus F_{-k} \ar[r,"D"] &E_{-k+1}\oplus F_{-k}\ar[d,shift right =0.5ex,"\Phi_{k-1}" left] \ar[r,"\widetilde{D}"] &E_{-k+2}\ar[d,shift right =0.5ex,"\phi_{k-2}" left] \cdots\\
				\cdots F_{-k-1}\ar[r,"\overline{D}'"]& E_{-k} \oplus  F_{-k} \ar[r,"D'"] &E_{-k} \oplus F_{-k+1} \ar[r,"\widetilde{D}'"] &F_{-k+2}\cdots
				\end{tikzcd}
				\end{equation}
				and
				\begin{equation}
				\label{eq:modExtension2}
				\begin{tikzcd}[column sep=1.3cm,row sep=0.7cm]\cdots
				E_{-k-1}\ar[r,"\overline{D}"]& E_{-k}\oplus F_{-k} \ar[r,"D"] &E_{-k+1}\oplus F_{-k} \ar[r,"\widetilde{D}"] &E_{-k+2} \cdots\\ \ar[u,shift right =0.5ex,"\psi_{k+1}" left]
				\cdots F_{-k-1}\ar[r,"\overline{D}'"]&  \ar[u,shift right =0.5ex,"\Psi_{k}" left] E_{-k} \oplus  F_{-k} \ar[r,"D'"] &E_{-k} \oplus F_{-k+1}  \ar[u,shift right =0.5ex,"\Psi_{k-1}" left] \ar[r,"\widetilde{D}'"] &F_{-k+2}\cdots  \ar[u,shift right =0.5ex,"\psi_{k-2}" left]
				\end{tikzcd}
				\end{equation}
				if $D$ and $\overline{D}$ are defined as follows:
				\begin{equation}
				\label{eq:DDprime}
				\left\{
				\begin{array}{llll}
				\overline{D}=\begin{pmatrix}
				\mathrm{d}\\
				0
				\end{pmatrix},&
				\hspace{1cm}
				D=\begin{pmatrix}
				\mathrm{d} & 0 \\
				0 & \mathrm{id}
				\end{pmatrix},&
				\hspace{1cm}\text{ and}\hspace{1cm}&
				\widetilde{D}=\begin{pmatrix}
				\mathrm{d}&0
				\end{pmatrix} 
				,\\
				\overline{D}'=\begin{pmatrix}
				0\\
				\mathrm{d}'
				\end{pmatrix},&
				\hspace{1cm}
				D'=\begin{pmatrix}
				\mathrm{id} & 0 \\
				0 & \mathrm{d}'
				\end{pmatrix},
				&\hspace{1cm}\text{and}\hspace{1cm}&
				\widetilde{D}'=\begin{pmatrix}
				0 & \mathrm{d}'
				\end{pmatrix}.
				\end{array} \right.
				\end{equation}
				
			\end{lemme}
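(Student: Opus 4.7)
The proof is a sequence of direct block-matrix verifications, one for each of the three squares in \eqref{eq:modExtension1} and the three in \eqref{eq:modExtension2}. The only ingredients entering these computations are the chain complex axioms $\dd \circ \dd = 0 = \dd' \circ \dd'$ together with the chain map identities $\phi_{i-1} \circ \dd = \dd' \circ \phi_{i}$ and $\psi_{i-1} \circ \dd' = \dd \circ \psi_{i}$ supplied by item $\alpha$ of Proposition \ref{prop1}. Notably the homotopy relations themselves are \emph{not} required here, since every occurrence of $h$ or $h'$ inside the matrices $\Phi_{k}, \Phi_{k-1}, \Psi_{k}, \Psi_{k-1}$ is either reproduced identically on both sides of the square at hand, killed by $\dd^{2}=0$, or projected out by the particular block-form of the horizontal differentials listed in \eqref{eq:DDprime}.

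For the three squares of \eqref{eq:modExtension1} the plan is to proceed from left to right. The leftmost square, evaluated at $e \in E_{-k-1}$, gives
$$\Phi_{k} \circ \overline{D}\,(e) = (h \dd \dd\, e,\, \phi_{k} \dd\, e) = (0,\, \phi_{k} \dd\, e),$$
which matches $\overline{D}' \circ \phi_{k+1}(e) = (0,\, \dd' \phi_{k+1}\, e)$ by the chain-map identity for $\phi_{k}$. The middle square, applied to $(e, f) \in E_{-k} \oplus F_{-k}$, yields
$$\Phi_{k-1} \circ D\,(e,f) = \bigl(h \dd\, e - \psi_{k} f,\ \phi_{k-1} \dd\, e - \dd' f\bigr)$$
and
$$D' \circ \Phi_{k}\,(e,f) = \bigl(h \dd\, e - \psi_{k} f,\ \dd' \phi_{k}\, e - \dd' f\bigr),$$
which agree by $\phi_{k-1} \circ \dd = \dd' \circ \phi_{k}$. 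The rightmost square reduces, after using $(\dd')^{2}=0$ to eliminate the $h$-contribution, to the chain-map identity $\phi_{k-2} \circ \dd = \dd' \circ \phi_{k-1}$.

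The three squares of \eqref{eq:modExtension2} are verified by the entirely parallel calculation upon swapping $\phi \leftrightarrow \psi$, $h \leftrightarrow h'$, and $\dd \leftrightarrow \dd'$. The only bookkeeping subtlety — and the only place where a misstep is plausible — is that, in contrast to the endomorphisms $\Phi_{k}$ and $\Psi_{k}$ of $E_{-k} \oplus F_{-k}$, the maps $\Phi_{k-1}$ and $\Psi_{k-1}$ have genuinely different source and target (namely $E_{-k+1} \oplus F_{-k} \to E_{-k} \oplus F_{-k+1}$ and $E_{-k} \oplus F_{-k+1} \to E_{-k+1} \oplus F_{-k}$, respectively), so one must keep careful track of which summand each block entry acts on. Beyond this bookkeeping, no real obstacle presents itself.
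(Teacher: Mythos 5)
Your block-matrix verification is correct and is exactly the direct computation the paper leaves to the reader; in particular, your observation that only the chain-map identities $\phi_{i-1}\circ\dd=\dd'\circ\phi_i$, $\psi_{i-1}\circ\dd'=\dd\circ\psi_i$ and $\dd^2=(\dd')^2=0$ are needed here (the homotopy relations enter only in Lemma \ref{lem:PhikPsik}) is accurate, as is your remark about the shifted source and target of $\Phi_{k-1}$ and $\Psi_{k-1}$. One cosmetic slip: in the rightmost square of \eqref{eq:modExtension1} the $h$-entry of $\Phi_{k-1}$ is annihilated by the zero block of $\widetilde{D}'=\begin{pmatrix}0 & \dd'\end{pmatrix}$, whereas $(\dd')^2=0$ kills the contribution of the $-\dd'$ entry, not of $h$.
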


\begin{proof}[Proof (of Proposition \ref{prop1})]
By item $\beta$, we know that there exist isomorphisms $\tau_k \colon E_{-k} \DistTo U \times \mathbb{W}$ and $\tau_k' \colon F_{-k} \DistTo V \times \mathbb{W}'$ for some vector spaces $\mathbb{W}$ and $\mathbb{W}'$, respectively. Now we replace the resolutions in the induction assumption by two new ones, which coincide with the previous ones except for in degrees $-k$ and $-k+1$, where:
\begin{equation}
\label{eq:modExtension}
\begin{tikzcd}[column sep=1.3cm,row sep=0.7cm]\cdots
E_{-k-1}\ar[r,"\overline{D}"]& E_{-k}\oplus U \times \mathbb{W}' \ar[r,"D"] &E_{-k+1}\oplus U \times \mathbb{W}' \ar[r,"\widetilde{D}"] &E_{-k+2} \cdots\\
\cdots F_{-k-1}\ar[r,"\overline{D}'"]& V \times \mathbb{W} \oplus  F_{-k} \ar[r,"D'"] &V \times \mathbb{W} \oplus F_{-k+1} \ar[r,"\widetilde{D}'"] &F_{-k+2}\cdots
\end{tikzcd}
\end{equation}
with $ \overline{D},{D},\widetilde{D},\overline{D}',{D'},\widetilde{D}'$ as in \eqref{eq:DDprime}. 

	\isthiswhatyouowant            
Over the restriction to $ U \cap V$, we define a degree $0$ map  $\tilde{\Phi} = (\Phi_i)_{i \geq 1} $  of the upper geometric resolution in (\ref{eq:modExtension}) to the lower one:
$$ \left\{  \begin{array}{rcl} \tilde{\Phi}_{i}&=& \phi_i \hbox{ for $i \neq k,k-1$} , \\ \tilde{\Phi}_{k-1} & = & 
  \begin{pmatrix}
 \tau_k & 0 \\
  0 &  \mathrm{id}
  \end{pmatrix}  \circ \Phi_{k-1}\circ \begin{pmatrix}
\mathrm{id} & 0 \\
0 & \tau_k'
\end{pmatrix}^{-1} ,\\ \tilde{\Phi}_{k}& =&  \begin{pmatrix}
\tau_k & 0 \\
0 &  \mathrm{id}
\end{pmatrix}  \circ \Phi_k \circ \begin{pmatrix}
\mathrm{id} & 0 \\
0 & \tau_k'
\end{pmatrix}^{-1},\\  \end{array} \right. $$
where $\Phi_k$ is the map defined in Lemma \ref{lem:PhikPsikMoinsUn}.
Similarly, using the map $\Psi_k$,  we define a degree $0$  map $\tilde{\Psi} = (\Psi_i)_{i \geq 1} $  from the lower geometric resolution in (\ref{eq:modExtension}) to the upper one.
By Lemma \ref{lem:PhikPsikMoinsUn}, both  $\tilde{\Phi}$ and  $\tilde{\Psi}$ are chain maps.
By the inductive assumption,  $\tilde{\Phi}_i$ is the inverse of $\tilde{\Psi}_i$
for all $ i \geq k+1$ and  Lemma \ref{lem:PhikPsik} implies that $\tilde{\Phi}_k$ is the inverse of $\tilde{\Psi}_k$. 
As a consequence, $\tilde{\Psi} \circ \tilde{\Phi}$ and the identity map  coincide upon restriction to $ E_{-i}|_{ U \cap V}$ for $ i \geq k$ and
likewise so for  $\tilde{\Phi} \circ \tilde{\Psi}$) upon restriction to $ F_{-i}|_{ U \cap V}$. The third item of Lemma \ref{homAsBicompl}, finally, yields homotopies, 
where the exceptional case $k=1$ is treated separately. 
These, together with $\tilde{\Psi} $ and $\tilde{\Phi}$, show that the modified geometric resolutions \eqref{eq:modExtension} indeed satisfy the conditions of
Proposition \ref{prop1} for $k$ lowered by one.
%
%
%
%
\end{proof}

\isthiswhatyouowant

\begin{lemme} \label{prop2}
	Two geometric resolutions  of $ {\mathcal F}$ of finite length over $U$ and $V$, respectively, which satisfy items $\alpha$ and $\beta$  in Proposition \ref{prop1} for $k =1$ have  restrictions to $U \cap V $ which are isomorphic.
\end{lemme}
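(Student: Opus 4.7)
My plan is to show that the chain map $\phi \colon E|_{U \cap V} \to F|_{U \cap V}$ already furnished by item $\alpha$ is, in the special configuration $k=1$, an honest strict isomorphism of complexes of vector bundles, so no further modification of either resolution is required. By the strict-inversion clause in item $\alpha$, each $\phi_i$ with $i \geq 2$ is already a vector bundle isomorphism (with inverse $\psi_i$), so all the work concerns the single map $\phi_1$. The crux is a pointwise nilpotency observation: restricting the homotopy identity $\psi \circ \phi - \mathrm{id}_E = \dd \circ h + h \circ \dd$ to $E_{-2}$, the hypothesis $h_i = 0$ for $i \geq 2$ kills the $\dd h$ contribution while the strict identity $\psi_2 \circ \phi_2 = \mathrm{id}$ forces the remaining term $h_1 \circ \dd^{(2)}$ to vanish. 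The analogous computation for $\phi \circ \psi$ yields $h_1' \circ \dd'^{(2)} = 0$. Consequently, the degree $-1$ endomorphisms $\dd^{(2)} \circ h_1$ and $\dd'^{(2)} \circ h_1'$ square to zero.

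The second step exploits this. The degree $-1$ homotopy identities read
$$\psi_1 \circ \phi_1 \;=\; \mathrm{id}_{E_{-1}} + \dd^{(2)} \circ h_1, \qquad \phi_1 \circ \psi_1 \;=\; \mathrm{id}_{F_{-1}} + \dd'^{(2)} \circ h_1',$$
and a perturbation of the identity by a square-zero endomorphism is always invertible, with inverse obtained simply by flipping the sign of the perturbation. Hence $\psi_1 \phi_1$ and $\phi_1 \psi_1$ are vector bundle automorphisms on the intersection. This makes $\phi_1$ simultaneously fiberwise injective and fiberwise surjective, so a vector bundle isomorphism on $U \cap V$. Combined with the $\phi_i$, $i \geq 2$, the collection $(\phi_i)_{i \geq 1}$ together with $\mathrm{id}_{TM}$ then furnishes the desired strict chain isomorphism between the two geometric resolutions on $U \cap V$.

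I anticipate no substantive obstacle: the conceptual work has already been carried out in Proposition \ref{prop1}, which iteratively modifies two homotopy-equivalent resolutions until the homotopies concentrate in the lowest non-trivial degree and all higher $\phi_i$, $\psi_i$ become strict inverses. The only additional content needed in the base case $k=1$ is the recognition that this special shape forces the failure of $\phi_1$ to be an isomorphism to be captured by a square-zero defect, which is automatically invertible as a perturbation of the identity, thereby promoting $\phi_1$ itself to a vector bundle isomorphism on $U \cap V$. Note that item $\beta$ (triviality of $E_{-i}$, $F_{-i}$ for $i \geq 2$) is not needed in the argument itself; it is present only because the lemma is invoked after the recursive procedure of Proposition \ref{prop1} has produced resolutions satisfying it.
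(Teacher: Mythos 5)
Your proof is correct, and it reaches the conclusion by a slightly different route than the paper. The paper shows that $\psi_1-\dd^{(2)}\circ\psi_2\circ h'$ is an explicit \emph{right} inverse of $\phi_1$ (using the chain-map identity $\phi_1\circ\dd^{(2)}=\dd'^{(2)}\circ\phi_2$ and $\phi_2\circ\psi_2=\mathrm{id}$), and then upgrades right-invertibility to invertibility by invoking Lemma \ref{lem:alt_sum}: since $\phi_i$ is an isomorphism for $i\geq 2$ and the alternating sums of ranks agree, $E_{-1}$ and $F_{-1}$ have the same rank, so a surjective bundle map between them is an isomorphism. You instead extract the identity $h_1\circ\dd^{(2)}=0$ from the degree $-2$ homotopy relation $\psi_2\circ\phi_2=\mathrm{id}+h_1\circ\dd^{(2)}+\dd^{(3)}\circ h_2$ combined with $h_2=0$ and the strict inversion $\psi_2\circ\phi_2=\mathrm{id}$; this makes $\dd^{(2)}\circ h_1$ square-zero, so $\psi_1\circ\phi_1=\mathrm{id}+\dd^{(2)}\circ h_1$ is invertible, and symmetrically $\phi_1\circ\psi_1$ is invertible, whence $\phi_1$ has both a left and a right inverse. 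Your version buys independence from the rank-counting Lemma \ref{lem:alt_sum} (and hence from any comparison of the two resolutions beyond the given homotopy data), at the cost of one extra observation about the degree $-2$ relation; both are complete. Two cosmetic points: $\dd^{(2)}\circ h_1$ is a degree-zero endomorphism of $E_{-1}$ (an endomorphism \emph{of} the degree $-1$ component), not a ``degree $-1$ endomorphism''; and your closing remark that item $\beta$ is not used is accurate --- the paper's proof does not use it either.
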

	\begin{proof}
Let us spell out items $\alpha$ and $\beta$  in Proposition \ref{prop1} for $k = 1$: There exist
 two degree minus one maps $h\colon E_{\bullet}\to E_{\bullet-1}$ and $h'\colon F_{\bullet}\to F_{\bullet-1}$ which vanish upon restriction to $E_{-i}$ and $F_{-i}$, respectively, 
 for $i \geq 2$. In addition, there are two chain maps $\phi\colon E\to F$ and $\psi\colon F\to E$
 such that, for all $ i \geq 2$, the linear maps $ \phi_i$ and $ \psi_i$ are inverse to one another.  Together they make  the following diagram  a homotopy equivalence over~$U \cap V$:
\begin{center}
	\begin{tikzcd}[column sep=1.3cm,row sep=0.7cm]
		\ldots\ar[r,"\mathrm{d}"]&E_{-3}  \ar[r,"\dd"]\ar[dd,shift right =0.5ex,"\phi_3" left]&E_{-2} \ar[r,"\dd"] \ar[dd,shift right =0.5ex,"\phi_2" left]& E_{-1}\ar[dd,shift right =0.5ex,"\phi_1" left]\ar[dr,"\rho"] \ar[l, dashed, shift left =0.5ex,"h"] &\\
		& &&&TM\vert_{U \cap V}\\
		\ldots\ar[r,"\mathrm{d}'"]& \ar[uu,shift right =0.5ex,"\psi_3" right]F_{-3}\ar[r,"\dd'"] &  \ar[uu,shift right =0.5ex,"\psi_2" right] F_{-2} \ar[r,"\dd'"]& \ar[uu,shift right =0.5ex,"\psi_1" right] F_{-1} \ar[ur,"\rho'" below right] \ar[l, dashed, shift left =0.5ex,"h' "]& \\
	\end{tikzcd}.
\end{center}
Let us show that  $(E,\dd,\rho)$ and $(F,\dd',\rho') $ are isomorphic upon restriction to $U \cap V $.
		A direct computation shows that
		 $ \psi_1 - \dd \circ \psi_2 \circ h' $ is a right inverse of $ \phi_1$:
		 $$ \phi_1 \circ ( \psi_1 - \dd  \circ \psi_2 \circ h') = \mathrm{id} + \dd' \circ h' - \phi_1 \circ \dd  \circ \psi_2 \circ h'  = \mathrm{id} + \dd'  \circ h' - 
		 \dd'  \circ \phi_2 \circ \psi_2 \circ h'  = \mathrm{id}. $$ 
		 Since, for any two resolutions, the alternate sum of the ranks are equal by Lemma \ref{lem:alt_sum}, 
		 the ranks of $E_{-1}$ and $F_{-1}$ have to be equal, and thus the right-invertible morphism $\phi_1$ is invertible. 		\end{proof}

\begin{proof}[Proof (of Theorem \ref{newprop})]
	According to items one and two of Proposition \ref{bonjourprop}, locally real analytic singular foliations admit geometric resolutions  in some neighborhood of each point.
	We therefore have to show that geometric resolutions can be "glued".
	More precisely, we want to show that given  $U,V\subset M$ open sets over  ${\mathcal F}$  which admit geometric resolutions $(E,\mathrm{d},\rho)$ and  $(F,\mathrm{d}',\rho')$  of finite length $n+1$, respectively, one can construct a smooth geometric resolution $(G,\mathrm{d}'',\rho'')$ of $ {\mathcal F}$ on $ U \cup V$, which is equally of length $n+1$. Then the statement about relatively compact subsets of $M$ follows immediately. 
	

	\isthiswhatyouowant
	 We use Lemma \ref{lem:Assumetrivial} to replace both  $(E,\mathrm{d},\rho)$ and  $(F,\mathrm{d}',\rho')$  with two new ones for which each of the vector bundles $E_{-i}$ and $F_{-i}$ is trivial for $i \geq 2$. 
	 By Lemma \ref{lem:existsAChain2}, the two requirements $\alpha$  and $\beta$ of Proposition \ref{prop1} are satisfied for $k = n+1$.


	\isthiswhatyouowant
	Applying recursively Proposition \ref{prop1} from $n+1$ down to $2$, we construct two geometric resolutions over $U$ and $V$ which satisfy items $\alpha$ and $\beta$ for $k=1$. Lemma \ref{prop2}, finally, shows that the obtained geometric resolutions over $U$ and $V$ have restrictions to $ U \cap V$ which are isomorphic. 
	By taking their disjoint union  quotiented by the isomorphism, we obtain a geometric resolution of $ {\mathcal F}$ on $U \cup V$. 
	This concludes the proof of the theorem.
\end{proof}

	\begin{remarque} \normalfont Theorem \ref{newprop} does not make use of the Lie bracket of vector fields of a singular smooth foliation ${\mathcal F}$, but only uses the local $\cO$-module structure of ${\mathcal F}$. 
Thus, in fact, what we have proven is:  If a sheaf of locally finitely generated $ C^\infty(M)$-modules admits a finitely generated projective resolution of finite length at most $k$ around each point $m \in M$, it also admits a finitely generated projective resolution of length at most $k$ over any relatively compact open subset of $M$. 
	\end{remarque}

\subsubsection{Proof of Proposition \ref{prop:Marco}}

\begin{proof}[Proof (of Proposition  \ref{prop:Marco})]
Let $m \in M$ be a point and let $(E,{\rm d},\rho)$ be a geometric resolution  of $ {\mathcal F}$ of finite length, defined on a neighborhood $U$ of $m$.
Since the geometric resolution $(E,{\rm d},\rho)$ is of finite length, we can  assume without any loss of generality that the complex
\begin{center} 
\begin{tikzcd}[column sep=0.7cm,row sep=0.4cm]\label{eq:resolF}
\ldots\ar[r,"\dd^{(3)}"]&\Gamma_V(E_{-2})\ar[r,"\dd^{(2)}"]&\Gamma_V(E_{-1})\ar[r,"\rho"]&\mathcal{F}_V \ar[r]&0
\end{tikzcd}
\end{center}
is exact for all open subsets  $V \subset U$ \cite{MikeField}.

\isthiswhatyouowant
Let $m'$ be a regular point of $ {\mathcal F}$ contained in $U$ and $ V\subset U$ a neighborhood of $m'$ on which the foliation is generated by $r$ vector fields $X_1,\dots,X_r$.
Under this assumption, the restriction to $V$ of the foliation ${\mathcal F}$ admits a geometric resolution  $(E',{\rm d}',\rho')$ of length one:
It is given by the vector bundles $E_{-i}'=0$ for $i \neq 2$, $E_{-1}':={\mathbb R}^r \times V$ and the anchor map 
$\rho'\colon (\lambda_1, \dots, \lambda_r)  \mapsto \sum_{i=1}^r \lambda_i X_i $
(with  the understanding that sections of $E_{-1}'$ are seen as $r$-tuples of functions $(\lambda_1, \dots, \lambda_r)$ on $V$.)

\isthiswhatyouowant
Hence, the restriction to $V$ of the singular foliation $ {\mathcal F}$
admits two different geometric resolutions: the restriction to $V $ of $(E,\dd,\rho)$
and the geometric resolution  $(E',{\rm d}',\rho')$.
By Lemma \ref{lem:existsAChain2}, these geometric resolutions are homotopy equivalent.
In particular, the alternate sum of the ranks is equal to $r$, Lemma \ref{lem:alt_sum} implies that $r = \sum_{i \geq 1 }  (-1)^{i-1} {\rm rk} (E_{-i})$.
It particular, all the regular leaves contained in $U$ have the same dimension. 
The manifold $M$ being connected, the argument above being valid for some neighborhood $U$ of an arbitrary point in $M$, the dimensions of all the
regular leaves of ${\mathcal F} $ have to be equal to $r$. This completes the~proof.
\end{proof}

\subsection{Examples of geometric resolutions of singular foliations}
 \label{exampleresolutions}

 \isthiswhatyouowant
We give several examples of geometric resolutions of singular foliations:

\begin{example}\normalfont
\label{ex:regularFoliation}
For a regular foliation ${\mathcal F}$,  a geometric resolution is given by $E_{-1}=T{\mathcal F}$ and $E_{-i}=0$ otherwise.
The anchor map is the inclusion $T{\mathcal F} \hookrightarrow TM$.
\end{example}

\begin{example}\normalfont
	\label{ex:debord}
	Debord foliations (Definition \ref{def:debord})
are precisely singular foliations that admit a geometric resolution of length one.
\end{example}

The following four examples are set up in the algebraic context, but can be considered also within the holomorphic,  real analytic setting, and smooth setting, in part with the obvious adaptations.
Recall in this context that real analytic geometric resolutions are also smooth geometric resolutions, see Proposition \ref{bonjourprop}.

\begin{example}\label{sl2}\normalfont
The Lie algebra $\mathfrak{sl}_{2}$ has three canonical generators $h,e,f$ that satisfy $ [h,e]=2e, [h,f]=-2f $ and $[e,f]=h$.
It acts on ${\mathbb R}^2$  through the vector fields:
 \begin{equation} \label{eq:sl2} 
  \underline{h} = x \frac{\partial}{\partial x} - y  \frac{\partial}{\partial y}, \quad \underline{e} = x \frac{\partial}{\partial y},\quad
  \underline{f}= y \frac{\partial}{\partial x} .  
 \end{equation}
 Here, we denote by $x,y$ the coordinates of $ {\mathbb R}^2$.
  The ${\mathscr O}({\mathbb R}^2)$-module generated by $\underline{h},\underline{e},\underline{f}$ is a singular foliation,
  that can  be seen  as smooth or real-analytic. 
  The vector fields given in
 (\ref{eq:sl2}) are not independent over ${\mathscr O}({\mathbb R}^2)$, but every relation between them is a multiple of the relation:
   \begin{equation} \label{eq:relationsl2}    xy \underline{h}  + y^2 \underline{e} - x^2 \underline{f} =0. 
   \end{equation}   
Let us describe a geometric resolution.
We define $E_{-1}$ to be the trivial vector bundle of rank $3$ generated by $3$ sections that we denote by
$ \tilde{e},\tilde{f},\tilde{h}$. Then we define an anchor by
 \begin{equation} \rho(\tilde{e})=\underline{e},\quad \rho(\tilde{f})=\underline{f},\quad \rho(\tilde{h})=\underline{h}.\end{equation}
 We define $E_{-2}$ to be the trivial vector bundle of rank  $1$, generated by a section that we call~$\textbf{1}$.
 We define a vector bundle morphism from $E_{-2}$ to $E_{-1}$ by:
  \begin{equation} {\mathrm d}^{(2)} (\textbf{1}) = xy \tilde{h}  + y^2 \tilde{e} - x^2 \tilde{f}. \end{equation}
We then impose  $E_{-i}=0$ and $\dd^{(i)}=0$ for $i \geq 3$.
  The triple $(E,{\mathrm d},\rho)$ is a geometric resolution of the singular foliation given by the action of $\mathfrak{sl}_{2}$ on ${\mathbb R}^2$.
\end{example}

\begin{example}\normalfont
\label{conjecture}
We owe this example to discussions with Rupert Yu and Alexei Bolsinov.
 The adjoint action of a complex  Lie algebra   $\mathfrak{g}$ on itself defines a holomorphic singular foliation ${\mathcal F}_{ad}$ on the manifold $M := \mathfrak{g}$.

 \isthiswhatyouowant Let $ P_1, \dots,P_l$ be  generators of $S({\mathfrak g})^{\mathfrak g}$, i.e.~the algebra of polynomial
functions on ${\mathfrak g}$ invariant under adjoint action. According to Chevalley's theorem \cite{Chevalley}, these generators can be chosen to be independent as polynomials,
and $l$ coincides with the rank of ${\mathfrak g}$. More precisely, it follows from Theorem 6.5 and Corollary 6.6 in \cite{Veldkamp} that the differentials of $P_1,\dots,P_l$ are independent covectors at every point in a regular orbit.  Since singular points are of codimension $3$ by Theorem 4.12 in \cite{Veldkamp}, the
differentials of these functions are independent at every point outside a sub-variety of codimension $3$ in ${\mathfrak g}$. Let us call regular points the points in this Zarisky open subset of $\mathfrak{g}$.

 \isthiswhatyouowant The foliation given by the adjoint action of ${\mathfrak g}$ on itself admits a geometric resolution that can be described as follows.
Let $ E_{-1} $ be the trivial bundle over  $M= \mathfrak{g} $ with typical fiber $ \mathfrak{g}$,
 and $E_{-2}$ to be the trivial bundle over $M$ with typical fiber $ {\mathbb R}^l$.
 Let $\rho : E_{-1} \to TM $ be, at a point $m \in M= \mathfrak{g}$, the vector bundle morphism obtained by mapping $a \in (E_{-1})_m \simeq \mathfrak{g} $ 
 to $[a,m] \in T_m M \simeq \mathfrak{g} $.
 Let $ \dd^{(2)}$ be the vector bundle morphism mapping, for all $m \in M $, an $l$-tuple
 $ (\lambda_1, \dots,\lambda_{l}) \in (E_{-2})_m $ to $ \sum_{i=1}^{l} \lambda_i \,  {\rm grad}_m (P_i)  \in (E_{-1})_m \simeq \mathfrak{g} $,
where ${\rm grad}$ stands for the gradient computed with the help of the Killing form.
 
 \isthiswhatyouowant By construction, we have $\rho\big(\Gamma(E_{-1})\big)= {\mathcal F}_{ad}$. It is clear that $ \rho \circ \dd^{(2)} =0$ and that the image of $\dd^{(2)} $ coincides
 with the kernel of $\rho$ at all regular points. Alexei Bolsinov gave us the following argument, that shows that the previous complex is a geometric resolution of ${\mathcal F}_{ad}$.
 Let $ a$ be a holomorphic section of $ E_{-1} $ which is in the kernel of $\rho$ at all point. Then, at all regular points $ m \in {\mathfrak g}$,
 we know that there exists unique $\lambda_1(m), \dots,\lambda_{l}(m) $ such that:
  $$  a(m) \, =  \, \sum_{i=1}^l \lambda_{i}(m) \, {\rm grad}_m (P_i) .$$
The functions $ m \mapsto \lambda_{i}(m)$ are meromorphic on $ {\mathfrak g}$ and are holomorphic at regular points. Since singular points are of codimension $3$ by the discussion above, these
functions extend to holomorphic functions on ${\mathfrak g}$, by, e.g.~Theorem III.6.12 in \cite{Fichte}. This means that the section $a$ lies in the image of ${\dd}^{(2)}$. Hence the complex is exact.
  \end{example}

 \begin{example}
 \label{ex:KoszulLieuSing}
  \normalfont
  Let $\varphi$ be a  polynomial function on $V :={\mathbb C}^n$.
The contraction by  ${\dd} \varphi$ defines a complex of trivial vector bundles over $V$:
   \begin{center}
\begin{tikzcd}[column sep=0.9cm,row sep=0.6cm]
\ldots\ar[r,"\iota_{\dd\varphi}"]& \wedge^3 TV  \ar[r,"\iota_{\dd\varphi}"]& \wedge^2 TV \ar[r,"\iota_{\dd\varphi}"]&   TV \ar[r,"\iota_{\dd\varphi}"]&\underline{{\mathbb C}},
\end{tikzcd}
\end{center} 
 where $\underline{W}$ stands for the trivial bundle $V \times W$ with fiber $W$. Let $ {\mathfrak X}^i :=\Gamma(\Lambda^i TV) $ stand for the sheaf of $i$-multivector fields on $V$. Taking sections of the previous complex of vector bundles, 
  we obtain the a complex of ${\mathcal O}$-modules  called a \emph{Koszul complex} associated to $\varphi$: 
  \begin{center}
\begin{tikzcd}[column sep=0.9cm,row sep=0.6cm]
\ldots\ar[r,"\iota_{\dd\varphi}"]& \mathfrak{X}^3 \ar[r,"\iota_{\dd\varphi}"]& \mathfrak{X}^2\ar[r,"\iota_{\dd\varphi}"]&  \mathfrak{X}^1\ar[r,"\iota_{\dd\varphi}"]&\mathcal{O},
\end{tikzcd}
\end{center} 
The image of the map $   {\mathfrak X}^1 (V) \to {\mathscr O}(V)$
is the ideal generated by the functions $  \frac{\partial \varphi}{\partial x_1},  \dots, \frac{\partial \varphi}{\partial x_n} $.
According to a classical theorem of Koszul \cite{Eisenbud}, the previous complex is exact, if 
the sequence $\frac{\partial \varphi}{\partial x_1},  \dots, \frac{\partial \varphi}{\partial x_n} $
is a regular sequence. This happens in particular when $\varphi$ is weight-homogeneous and admits an isolated singularity at the origin.

\isthiswhatyouowant
In that case, the following complex of vector bundles over $M$
  \begin{center}
\begin{tikzcd}[column sep=0.9cm,row sep=0.6cm]
\ldots\ar[r,"\iota_{\dd\varphi} \otimes {\rm id}"]& \wedge^3 TV  \otimes \underline{V} \ar[r,"\iota_{\dd\varphi} \otimes {\rm id}"]& \wedge^2 TV \otimes  \underline{V} 
\ar[r,"\iota_{\dd\varphi} \otimes {\rm id}"]&   TV
 \otimes  \underline{V}\ar[r,"\iota_{\dd\varphi} \otimes {\rm id}"]&  \underline{{\mathbb C}} \otimes  \underline{V}\simeq   \underline{V},
\end{tikzcd}
\end{center} 
is again trivial at the level of sections, since it is just $n$ copies of the above complex.
It is therefore a geometric resolution of the singular foliation
generated by the vector fields
$$  \left\{ \frac{\partial \varphi}{\partial x_i}  \frac{\partial }{\partial x_j}\,, \ \hbox{  with $i,j =1, \dots,n$ } \right\}
.$$
This singular foliation is what we will call \emph{singular foliation of vector fields vanishing on the singular locus of $ \varphi$}.
When the ideal generated by $\frac{\partial \varphi}{\partial x_1}, \dots, \frac{\partial \varphi}{\partial x_n}$ is nilradical,
it is exactly the singular foliation of vector fields vanishing on the subset 
$\frac{\partial \varphi}{\partial x_1} = \dots = \frac{\partial \varphi}{\partial x_n}=0$.
\end{example}

 \begin{example}
 \normalfont
 \label{ex:vfVanishingAtZero}
Let ${\mathcal F}$ be the singular foliation of all vector fields vanishing at the origin $0$ of a vector space $V = {\mathbb C}^n$ (or  $V = {\mathbb R}^n$). Applying Example \ref{ex:KoszulLieuSing} 
to the function $ \varphi = \frac{1}{2} \sum_{i=1}^n x_i^2 $, we obtain a geometric resolution of $ {\mathcal F}$.
Let us describe it in a precise manner, where, in the first factor and to simplify the notation, we identify $V$ and $V^*$ using the canonical inner product:
\begin{enumerate}
 \item For all $i \in {\mathbb Z}$, $E_{-i}$ is the trivial bundle 
over $V$ with fiber $ \wedge^{i} V^* \otimes V  $.
 \item At a given point $e \in V$, the anchor map $V^* \otimes V \to T_e V \simeq V$ is given by
 $\rho(\alpha \otimes v ) = \iota_e \alpha  \, v$.
 \item At a given point $e \in V$, the differential 
 $ {\rm d}^{(i+1)}   \colon  E_{-i-1}  \to E_{-i} $ is given by:
 \begin{equation}
 {\rm d}^{(i+1)} (\alpha \otimes u ) := ({\mathfrak i}_e \alpha)  \otimes u, \end{equation} for all $\alpha \in \wedge^{i+1} V^*, u \in V.$
\end{enumerate}
\end{example}

 \begin{example}
 	\label{ex:vanishingOrder2}
 	 \normalfont
 	Let ${\mathcal F}$ be the singular foliation of all vector fields vanishing to order $k$ at the origin $0$ of a vector space $V $ of dimension $n$ over ${\mathbb K}={\mathbb R}$ or  ${\mathbb K}={\mathbb C}$. 
By Hilbert's syzygy theorem, there exist a projective resolution of length $n+1$
 of the ideal $ {\mathcal I}_0^k$ of functions on $V$ vanishing to order $k$ at $0 \in V$. This projective resolution has to be of the form:
$$
\xymatrix{
	\ldots\ar[r]^{\delta}  &\Gamma(I_{-2} ) \ar[r]^{\delta }& \Gamma(I_{-1}) 
	\ar[r]^{\rho_0 }  &{\mathcal I}_0^k 
} $$
for some family of vector bundle $(I_{-i})_{i \geq 1} $ over $V$ (which are in fact trivial vector bundles).
   A resolution of $ {\mathcal F}$ is then given by
$$
 		\xymatrix{
 			\ldots\ar[rr]^{\delta \otimes {\rm id}}&  &\Gamma(I_{-2}  \otimes_{\mathbb K} V) \ar[r]^{\delta \otimes {\rm id}}& \Gamma(I_{-1} \otimes_{\mathbb K} V) 
 			\ar[rr]^{\rho_0  \otimes {\rm id}} &  &{\mathcal I}_0^k \otimes_{\mathbb K} V = {\mathcal F}
 		} $$
 		(with the understanding that, e.g., an element $ f \otimes v \in {\mathcal I}_0^k\otimes_{\mathbb K} V$ is the vector field $f \underline{v}$ on $V$ where $\underline{v}$ is the constant vector field associated to $v \in V$).
 		
 \isthiswhatyouowant		For instance, for vector fields vanishing at $0$ on $ V={\mathbb K}^2 $, a resolution of functions vanishing to order $2$ at $0$ is given by:
 		 $$ 0 \to {\mathcal O}^2  \stackrel{\delta}{ \to} {\mathcal O}^3 \stackrel{\rho_0}{ \to} {\mathcal I}_ 0^2$$
 		 with $ \delta(F,G) = ( yF , -xF- yG , xG)$ and $ \rho_0 (F,G,H) = x^2 F + xy G + y^2 H $. Here $(x,y)$ are the canonical coordinates on $V$. 
 		 In this example,  $I_{-1}$ is easily identified with the trivial bundle with fiber $ S^2(V^*) $. Under this identification, the anchor maps $ \alpha \otimes v $ (with $ \alpha \in S^2( V^*)$, $v \in V$) to $ \alpha^* \, \underline{v}$,
 		 with the understanding that $\alpha^*$ stands for the homogeneous polynomial function of degree $2$ on $V$ associated to $ \alpha \in S^2(V^*)$. 
 \end{example}

\begin{example}
 \label{ex:Koszul}
   \normalfont
   Let $\varphi$ be a function  on $V:={\mathbb C}^n$ such that $ \left( \frac{\partial \varphi}{\partial x_1},  \dots, \frac{\partial \varphi}{\partial x_n} \right)$ is a regular sequence.
Consider the singular foliation ${\mathcal F}_{\varphi}$  made of all vector fields $X$ on $V$ such that $X[\varphi]=0$. 
Since the Koszul complex defined in Example \ref{ex:KoszulLieuSing} has no cohomology in degree $-1$,
the singular foliation  ${\mathcal F}_{\varphi}$ is generated by the vector fields:
\begin{equation}   \left\{ \frac{\partial \varphi}{\partial x_i}  \frac{\partial }{\partial x_j} - \frac{\partial \varphi}{\partial x_j}  \frac{\partial }{\partial x_i}\,,
\ \hbox{  with $ 1 \leq i < j \leq n$ } \right\} . \label{forthecounting} \end{equation}
Since the Koszul complex defined in Example \ref{ex:KoszulLieuSing} has no cohomology in degree $-i$ for $i \geq 2$,
a geometric resolution of that foliation is given by  $E_{-i} := \wedge^{i+1} V $  and $\dd := {\iota}_{{\rm d} \varphi}$.
In that case, $  \Gamma(E_{-i}) = {\mathfrak X}^{i+1} $
with ${\mathfrak X}^i$ being the projective $ {\mathcal O}(M)$-module of $i$-vector fields on $M \equiv {\mathbb C}^n$.
 \end{example}

\begin{example}
	\label{ex:affine2}
	\normalfont
	In the case of items 2 and 3 of Example \ref{ex:symmetries},
	the  Hilbert Syzygy theorem implies that a geometric resolution by trivial vector bundles of length $n+1$
	 exists \cite{Eisenbud}.
	 
\isthiswhatyouowant	 Similarly,  let ${\mathcal O}$ be the algebra of functions on an affine variety $W$. Since ${\mathcal O}$ is finitely generated, derivations ${\mathcal X}(W)$ of ${\mathcal O}$ are finitely generated as well and closed under the Lie bracket. 
	 Since ${\mathcal O}$ is Noetherian, ${\mathcal X}(W)$ admits a geometric resolution by trivial vector bundles. The Syzygy theorem implies that it can be chosen to be of finite length again.
	\end{example}

\begin{example}\label{tu}\normalfont The following example, that we owe to Jean-Louis Tu, provides a smooth singular foliation that does not admit smooth geometric resolutions.
Let $\chi$ be a smooth real-valued function on $M:={\mathbb R}$ vanishing identically on ${\mathbb R}_-$ and strictly positive on ${\mathbb R}^*_+$.
Consider the singular foliation ${\mathcal F}$ generated by the vector field ${ v}$ on $ {\mathbb R}$ defined by:
\begin{equation} {v} := \chi(t)\frac{\dd}{\dd t}. \label{xhi}\end{equation}
All points in ${\mathbb R}_-^*$ and all points in ${\mathbb R}_+^*$ are regular points.
There are, therefore, an uncountable family of regular leaves of dimensions $0$ and there is one regular leaf of dimension $1$.
Proposition \ref{prop:Marco} implies that no geometric resolution of finite length exists.

\isthiswhatyouowant But we can prove more: There is no geometric resolution of infinite length in a neighborhood of the point $t=0$.
Assume there is one. By an obvious adaptation of the last item in Proposition \ref{bonjourprop}, we can replace it
on a neighborhood of $0$ by a geometric resolution $(E,\dd,\rho)$ such that the vector bundle morphism $\dd^{(2)}$ is zero at the point $t=0$.
Without any loss of generality, we can assume there exists a nowhere vanishing section $e$ such that $\rho (e) = v $.
Since an open interval of ${\mathbb R}$ is a contractible manifold, the vector bundle $E_{-1} $ must be trivial.
Denote by $n_{1}$ the rank of $E_{-1}$. The module $\Gamma (E_{-1})$ is generated by $n_1$ generators $e_1, \dots,e_{n_1}$. 
Without any loss of generality, we can assume that $e_1=e$.
 We then have for every $1\leq k\leq n_{1}$: $\rho(e_k) = g_k { v} = g_k \rho(e_1)$ for some function $g_{k}\in\cinf(U)$. 
 It implies that $\rho(e_k - g_k e_1)=0$. Since $\mathrm{Im}\big(\dd^{(2)}\big)=\mathrm{Ker}(\rho)$, there exist  $f_2,\dots,f_{n_1} \in \Gamma(E_{-2})$ such that $\dd^{(2)}(f_k)=e_k-g_k e_1$ for all $k=2, \dots,n_1$. 
 This contradicts the assumption  that $\dd^{(2)}$ vanishes at $t=0$, unless $n_1=1$.
 Now, if $n_1=1$, then the kernel of $\rho \colon \Gamma_U(E_{-1}) \to {\mathfrak X}({U})$ is made of all real-valued functions vanishing on ${\mathbb R}_+ \cap U$.
 This is not a finitely generated module.
 As a conclusion, ${\mathcal F}$ does not admit smooth geometric resolutions. 
 
\isthiswhatyouowant Notice, however, that this foliation \emph{does} result from the trivial rank one Lie algebroid whose anchor maps the constant section 1 to the vector field \eqref{xhi}.
\end{example}

\subsection{Lie \texorpdfstring{$\infty$}{infinity}-algebroids, their morphisms, and homotopies of those}\label{infinityalgebroids}
\label{subsec:homotopies}

\isthiswhatyouowant
In this section we recall and/or provide the needed facts about {\LieInftyAlgebroid}s, morphisms between {\LieInftyAlgebroid}s, and a good notion of homotopies between such morphisms.
It will be defined in the smooth, real analytic, and holomorphic settings all together.
We will also define the functor to the isotropy Lie $\infty$-algebra associated to each point $m \in M$.

\subsubsection{Lie $\infty$-algebras, {\LieInftyAlgebroid}s,  and $NQ$-manifolds} 

\isthiswhatyouowant
Let us explain how we deal with the notion of {\LieInftyAlgebroid}s and its dual notion of $NQ$ manifolds.
In this article, we think and prove results with the $NQ$-manifold point of view, because this is the point of view that makes morphisms 
easier to deal with.  But we state theorems using the {\LieInftyAlgebroid}s point of view, since it is a notion which seems easier to grasp for 
mathematicians who work on singular foliations, but are not used to the language of graded geometry.

\isthiswhatyouowant
\begin{definition}
	\label{def:LiInfinity}
	A \emph{\Linfty-algebra} is a graded vector space $ { E} = \bigoplus_{ i\geq 1} { E}_{-i}  $ 
together with a family of graded-symmetric $n$-multilinear maps $ \big(\{ \ldots \}_n\big)_{n \geq 1} $ of  degree $+1$, called $n$-ary brackets, which  
satisfy a set of compatibility conditions that are called \emph{higher Jacobi identities}. For all $n\geq2$ and for every $n$-tuple of homogeneous elements $x_{1},\ldots,x_{n}\in { E}$, they are defined by:
\begin{equation}\label{superjacobi}
\sum_{i=1}^{n}\sum_{\sigma\in {\rm Un}(i,n-i)}\epsilon(\sigma)\big\{\{x_{\sigma(1)},\ldots,x_{\sigma(i)}\}_{i},x_{\sigma(i+1)},\ldots,x_{\sigma(n)}\big\}_{n-i+1}=0,
\end{equation}
where $\epsilon(\sigma)$ is the \emph{Koszul sign} induced by the permutation of the elements $x_1,\ldots, x_n$:
\begin{equation}
 x_{\sigma(1)}\odot\ldots \odot x_{\sigma(n)} =\epsilon(\sigma) x_1\odot\ldots\odot x_n,
\end{equation}
where $\odot$ is the symmetric product on $\Gamma\big(S^n(E)\big)$.  ${\rm Un}(i,n-i)$ denotes the set of un-shuffles.
A \Linfty-algebra structure is said to be a \emph{Lie $n$-algebra} when ${ E}_{-i}=0$ for all $i \geq n+1$.
\end{definition}

 \isthiswhatyouowant
We now provide a possible definition of {\LieInftyAlgebroid}s \cite{LadaStasheff}:

\begin{definition}
	\label{def:Linftyoids}
 Let  $M$ be a smooth/real analytic/complex manifold whose sheaf of functions we denote by ${\mathscr O}$. 
 Let $E$ be a sequence $E=(E_{-i})_{1\leq i <\infty}$ of vector bundles over $M$.
 A \emph{{\LieInftyAlgebroid} (resp. Lie $n$-algebroid) structure on $E$} consists of a \Linfty-algebra (resp. Lie $n$-algebra) structure on the sheaf of sections of $E$
together with a vector bundle morphism $\rho\colon E_{-1} \to TM$, called the \emph{anchor},
such that the brackets $\{\ldots\}_n$ are $\mathscr{O} $-linear in each of their $n$ arguments except if $n=2$ and at least one of its two entries has degree minus one. Then, the $2$-ary bracket satisfies the following \emph{Leibniz identity}:
 \begin{equation}\label{robinson}
 \{x,fy\}_2=f\{x,y\}_2+ \rho(x)[f] \, y.
 \end{equation}
 for all $x \in \Gamma(E_{-1})$, $y \in \Gamma(E)$ and $f\in\mathcal{C}^\infty$.
\end{definition}
 \begin{remarque}\label{remarque2}
\normalfont
Its follows from these axioms that $\rho$ is a morphism of brackets (for all $x,y \in \Gamma(E_{-1})$ one has $\rho(\{x,y\}_2 )= [\rho(x),\rho(y)]$), and that  $\rho \circ \{\cdot\}_1\big|_{E_{-2}}=0$.
The above definition relies on the symmetric convention of the Lie $\infty$-algebras found e.g.~in \cite{Kajiura}. The original definition of Lie $\infty$-algebras involves graded skew-symmetric brackets \cite{Marklada};
however, they are in one-to-one correspondence with the above ones, cf \cite{Mehta, Leonid}. 
\end{remarque}

\isthiswhatyouowant
We observe that  for Lie $n$-algebroids  all $k$-ary brackets with $k \geq n+2$ are trivial for degree reasons.
Moreover, for every {\LieInftyAlgebroid}, the following sequence:
 \begin{equation} \label{sequencelinear0}
\begin{tikzcd}[column sep=0.9cm,row sep=0.6cm]
\ldots\ar[r,"\dd^{(4)}"]& E_{-3} \ar[r,"\dd^{(3)}"]& E_{-2}\ar[r,"\dd^{(2)}"]&  E_{-1}\ar[r,"\rho"]& TM,
\end{tikzcd}
\end{equation}
is a complex of vector bundles that we call its \emph{linear part}. 

\isthiswhatyouowant
Also, for every {\LieInftyAlgebroid} $E$ over $M$, the $2$-ary bracket restricts to a skew-symmetric bilinear bracket on $\Gamma(E_{-1})$. Together with the anchor map, 
it defines an almost-Lie algebroid structure on $E_{-1}$. Therefore, by using the first item of Proposition \ref{prop:Almost}, we obtain:
\begin{proposition} \label{prop:fromNQtoSingularFoliations2}
For every {\LieInftyAlgebroid} $E$ over $M$ with anchor $\rho$, the sheafification of the pre-sheaf $\rho\big(\Gamma(E_{-1})\big)$ is a singular foliation.
\end{proposition}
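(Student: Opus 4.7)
The plan is to deduce the statement directly from Proposition~\ref{prop:Almost}(1) by observing that the {\LieInftyAlgebroid} data restrict to an almost-Lie algebroid structure on the vector bundle $E_{-1}$. First, I would note that the $2$-ary bracket $\{\cdot,\cdot\}_2$ sends $\Gamma(E_{-1})\times\Gamma(E_{-1})$ to $\Gamma(E_{-1})$ for degree reasons: the bracket has degree $+1$, so two inputs of degree $-1$ produce an output of degree $-1$. Its restriction is $\mathbb{R}$-bilinear and skew-symmetric thanks to the graded symmetry convention of Definition~\ref{def:LiInfinity}.

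Next, I would verify the two axioms of an almost-Lie algebroid for the triple $(E_{-1}, \{\cdot,\cdot\}_2, \rho)$. The Leibniz identity~(\ref{algebroid}) is precisely the content of~(\ref{robinson}) specialized to $x,y\in\Gamma(E_{-1})$. The algebra homomorphism condition~(\ref{algebroid2}), that is $\rho(\{x,y\}_2)=[\rho(x),\rho(y)]$, is exactly what is recorded in Remark~\ref{remarque2} as a consequence of the higher Jacobi identities together with the $\mathscr{O}$-linearity constraints of Definition~\ref{def:Linftyoids}.

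Then I would apply Proposition~\ref{prop:Almost}(1): for any open subset $U\subset M$ over which $E_{-1}$ is trivializable, the image $\rho(\Gamma_U(E_{-1}))$ is an involutive and locally finitely generated $\mathscr{O}(U)$-submodule of $\mathfrak{X}(U)$. Local finite generation is automatic from $E_{-1}$ being of finite rank, so $\Gamma_U(E_{-1})$ is locally generated by a finite frame whose $\rho$-image $\mathscr{O}$-spans $\rho(\Gamma_U(E_{-1}))$. Sheafifying the presheaf $U\mapsto\rho(\Gamma_U(E_{-1}))$ produces a subsheaf of $\mathfrak{X}$ which inherits local finite generation and bracket closure from the presheaf (both properties can be checked on trivializing neighborhoods, where the presheaf already satisfies them), hence is a singular foliation in the sense of Definition~\ref{def:sing_fol}.

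There is no substantial obstacle: every ingredient — the axioms of an almost-Lie algebroid, the transition from an almost-Lie algebroid to a singular foliation, and the passage from presheaf to sheaf — is already established earlier in the paper or follows at once from the definitions. The argument is essentially a matter of bookkeeping, the main observation being that the lowest-degree data of a {\LieInftyAlgebroid} are precisely the data of an almost-Lie algebroid, for which Proposition~\ref{prop:Almost}(1) is tailor-made.
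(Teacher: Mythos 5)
Your proposal is correct and follows exactly the paper's own route: the paper likewise observes that the $2$-ary bracket restricted to $\Gamma(E_{-1})$ together with the anchor yields an almost-Lie algebroid structure on $E_{-1}$ (the Leibniz rule coming from \eqref{robinson} and the anchor-morphism property from Remark \ref{remarque2}), and then invokes the first item of Proposition \ref{prop:Almost}. Your added bookkeeping about trivializing neighborhoods and sheafification is a harmless elaboration of the same argument.
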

\noindent We call this singular foliation the \emph{singular foliation of the {\LieInftyAlgebroid} structure on $E$}. 

\isthiswhatyouowant
The definition of {\LieInftyAlgebroid}s above, although elementary, is quite cumbersome and often hard to use---especially when dealing  with morphisms later on. 
$Q$-manifolds with purely non-negative degrees, called $NQ$-manifolds, are much more efficient objects and in one-to-one correspondence
with the  {\LieInftyAlgebroid}s. Let us define them.

\isthiswhatyouowant
  We call a sequence $ E:=(E_{-i})_{i \geq 1}$ of finite rank vector bundles over $ {M}$ 
  indexed by negative numbers an \emph{$N$-manifold $E \to M$}.\footnote{Strictly speaking, this is only an example of an $N$-manifold, but after the choice of what is called \emph{splittings}, every $N$-manifold takes this form. We will henceforth always talk about these \emph{split} graded manifolds without further mention.}
   An element $x\in \Gamma(E_{-i})$ is said to be \emph{of degree $-i$}, written as $|x|=-i$.   
  The sheaf of graded commutative $ {\mathscr O}$-algebras of smooth, real analytic, or holomorphic sections of the graded-symmetric algebra $ S(E^*) $ will be denoted by $\mathcal{E}$ and called the \emph{functions on the $N$-manifold $E \to M$}. 
  Here, it is understood that $E^* = \bigoplus_{i \geq 1}E_{-i}^* $ and sections of $E_{-i}^*$ are considered to be of degree $+i$.
  
  \isthiswhatyouowant
  By construction, $\mathcal{E}$ is a sheaf of graded commutative ${\mathscr O}$-algebras.  
  For every positive $k$ and $n$, sections of
  $$ \bigoplus_{i_1+\dots+i_k=n} E^{*}_{-i_{1}}\odot\ldots\odot E^{*}_{-i_{k}} , $$
where $\odot$ denotes the graded-symmetric tensor product,   will be said to be of 
 \emph{degree} $n$ and of \emph{arity} $k$ and  will be denoted by $\mathcal{E}_n^{(k)}$. 
  
  \isthiswhatyouowant
 Graded derivations of $\mathcal{E} $  will be called \emph{vector fields} on the $N$-manifold $E \to M$.
A vector field $Q$ is said to be of \emph{arity} $k$ if, for all function $F \in \mathcal{E}$ of arity $l$, the arity of $Q[F]$ is $l+k$.
Every vector field $Q$ can be decomposed as an infinite sum:
 $$ Q = \sum_{k \geq -1} Q^{(k)}$$
with $Q^{(k)}$ being a vector field of arity $k$. 
The \emph{degree} of a vector field is defined in in a similar manner. 
A vector field $Q$ of odd degree commuting with itself, i.e.~satisfying $Q^2:=\frac{1}{2}[Q,Q]=0$, is said to be \emph{homological}.

\begin{definition}
An $NQ$-\emph{manifold} is a pair $ ( E,Q)$ where $  E \to { M}$ is an $N$-manifold over some base ${ M}$ and where $Q$ is a homological vector field of degree $+1$. 
\end{definition}

\isthiswhatyouowant
By construction, for every $NQ$-manifold $(E,Q)$ with sheaf of functions $\mathcal{E}$, we have an isomorphism of sheaves $\mathcal{E}_0 \simeq {\mathscr O}$, while
$ \mathcal{E}_1   \simeq \Gamma(E_{-1}^*)$, so that the derivation $Q $ maps  ${\mathscr O}$ to $\Gamma(E_{-1}^*) $. By the derivation property,
there exists a unique morphism of graded vector bundles $\rho \colon E_{-1} \to T M$ such that 
 \begin{equation}\label{eq:anchor} \big\langle Q \, f, x  \big\rangle = \rho(x) \, f\hspace{0.7cm} \text{for all}\hspace{0.4cm}  f \in {\mathscr O},\, x \in \Gamma(E_{-1}) .\end{equation}
 Here  $\langle \,.\, , . \,\rangle$ stands for the duality pairing between sections of a vector bundle and sections of its dual and the application of a vector field to a function is understood when the former one precedes the latter one.
We call the vector bundle morphism $ \rho$ the \emph{anchor map of the $NQ$-manifold $(E,Q) $}. 

\isthiswhatyouowant For a degree one vector field $Q$, the arity decomposition takes the form
\begin{equation} \label{Qarity}
Q = \sum_{k \geq 0} Q^{(k)}
\end{equation}
The next result is classical by now \cite{Voronov} 
and describes the duality between {\LieInftyAlgebroid}s  and $NQ$-manifolds.

\begin{theoreme} \label{thm:fromNQtoLinfinity}
Let $E=(E_{-i})_{i\geq 1}$ be a sequence of vector bundles over a manifold $M$. There is a one-to-one correspondence between (split) $NQ$-manifolds and {\LieInftyAlgebroid} structures on $E$. The anchor $\rho$ of both is identified by means of Equation \eqref{eq:anchor} above. 
In addition, under this correspondence:
\begin{enumerate}
\item The differential $\dd$ of the linear part of the {\LieInftyAlgebroid} structure is obtained by dualizing the arity zero component $Q^{(0)}$ of $Q$,
i.e.~for all $\alpha \in \Gamma(E^*) $ and $ x \in \Gamma(E)$:
\begin{equation} \label{Q0}
\big\langle Q^{(0)}\, \alpha,x\big\rangle=(-1)^{|\alpha|}\big\langle \alpha,\dd(x)\big\rangle.
\end{equation} 
\item The $2$-ary bracket $\{ .\, ,. \}_2$ and the arity one component $Q^{(1)}$    are related by:
  \begin{align}\label{def:bracket} \big\langle Q^{(1)}\, \alpha,x\odot y\big\rangle=\rho(x)\, \langle\alpha,y\rangle-\rho(y)\, \langle\alpha,x\rangle-\langle \alpha, \{x,y\}_{2}\rangle,\end{align}
for all homogeneous elements $x,y \in \Gamma(E)$ and $\alpha \in \Gamma(E^*)$, with  the understanding that $\rho$ vanishes on $ E_{-i}$ for $i \neq 1$.
\item For every $n\geq3$, the $n$-ary brackets $\{\ldots\}_n \colon \Gamma\big(S^n (E)\big) \to \Gamma (E)$ and the component $Q^{(n-1)} \colon  \Gamma(E^*) \to \Gamma (S^n E^*)$
of arity $n-1$ of $Q$ are dual one to the other. 
\end{enumerate}
\end{theoreme}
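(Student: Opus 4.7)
The plan is to unpack the correspondence in each direction and to check that the two structural identities --- $Q^2 = 0$ on the $NQ$-side and the higher Jacobi identities \eqref{superjacobi} on the Lie $\infty$-algebra side --- match. Since the $N$-manifold $E$ is split, its functions form the sheaf $\mathcal{E}$ of sections of the graded-symmetric algebra $S(E^*)$, graded by \emph{arity} in addition to degree. Any graded derivation $Q$ of $\mathcal{E}$ is determined by its restriction to the generating subsheaves $\mathscr{O}$ and $\Gamma(E^*)$, and then extends uniquely to all of $\mathcal{E}$ by the graded Leibniz rule. This reduces the correspondence, in both directions, to an analysis on these generators.

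First I will analyze, for a degree $+1$ vector field $Q$, where each arity component $Q^{(k)}$ can send the generators. A function $f \in \mathscr{O}$ has arity $0$ and degree $0$, so $Q^{(k)}[f] \in \mathcal{E}_1^{(k)}$; the only non-trivial possibility is $k=1$, and by the derivation property together with $\mathscr{O}$-linearity the resulting map $\mathscr{O} \to \Gamma(E_{-1}^*)$ is the dual of a vector bundle morphism $\rho \colon E_{-1} \to TM$, giving the anchor as in \eqref{eq:anchor}. A section $\alpha \in \Gamma(E^*_{-i})$ has arity $1$ and degree $i$, so $Q^{(k)}[\alpha] \in \mathcal{E}_{i+1}^{(k+1)}$. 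For $k=0$ this dualizes to a bundle map $\dd^{(i+1)} \colon E_{-i-1} \to E_{-i}$ and recovers the linear part \eqref{sequencelinear0}. For $k \geq 1$, dualization yields a graded-symmetric, degree $+1$ map $\{\cdots\}_{k+1}$ on $\Gamma(E)$ which is $\mathscr{O}$-multilinear whenever the Leibniz rule forces no anchor correction; when $k=1$ and one argument lies in $\Gamma(E_{-1})$, the derivation pulls in the $Q^{(1)}$-contribution on $\mathscr{O}$, producing precisely the anchor-Leibniz rule \eqref{robinson}.

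Going the other way, starting from a Lie $\infty$-algebroid structure I will assemble the components $Q^{(k)}$ on the generators via the formulas \eqref{Q0}, \eqref{def:bracket} and the obvious analogues for $n \geq 3$, and then extend by the graded Leibniz rule. The $\mathscr{O}$-multilinearity of the higher brackets (with the one exception built into Definition \ref{def:Linftyoids}) together with \eqref{robinson} make this extension well-defined as a derivation, and the arity-and-degree count makes the assembled $Q$ a degree $+1$ vector field. By construction the two procedures are mutually inverse, so the set-theoretic correspondence is established.

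The main obstacle is the equivalence of $Q^2 = 0$ with the higher Jacobi identities \eqref{superjacobi}. My plan is to decompose $Q^2 = \frac{1}{2}[Q,Q]$ by arity as $\sum_{n \geq 0} R^{(n)}$ with $R^{(n)} = \frac{1}{2}\sum_{i+j=n}[Q^{(i)}, Q^{(j)}]$, and then check the vanishing of each $R^{(n)}$ separately on the generators. Pairing $R^{(n-2)}[\alpha]$, for $\alpha \in \Gamma(E^*)$, against an $n$-fold graded-symmetric product $x_1 \odot \cdots \odot x_n$ and using that each $Q^{(i)}$ acts as a graded derivation, the inner brackets distribute over the factors: the un-shuffle sum in \eqref{superjacobi} arises from the choice of which subset of the $x_j$ feeds into the inner bracket, and the Koszul signs arise from the graded-symmetric tensor convention. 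Evaluating $R^{(n)}$ on $f \in \mathscr{O}$ reproduces in an analogous fashion the anchor-compatibility content of the higher Jacobi identities. The delicate point is the sign bookkeeping introduced by the dualization between brackets and $Q$-components; once these signs are settled, the remainder is a routine computation.
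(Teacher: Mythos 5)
The paper does not actually prove Theorem \ref{thm:fromNQtoLinfinity}: it is stated as a classical fact with a pointer to Voronov, so there is no in-paper argument to compare yours against. Your outline is the standard proof of this duality and is essentially sound: a degree-one derivation of $\Gamma(S(E^*))$ is determined by its restriction to the generators $\mathscr{O}$ and $\Gamma(E^*)$; the arity-and-degree count on these generators produces exactly the anchor (only $Q^{(1)}$ can act nontrivially on $\mathscr{O}$), the differential $\dd$ from $Q^{(0)}$, and the $(k+1)$-ary brackets from $Q^{(k)}$, with the single Leibniz anomaly \eqref{robinson} arising precisely when $k=1$ and an argument has degree minus one; and the arity decomposition of $\tfrac{1}{2}[Q,Q]$ evaluated against symmetric products of sections yields the higher Jacobi identities \eqref{superjacobi}, the un-shuffles coming from distributing the inner derivation over the factors. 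The one step you compress is the evaluation of $[Q,Q]$ on $\mathscr{O}$ itself: its vanishing encodes $\rho\circ\dd^{(2)}=0$ and $\rho(\{x,y\}_2)=[\rho(x),\rho(y)]$, which are not literally among the axioms of Definition \ref{def:Linftyoids} but only derived from them (this is the content of Remark \ref{remarque2}). For the correspondence to be a genuine bijection rather than an inclusion, you must verify that these two identities follow from the higher Jacobi identities together with the stated $\mathscr{O}$-linearity properties of the brackets --- which they do, by the usual trick of inserting $fz$ into the $n=2$ and $n=3$ Jacobi identities and comparing with $f$ times the unperturbed identity --- so this is a gap of exposition rather than of substance.
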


\isthiswhatyouowant This theorem justifies the following convention:

\begin{convention}
We shall denote {\LieInftyAlgebroid}s as pairs $(E,Q)$, with $Q$ the homological vector field of the corresponding $NQ$-manifold $E$.
As before,  ${\mathcal E}=\Gamma\big(S(E^*)\big)$ is its sheaf of functions. Its linear part  shall be denoted by $(E,\dd,\rho)$, with $\rho$ being the anchor map.
\end{convention}

	\subsubsection{Morphisms of {\LieInftyAlgebroid}s}

\isthiswhatyouowant

\begin{definition}
\label{def:morphism}
A {\LieInftyAlgebroid} morphism
from a {\LieInftyAlgebroid} $ ( E',Q')$ to a {\LieInftyAlgebroid} $ (E,Q)$, 
with sheaves of functions $\mathcal{E'}$ and $ {\mathcal E}$, respectively, is a graded commutative algebra morphism $\Phi \colon {\mathcal E}\to \mathcal{E}'$  which intertwines $Q$ and $Q'$:
\begin{equation}\label{Qmorphism}
\Phi\circ Q=Q'\circ\Phi.
\end{equation}
\end{definition}

\isthiswhatyouowant
Every {\LieInftyAlgebroid} morphism $\Phi$ induces a smooth map $\phi \colon M' \to M$ that we call the \emph{base morphism}.
We say that a {\LieInftyAlgebroid} morphism $\Phi$ is \emph{over the identity of $M$}, if  the base morphism $\phi$ is the identity map. 
It also induces a graded vector bundle morphism $\phi_0\colon E'_\bullet \to E_\bullet $ over $\phi$ that we call the \emph{linear part of $\Phi$}.
%

\begin{remarque}
	\normalfont
	\label{rmk:linearparts}
	Equation (\ref{Qmorphism}), 
	restricted to terms of arity $0$, implies that 
	the linear part $\phi_{0}$ of a  {\LieInftyAlgebroid} morphism $\Phi$ from $(E',Q')$ to $(E,Q)$
	is a chain map between their respective linear parts:
	\begin{center}
		\begin{tikzcd}[column sep=1.3cm,row sep=1.4cm]
			\ldots\ar[r,"\dd'"] & E'_{-3} \ar[r,"\dd'"]\ar[d,"\phi_0"]& E'_{-2} \ar[d,"\phi_0"]\ar[r,"\dd'"]&  E'_{-1} \ar[d,"\phi_0"]\ar[r,"\rho'"]& TM' \ar[d,"\phi_\ast"].\\
			\ldots\ar[r,"\dd"]& E_{-3} \ar[r,"\dd"]& E_{-2}\ar[r,"\dd"]&  E_{-1}\ar[r,"\rho"]& TM
		\end{tikzcd}
	\end{center}
\end{remarque}

\isthiswhatyouowant
An ${\mathscr O}$-linear map from $ {\mathcal E}:=\Gamma\big(S(E^*)\big)$ to ${\mathcal E}':=\Gamma\big(S(E'^*)\big)$, which is not necessarily a {\LieInftyAlgebroid} morphism, is said 
to be of \emph{arity/degree $k$}
if it maps functions of arity/degree $l$ in $\mathcal{E}$  to functions of arity/degree $l+k$  in $\mathcal{E}'$. 
By construction, the component $ \Phi^{(k)}$ is such that the arity of $\Phi^{(k)}(F) $ is $k+l$ for every function $F$ of arity $l$.
In particular, $\Phi$ can be decomposed into components according to  arity, 
which allows us to consider $\Phi$ as a formal sum:
\begin{equation}
\Phi=\sum_{k \in {\mathbb Z}}\ \Phi^{(k)}\, .
\end{equation}
The component $ \Phi^{(k)}$ of arity $k\geq 0$ maps $\Gamma(E^*) $ to  $ \Gamma\big(S^{k+1}(E'^*) \big)$. By $\cO$-linearity, it gives rise to a section of $S^{k+1}(E'^*)  \otimes E$
that we denote by $\phi_k$ and call the \emph{$k$-th Taylor coefficient of $\Phi$}. For all $\alpha \in \Gamma(E^*)$ one has, by definition, 
\begin{equation} \Phi^{(k)}(\alpha) = \langle \phi_{k}, \alpha\rangle. \label{Taylor}\end{equation} It deserves to be noticed that $\Phi\colon  {\mathcal E} \to  {\mathcal E}'$ is a graded morphism of algebras if and only if $ \Phi^{(i)}=0$ for all $i < 0$ and 
for all $k,n \in {\mathbb N}$ and all  $\alpha_1, \dots, \alpha_k \in \Gamma(E^*)$ one has:
\begin{equation}
\label{boum}
\Phi^{(n)}(\alpha_1 \odot \dots \odot \alpha_k)\ =\sum_{i_1+\dots+i_k =n } \Phi^{(i_1)}(\alpha_{1})  \odot \cdots \odot \Phi^{(i_k)}(\alpha_{k}).
\end{equation}
$\Phi$ is a morphism of $N$-manifolds if, in addition, it has degree zero. Evidently, every algebra morphism is determined uniquely by its Taylor coefficients. 

\isthiswhatyouowant The following  notions will be of importance later on. They show  in part similarities with the AKSZ-formalism \cite{AKSZ,Roytenberg2,Kotov} as well as the formalism developed in \cite{Bojowald}.

\isthiswhatyouowant
Let $(E,Q)$ and $(E',Q')$ be two {\LieInftyAlgebroid}s over $M$ with sheaves of functions $ {\mathcal E}$ and $ {\mathcal E}'$, respectively.
We define a degree one operator on the space of linear maps $\mathrm{Lin}(\mathcal{E},\mathcal{E}')$ from $\mathcal{E}$ to $\mathcal{E}'$ by
\begin{align*}
\Qtot \colon \mathrm{Lin}(\mathcal{E},\mathcal{E}') & \to \mathrm{Lin}(\mathcal{E},\mathcal{E}'),\\
\Psi\hspace{0.5cm} &\mapsto  Q'\circ\Psi-(-1)^{|\Psi|}\Psi\circ Q 
\end{align*}
for every map of graded manifolds $\Psi\colon\mathcal{E}\to\mathcal{E}'$ of homogeneous degree $|\Psi|\in\mathbb{Z}$. 
This operator squares to zero because both vector fields are homological. 
It generalizes the BV-operator in the AKSZ-construction. One evidently has the following statement:
\begin{lemme} Let $(E,Q)$and $ (E',Q')$ be Lie $\infty$-algebroids.  A graded algebra morphism $ \Phi \colon {\mathcal E} \to {\mathcal E}'$ is a Lie $\infty$-algebroid morphism if and only if it is a degree zero  $\Qtot$-cocycle.
\end{lemme}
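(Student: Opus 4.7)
The plan is to observe that this lemma amounts to an unpacking of the definitions, using only the conventions already fixed in the text. The key point is that a graded commutative algebra morphism $\Phi \colon \mathcal{E} \to \mathcal{E}'$ between the function sheaves of two $N$-manifolds is, under the standing convention recalled just before the lemma (together with the vanishing $\Phi^{(i)} = 0$ for $i<0$ and the compatibility \eqref{boum}), of homogeneous degree zero. Hence $|\Phi| = 0$ and the sign factor $(-1)^{|\Phi|}$ that appears in the definition of $\Qtot$ reduces to $+1$.

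Applying the definition of $\Qtot$ directly then gives
\begin{equation*}
\Qtot(\Phi) \;=\; Q' \circ \Phi - \Phi \circ Q .
\end{equation*}
Therefore, the condition $\Qtot(\Phi) = 0$ is equivalent to $Q' \circ \Phi = \Phi \circ Q$, which is precisely the intertwining relation \eqref{Qmorphism} appearing in Definition~\ref{def:morphism}. Conversely, any graded algebra morphism satisfying \eqref{Qmorphism} manifestly satisfies $\Qtot(\Phi) = 0$ by the same one-line computation.

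I do not anticipate any genuine obstacle here: the lemma is tautological once the degree-zero convention for algebra morphisms of $N$-manifolds is invoked, and once the sign in the definition of $\Qtot$ is evaluated on a degree-zero element. The only point worth writing out explicitly in the final proof is the justification that $|\Phi| = 0$, so that the formula for $\Qtot$ specializes cleanly to the graded commutator $Q' \circ \Phi - \Phi \circ Q$; everything else is a direct comparison with Definition~\ref{def:morphism}.
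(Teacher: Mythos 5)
Your proof is correct and coincides with what the paper intends: the lemma is introduced with ``One evidently has the following statement'' and no proof is given, precisely because it reduces to your observation that for degree-zero $\Phi$ the sign $(-1)^{|\Phi|}$ equals $+1$, so that $\Qtot(\Phi)=0$ is literally the intertwining relation \eqref{Qmorphism} of Definition~\ref{def:morphism}. The one caveat is that in this paper's conventions a graded algebra morphism is \emph{not} automatically homogeneous of degree zero (cf.\ the remark that ``$\Phi$ is a morphism of $N$-manifolds if, in addition, it has degree zero''), which is why ``degree zero'' is imposed explicitly in the statement rather than being deduced as you propose; this does not affect the validity of the equivalence, only the justification of $|\Phi|=0$.
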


\begin{definition}
	For every graded algebra morphism $\Phi \colon\mathcal{E}\to\mathcal{E}'$,  a homogeneous map $W \colon\mathcal{E}\to\mathcal{E}'$ of degree $k$ which satisfies
	\begin{equation}
	\label{eq:PhiDiff}
	W(F\odot G) = W(F) \odot \Phi (G) + (-1)^{k|F|} \Phi (F) \odot W (G)
	\end{equation}
	for all homogeneous functions $F,G \in {\mathcal E}$ is called a \emph{$\Phi$-derivation of degree $k$}.
	 We denote the space of $\Phi$-derivations  by $ {\mathfrak X}({\mathcal E} \stackrel{\Phi}{\to}  {\mathcal E}')$ and its restriction to $\cO$-linear ones  by $ {\mathfrak X}_{\mathrm{vert}}({\mathcal E} \stackrel{\Phi}{\to}  {\mathcal E}')$.
\end{definition}
\isthiswhatyouowant To give an example:
\begin{lemme} For every graded algebra morphism  $\Phi\colon  {\mathcal E} \to {\mathcal E}'$ of degree $k$:
	\label{lem:DeltaPhi}
	\begin{enumerate}
	\item  $	{\Qtot (\Phi)}$ is a $ \Phi$-derivation of degree $k+1$.
	\item 	${\Qtot (\Phi)}$ is ${\mathcal O} $-linear if $\Phi$ is $\cO$-linear and $\rho' = \rho \circ \phi_0$. 
\end{enumerate}
\end{lemme}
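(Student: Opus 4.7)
The plan is to prove both items by a direct computation, expanding the definition $\Qtot(\Phi)=Q'\circ\Phi-(-1)^{|\Phi|}\Phi\circ Q$ and invoking only (i) the fact that $Q$ and $Q'$ are (graded) derivations of $\mathcal{E}$ and $\mathcal{E}'$ respectively, and (ii) that $\Phi$ is a graded algebra morphism, i.e.\ $\Phi(F\odot G)=\Phi(F)\odot\Phi(G)$ up to the appropriate Koszul sign. Writing $W:=\Qtot(\Phi)$, the degree of $W$ is $|\Phi|+1=k+1$ by construction, so only the $\Phi$-derivation identity \eqref{eq:PhiDiff} has to be checked for item~1.

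For item 1, I would compute $W(F\odot G)$ for homogeneous $F,G\in\mathcal{E}$ by applying $Q'$ to $\Phi(F)\odot\Phi(G)$ and $\Phi$ to $Q(F)\odot G+(-1)^{|F|}F\odot Q(G)$. The Leibniz rules for $Q$ and $Q'$ produce four terms each; after using $\Phi(F\odot G)=\Phi(F)\odot\Phi(G)$ (with the Koszul sign convention appropriate to the degree of $\Phi$) and then pairing up the terms with matching arguments, the two terms containing $\Phi(F)\odot\Phi(G)$ without any $Q$ cancel, and the remaining four terms regroup as
\begin{equation*}
 \bigl(Q'\Phi(F)-(-1)^{k}\Phi(Q(F))\bigr)\odot\Phi(G)\;+\;(-1)^{(k+1)|F|}\Phi(F)\odot\bigl(Q'\Phi(G)-(-1)^{k}\Phi(Q(G))\bigr),
\end{equation*}
which is exactly $W(F)\odot\Phi(G)+(-1)^{(k+1)|F|}\Phi(F)\odot W(G)$, establishing \eqref{eq:PhiDiff} for $W$ of degree $k+1$.

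For item 2, I would test $\mathcal{O}$-linearity of $W$ on a product $fF$ with $f\in\mathcal{O}$ and $F\in\mathcal{E}$. Using $\mathcal{O}$-linearity of $\Phi$ (so that $\Phi(fF)=f\Phi(F)$) together with the Leibniz property of $Q$ and $Q'$, all terms proportional to $\Phi(Q(F))$ and $Q'(\Phi(F))$ cancel against $fW(F)$, leaving
\begin{equation*}
 W(fF)-fW(F)\;=\;\bigl(Q'(f)-(-1)^{k}\Phi(Q(f))\bigr)\odot\Phi(F).
\end{equation*}
The factor in parentheses is a section of $\Gamma((E')^{*}_{-1})$; pairing it with an arbitrary $x'\in\Gamma(E'_{-1})$ produces $\rho'(x')[f]-\rho(\phi_{0}(x'))[f]$ by \eqref{eq:anchor} and the definition of the linear part $\phi_{0}$ of $\Phi$ (which dualises to $\Phi$ acting on $\Gamma(E^{*}_{-1})$). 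Hence the hypothesis $\rho'=\rho\circ\phi_{0}$ forces this pairing to vanish for every $x'$, proving $W(fF)=fW(F)$.

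No real obstacle arises: the whole argument is a formal manipulation with the Leibniz rule and the definition of the anchor, and the only care required is in the Koszul signs of item~1 when $k$ is odd. In particular, no use is made of $(Q')^{2}=0=Q^{2}$ beyond what is already encoded in the fact that $\Qtot$ is well defined.
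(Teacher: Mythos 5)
Your proposal is correct and follows essentially the same route as the paper: item 1 by expanding $\Qtot(\Phi)(F\odot G)$ with the Leibniz rules for $Q,Q'$ and the morphism property of $\Phi$, and item 2 by reducing everything to the single defect term $Q'(f)-\Phi(Q(f))=(\rho'-\rho\circ\phi_0)^*(\dd_{\mathrm{dR}}f)$, which vanishes by hypothesis (the paper evaluates directly on $f\in\mathscr{O}$ and you test on $fF$, but these are the same argument since a $\Phi$-derivation is $\mathscr{O}$-linear exactly when it kills $\mathscr{O}$). Your handling of the Koszul signs for general degree $k$ is, if anything, slightly more careful than the paper's, which implicitly works with $k=0$.
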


\begin{proof}
 The first item follows from a straightforward computation:
	\begin{eqnarray}
\label{eq:quasider0}
	{\Qtot (\Phi)}(F  \odot G)& =& Q'\left( \Phi (F)  \odot \Phi(G) \right)  -\Phi\left( Q(F)  \odot G + (-1)^{|F| } F  \odot Q(G) \right)   \\
	& =& Q'\circ  \Phi (F)   \odot  \Phi (G)   - \Phi \circ Q(F)  \odot \Phi( G) \nonumber \\& & + (-1)^{|F|} \Phi(F)  \odot \left(  Q' \circ  \Phi(G) - \Phi \circ  Q (G)   \right) \nonumber \\ &=&
	 {\Qtot (\Phi)}(F)  \odot \Phi(G) - (-1)^{|F|} \Phi(F) \odot  {\Qtot (\Phi)} (G), \nonumber
	\end{eqnarray}
	for all degree-homogeneous $F,G \in {\mathcal E}$.  The second item follows from
		$$\begin{array}{rcll}
		{\Qtot (\Phi)}(f)& =&   Q'(f) -  \Phi \circ Q(f) & \hbox{(since $\Phi(f)=f$) } \\
	& =&   (\rho')^* (\dd f) -  \Phi^{(0)} \circ \rho^* (\dd f) & \hbox{($\Phi$ and $ \Phi^{(0)}$ coincide on $\Gamma(E^*_{-1})$) }\\
	&=&  (  \rho' - \rho \circ \phi_0 )^*(\dd f)& \hbox{(by duality)} \\
	&=& 0. & 	\end{array} $$
\end{proof} 
 \isthiswhatyouowant  Such as algebra morphisms, also a $\Phi$-derivation is determined uniquely by its Taylor coefficients $w_{i} \in \Gamma\left(S^{i+1}(E'^*) \otimes E\right)$, where now $i \geq -1$:
\begin{equation}
\label{boum2} W^{(n)}  (\alpha_1 \odot \cdots \odot \alpha_k) =  \sum_{j=1}^k \sum_{i_1+ \dots+  i_k = n } \epsilon_j \, \Phi^{(i_1)} (\alpha_1) \odot \cdots \odot \langle w_{i_j},\alpha_j\rangle \odot \cdots \odot   \Phi^{(i_k)} (\alpha_k)
\end{equation} 
with $ \epsilon_j =  (-1)^{|W|(|\alpha_1|+ \dots + |\alpha_{j-1}|)}$; again, for every $\alpha \in \Gamma(E^*)$, 
$\langle w_k,\alpha \rangle := W^{(k)}(\alpha)$. Conversely, given an arbitrary graded algebra morphism $\Phi \colon {\mathcal E}\to {\mathcal E}'$ and any section $w \in \Gamma\left(S^\bullet(E'^*) \otimes E\right)$, there is a unique $\Phi$-derivation $W=: w^\Phi$ whose arity $n$ component satisfies Equation \eqref{boum2} where $w_k$ is the restriction of $w$ to $\Gamma\left(S^{k+1}(E'^*) \otimes E\right)$. 


\isthiswhatyouowant
Now let $\Phi$ be a  {\LieInftyAlgebroid} morphism from $(E',Q')$ to $(E,Q)$. 
By a straightforward computation one shows that  for every $\Phi$-derivation $W$ of degree $k$, the linear map $\Qtot(W)$ is a ${\Phi}$-derivation 
of degree $k+1$.  This implies in particular:

\begin{lemme}
	\label{lem:complexPhideriv}
	For every {\LieInftyAlgebroid} morphism $\Phi$  from $(E',Q')$ to $(E,Q)$, the graded space $  {\mathfrak X}({\mathcal E} \stackrel{\Phi}{\to}  {\mathcal E}')$ of $\Phi$-derivations is a complex when equipped with the differential $\Qtot$. $ ({\mathfrak X}_{\mathrm{vert}}({\mathcal E} \stackrel{\Phi}{\to}  {\mathcal E}'),\Qtot)$ is a subcomplex.
\end{lemme}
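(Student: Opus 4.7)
The plan is to verify three claims: that $\Qtot$ sends $\Phi$-derivations of degree $k$ to $\Phi$-derivations of degree $k+1$; that $\Qtot^{2}=0$; and that the subspace $\mathfrak{X}_{\mathrm{vert}}$ is stable under $\Qtot$.

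First, I would execute the direct computation that the paper announces as "straightforward", namely verify that $\Qtot(W)$ satisfies~\eqref{eq:PhiDiff} with $k$ replaced by $k+1$ whenever $W$ satisfies~\eqref{eq:PhiDiff} with degree $k$. Given homogeneous $F,G \in \mathcal{E}$, one expands
\begin{equation*}
\Qtot(W)(F\odot G) \;=\; Q'\!\bigl(W(F\odot G)\bigr)\;-\;(-1)^{k}\,W\!\bigl(Q(F\odot G)\bigr)
\end{equation*}
by applying, on the inner expressions, the Leibniz rules for $Q$ and $Q'$ (both of degree one) together with the $\Phi$-derivation identity for $W$. Eight terms appear. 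Four of them regroup directly into $\Qtot(W)(F)\odot\Phi(G)+(-1)^{(k+1)|F|}\Phi(F)\odot\Qtot(W)(G)$, the desired degree-$(k+1)$ $\Phi$-derivation expression. The remaining four are cross-terms of the shape $W(F)\odot Q'\Phi(G)$ paired against $W(F)\odot\Phi(QG)$, and a symmetric pair on the other factor; these cancel pairwise upon invoking the intertwining relation $Q'\circ\Phi=\Phi\circ Q$ of~\eqref{Qmorphism} that defines $\Phi$ as a Lie $\infty$-algebroid morphism.

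Second, $\Qtot^{2}=0$ is an immediate unfolding:
\begin{equation*}
\Qtot^{2}(W)\;=\;Q'\bigl(Q'W-(-1)^{k}WQ\bigr)\;-\;(-1)^{k+1}\bigl(Q'W-(-1)^{k}WQ\bigr)Q\;=\;(Q')^{2}W\;-\;WQ^{2},
\end{equation*}
the two mixed $Q'WQ$ terms cancelling because they enter with opposite signs, and the two extremal pieces vanishing by the homological property of $Q$ and $Q'$.

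Third, for the subcomplex statement, I would expand $\Qtot(W)(fF)-f\,\Qtot(W)(F)$ for $f\in\mathcal{O}$ and $F\in\mathcal{E}$ by the same Leibniz/derivation combinatorics. Using the $\mathcal{O}$-linearity of $W$ (which forces $W(f)=0$ and $W(fG)=fW(G)$) together with the intertwining identity restricted to functions, namely $Q'(f)=\Phi(Q(f))$ for $\Phi$ over the identity (exactly as in item~$2$ of Lemma~\ref{lem:DeltaPhi}), the obstruction terms collapse, yielding $\Qtot(W)\in\mathfrak{X}_{\mathrm{vert}}$.

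The only step requiring genuine attention is the first one, and there the sole obstacle is Koszul-sign bookkeeping: one must carefully track the signs generated by moving $Q,Q'$ past $\Phi(F)$ and by applying the $\Phi$-derivation rule with a degree-$k$ operator. Steps two and three are then shorter verifications modelled on exactly the same expansion pattern.
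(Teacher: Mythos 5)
Your first two steps are correct and are precisely what the paper has in mind: the eight-term expansion of $\Qtot(W)(F\odot G)$, with four terms reassembling into the $\Phi$-derivation identity in degree $k+1$ and the four cross-terms cancelling against $Q'\circ\Phi-\Phi\circ Q=0$, is the announced ``straightforward computation'' modelled on \eqref{eq:quasider0}, and $\Qtot^{2}=0$ follows as you say from $Q^{2}=(Q')^{2}=0$.

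Your third step, however, does not go through as written. A $\Phi$-derivation is $\mathcal{O}$-linear exactly when it annihilates $\mathcal{O}$, so the subcomplex claim reduces to $\Qtot(W)(f)=0$ for all $f\in\mathcal{O}$. But
\begin{equation*}
\Qtot(W)(f)\;=\;Q'\bigl(W(f)\bigr)-(-1)^{k}\,W\bigl(Q(f)\bigr)\;=\;-(-1)^{k}\,W\bigl(\rho^{*}\dd_{\mathrm{dR}}f\bigr),
\end{equation*}
and $Q(f)=\rho^{*}\dd_{\mathrm{dR}}f$ lies in $\Gamma(E_{-1}^{*})$, \emph{not} in $\mathcal{O}$: the $\mathcal{O}$-linearity of $W$ is silent about this term, and the intertwining identity restricted to functions only controls the other cross-term, $\bigl(Q'(f)-\Phi(Q(f))\bigr)\odot W(F)$, not this one. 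By \eqref{rootdef} the surviving quantity is $-(-1)^{k}\,\mathrm{rt}(W)[f]$, the root of $W$, which need not vanish --- compare item 2 of Lemma \ref{lem:DeltaPhi}, where the analogous conclusion already requires the extra hypothesis $\rho'=\rho\circ\phi_{0}$. To close the gap you must either restrict the degree (for $|W|=k\le -1$ and Taylor coefficients of arity $n\ge 0$, the arity-$n$ component of $W(\rho^{*}\dd_{\mathrm{dR}}f)$ lives in $\Gamma(S^{n+1}(E'^{*}))_{k+1}$, which vanishes since $k+1\le 0<n+1$; this covers the degree $-1$ homotopy derivations $H_{t}$), or replace $\Qtot$ by the arity-zero differential $\Qtoto$, for which $Q^{(0)}(f)=0$ makes verticality trivially preserved --- and this is the version actually used in Sections \ref{sec:LieInftyDefor} and \ref{sec:coho2}. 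Without one of these restrictions, the cancellation you assert in step three does not take place.
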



	\subsubsection{Homotopies between morphisms of {\LieInftyAlgebroid}s}


\isthiswhatyouowant
Let us define what we mean by a piecewise-{\smooth} path valued in {\LieInftyAlgebroid} morphisms from $(E',Q')$ to $(E,Q)$. 
A \emph{piecewise-{\smooth}}
\emph{path valued in  $\Gamma(B)$}, with $B$ a vector bundle over $M$,
is a map  $\psi \colon M \times I \to B$ to the manifold $B$ such 
that for all fixed $t \in I\equiv [0,1]$, the map $ m \mapsto \psi(m,t)$ is a section of $B$ 
and there exists a subdivision $a=t_0 < \dots < t_k=b$ of $I=[a,b]$ such that the map $\psi \colon M \times ]t_{i},t_{i+1} [\to B$
is of class {\smooth}.

\begin{definition}
\label{def:pickyAboutPaths}
Let $(E,Q)$ and $(E',Q')$ be {\LieInftyAlgebroid}s over $M$.
A path $t\mapsto  \Phi_t$ valued in {\LieInftyAlgebroid} morphisms from $E'$ to $E$
is said to be  \emph{continuous piecewise-{\smooth}} 
when for all $k \in {\mathbb N}$, its Taylor coefficients $ t \mapsto \phi_k (t)$ of arity $k$ is a piecewise-{\smooth}  path  valued
in $\Gamma\big(S^{k+1}\big(E'^*\big) \otimes E \big)$, which is also continuous---even at the junction points..
\newline
\indent Given a piecewise-{\smooth} path $t \mapsto  \Phi_t$ valued in {\LieInftyAlgebroid} morphisms from $(E',Q')$ to $(E,Q)$,
we say that a path $t \mapsto H_t $, with $H_t$ a $\Phi_t$-derivation,  is \emph{piecewise smooth}
if its Taylor coefficients $ t \mapsto h_k (t) $ of arity $k$
is a piecewise-smooth path valued in $\Gamma\big(S^{k+1}\big(E'^*\big) \otimes E \big)$.
\end{definition}

\begin{remarque}
\normalfont
A subtle point in this definition is that the subdivision of $ I$ with respect to which $\phi_k(t)$ is piecewise-$ C^\infty$ may depend on $k$.
The derivative $\tfrac{d}{dt} \Phi_t $ is well-defined for all $ t \in I$  which are not in the countable set of points delimiting all these subdivisions. 
For Lie $n$-algebroids, since the components of arity $k$ of $\big(S(E'^*)\otimes E\big)_{0}$ vanish for $k$ large enough, this subdivision of $I$
can be chosen to be the same for all values of $k \geq 0$.
\end{remarque}

\isthiswhatyouowant
It is routine to check that $ \tfrac{\dd }{\dd t} \Phi_t$ is a $\Phi_t$-derivation of degree $0$ for each value of $t$ for which it is defined: it satisfies $\Qtot \left( \tfrac{\dd}{\dd t} \Phi_t\right)=0$,
i.e.~it is a cocycle for the complex of Lemma \ref{lem:complexPhideriv} for $\Phi$ replaced by $\Phi_t$. This justifies the following definition, whose rough idea is that homotopies are curves of {\LieInftyAlgebroid} morphisms whose derivatives are coboundaries
for the complex of $\Phi$-derivations:

\begin{definition}
\label{def:homotopy}
Let $\Phi$ and $\Psi$ be two {\LieInftyAlgebroid} morphisms from $(E',Q')$ to $(E,Q)$ covering the identity morphism. A \emph{homotopy between 
$\Phi$ and $\Psi$} is a pair $(\Phi_{t},H_{t})$ consisting of:
\begin{enumerate}
\item a continuous piecewise-$C^{\infty}$ path $t\mapsto\Phi_t$ valued in {\LieInftyAlgebroid} morphisms between $E'$ and $E$ such that:
\begin{equation*}
\Phi_0=\Phi \qquad {\hbox{ and }} \qquad  \Phi_1=\Psi,
\end{equation*}
\item a piecewise smooth path $t\mapsto H_t $ valued in $\Phi_t$-derivations of degree $-1$, such that the following equation:
\begin{equation}\label{Hortense}
\frac{\dd\Phi_t}{\dd t}=\Qtot \left( H_{t} \right) \equiv Q' \circ H_t + H_t \circ Q 
\end{equation}
holds for every $t\in\,[0,1]$ where it is defined.
\end{enumerate}
\end{definition}

\begin{remarque}
\normalfont
Let us be more precise about the meaning of Equation (\ref{Hortense}): It should be understood as meaning that 
the equality 
$$\frac{\dd\Phi^{(k)}_t}{\dd t}=\left( \Qtot \left( H_{t}\right)\right)^{(k)} = \sum_{i=0}^k \left( (Q')^{(i)} \circ H_{t}^{(k-i)}  + H_{t}^{(k-i)} \circ Q^{(i)}  \right)   $$
holds for every $k \in {\mathbb N}$ and every $t\in\,[0,1]$ where it is defined, i.e.~which are not junction points of the partition of $I$ with respect to which 
$\Phi^{(k)}_t$ is piecewise-{\smooth}.
\end{remarque}

\isthiswhatyouowant
The following fact is obvious:
\begin{proposition}\label{equivhomotopy}
Homotopy of {\LieInftyAlgebroid} morphisms is an equivalence relation, denoted by $\sim$, 
 which is compatible with composition.
\end{proposition}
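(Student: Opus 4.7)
The plan is to verify reflexivity, symmetry, and transitivity of $\sim$, and then compatibility with composition, each by constructing an explicit homotopy $(\Phi_t,H_t)$ satisfying Equation~(\ref{Hortense}).

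\emph{Reflexivity.} For any {\LieInftyAlgebroid} morphism $\Phi$, take $\Phi_t:=\Phi$ and $H_t:=0$ for all $t\in[0,1]$. Both paths are trivially continuous piecewise-$C^\infty$, and $\frac{\dd\Phi_t}{\dd t}=0=\Qtot(0)$.

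\emph{Symmetry.} Given a homotopy $(\Phi_t,H_t)$ from $\Phi$ to $\Psi$, define $\tilde{\Phi}_t:=\Phi_{1-t}$ and $\tilde{H}_t:=-H_{1-t}$. The paths remain continuous piecewise-$C^\infty$, since the underlying subdivisions of $[0,1]$ are simply reversed. By the chain rule, $\frac{\dd\tilde{\Phi}_t}{\dd t}=-\frac{\dd\Phi_s}{\dd s}\big|_{s=1-t}=-\Qtot(H_{1-t})=\Qtot(-H_{1-t})=\Qtot(\tilde{H}_t)$.

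\emph{Transitivity.} Given $(\Phi^{(1)}_t,H^{(1)}_t)$ from $\Phi$ to $\Psi$ and $(\Phi^{(2)}_t,H^{(2)}_t)$ from $\Psi$ to $\Xi$, concatenate via
\begin{equation*}
\Phi_t:=\begin{cases}\Phi^{(1)}_{2t}&t\in[0,\tfrac12]\\ \Phi^{(2)}_{2t-1}&t\in[\tfrac12,1]\end{cases},\qquad H_t:=\begin{cases}2H^{(1)}_{2t}&t\in[0,\tfrac12]\\ 2H^{(2)}_{2t-1}&t\in[\tfrac12,1]\end{cases}.
\end{equation*}
The factor of $2$ comes from the reparametrization, and the values at $t=\tfrac12$ match by continuity of $\Phi_t$ (both give $\Psi$), so each Taylor coefficient is still a continuous piecewise-$C^\infty$ path; this is exactly the reason for allowing \emph{piecewise} smoothness in Definition~\ref{def:pickyAboutPaths}. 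On each piece, $\frac{\dd\Phi_t}{\dd t}=\Qtot(H_t)$ is immediate from the chain rule.

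\emph{Compatibility with composition.} Suppose $(\Phi_t,H_t)$ is a homotopy from $\Phi$ to $\Phi'$ between $(E',Q')$ and $(E,Q)$, and let $\Psi$ be any {\LieInftyAlgebroid} morphism from some $(E'',Q'')$ to $(E',Q')$, viewed as an algebra map $\Psi\colon\mathcal{E}'\to\mathcal{E}''$. Then $\Psi\circ\Phi_t$ is a path of algebra morphisms from $\mathcal{E}$ to $\mathcal{E}''$, and a direct check (analogous to the derivation-property computation in the proof of Lemma~\ref{lem:DeltaPhi}) shows that $\Psi\circ H_t$ is a $(\Psi\circ\Phi_t)$-derivation of degree $-1$. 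Using $\Psi\circ Q'=Q''\circ\Psi$,
\begin{equation*}
\tfrac{\dd}{\dd t}(\Psi\circ\Phi_t)=\Psi\circ(Q'\circ H_t+H_t\circ Q)=Q''\circ(\Psi\circ H_t)+(\Psi\circ H_t)\circ Q,
\end{equation*}
which is precisely $\Qtot(\Psi\circ H_t)$ for the complex associated with $\Psi\circ\Phi_t$. Pre-composition by a morphism $\Phi\colon(E',Q')\to(E,Q)$ with a homotopy $(\Psi_t,K_t)$ is handled symmetrically: $K_t\circ\Phi$ is a $(\Psi_t\circ\Phi)$-derivation and $\Phi\circ Q=Q'\circ\Phi$ gives $\tfrac{\dd}{\dd t}(\Psi_t\circ\Phi)=\Qtot(K_t\circ\Phi)$. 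Combining these two cases (and transitivity) shows that if $\Phi\sim\Phi'$ and $\Psi\sim\Psi'$, then $\Psi\circ\Phi\sim\Psi'\circ\Phi'$. No real obstacle arises; the only point requiring attention is the sign and piecewise structure in transitivity, which is precisely what Definition~\ref{def:pickyAboutPaths} was tailored to accommodate.
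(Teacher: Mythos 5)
Your proof is correct and follows essentially the same route as the paper: reflexivity via $(\Phi,0)$, symmetry via $(\Phi_{1-t},-H_{1-t})$, and transitivity via concatenation with reparametrization, exactly as in the paper's argument. The only difference is in the composition step, where the paper exhibits the single homotopy $\big(\Phi_t'\circ\Phi_t,\,H_t'\circ\Phi_t+\Phi_t'\circ H_t\big)$ varying both factors at once, while you vary one factor at a time and chain the two resulting homotopies by transitivity; both are valid and yield the same conclusion.
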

\begin{proof}
Let us show that homotopy defines an equivalence relation $\sim$ between {\LieInftyAlgebroid} morphisms:
\begin{itemize}
\item \emph{reflexivity}: $\Phi\sim\Phi$, as can be seen by choosing $\Phi_{t}=\Phi$ and $H_{t}=0$ for every $t\in[0,1]$.
\item \emph{symmetry}: $\Phi\sim\Psi$ implies that $\Psi\sim\Phi$ by reversing the flow of time, i.e.~by considering the homotopy $(\Phi_{1-t},-H_{1-t})$.
\item \emph{transitivity}: if $\Phi\sim\Psi$ and $\Psi\sim\Xi$ then there exists a homotopy $(\Theta_{1\, t},H_{1\, t})$ between $\Phi$ and $\Psi$ and a homotopy $(\Theta_{2\, t},H_{2\, t})$ joining $\Psi$ and
$\Xi$. It is then sufficient to glue $\Theta_{1}$ and $\Theta_{2}$ and rescale the time variables, so that the new time variable takes values in the closed interval $[0,1]$. 
(Notice that the resulting function will be continuous at the junction, but not differentiable in general at that point.)
\end{itemize}

\isthiswhatyouowant
Now assume that $\Phi,\Psi\colon \mathcal{E}\to\mathcal{E}'$ are homotopic {\LieInftyAlgebroid} morphisms between $(E',Q')$ and $(E,Q)$, and that $\Phi',\Psi'\colon \mathcal{E}'\to\mathcal{E}''$ are homotopic {\LieInftyAlgebroid} morphisms between $(E'',Q'')$ and $(E',Q')$. Let us denote by $(\Phi_{t},H_{t})$ 
the homotopy between $\Phi$ and $\Psi$, and $(\Phi_{t}',H_{t}')$ the homotopy between $\Phi'$ and $\Psi'$. Then $\Phi'\circ\Phi$ and $\Psi'\circ\Psi$ are 
homotopic via $\big(\Phi_t'\circ\Phi_{t},H_t'\circ\Phi_{t}+\Phi_t'\circ H_{t}\big)$.
\end{proof}

\isthiswhatyouowant
We now give an important example, that shall be used in the sequel:
\begin{example}
\normalfont
\label{ex:homotopies}
Let $(E,Q)$ and $(E',Q')$ be {\LieInftyAlgebroid}s over $M$ and let  $w$ be a section of degree $-1$ of $\Gamma \big(S^{i+1} (E'^*) \otimes E\big)$ for some $i \geq 0$. 
For every {\LieInftyAlgebroid} morphism $\Phi \colon {\mathcal E} \to {\mathcal E}'$ from $(E',Q')$ to $(E,Q)$, we denote the ${\mathscr O}$-linear $\Phi$-derivation with Taylor coefficient $w$ by $w^\Phi$.

\isthiswhatyouowant
 The following differential equation has a unique solution for $\Phi_t$ for all $t \in {\mathbb R}$:
  \begin{equation}\label{eq:eqdiff_wanted} \frac{\dd \Phi_t }{\dd t} = \Qtot \left( w^{\Phi_t}\right)\qquad \hbox{ and } \qquad\Phi_0=\Phi. \end{equation}
   Therefore, the pair $(\Phi_t,w^{\Phi_t})$ is a homotopy between the {\LieInftyAlgebroid} morphism $\Phi$ and
   the {\LieInftyAlgebroid} morphism $\Phi_1$.
  
  \isthiswhatyouowant
  Let us check this point. The differential equation above decomposes into a sequence of coupled differential equations:   For all $    \alpha \in \Gamma(E^*) $   one has
    \begin{equation}
    \label{eq:recursion2}
    \begin{array}{rcl}
    \frac{\dd \Phi_t^{(k)} (\alpha)}{\dd t} &=& \sum_{j=0}^{k} \left( Q'^{(k-j)} \circ \left(w^{\Phi_t}\right)^{(j)} (\alpha)- \left( w^{\Phi_t} \right)^{(j)}  \circ Q^{(k-j)}  (\alpha)\right)  \\   &=&   Q'^{(k-i)} \circ  w \, (\alpha) - \sum_{j=0}^{k-1} \left( w^{\Phi_t}\right)^{(j)}  \circ Q^{(k-j)}  (\alpha)  \\
    & & -  \delta_{i}^k \, \, w \circ Q^{(0)}  (\alpha)  \end{array}
    \end{equation} 
where $\delta_{i}^k$ is the Kronecker symbol.

 \isthiswhatyouowant
  Equation \eqref{boum2} shows that $ (w^{\Phi_t})^{(j)}$ is a fixed  expression involving $w$ and $\Phi_t^{(k')}$ for $k' =0, \dots, j-i$.
     	This implies that the r.h.s. of the differential Equation \eqref{eq:recursion2} does not depend on $ \Phi_t^{(k)}$ but only on $ \Phi_t^{(k')}$ for $ k' \leq k-1$. (In fact, a closer look shows that it only depends on $ \Phi_t^{(k')}$ for $ k' \leq k-1-i$).  Therefore  $\Phi_t^{(k)}(\alpha)$ is obtained by integration of the r.h.s. of $ \eqref{eq:recursion2}$, which itself depends only on  $ \Phi_t^{(k')}$ for $ k' \leq k-1$. Equation \eqref{eq:eqdiff_wanted} can therefore be solved by recursion. 
 Since  for $i \geq 1$  we have that $ \tfrac{\dd \Phi_t^{(0)} }{\dd t}$ vanishes identically
 and for $i=0$ it satisfies
   $$  \frac{\dd \Phi_t^{(0)} (\alpha)}{\dd t}  = Q'^{(0)} \circ w (\alpha) + w \circ Q^{(0)}(\alpha) \quad \forall \alpha \in \Gamma (E^*), $$
   one finds that $\Phi_t^{(k)}(\alpha)$ is a polynomial in $t$ for all $k \geq 0$ and $\alpha \in \Gamma(E^* )  $. Equation \eqref{eq:eqdiff_wanted} therefore has solutions defined on all of ${\mathbb R} $.
\end{example}

\isthiswhatyouowant
The importance of Definition \ref{def:homotopy} relies on the following result, which states that two homotopic {\LieInftyAlgebroid} morphisms are related by a $\Qtot$-exact term:
\begin{proposition}
\label{prop:HomotopyMeansHomotopy}
Let $(E,Q)$ and $(E',Q')$ be {\LieInftyAlgebroid}s over $M$.
For every two homotopic  Lie $\infty $-morphisms $\Phi$ and $\Psi$ from $(E',Q')$ to $(E,Q)$, there exists an ${\mathscr O}$-linear map
$ H \colon {\mathcal E } \to {\mathcal E}'$ of degree $-1 $  such that:
 \begin{equation}\label{tolley} \Psi - \Phi = \Qtot \left( H \right) \equiv Q' \circ H + H \circ Q . \end{equation}
\end{proposition}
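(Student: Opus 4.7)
The natural approach is to integrate the homotopy equation \eqref{Hortense} over $t\in[0,1]$. Given homotopy data $(\Phi_t,H_t)$ from Definition \ref{def:homotopy}, I would define
\[
H \;:=\; \int_0^1 H_t\,dt,
\]
where the integral is taken componentwise on Taylor coefficients: each $h_k(t)\in\Gamma(S^{k+1}(E'^*)\otimes E)$ is a piecewise-smooth (in particular Riemann-integrable) path in a finite-rank vector bundle, so $h_k:=\int_0^1 h_k(t)\,dt$ is a well-defined section. These coefficients assemble into an $\mathscr{O}$-linear map $H\colon\mathcal{E}\to\mathcal{E}'$ of degree $-1$.

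For the $\mathscr{O}$-linearity of $H$, I would first check that each $H_t$ is automatically $\mathscr{O}$-linear: being of degree $-1$, it would send $f\in\mathscr{O}$ (of degree $0$) into the degree $-1$ part of $\mathcal{E}'$, which is zero; combined with the $\Phi_t$-derivation identity \eqref{eq:PhiDiff} this gives $H_t(fF)=H_t(f)\odot\Phi_t(F)+\Phi_t(f)\odot H_t(F)=f\,H_t(F)$. Linearity of the integral then yields $H(fF)=fH(F)$.

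Next, the fundamental theorem of calculus applied pointwise in $\mathcal{E}$ gives, for every $F\in\mathcal{E}$,
\[
\Psi(F)-\Phi(F)=\Phi_1(F)-\Phi_0(F)=\int_0^1\frac{d\Phi_t(F)}{dt}\,dt=\int_0^1\!\bigl(Q'(H_t(F))+H_t(Q(F))\bigr)\,dt,
\]
where the last equality uses \eqref{Hortense}. Since $Q'$ and $Q$ do not depend on $t$ and act linearly on the relevant sheaves of sections, they commute with the integral, so
\[
\Psi(F)-\Phi(F)=Q'\!\left(\int_0^1\!H_t(F)\,dt\right)+\left(\int_0^1\!H_t\,dt\right)\!Q(F)=(Q'\circ H+H\circ Q)(F)=\Qtot(H)(F),
\]
which is the desired equation \eqref{tolley}.

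The only technicality is the piecewise-smooth nature of $\Phi_t$ and $H_t$, whose subdivisions of $[0,1]$ may depend on the arity. This is the sole potential obstacle, and I would address it by fixing $F\in\mathcal{E}$ and working arity-by-arity: since $F$ decomposes into finitely many arity components, only finitely many subdivisions are relevant, and they can be merged into a common finite partition of $[0,1]$. The fundamental theorem of calculus then applies on each subinterval, and continuity of $t\mapsto\Phi_t(F)$ at the junction points makes the telescoping sum collapse to $\Phi_1(F)-\Phi_0(F)$. No other serious difficulty is anticipated.
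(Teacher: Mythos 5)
Your proof is correct and follows essentially the same route as the paper: the paper likewise sets $H=\int_0^1 H_t\,\dd t$ and applies the fundamental theorem of calculus arity-by-arity to the continuous piecewise-smooth paths of Taylor coefficients. Your additional remarks (the automatic $\mathscr{O}$-linearity of a degree $-1$ $\Phi_t$-derivation because $\mathcal{E}'$ vanishes in negative degrees, and the merging of the arity-dependent subdivisions for a fixed $F$) only make explicit points the paper leaves to the reader.
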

\begin{proof}
We shall use the following property: the variation of a continuous piecewise-$C^{\infty}$ function is equal to the integral of its derivative. 
From the relation $  \frac{\dd }{\dd t} \Phi_t = \Qtot \left( H_t \right) $ and from the fact that the path $t \mapsto \phi_k (t) $ is continuous piecewise-$C^{\infty}$ for all $k \in {\mathbb N}$, 
we therefore obtain:
\begin{eqnarray*}  \Psi - \Phi &=& \int_0^1 \frac{\dd }{\dd t} \Phi_t \, \dd t = \int_0^1 \Qtot \left( H_t \right) \, \dd t \\
&=& \int_0^1 \big(Q' \circ H_t + H_t \circ Q \big)\, \dd t \\ 
&=&  Q' \circ \left( \int_0^1 H_t \,\dd t \right)  + \left(\int_0^1 H_t\, \dd t \right)\circ Q   \end{eqnarray*}
Hence $H= \int_0^1 H_t\, \dd t $ satisfies Condition \eqref{tolley}. Also, $H$ is ${\mathscr O}$-linear because so is $H_{t}$ for all $t \in [0,1]$.
\end{proof}

\isthiswhatyouowant
It deserves to be noticed that the map $H$ introduced in the Proposition \ref{prop:HomotopyMeansHomotopy} is, in general, neither an algebra morphism nor a derivation of any sort.

\begin{remarque}
\normalfont
Taking the arity $0$ part of Equation \eqref{tolley}, one finds a homotopy of the two underlying chain maps: 
 \begin{center}
\begin{tikzcd}[column sep=1.3cm,row sep=0.7cm]
\ldots\ar[r,"\dd"] & E'_{-3} \ar[r,"\dd"]\ar[dd,shift left =0.5ex,"\phi_0"]\ar[dd,shift right =0.5ex,"\psi_0" left]& E'_{-2} \ar[ddl,dashed, "h\ " above left] \ar[dd,shift left =0.5ex,"\phi_0"]\ar[dd,shift right =0.5ex,"\psi_0" left]\ar[r,"\dd"]& E'_{-1} \ar[ddl,dashed,"h\ " above left] \ar[dd,shift left =0.5ex,"\phi_0"]\ar[dd,shift right =0.5ex,"\psi_0" left]\\
&\\
\ldots\ar[r,"\dd'"]& E_{-3} \ar[r,"\dd'"]& E_{-2}\ar[r,"\dd'"]&  E_{-1}.
\end{tikzcd}
\end{center}
Above, $\phi_0$ and $\psi_0$ are the linear parts of $\Phi$ and $\Psi$, respectively, and $h$ is the dual to the component of arity $0$ of $H$.
\end{remarque}

\isthiswhatyouowant
We now define what we mean by a homotopy equivalence of {\LieInftyAlgebroid}s:
\begin{definition}
	\label{def:homtequiv}
Let $(E,Q)$ and $(E',Q')$ be two {\LieInftyAlgebroid}s over $M$ and $\Phi\colon \mathcal{E}'\to\mathcal{E}$ a {\LieInftyAlgebroid} morphism between them.
We say that $\Phi$ is a \emph{homotopy equivalence} if there exists a {\LieInftyAlgebroid} morphism $\Psi\colon \mathcal{E}\to\mathcal{E}'$
such that
\begin{equation*}
\Phi\circ\Psi\sim\mathrm{id}_{\mathcal{E}}\hspace{1cm}\text{and}\hspace{1cm}\Psi\circ\Phi\sim\mathrm{id}_{\mathcal{E}'}.
\end{equation*}
In such a case,  the {\LieInftyAlgebroid}s $(E,Q)$ and $(E',Q')$ are said to be \emph{homotopy equivalent}.
\end{definition}

\subsubsection{Comparison with cylinder homotopies}

	Although it may seem quite different at first look, Definition \ref{def:homotopy} is in fact very similar 
	to a more classical and natural definition of homotopy given  by \cite{Bojowald,valette},  that we shall refer as the \emph{cylinder homotopy}. The only difference with our definition lies in an important relaxation of the regularity conditions.
	It consists in defining homotopies between two morphisms as being {\LieInftyAlgebroid} morphisms of differential graded algebras from ${\mathcal E}$ to the tensor product
	$ {\mathcal E}'  \otimes \Omega^\bullet \big([0,1]\big) $, where $\Omega^\bullet \big([0,1]\big) $ stands for forms on $[0,1]$, equipped with de Rham differential, 
	whose restrictions to $\{0\}$ and $\{1\}$ are the two given {\LieInftyAlgebroid} morphisms.
 Equivalently, a cylinder homotopy between two {\LieInftyAlgebroid} morphisms $\Phi$ and $\Psi$ from $(E,Q)$ to $(E',Q')$ is a {\LieInftyAlgebroid} morphism  $ (TI, \dd_{dR}) \times (E,Q) \mapsto (E',Q') $ whose restrictions to $\{0\} \times (E,Q) $
		and $ \{1\} \times (E,Q) \ $ are $ \Phi$  and $\Psi$, respectively.
	
	\isthiswhatyouowant
		\begin{proposition}
			\label{homotopy:usualNotion}
			Cylinder homotopies are homotopies $(\Phi_t,H_t)$ as in Definition \ref{def:homotopy} that depend smoothly on the parameter $t$.
		\end{proposition}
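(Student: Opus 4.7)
The plan is to set up an explicit bijection between cylinder homotopies and the homotopies $(\Phi_t, H_t)$ of Definition~\ref{def:homotopy} whose $t$-dependence is smooth rather than merely piecewise smooth. Any element of $\mathcal{E}' \otimes \Omega^\bullet([0,1])$ decomposes uniquely as $\alpha(t) + \dd t \wedge \beta(t)$ with $\alpha(t),\beta(t)\in \mathcal{E}'$ smooth in $t\in [0,1]$. Given a cylinder homotopy $\tilde{\Phi}\colon \mathcal{E}\to \mathcal{E}'\otimes \Omega^\bullet([0,1])$, I would therefore define, for each homogeneous $F\in \mathcal{E}$,
\begin{equation*}
\tilde{\Phi}(F) \,=\, \Phi_t(F) \,+\, (-1)^{|F|}\,\dd t\wedge H_t(F),
\end{equation*}
which extracts two smooth one-parameter families $\Phi_t,H_t\colon \mathcal{E}\to \mathcal{E}'$ of $\mathscr{O}$-linear maps; the evaluations of $\tilde{\Phi}$ at $t=0,1$ automatically reproduce the prescribed boundary morphisms.

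Next, I would unpack the two structural requirements on $\tilde{\Phi}$ along this form-degree splitting. Writing out $\tilde{\Phi}(F\odot G)=\tilde{\Phi}(F)\wedge \tilde{\Phi}(G)$ and comparing the $\dd t^0$ and $\dd t^1$ components yields, respectively, that $\Phi_t$ is a graded algebra morphism for every $t$ and that $H_t$ satisfies the $\Phi_t$-derivation relation \eqref{eq:PhiDiff} of degree $-1$; the sign $(-1)^{|F|}$ in the ansatz is chosen precisely so that the Koszul signs produced by moving $\dd t$ through $\Phi_t(G)$ match those in \eqref{eq:PhiDiff}. Similarly, decomposing the differential-compatibility condition $\tilde{\Phi}\circ Q = (Q' + \dd_{dR})\circ \tilde{\Phi}$ along the same bigrading gives, at form-degree zero, that each $\Phi_t$ is a {\LieInftyAlgebroid} morphism, and at form-degree one the identity
\begin{equation*}
\partial_t \Phi_t \,=\, Q'\circ H_t \,+\, H_t\circ Q \,=\, \Qtot(H_t),
\end{equation*}
which is exactly Equation~\eqref{Hortense}. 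Thus a cylinder homotopy produces a smooth $(\Phi_t,H_t)$ satisfying Definition~\ref{def:homotopy}.

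The construction is manifestly reversible: from a smooth pair $(\Phi_t,H_t)$ satisfying Definition~\ref{def:homotopy}, the same formula defines a graded algebra morphism $\tilde{\Phi}$ to $\mathcal{E}'\otimes \Omega^\bullet([0,1])$, and running the two computations above backwards shows that $\tilde{\Phi}$ intertwines the differentials, hence is a cylinder homotopy. I expect the only real obstacle to be keeping the sign bookkeeping straight: the $(-1)^{|F|}$ prefactor, the degree-$+1$ of $\dd t$, and the way $Q'$ passes through $\dd t$ all have to conspire to turn the morphism/derivation conditions on $\tilde{\Phi}$ into the corresponding conditions on $(\Phi_t, H_t)$ with $H_t$ of degree $-1$ and $\partial_t\Phi_t$ of degree $0$. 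Beyond this purely computational sign check, the statement is essentially a bookkeeping identification, and the smoothness restriction on $t$-dependence is built into the definition of a morphism of graded manifolds, so no further regularity analysis is required.
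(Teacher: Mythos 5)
Your proposal is correct and follows essentially the same route as the paper: the paper likewise identifies $\mathcal{E}'\otimes\Omega^\bullet([0,1])$ with expressions $F_t+G_t\,\epsilon$ (with $\epsilon$ playing the role of $\dd t$), writes $\widetilde{\Phi}(F)=\Phi_t(F)+(-1)^{|F|}H_t(F)\,\epsilon$, and splits the algebra-morphism and chain-map conditions along form degree to recover the $\Phi_t$-derivation property and Equation \eqref{Hortense}, with the converse obtained by reversing the computation. The only point worth double-checking in your write-up is the sign bookkeeping in the $\dd t$-component of the chain-map condition, which you correctly flag as the sole delicate step.
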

		
	\isthiswhatyouowant
	There is, however, a technical issue in the proof of Theorem \ref{theo:onlyOne} that imposes the need to  use of continuous piecewise {\smooth}-paths.
		
	\begin{proof}
	\isthiswhatyouowant
	Let us explain the correspondence between both definitions.
	Let $(E,Q)$, $(E',Q')$, and $(\Phi_t,H_t)$ be as in Definition \ref{def:homotopy}. 
	
	\isthiswhatyouowant
	Let us equip the tensor product
	$ {\mathcal E}'  \otimes \Omega^\bullet\big([0,1]\big)$
	with the differential $D$ given for all $F \in {\mathcal E}_i'$  and $\omega \in  \Omega^\bullet\big([0,1]\big)$ by
	$$ D \colon F \otimes \omega \mapsto Q'(F) \otimes \omega - (-1)^{|F|} F \otimes {\rm d}_{\text{dR}} \omega .$$
	The graded commutative algebra ${\mathcal E}'  \otimes \Omega^\bullet\big([0,1]\big)$ can be identified with the algebra made of sums 
	$   F_t + G_t \, \epsilon$
	with $F_t,G_t$ families of elements in ${\mathcal E}'$ depending smoothly on the parameter $t \in [0,1]$  and $\epsilon$ some free parameter of degree $+1$ that squares to $0$ (that we invite the reader to think of it as being "dt").
	The product in ${\mathcal E}'  \otimes \Omega^\bullet\big([0,1]\big)$  is then given by $ (F_t + G_t \, \epsilon ) (\tilde{F}_t + \tilde{H}_t \, \epsilon) = F_t \tilde{F}_t + \big( F_t \tilde{G}_t + \tilde{F}_t G_t \big) \epsilon  $.
	Also, the operator $D$ is given for all $F_t + G_t \epsilon$ of degree $i$, by:
	\begin{equation}
	\label{eq:homotopiesExplicit}
	D \big(  F_t + G_t \epsilon \big)   = \big(Q' ( F_t )\big)  +  \left(-  (-1)^{i} \frac{\dd F_t}{\dd t} + \big(Q' (G_t) \big) \right) \epsilon .
	\end{equation}
	Consider the map of degree $0$ given by:
	$$  \begin{array}{rrcl} \widetilde{\Phi}:= &{\mathcal E} & \to & {\mathcal E}'  \otimes \Omega^\bullet\big([0,1]\big)  \\
	&  F & \mapsto  & t \mapsto \Phi_t(F)  + (-1)^{|F|} H_t(F) \epsilon .
	\end{array} $$
	This map is a graded algebra morphism. This follows from the fact that $\Phi_t$ is an algebra morphism and $H_t$ is a $\Phi_t$-derivation for all $t$,
	as can be seen by a direct computation, valid for all $F \in {\mathcal E}_i,G \in {\mathcal E}_j$: 
	\begin{eqnarray*}
		\widetilde{\Phi} (FG) &=&  \Phi_t(FG)  + (-1)^{i+j} H_t(FG) \,  \epsilon \ \\
		& = &  \Phi_t(F) \Phi_t(G)  + \big( (-1)^{i+j} H_t(F) \Phi_t(G) + (-1)^j \Phi_t(F) H_t(G)\big) \,  \epsilon  \\
		& = & \big(\Phi_t(F)  + (-1)^i H_t(F) \,  \epsilon \big)  \, \cdot \,  \big(\Phi_t(G)  +  (-1)^j H_t(G) \,  \epsilon\big)\\
		& = & \widetilde{\Phi}(F) \, \cdot \,  \widetilde{\Phi}(G).
	\end{eqnarray*}
	Equation (\ref{Hortense}) holds,
	as can be seen by the following computation: 
	$$\left\{
	\begin{array}{l} \widetilde{\Phi} \circ Q(F) =  \Phi_t \circ Q (F) + (-1)^i H_t \circ Q ( F) \, \epsilon \\
	D \circ  \widetilde{\Phi}  (F)   = Q \circ \Phi_t  (F)   -  (-1)^{|i|}  \, \frac{\dd \Phi_t(F)}{\dd t} \epsilon + (-1)^i \, Q \circ H_t( F) \, \epsilon .\end{array}\right.$$ 
	As a consequence, $ \widetilde{\Phi} $ is a chain map.  
	Since $\widetilde{\Phi}$ is a graded algebra morphism and a chain map, this implies that the data of Definition \ref{def:homotopy} induces, when it is smooth, a 
	cylinder homotopy. The converse goes by going backward in the previous computations and  proves the equivalence of 
	the cylinder homotopy with homotopies as in Definition \ref{def:homotopy} given by smooth data. 
\end{proof}

\isthiswhatyouowant
	For a more enhanced discussion  about this more restricted notion of homotopy of {\LieInftyAlgebroid} morphisms, we refer to \cite{Poncin} or  \cite{valette}.

	\begin{proposition}
		The equivalence relation given by cylinder homotopies and the equivalence relation given by homotopies as in Definition \ref{def:homotopy} coincide on morphisms of Lie $\infty$-algebroids constructed on complexes of finite length.
	\end{proposition}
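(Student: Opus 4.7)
The plan is to establish both inclusions between the two equivalence relations. One direction is essentially immediate: by Proposition \ref{homotopy:usualNotion}, cylinder homotopies are precisely those homotopies $(\Phi_t,H_t)$ of Definition \ref{def:homotopy} which depend \emph{smoothly} on $t$. Since smooth is a special case of continuous piecewise $C^\infty$, every cylinder homotopy is, \emph{a fortiori}, a homotopy in the sense of Definition \ref{def:homotopy}. As the latter is already an equivalence relation by Proposition \ref{equivhomotopy}, the equivalence relation generated by cylinder homotopies is contained in it.

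For the reverse inclusion, let $(\Phi_t,H_t)$ be a Definition \ref{def:homotopy} homotopy between $\Phi_0=\Phi$ and $\Phi_1=\Psi$, where source and target are Lie $\infty$-algebroids built on complexes of finite length, hence in fact Lie $n$-algebroids for some $n$. By the remark following Definition \ref{def:pickyAboutPaths}, one may then select a \emph{single} finite partition $0=t_0<t_1<\cdots<t_N=1$ such that every Taylor coefficient $\phi_k(t)$ of $\Phi_t$ and $h_k(t)$ of $H_t$ is smooth on each open subinterval $(t_{i-1},t_i)$ and extends smoothly to the closed subinterval $[t_{i-1},t_i]$ (the extension for $\Phi_t$ being granted by its continuity at the junction points). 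On each such subinterval I would reparametrize to $[0,1]$ by $s=(t-t_{i-1})/(t_i-t_{i-1})$, and set $\Phi^i_s:=\Phi_{t_{i-1}+s(t_i-t_{i-1})}$ together with $H^i_s:=(t_i-t_{i-1})\,H_{t_{i-1}+s(t_i-t_{i-1})}$. The rescaling factor $(t_i-t_{i-1})$ is precisely the Jacobian of the change of variables and ensures that $\tfrac{\dd}{\dd s}\Phi^i_s=\Qtot(H^i_s)$ holds. Each pair $(\Phi^i_s,H^i_s)$ is then a smooth Definition \ref{def:homotopy} homotopy from $\Phi_{t_{i-1}}$ to $\Phi_{t_i}$, hence by Proposition \ref{homotopy:usualNotion} a cylinder homotopy between these two morphisms. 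Concatenating these $N$ cylinder homotopies via transitivity of the cylinder-homotopy equivalence relation yields the desired cylinder-homotopy equivalence between $\Phi$ and $\Psi$.

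The only delicate point, and the one responsible for the finiteness assumption in the statement, is the availability of a \emph{common} partition of $[0,1]$ adapted to all arities simultaneously. In the infinite-length case, each Taylor coefficient $\phi_k$ could require its own partition, and the union over $k$ of these partitions need not be finite; it could even be dense in $[0,1]$, which would obstruct the decomposition into finitely many smooth pieces and hence the use of transitivity. Once this common partition is secured, the remainder of the argument is a routine chaining of smooth reparametrizations.
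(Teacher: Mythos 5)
Your proof is correct, and it shares with the paper the two essential ingredients: the easy inclusion via Proposition \ref{homotopy:usualNotion}, and the observation that finite length forces the set of non-smooth points to be finite (the remark after Definition \ref{def:pickyAboutPaths}). Where you genuinely diverge is in how the finitely many junction points are smoothed out. You cut $[0,1]$ at those points, rescale each piece affinely back to $[0,1]$ (correctly inserting the Jacobian factor $(t_i-t_{i-1})$ on $H$ so that Equation \eqref{Hortense} is preserved), obtain a finite chain of smooth homotopies, and concatenate using transitivity of the cylinder-homotopy equivalence relation. The paper instead uses a single global reparametrization $t\mapsto f(t)$ by a smooth surjection of $[0,1]$ which is \emph{flat} (all derivatives vanish) at the bad points; then $\bigl(\Phi_{f(t)},\,f'(t)H_{f(t)}\bigr)$ is smooth on all of $[0,1]$ at once, yielding one cylinder homotopy directly from $\Phi_0$ to $\Phi_1$ and sidestepping any question of whether the cylinder relation is itself transitive (smooth concatenation of cylinder homotopies in fact requires a flatness trick of exactly this kind, so the paper's choice is the more economical one). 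Your version requires, in addition, that the piecewise data extend smoothly to the \emph{closed} subintervals; note that the continuity of $\Phi_t$ at the junctions, which is all Definition \ref{def:pickyAboutPaths} guarantees, does not by itself give smooth one-sided extensions. This is a mild implicit regularity assumption, but the paper's flat-reparametrization argument rests on essentially the same boundedness of the data and its derivatives near junction points, so it is not a defect specific to your route. Both arguments are valid.
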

	\begin{proof} 
		Let $(\Phi_t,H_t)$ be a homotopy as in Definition \ref{def:homotopy} relating two morphisms $\Phi_0,\Phi_1$, defined on complexes of finite length. Then only finitely many of the Taylor coefficients of $\Phi_t$ and $H_t$ are different from zero. The set $S$ of points in $I=[0,1]$ at which $\Phi_t$ or $H_t$ are non-smooth is therefore finite. Let $f \colon I \to I$ be a  smooth function such that $f(0)=0$ and $ f(1)=1$ and such that $ f^{(k)}(t)=0 $ for all $k \geq 1$ and $t \in S$. The pair   $(\Phi_{f(t)},f'(t) H_{f(t)})$ is then a homotopy given by smooth data relating $\Phi_0$ and $ \Phi_1$.
		The result then follows from Proposition \ref{homotopy:usualNotion}.
	\end{proof}

	\subsubsection{The homotopy and the isotropy functors} 
	\label{sec:functor}
	
Let $M$ be a manifold.
For every $k\in \mathbb{N}\cup \infty$, Lie $k$-algebroids over $M$ together with Lie $\infty$-algebroid morphisms  of $M$ form a category that we denote by {\bf Lie-$k$-algoid}. In this article, we rather consider the \emph{quotient category} {\bf hLie-$k$-algoid$_M$} where objects are Lie $k$-algebroids over $M$ and arrows are homotopy classes of morphisms. 

\begin{remarque}
	\normalfont
	In this language, Theorem \ref{theo:onlyOne} translates into: Universal Lie $\infty$-algebroids of $ {\mathcal F}$ are terminal objects in the sub-category of {\bf hLie-$\infty$-algoid$_M$} of Lie $\infty$-algebroids for which the image of the anchor map is contained in ${\mathcal F}$.
\end{remarque}


\isthiswhatyouowant
When the manifold is  a point, we recover the usual categories of Lie $\infty$-algebras defined over negatively graded vector spaces. We denote by {\bf Lie-$k$-alg} and {\bf hLie-$k$-alg} the counterpart of the above-defined categories in this case.

\isthiswhatyouowant
Every point $m \in M$ induces a functor 
${\mathfrak I}_m\colon \hbox{{\bf Lie-$k$-algoid$_M$}} \mapsto \hbox{{\bf Lie-$k$-alg}}$,  called the isotropy functor, that we now describe.

\begin{enumerate}
	\item Let $(F,Q_F)$ be a Lie $k$-algebroid over $M$ with anchor $\rho$.
	According to the axioms of Lie $k$-algebroids,  the $k$-ary bracket restricts to the graded vector space
	\begin{equation} \label{eq:inducedBraket}  K^\bullet(F,m) := {\mathrm{Ker}} (  \rho|_m)\oplus\bigoplus_{i \geq 2}   F_{-i}|_m  \end{equation}
	for every $k \in {\mathbb N}$, see Definition \ref{def:Linftyoids}, Equation \eqref{robinson}, and Remark \ref{remarque2}. This implies that \eqref{eq:inducedBraket}  is  equipped with a Lie $k$-algebra structure. This defines ${\mathfrak I}_m$ on objects.
	\item Let  $\Phi \colon \Gamma(S(F^*)) \mapsto \Gamma(S(E^*))$  be an arbitrary Lie $k$-algebroid morphism from $ (E,Q_E) $ to $ (F,Q_F)$. 
Since it is ${\mathcal O}$-linear, it induces a graded Lie algebra morphism $\Phi|_m \colon S(F|_m^*) \mapsto S(E|_m^*)$. The linear part of $\Phi$  being a chain map  by Remark \ref{rmk:linearparts}, it restricts to
a graded algebra morphism $ {\mathfrak I}_m(\Phi) \colon S(K(F,m)^*) \mapsto S(K(E,m)^*)$, which is easily checked to be a Lie $\infty$-algebra morphism. This defines
 $ {\mathfrak I}_m$ on arrows.
\end{enumerate}
 
\isthiswhatyouowant
Homotopies between morphisms being given by ${\mathcal O}$-linear data $(\Phi_t,H_t)$, the following lemma is easily verified:

	\begin{lemme}
		\label{lem:homtFunctr}
		Let  $\Phi,\Psi \colon (E,Q) \mapsto (E',Q')$ be two  homotopic Lie-$\infty$ algebroid morphisms over $M$. For every point $m \in M$, 
		${\mathfrak I}_m(\Phi), {\mathfrak I}_m(\Psi) \colon {\mathfrak I}_m(E,Q) \mapsto {\mathfrak I}_m(E',Q')$ are homotopic Lie-$\infty$ algebra morphisms.
	\end{lemme}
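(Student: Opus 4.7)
The strategy is to apply the functor ${\mathfrak I}_m$ pointwise to the homotopy data $(\Phi_t, H_t)$ and verify that the axioms of Definition \ref{def:homotopy} survive this operation.

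First, define $\Phi_t^m := {\mathfrak I}_m(\Phi_t)$ for every $t\in[0,1]$. By the very definition of ${\mathfrak I}_m$ on morphisms together with functoriality, this produces a path of Lie $\infty$-algebra morphisms from ${\mathfrak I}_m(E,Q)$ to ${\mathfrak I}_m(E',Q')$, with $\Phi_0^m = {\mathfrak I}_m(\Phi)$ and $\Phi_1^m = {\mathfrak I}_m(\Psi)$. Moreover, the construction of ${\mathfrak I}_m$ on a morphism $\Phi_t$ consists of ${\mathcal O}$-linear restriction of Taylor coefficients at $m$ followed by descent to the isotropy Lie $\infty$-algebras, both of which are finite-dimensional linear operations. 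Consequently the Taylor coefficients of $\Phi_t^m$ depend continuously and piecewise smoothly on $t$, with (at worst) the same subdivisions of $[0,1]$ as those of $\Phi_t$.

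Next we produce the derivation path. Since $\Phi_t$ covers the identity of $M$ and $H_t$ is a $\Phi_t$-derivation appearing in $\tfrac{\mathrm{d}\Phi_t}{\mathrm{d}t}=\Qtot(H_t)$, an application of Lemma \ref{lem:DeltaPhi} (together with direct inspection of the arity-zero components, which force the anchors to coincide) implies that $H_t$ is itself ${\mathcal O}$-linear. Its Taylor coefficients $h_k(t)\in \Gamma\big(S^{k+1}(E'^{\,\ast})\otimes E\big)$ therefore admit a restriction at $m$, and we define $H_t^m$ by applying to these coefficients the same restriction-and-descent procedure that yields ${\mathfrak I}_m$ on morphisms. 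The resulting degree $-1$ map $S(K(E',m)^{\ast})\to S(K(E,m)^{\ast})$ is a $\Phi_t^m$-derivation because the Leibniz rule \eqref{eq:PhiDiff} is stable under pointwise restriction and passes to the descent, using that the linear part of $\Phi_t$ is a chain map preserving the kernel of the anchor at $m$. Piecewise smoothness of $t\mapsto H_t^m$ follows as in the previous step.

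Finally, at every non-junction point $t$, the homotopy equation $\tfrac{\mathrm{d}\Phi_t}{\mathrm{d}t}=Q'\circ H_t + H_t\circ Q$ holds as an identity of ${\mathcal O}$-linear maps ${\mathcal E}\to{\mathcal E}'$. Pointwise restriction at $m$ commutes with $\mathrm{d}/\mathrm{d}t$ and with composition by the homological vector fields, and both $Q$ and $Q'$ descend to the homological vector fields $Q_{{\mathfrak I}_m(E,Q)}$ and $Q'_{{\mathfrak I}_m(E',Q')}$ of the corresponding isotropy Lie $\infty$-algebras. Applying ${\mathfrak I}_m$ to both sides of the equation above yields
\begin{equation*}
\frac{\mathrm{d}\Phi_t^m}{\mathrm{d}t}\;=\;Q'_{{\mathfrak I}_m(E',Q')}\circ H_t^m + H_t^m\circ Q_{{\mathfrak I}_m(E,Q)},
\end{equation*}
so that the pair $(\Phi_t^m,H_t^m)$ is a homotopy between ${\mathfrak I}_m(\Phi)$ and ${\mathfrak I}_m(\Psi)$ in the sense of Definition \ref{def:homotopy} transposed to Lie $\infty$-algebras. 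The main technical obstacle lies in the second step: checking that the pointwise restriction of the $\Phi_t$-derivation $H_t$ genuinely descends to a $\Phi_t^m$-derivation on the isotropy quotients. This parallels the verification of well-definedness of ${\mathfrak I}_m$ on morphisms, but now requires combining the Leibniz identity with the chain-map property of the linear part of $\Phi_t$ in order to see that the derivation respects the ideals defining the descent to $S\big(K(\cdot,m)^{\ast}\big)$.
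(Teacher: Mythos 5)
Your proposal is correct and follows essentially the same route as the paper, which disposes of this lemma in one line by observing that the homotopy data $(\Phi_t,H_t)$ is $\mathscr{O}$-linear and hence restricts at $m$ and descends to the isotropy quotients; your write-up is just the explicit verification the paper leaves to the reader. One small quibble: the $\mathscr{O}$-linearity of $H_t$ does not come from Lemma \ref{lem:DeltaPhi} but simply from its degree, since a degree $-1$ map sends $\mathcal{E}_0=\mathscr{O}$ into $\mathcal{E}'_{-1}=0$, and the same degree count (plus the arity $-1$ part of the homotopy equation, which forces the arity $-1$ Taylor coefficient into $\ker\rho$) is what makes the descent of $H_t$ to a $\Phi_t^m$-derivation work.
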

		

\isthiswhatyouowant
This Lemma implies that the isotropy functor passes to the quotient  
to yield a functor
\begin{equation}
\label{eq:homotopyFunctor}
h{\mathfrak I}_m\colon \hbox{{\bf hLie-$k$-algoid$_M$}} \mapsto \hbox{{\bf hLie-$k$-alg}},
\end{equation}
that we call the \emph{isotropy functor}.

\isthiswhatyouowant
Let us finish this discussion with a few words on invertible arrows in these categories. 
Homotopy equivalences, see Definition \ref{def:homtequiv}, are invertible arrows in the category {\bf hLie-$k$-algoid$_M$}.
An invertible arrow in the category {\bf hLie-$k$-alg} relating two objects $(V,Q_V)$ and $(W,Q_W)$ is a differential graded algebra isomorphism of the corresponding symmetric algebras, $ S(V^*) \simeq S(W^*)$.

\subsection{Proof of Theorem \ref{theo:existe} about existence} 

\label{sectionpreuve1}
\isthiswhatyouowant

\isthiswhatyouowant
 Throughout this subsection we assume that we are working in the smooth setting but the arguments below also work in the real analytic or holomorphic case in a neighborhood of a point.

\isthiswhatyouowant	To make the result global in these cases, we would need to start with a singular foliation which admits a global geometric resolution $ (E,\dd,\rho)$, but, at least the way how the proof is performed, we would also have to assume that all vector bundles in this geometric resolution admit connections,  and, in particular, that an almost Lie algebroid structure can be constructed globally on $E_{-1}$. Then a step-by-step verification of the arguments presented here would lead to the same conclusion.


\subsubsection{Arity-deformation of complexes of vector bundles}
\label{sec:aritydefo}
\isthiswhatyouowant
A complex of vector bundles 
 \begin{equation} \label{sequencelinear1}
\begin{tikzcd}[column sep=0.9cm,row sep=0.6cm]
\ldots\ar[r,"\dd"]& E_{-3} \ar[r,"\dd"]& E_{-2}\ar[r,"\dd"]&  E_{-1}
\end{tikzcd}
\end{equation}
is in one-to-one correspondence with an NQ-manifold where $Q$ is of arity zero, $Q=Q^{(0)}$, see in particular Equation \eqref{Q0}.   In this subsection, we discuss deformations of such a complex inside the category of NQ-manifolds following the general expansion  \eqref{Qarity}: Theorem \ref{theo:existe} will then follow from this discussion. The homological condition $[Q,Q]=0$ is equivalent to the following set of equations:
\begin{align}
[Q^{(0)},Q^{(0)}]&=0\label{jacques00}\\
[Q^{(0)},Q^{(1)}]&=0\label{jacques0}\\
\forall\ n\geq2\hspace{1.3cm}[Q^{(0)},Q^{(n)}]&=-\frac{1}{2}\underset{i+j=n}{\sum_{1\leq i, j\leq n-1}} [Q^{(i)},Q^{(j)}].\label{jacques}
\end{align}
The first one is valid by assumption, the second of these equations determines the tangent vector of the deformation. It has to be a $Q^{(0)}$-closed, arity one, and degree one vector field. Such a vector field $Q^{(1)}$ corresponds to the following data: An anchor map $\rho \colon E_{-1} \to TM$ and a family of binary brackets on the sections of $E$ such that the Leibniz identity \eqref{robinson} holds true and $\dd$ is a derivation of the bracket. 

\isthiswhatyouowant
Now we assume that $E$, $Q^{(0)}$, and such a $Q^{(1)}$ are given. Then there is a standard theory of obstruction classes entailed by Equations \eqref{jacques}: The deformation problem is governed by the differential graded Lie algebra (DGLA) $$(\mathfrak{X}(E), [ \cdot , \cdot ], [Q^{(0)}, \cdot ]). $$  While the right hand side of Equation  \eqref{jacques} is always $Q^{(0)}$-closed by the Jacobi identity of the graded Lie bracket of vector fields, according to this equation it has to be exact so as to extend the procedure one step further. Thus the obstruction for finding $Q^{(n)}$ lives in $H^2(\mathfrak{X}(E),\ad_{Q^{(0)}})$, the cohomology  of $\ad_{Q^{(0)}}\equiv [Q^{(0)}, \cdot ]$ at degree two. 

\isthiswhatyouowant
What we said so far is valid for every DGLA. In the particular context of NQ-manifolds, it turns out to be useful to relate this problem to the following differential graded Lie subalgebra 
$$(\mathfrak{X}_{\mathrm{vert}}(E), [ \cdot , \cdot ], [Q^{(0)}, \cdot ]) $$ 
of vertical vector fields. By definition, \emph{vertical vector fields} are $\mathcal{O}$-linear derivations of $\mathcal{E}$. Now it is an important observation that within the expansion \eqref{Qarity} only the term within $Q^{(1)}$ which specifies the anchor  is non-vertical; all the other parts of $Q$ are vertical vector fields. Moreover, the Lie bracket of $Q^{(1)}$ with $Q^{(n)}$ is vertical whenever $n \geq 2$. Thus \emph{all} the obstruction classes for $n$ strictly bigger than two actually live inside 
$ H^2(\mathfrak{X}_{\mathrm{vert}}(E),\ad_{Q^{(0)}})$. In fact, noting that the Lie bracket preserves arity and the arity of the right hand side of Equation \eqref{jacques} is  $n$, for every $n> 2$ the cohomological obstruction lives in 
\begin{equation}\label{H2vert}
H^2(\mathfrak{X}^{(n)}_{\mathrm{vert}}(E),\ad_{Q^{(0)}}) \, .
\end{equation}
Here $\mathfrak{X}^{(n)}_{\mathrm{vert}}(E)$ denotes vertical vector fields of arity $n$.

\isthiswhatyouowant
The case $n=2$ needs special care. The condition $[Q^{(0)},Q^{(2)}]= \tfrac{1}{2}[Q^{(1)},Q^{(1)}]$ can be split into two parts: Since the right hand side of this equation is vertical, one needs 
\begin{equation} \label{Q1Q1}
[Q^{(1)},Q^{(1)}] \in \mathfrak{X}_{\mathrm{vert}}(E) \, . \end{equation}
If this is satisfied, then its class inside \eqref{H2vert} is defined and needs to vanish.

\subsubsection{Cohomology of vertical vector fields for geometric resolutions}

\isthiswhatyouowant
We denote the space of vertical vector fields on $E$ of arity $n$ and degree $k$ by $\mathfrak{X}_{\mathrm{vert}}^{(n)}(E)_k$. There is a natural isomorphism
\begin{equation}\label{eq:Phi0AsComplex}
  \mathfrak{X}_{\mathrm{vert}}^{(n)}(E)_\bullet \simeq  \Gamma( S^{n+1}(E^*) \otimes E )_\bullet.
\end{equation}
Consider the following map, which we call the \emph{root map}:
$$ \mathrm{rt} \colon {\mathfrak{X}}_{\mathrm{vert}}^{(n)}(E)_\bullet \to \Gamma( S^{n+1}(E^*))_{\bullet}\otimes_{\mathcal O} {\mathcal F}[-1] $$
which is obtained  by applying $ \id \otimes \rho $ to the component  of a vertical vector field in $ S^{(n+1)} (E^*)_\bullet \otimes E_{-1} $. 
The shift in degree is needed if we want $\mathrm{rt}$ to be of degree zero.
Later, it will be useful to characterize this map also in the following way: Let $f$ denote a function on $M$, $\dd_{\mathrm{dR}}f \in \Gamma(T^*M)$ its differential, and $\rho^{\ast} \colon \Gamma(T^*M) \to \Gamma(E_{-1}^*)$ the dual of the anchor map. Then for every vertical vector field $W$ on $E$, 
one has:
\begin{equation}\label{rootdef}
W(\rho^*\dd_{\mathrm{dR}}f) = \mathrm{rt}(W)[f] \, ,
\end{equation}
since for every $x \in \Gamma(E_{-1})$, $\langle x, \rho^*\dd_{\mathrm{dR}}f \rangle = \rho(x)[f]$.   We will call the image $\mathrm{rt}(W)$ of a vertical vector field $W$ with respect to this map the \emph{root} of this vector field.
\begin{proposition}
	\label{prop:fondamental3}
	If $ (E,\dd, \rho)$ is a geometric resolution of $ {\mathcal F}$, then 
	$$\mathrm{rt} \colon ({\mathfrak{X}}_{\mathrm{vert}}^{(n)}(E)_\bullet,  \ad_{Q^{(0)}} ) \to ( \Gamma( S^{n+1}(E^*))_\bullet \otimes_{\mathcal O} {\mathcal F}[-1] ,  Q^{(0)} \otimes \id) $$
	is a quasi-isomorphism.
\end{proposition}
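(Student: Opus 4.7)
The plan is to realize the source as the total complex of a double complex and reduce the statement to the combination of two inputs: exactness of the geometric resolution $(E,\dd,\rho)$ and flatness of the bundles $S^{n+1}(E^*)$. Under the $\cO$-linear identification~\eqref{eq:Phi0AsComplex}, I would first decompose $\ad_{Q^{(0)}}$ on $\mathfrak{X}^{(n)}_{\mathrm{vert}}(E) \cong \Gamma(S^{n+1}(E^*) \otimes E)$ as the sum of two (anti)commuting differentials $d_1 + d_2$, where $d_1 := Q^{(0)} \otimes \id$ is the Koszul-type derivation of $\Gamma(S^{n+1}(E^*))$ extending (up to signs) the dual of $\dd$, while $d_2 := \id \otimes \dd$ is the differential coming from the second factor. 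This decomposition follows directly from the graded Leibniz rule for $[Q^{(0)}, \cdot\,]$ together with the fact that $Q^{(0)}$ is determined on generators $E^*$ by $\dd^*$.

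Next I would verify that, under the same identification, the root map $\mathrm{rt}$ is simply the projection of $\Gamma(S^{n+1}(E^*) \otimes E)$ onto its $E_{-1}$-summand followed by $\id \otimes \rho$. A short computation using $\rho \circ \dd^{(2)} = 0$ then shows that $\mathrm{rt}$ intertwines $\ad_{Q^{(0)}}$ with $Q^{(0)} \otimes \id$, so that $\mathrm{rt}$ is a chain map. By construction, the target is the $E_{-1}$-column of the analogous structure equipped with only the $d_1$-differential.

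The key step is to exploit flatness. Since $\Gamma(S^{n+1}(E^*))$ is locally free, hence flat, over $\cO$, tensoring the exact sequence
\[
\cdots \to \Gamma(E_{-3}) \xrightarrow{\dd^{(3)}} \Gamma(E_{-2}) \xrightarrow{\dd^{(2)}} \Gamma(E_{-1}) \xrightarrow{\rho} \cF \to 0
\]
by $\Gamma(S^{n+1}(E^*))$ over $\cO$ yields again an exact sequence. Equivalently, the $d_2$-cohomology of the source vanishes in $E$-degrees $\le -2$ and is canonically identified in $E$-degree $-1$ with $\Gamma(S^{n+1}(E^*)) \otimes_{\cO} \cF$, the identification being induced precisely by $\mathrm{rt}$. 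Comparing the spectral sequences associated to the filtration by $E$-degree on source and target (the latter being trivially concentrated in a single column), the induced map on $E_1$-pages is therefore an isomorphism, and so is the induced map on total cohomology. Equivalently, the mapping cone of $\mathrm{rt}$ has vanishing $d_2$-cohomology and is thus acyclic by the acyclic-assembly lemma \cite{Weibel}.

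The main obstacle is essentially bookkeeping: the conceptual content of the proposition is entirely captured by the flatness of $S^{n+1}(E^*)$ and the exactness of the resolution, so the real work lies in the first step, where one must carefully unwind the graded-symmetric conventions to confirm that $\ad_{Q^{(0)}}$ on $\mathfrak{X}^{(n)}_{\mathrm{vert}}(E)$ does split, with the correct signs, as the total differential of a tensor product of complexes. Once this is in place the spectral-sequence (or mapping-cone) argument is entirely standard.
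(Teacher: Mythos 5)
Your proposal is correct and follows essentially the same route as the paper: identify $\ad_{Q^{(0)}}$ on $\mathfrak{X}^{(n)}_{\mathrm{vert}}(E)\simeq\Gamma(S^{n+1}(E^*)\otimes E)$ with the total differential of the bicomplex $(\id\otimes\dd,\;Q^{(0)}\otimes\id)$, use projectivity (hence flatness) of $\Gamma(S^{n+1}(E^*))$ to tensor the exact sequence $\cdots\to\Gamma(E_{-1})\to\mathcal F\to 0$ and obtain exact rows, and then conclude that $\mathrm{rt}$, which is exactly the last horizontal arrow, is a quasi-isomorphism. The paper assembles the last step by a diagram chase, equivalently by the long exact sequence of the short exact sequence of complexes $0\to\Gamma(S^{n+1}(E^*))\otimes_{\mathcal O}\mathcal F\to\mathcal B\to\mathfrak{X}^{(n)}_{\mathrm{vert}}(E)\to 0$, whereas you invoke a spectral-sequence or mapping-cone argument; these are interchangeable standard packagings of the same computation.
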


\isthiswhatyouowant
The proof of this proposition requires some preparation. The following lemma is straightforward to show: We leave the details to the reader, some of which, however, can be found also in the proof of Lemma \ref{rmk:root0} below.
\begin{lemme}
	\label{lem:Phi0AsComplex}
	Under the isomorphism   (\ref{eq:Phi0AsComplex}), the differential $\ad_{Q^{(0)}}$ becomes the total differential of the following bicomplex:	
\begin{equation}
\label{eq:bicomplex2}
 \resizebox{0.88\textwidth}{!}{
 \xymatrix{ & \vdots & \vdots &  \vdots \\ \cdots
	  \ar[r]^{ }  &	\Gamma(S^{n+1}(E^*)_{n+3}  \otimes E_{-3}) \ar[r]^{  {\rm{id}} \otimes \dd } 
	  \ar[u]
	  &\Gamma(S^{n+1}(E^*)_{n+3}  \otimes E_{-2}) \ar[u]
	   \ar[r]^{  {\rm{id}} \otimes \dd }   & \Gamma( S^{n+1}(E^*)_{n+3}  \otimes E_{-1} )  \ar[u] 
	  \ar[r]& 0 \\  \cdots
	  \ar[r]^{ }  &	\Gamma(S^{n+1}(E^*)_{n+2}  \otimes E_{-3}) \ar[r]^{  {\rm{id}} \otimes \dd } 
	  \ar[u]^{  Q^{(0)}  \otimes  {\rm{id}}}  &\Gamma(S^{n+1}(E^*)_{n+2}  \otimes E_{-2}) \ar[u]^{  Q^{(0)}  \otimes  {\rm{id}}}  \ar[r]^{  {\rm{id}} \otimes \dd }   & \Gamma( S^{n+1}(E^*)_{n+2}  \otimes E_{-1} )   \ar[u]^{  Q^{(0)}  \otimes  {\rm{id}} }\ar[r]& 0 \\  \cdots
  \ar[r]^{ }  &	\Gamma( S^{n+1}(E^*)_{n+1} \otimes E_{-3}) \ar[r]^{  {\rm{id}} \otimes \dd } 
   \ar[u]^{  Q^{(0)} \otimes  {\rm{id}}}  &\Gamma(S^{n+1}(E^*)_{n+1} \otimes E_{-2}) \ar[u]^{  Q^{(0)} \otimes  {\rm{id}}}  \ar[r]^{  {\rm{id}} \otimes \dd }   & \Gamma( S^{n+1}(E^*)_{n+1} \otimes E_{-1} )   \ar[u]^{  Q^{(0)}  \otimes  {\rm{id}} } \ar[r]& 0 \\
     &0 \ar[u] & 0 \ar[u]&  0 \ar[u] & }}\end{equation}

\end{lemme}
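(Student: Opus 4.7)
The plan is to unwind the isomorphism \eqref{eq:Phi0AsComplex} concretely and then compute $\ad_{Q^{(0)}}$ term by term using the graded derivation rule, tracking Koszul signs carefully.

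First, I would make the isomorphism \eqref{eq:Phi0AsComplex} explicit. A vertical vector field $W$ of arity $n$ on the $N$-manifold $E$ is by definition an $\mathcal{O}$-linear graded derivation of $\mathcal{E} = \Gamma(S(E^*))$, and is therefore uniquely determined by its restriction to the space of generators $\Gamma(E^*)$. Since $W$ has arity $n$, this restriction is an $\mathcal{O}$-linear map $\Gamma(E^*) \to \Gamma(S^{n+1}(E^*))$, and the tensor-hom adjunction identifies such maps with elements $w \in \Gamma(S^{n+1}(E^*) \otimes E)$ via $W(\alpha) = \langle w, \alpha \rangle$. Splitting this identification according to the bidegree of the factors $\Gamma(S^{n+1}(E^*)_{j}) \otimes \Gamma(E_{-i})$ places $W$ in the bigraded vector space displayed in \eqref{eq:bicomplex2}, with total degree $|W| = j - i$.

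Next, I would check that $\ad_{Q^{(0)}}$ preserves verticality and then evaluate it on $W$. Since $Q^{(0)}(f) = 0$ for $f \in \mathcal{O}$ (the image of $f$ under $Q$ lies in $\Gamma(E^{*}_{-1})$, which has arity one, so only $Q^{(1)}$ contributes to $Q(f)$), one has $[Q^{(0)}, W](f) = 0$ for every $f \in \mathcal{O}$, hence $[Q^{(0)}, W]$ is again vertical. For $\alpha \in \Gamma(E^*)$ I would then expand
\begin{equation*}
[Q^{(0)}, W](\alpha) = Q^{(0)}\bigl(W(\alpha)\bigr) - (-1)^{|W|} W\bigl(Q^{(0)}(\alpha)\bigr).
\end{equation*}
In the first term, $W(\alpha) = \langle w, \alpha \rangle \in \Gamma(S^{n+1}(E^*))$, and $Q^{(0)}$ acts on this as the graded derivation of the symmetric algebra extending its action on generators; under the identification $W \leftrightarrow w$ this is precisely the action of $Q^{(0)} \otimes \id$ on the first tensor factor of $w$, i.e.\ the vertical arrow of \eqref{eq:bicomplex2}.

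For the second term, I would use the defining relation \eqref{Q0}, $\langle Q^{(0)}\alpha, x\rangle = (-1)^{|\alpha|}\langle \alpha, \dd(x)\rangle$, to transfer $Q^{(0)}$ from $\alpha \in \Gamma(E^*)$ onto the $E$-factor of $w$ via the duality pairing. This identifies $W \circ Q^{(0)}$, up to the Koszul sign coming from $|\alpha|$ and $|W|$, with the action of $\id \otimes \dd$ on the second tensor factor of $w$, i.e.\ the horizontal arrow of \eqref{eq:bicomplex2}. Combining the two contributions and comparing with the bigrading $|W| = j - i$ shows that $\ad_{Q^{(0)}}$ realizes exactly the standard total differential $Q^{(0)} \otimes \id \pm \id \otimes \dd$ of the displayed bicomplex. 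The only genuine obstacle is bookkeeping: one must fix a pairing convention once and then track the Koszul signs carrying $|W|$, $|\alpha|$, and the column index $j$ in a single consistent manner so that the two contributions assemble into the correct signed total differential; once this is done on a pure tensor $w$, the result extends by $\mathcal{O}$-linearity and graded symmetry to all of $\Gamma(S^{n+1}(E^*) \otimes E)$.
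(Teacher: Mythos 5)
Your proof is correct and is precisely the argument the paper leaves to the reader (the lemma is stated as ``straightforward to show,'' with some details deferred to the proof of Lemma \ref{rmk:root0}, which already uses the resulting decomposition of $\ad_{Q^{(0)}}$ into the $Q^{(0)}\otimes\mathrm{id}$ and $\mathrm{id}\otimes\dd$ components). Identifying a vertical arity-$n$ derivation with its restriction to generators, splitting $[Q^{(0)},W](\alpha)=Q^{(0)}(W(\alpha))-(-1)^{|W|}W(Q^{(0)}\alpha)$, and transferring $Q^{(0)}$ onto the $E$-factor via Equation \eqref{Q0} is exactly the intended route, with only the Koszul-sign bookkeeping left to fix conventions.
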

\isthiswhatyouowant 
For degree reasons $S^{k}(E^*)_{l} = 0$ for $l<k$---the first non-zero row starts at height $n+1$. 

\begin{figure}[ht]
  \centering
  \begin{tikzpicture}[scale=0.50]
    \coordinate (Origin)   at (0,0);
    \coordinate (XAxisMin) at (10,-2);
    \coordinate (XAxisMax) at (0,-2);
    \coordinate (AAxisMin) at (9,-2);
    \coordinate (AAxisMax) at (7,-2);
    \coordinate (YAxisMin) at (10,1);
    \coordinate (YAxisMax) at (10,8);
    \coordinate (ZAxisMin) at (10,-2);
    \coordinate (ZAxisMax) at (10,-1);
    \coordinate (WAxisMin) at (10,-1);
    \coordinate (WAxisMax) at (10,1);
    \draw [ultra thick, black,-latex] (AAxisMax) -- (XAxisMax) node [left] {};
    \draw [ultra thick, black,-latex] (YAxisMin) -- (YAxisMax) node [above] {};
    \draw [ultra thick, black] (ZAxisMin) -- (ZAxisMax);
    \draw [ultra thick, black,dashed] (WAxisMin) -- (WAxisMax);
    \draw [ultra thick, black] (XAxisMin) -- (AAxisMin);
    \draw [ultra thick, black,dashed] (AAxisMin) -- (AAxisMax);

    \clip (-3,-3) rectangle (12cm,9cm); 
    \coordinate (Bone) at (4,2);
    \coordinate (Btwo) at (-6,12);
    \coordinate (B1) at (-7,11);
    \coordinate (B2) at (2,2);
    \coordinate (B3) at (0,4);
    \coordinate (B4) at (-4,8);
    \draw[style=help lines,dashed] (-4,1) grid[step=2cm] (7,9);
    \draw[style=help lines,dashed] (9,2) grid[step=2cm] (10,9);
    \draw (-3,-2) -- (10,-2)[dashed, gray];
    \draw (6,-2) -- (6,-1)[dashed, gray];
    \draw (4,-2) -- (4,-1)[dashed, gray];
    \draw (2,-2) -- (2,-1)[dashed, gray];
    \draw (0,-2) -- (0,-1)[dashed, gray];
    \draw (-2,-2) -- (-2,-1)[dashed, gray];
    \foreach \x in {-4,-3,...,3}{
      \foreach \y in {1,2,...,4}{
        \node[draw,circle,inner sep=1pt,fill] at (2*\x,2*\y) {};
        \node[draw,circle,inner sep=1pt,fill,red] at (Bone){};
        \node[draw,circle,inner sep=1pt,fill,red] at (0,6){};
        \node[draw,circle,inner sep=1pt,fill,red] at (2,4){};
        \node[draw,circle,inner sep=1pt,fill,red] at (-2,8){};
        \node[draw,circle,inner sep=1pt,fill,red] at (-4,10){};
        \node[draw,circle,inner sep=1pt,fill,blue] at (-6,10){};
        \node[draw,circle,inner sep=1pt,fill,blue] at (B2){};
        \node[draw,circle,inner sep=1pt,fill,blue] at (B3){};
        \node[draw,circle,inner sep=1pt,fill,blue] at (B4){};
        \node[draw,circle,inner sep=1pt,fill,blue] at (-2,6){};
      }
    }
    \node [below] at (4,-2)  {$-d+1$};
    \node [below] at (2,-2)  {$-d$};
    \node [above right] at (-2,-2)  {\textbf{depth}};
    \node [right] at (10,2)  {n+1};
    \node [right] at (10,6)  {};
    \node [above left] at (10,8)  {\textbf{height}};
    
    \draw [ultra thick,red] (Btwo)
        -- (Bone);
    \node [above right,red] at (2,4) {\large $[Q^{(0)},Y]$};
    \draw [ultra thick,blue,dashed] (B1)
        -- (B2);
    \node [below left,blue] at (0,4) {\large $Y$};
    
  \end{tikzpicture}
  \caption{\small 
  A vertical vector field $Y$ of arity $n$ and degree $n+1-d$ 
  and its image under the differential $\ad_{Q^{(0)}}\cong Q^{(0)} \otimes  {\rm{id}} +  {\rm{id}} \otimes \dd$}.
  \label{figure1}
\end{figure}
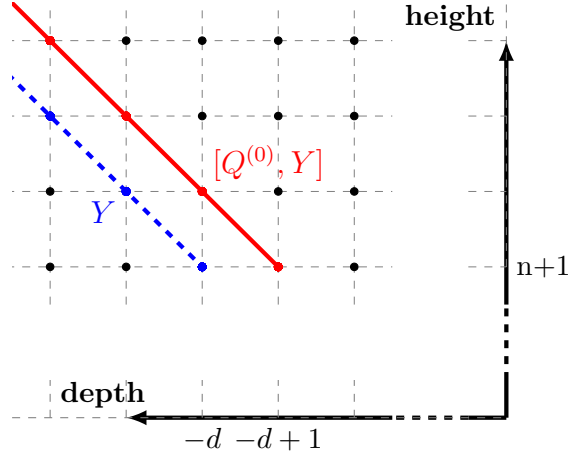

\begin{proof}[Proof of Proposition \ref{prop:fondamental3}] By definition of a geometric resolution, the following is an exact complex of $ {\mathcal O}$-modules:
	$$\xymatrix{	\cdots
		\ar[r]^{ }  &\Gamma( E_{-2}) \ar[r]^{   \dd } 
		& \Gamma(E_{-1}) \ar[r]^{   \rho }   &  {\mathcal F} \ar[r] & 0. 
	}$$
 	Sections of $S^{n+1}(E^*)_k$ form a projective $\mathscr{O}$-module for all $k \geq 0$, thus
	tensoring over $ {\mathcal O}$ with $ \Gamma (S^{n+1}(E^*)_k )  $ preserves exactness:
\begin{equation}
 \label{eq:lines}  \resizebox{0.88\textwidth}{!}{\xymatrix{	\cdots
 	\ar[r]^{ }  &	\Gamma( S^{n+1}(E^*)_{k} \otimes E_{-2}) \ar[r]^{  {\rm{id}} \otimes \dd } 
 	&\Gamma(S^{n+1}(E^*)_{k} \otimes E_{-1}) \ar[r]^{  {\rm{id}} \otimes \rho }   & \Gamma( S^{n+1}(E^*)_{k}) \otimes_{\mathcal O} {\mathcal F} \ar[r] & 0  .}}
\end{equation}
Therefore, all lines in the bicomplex below are exact complexes:
\begin{equation}\label{eq:Bigbicomplex} 
 \resizebox{0.88\textwidth}{!}{\xymatrix{ & \vdots & \vdots &  \vdots & \\ \cdots
	\ar[r]^{ }  &	\Gamma(S^{n+1}(E^*)_{n+3}  \otimes E_{-2}) \ar[r]^{  {\rm{id}} \otimes \dd } 
	\ar[u]
	&\Gamma(S^{n+1}(E^*)_{n+3}  \otimes E_{-1}) \ar[u]
	\ar[r]^{  {\rm{id}} \otimes \rho }   & \Gamma( S^{n+1}(E^*)_{n+3} ) \otimes_{\mathcal O}  {\mathcal F}  \ar[u] 
	\ar[r] & 0
	\\  \cdots
	\ar[r]^{ }  &	\Gamma(S^{n+1}(E^*)_{n+2}  \otimes E_{-2}) \ar[r]^{  {\rm{id}} \otimes \dd } 
	\ar[u]^{  Q^{(0)}  \otimes  {\rm{id}}}  &\Gamma(S^{n+1}(E^*)_{n+2}  \otimes E_{-1}) \ar[u]^{  Q^{(0)}  \otimes  {\rm{id}}}  \ar[r]^{  {\rm{id}} \otimes \rho }   & \Gamma( S^{n+1}(E^*)_{n+2} ) \otimes_{\mathcal O}  {\mathcal F}    \ar[u]^{  Q^{(0)}  \otimes  {\rm{id}} }\ar[r] & 0 \\  \cdots
	\ar[r]^{ }  &	\Gamma( S^{n+1}(E^*)_{n+1} \otimes E_{-2}) \ar[r]^{  {\rm{id}} \otimes \dd } 
	\ar[u]^{  Q^{(0)} \otimes  {\rm{id}}}  &\Gamma(S^{n+1}(E^*)_{n+1} \otimes E_{-1}) \ar[u]^{  Q^{(0)} \otimes  {\rm{id}}}  \ar[r]^{  {\rm{id}} \otimes \rho }   & \Gamma( S^{n+1}(E^*)_{n+1} ) \otimes_{\mathcal O}  {\mathcal F}   \ar[u]^{  Q^{(0)}  \otimes  {\rm{id}} }  \ar[r] & 0  \\ &0 \ar[u] & 0 \ar[u]&  0 \ar[u] & }}
\end{equation}
so that the cohomology of the total differential of this bicomplex is zero. By diagram chasing, this implies that
the sub-bicomplex  obtained by removing the last column on the right is quasi-isomorphic to the complex formed by the last column. The quasi-isomorphism is simply given by the collection of arrows from the penultimate column to the last one,  which, up to the shift in degree, is precisely the map  $ \mathrm{rt}$.

 \isthiswhatyouowant
	Alternatively, this quasi-isomorphism can be seen as follows. There is a short exact sequence of complexes:
	 $$  0 \to \Gamma( S^{n+1}(E^*)_{\bullet} ) \otimes_{\mathcal O}  {\mathcal F} \rightarrow  {\mathcal B}_\bullet \stackrel{\pi}{\to}   \mathfrak{X}_{\mathrm{vert}}^{(n)}(E)_\bullet  \to 0 , $$
	 where  ${\mathcal B}_\bullet$ is the complex associated to the bicomplex \eqref{eq:Bigbicomplex} and  $\pi$  is the projection of the bicomplex \eqref{eq:Bigbicomplex} onto \eqref{eq:bicomplex2} composed with the isomorphism \eqref{eq:Phi0AsComplex}. Since the lines of the bi-complex ${\mathcal B}_\bullet$  have trivial cohomology, the total differential of ${\mathcal B}_\bullet$ has trivial  cohomology. 
	 Taking the associated long exact sequence in cohomology,  we obtain an isomorphism at the level of the connecting map: 
$$	  H^k({\mathfrak{X}}_{\mathrm{vert}}^{(n)}(E),  \ad_{Q^{(0)}} ) \simeq H^{k+1}( \Gamma( S^{n+1}(E^*)) \otimes_{\mathcal O} {\mathcal F} ,  Q^{(0)} \otimes \id )$$
for all $k \in {\mathbb Z}$. This connecting map is easily seen to be induced by ${\mathrm{rt}}$.
	\end{proof}
\begin{lemme}\normalfont\label{rmk:root0}
	Let $ (E,\dd, \rho)$ be a geometric resolution of $ {\mathcal F}$ and  $R$ a vertical vector field of degree $i$ and arity $n$ which is $Q^{(0)}$-closed and whose root is strictly zero.	Then $ R= [Q^{(0)},W]$ for some vertical vector field $W$ of arity $n$ which has no component in $ \Gamma ( S^{n+1} ( E^* )_{i} \otimes E_{-1}) $.
\end{lemme}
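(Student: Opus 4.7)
My plan is to prove the lemma by a step-by-step cancellation procedure, working through the $E$-degree one component at a time and using the exactness of the rows of the bicomplex from Lemma \ref{lem:Phi0AsComplex}.

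First, I would use the isomorphism \eqref{eq:Phi0AsComplex} to decompose $R$ as $R = \sum_{j \geq 1} R_{-j}$, with $R_{-j} \in \Gamma(S^{n+1}(E^*)_{i+j} \otimes E_{-j})$, and recall that the differential $\ad_{Q^{(0)}}$ corresponds to the total differential $\mathrm{id} \otimes \dd \pm Q^{(0)} \otimes \mathrm{id}$ of the bicomplex. The hypothesis $\mathrm{rt}(R)=0$ reads $(\mathrm{id} \otimes \rho)(R_{-1}) = 0$. Since $\Gamma(S^{n+1}(E^*)_{i+1})$ is a projective $\mathscr{O}$-module, tensoring it with the geometric resolution preserves exactness (this is exactly the exactness of the rows of the augmented bicomplex \eqref{eq:Bigbicomplex} used in the proof of Proposition \ref{prop:fondamental3}). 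Hence $R_{-1}$ lies in the image of $\mathrm{id} \otimes \dd^{(2)}$, and we may choose $W_{-2} \in \Gamma(S^{n+1}(E^*)_{i+1} \otimes E_{-2})$ with $(\mathrm{id} \otimes \dd^{(2)})(W_{-2}) = R_{-1}$.

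Now I set $R^{(1)} := R - [Q^{(0)}, W_{-2}]$. Still $Q^{(0)}$-closed, it has vanishing $E_{-1}$ component, since $[Q^{(0)}, W_{-2}]$ splits into a piece in $E_{-1}$ (equal to $(\mathrm{id}\otimes\dd^{(2)})(W_{-2})=R_{-1}$) and a piece in $E_{-2}$. The plan is to iterate this. Inductively, assume we have built $W_{-2},\ldots,W_{-k}$ with no $E_{-1}$ component in $W$ so far, such that $R^{(k-1)} := R - [Q^{(0)}, W_{-2}+\cdots+W_{-k}]$ is closed and vanishes in components $E_{-1},\ldots,E_{-k+1}$. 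Reading the closedness equation $\ad_{Q^{(0)}}(R^{(k-1)})=0$ in the $E_{-k+1}$ slot and using that the vertical contribution from $R^{(k-1)}_{-k+1}$ vanishes by the induction hypothesis, I would obtain $(\mathrm{id}\otimes\dd^{(k)})(R^{(k-1)}_{-k})=0$. By exactness of the row at $E_{-k}$ (with $k\ge 2$), there exists $W_{-k-1} \in \Gamma(S^{n+1}(E^*)_{i+k} \otimes E_{-k-1})$ such that $(\mathrm{id}\otimes \dd^{(k+1)})(W_{-k-1}) = R^{(k-1)}_{-k}$; subtracting $[Q^{(0)}, W_{-k-1}]$ then kills the $E_{-k}$ component without touching any component above.

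Setting $W := \sum_{j \geq 2} W_{-j}$ by construction gives a vertical vector field of arity $n$ and degree $i-1$ with no component in $\Gamma(S^{n+1}(E^*)_i \otimes E_{-1})$, and $[Q^{(0)}, W] = R$. The only subtle point I expect is well-definedness of the formal sum for $W$: the $W_{-j}$ live in different direct summands of $\Gamma(S^{n+1}(E^*)\otimes E)$, so their sum is automatically defined as an element of the sheaf of vertical vector fields, and when $E$ is bounded (Lie $k$-algebroid case) the procedure terminates after finitely many steps. The main work is the induction argument above, whose core is really just the exactness of the rows of the bicomplex, i.e.\ the defining property of a geometric resolution carried through by the exactness-preserving functor $\Gamma(S^{n+1}(E^*))\otimes_{\mathscr{O}}(-)$.
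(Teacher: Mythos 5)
Your proof is correct and follows essentially the same route as the paper: decompose $R$ into its $E_{-j}$-components, use $\mathrm{rt}(R)=0$ together with exactness of the rows of the bicomplex (projectivity of $\Gamma(S^{n+1}(E^*)_k)$ preserving the exactness of the resolution) to find $W_{-2}$, and then recurse using the $Q^{(0)}$-closedness of $R$. Your "subtract the remainder $R^{(k-1)}$" formulation is just a repackaging of the paper's explicit system of conditions $({\mathcal H}_k)$, so there is nothing to change.
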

	\begin{proof} 
		Decompose $R$ as a sum $ R =\sum_{j \geq 1}  R_j$ with $ R_j \in \Gamma(S^{(n+1)} (E^*)_{j+i}  \otimes E_{-j} )$.
		We need to find $W$ of the form  $ W =\sum_{j \geq 2}  W_j$ with $ W_j \in \Gamma(S^{(n+1)} (E^*)_{j+i-1}  \otimes E_{-j} )$.
		The relation $ R= [Q^{(0)},W]$ amounts to a sequence $  ({\mathcal H_k})_{k \geq 2}$ of conditions:
		$$ \left\{ \begin{array}{crcl} ({\mathcal H_2}) & ({\mathrm{id}} \otimes \dd) \, W_2 &=& R_1 ,\\  ({\mathcal H_k}) & ({\mathrm{id}} \otimes \dd) \, W_k &=& R_{k-1} -  (Q^{(0)} \otimes {\mathrm{id}})W_{k-1} \quad \hbox{ for all $ k \geq 3$}.  \end{array}\right.$$
		Since $ {\mathrm{rt}}(R)= ({\mathrm{id}} \otimes \rho) (R_1)  = 0$, exactness of the $i$th line in (\ref{eq:lines}) implies that there is some $W_2 \in  \Gamma(S^{(n+1)} (E^*)_{i+1} \otimes E_{-2} ) $ satisfying  $({\mathcal H_2})$. 		The remaining components are then constructed by recursion: Assume that we have constructed $W_2, \dots,W_k$ satisfying $  ({\mathcal H_2}), \dots, ({\mathcal H_k})$ for some $k \geq 2$. The condition $[Q^{(0)},R]=0$ translates into the following set of equations
		$$ (Q^{(0)} \otimes {\mathrm{id}}) R_{j-1} = - ({\mathrm{id}} \otimes \dd) R_j \, \quad \hbox{ for all $ j \geq 2$} \, . $$ 
	Applying $(Q^{(0)} \otimes {\mathrm{id}})$ to the recursion assumption $({\mathcal H_k})$, the above equation for $j=k$, together with anticommutativity of the two differentials, shows that $R_{k} - ( Q^{(0)} \otimes {\mathrm{id}}) W_{k} $ is $ ({\mathrm{id}} \otimes \dd)$-closed. Then exactness of the $i$th line in  (\ref{eq:lines}) grants the existence of some $W_{k+1} \in  \Gamma(S^{(n+1)} (E^*)_{i+k} \otimes E_{-k-1})$ which satisfies $ ({\mathcal H}_{k+1})$. \end{proof}

\begin{corollaire} \label{cor:coho} In the case of a geometric resolution $ (E,\dd, \rho)$ one has:
\begin{enumerate}  \item $H^k ({\mathfrak X}_{{\mathrm{vert}}}^{(n)}(E),   \ad_{Q^{(0)}} ) \cong H^{k+1} (\Gamma( S^{n+1}(E^*))\otimes_{\mathcal O} {\mathcal F},  Q^{(0)} \otimes \id  )$
\item $H^2 ({\mathfrak X}_{{\mathrm{vert}}}^{(n)}(E),   \ad_{Q^{(0)}} )=0 $ if  $ n \geq 3$.
\end{enumerate}
\end{corollaire}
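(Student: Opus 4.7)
The plan is to obtain both statements as direct consequences of Proposition \ref{prop:fondamental3}, without any additional substantial input.

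For item 1, I would simply pass Proposition \ref{prop:fondamental3} to cohomology. Since the root map $\mathrm{rt}\colon {\mathfrak X}_{\mathrm{vert}}^{(n)}(E)_\bullet \to \Gamma(S^{n+1}(E^*))_\bullet \otimes_{\mathcal O} {\mathcal F}[-1]$ is a degree-zero quasi-isomorphism by that proposition, one obtains
\[
H^k({\mathfrak X}_{\mathrm{vert}}^{(n)}(E),\ad_{Q^{(0)}}) \;\cong\; H^k\bigl(\Gamma(S^{n+1}(E^*))_\bullet \otimes_{\mathcal O} {\mathcal F}[-1]\bigr) \;=\; H^{k+1}\bigl(\Gamma(S^{n+1}(E^*)) \otimes_{\mathcal O} {\mathcal F},\,Q^{(0)}\otimes\id\bigr),
\]
the last equality being nothing but the standard convention $H^k(V[-1]) = H^{k+1}(V)$. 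This is also, verbatim, the isomorphism produced by the connecting-map argument in the alternative proof of Proposition \ref{prop:fondamental3} via the short exact sequence involving the complex ${\mathcal B}_\bullet$.

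For item 2, I would feed $k=2$ into item 1 and reduce the claim to the vanishing of $H^3\bigl(\Gamma(S^{n+1}(E^*)) \otimes_{\mathcal O} {\mathcal F},\,Q^{(0)}\otimes\id\bigr)$ when $n\geq 3$. This is then a pure degree count: the dual bundle $E^* = \bigoplus_{i\geq 1} E_{-i}^*$ is concentrated in strictly positive degrees, hence every non-zero homogeneous section of $S^{n+1}(E^*)$ has total degree at least $n+1$, and in particular $\Gamma(S^{n+1}(E^*))_\ell = 0$ for all $\ell < n+1$. For $n\geq 3$ we have $n+1\geq 4>3$, so the degree-three piece of the complex $\Gamma(S^{n+1}(E^*)) \otimes_{\mathcal O} {\mathcal F}$ is identically zero (this is the fact already recorded in the bicomplex \eqref{eq:Bigbicomplex}, whose first non-zero row sits at height $n+1$). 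Its cohomology in degree three therefore vanishes trivially.

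There is essentially no hard step: the technical work has been accomplished in Proposition \ref{prop:fondamental3}, and the corollary merely repackages it together with the elementary observation that $S^{n+1}(E^*)$ lives in degrees $\geq n+1$. The \emph{point} of the corollary — and in particular of item 2 — is that it will be precisely what is needed in the obstruction-theoretic induction of Section \ref{sectionpreuve1}: item 2 guarantees that at every arity $n\geq 3$ the obstruction class arising from the right-hand side of \eqref{jacques} represents zero in $H^2({\mathfrak X}_{\mathrm{vert}}^{(n)}(E),\ad_{Q^{(0)}})$ and can hence be trivialised by an appropriate choice of $Q^{(n)}$, thus completing the construction of the universal $\LieInftyAlgebroid$ in Theorem \ref{theo:existe}.
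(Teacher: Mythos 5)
Your proof is correct and follows exactly the paper's own argument: item 1 is Proposition \ref{prop:fondamental3} passed to cohomology with the degree shift absorbed, and item 2 is the observation that $\Gamma(S^{n+1}(E^*))$ has no elements of degree three when $n\geq 3$. Nothing to add.
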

\begin{proof} The first item is a trivial consequence of Proposition \ref{prop:fondamental3} after dropping the shift in degree of $\cF$. The second item follows from the first one for $k=2$ and the fact that every element of degree three in $\Gamma( S^{n+1}(E^*))$ is zero if $n>2$. 
\end{proof}

\subsubsection{Extension of an almost Lie algebroid  to a graded almost Lie algebroid on a geometric resolution}
\isthiswhatyouowant
Now we return to our initial problem: We are given a geometric resolution $(E,\dd,\rho)$. This specifies in particular $E$ and $Q^{(0)}$ in the deformation problem outlined in section \ref{sec:aritydefo}. To take care of the given anchor map, we now search for a $Q^{(0)}$-closed $Q^{(1)}$ inducing $\rho$. 

\isthiswhatyouowant
In fact, according to Proposition \ref{prop:Almost}, we know that we can equip $E_{-1}$ with an almost Lie algebroid bracket for the given anchor. More precisely, an anchor and a binary bracket on $E_{-1}$ satisfying the Leibniz identity correspond precisely to a degree one vector field $Q^{(1)}_{E_{-1}}$ of arity one. These data define an almost Lie algebroid structure if in addition Equation \eqref{algebroid2}  is satisfied. It is an easy exercise to verify that this last identity is satisfied iff 
\begin{equation} \label{Q1Q1E1}
[Q^{(1)}_{E_{-1}},Q^{(1)}_{E_{-1}}] \in \mathfrak{X}_{\mathrm{vert}}(E_{-1}) \, . \end{equation}
Thus, to initiate our deformation problem, we need to extend
$Q^{(1)}_{E_{-1}}$ on $E_{-1}$ to a vector field $Q^{(1)}$ defined on all of $E$ which is $\ad_{Q^{(0)}}$-closed. We now first lift $Q^{(1)}_{E_{-1}}$ to a vector field $Q^{(1)}_{E}$ by means of a connection.\footnote{The 
 graded manifolds we discuss here are split and thus we also have an embedding of  $E_{(-1)}$ into $E$. We can choose any extension  of the vector field outside the given graded submanifold, provided only of the given degree and arity. For degree reasons, these two notions of a lift and such an extension are equivalent here.} Given an arbitrary choice for the lift, we search for 
\begin{equation} Q^{(1)}:= Q^{(1)}_{E} + V \label{Q1}\end{equation}
where $V$ is a vertical vector field in $ \oplus_{i \geq 2} \Gamma ( S^2(E^*)_{i+1}  \otimes E_{-i} ) $, \emph{i.e.}~with no component in $ \Gamma ( \wedge^2 E_{-1}^{*}  \otimes E_{-1})$. 

\isthiswhatyouowant
Note that $V$ now also parametrizes different choices for the initial lift $Q^{(1)}_{E}$. One can convince oneself that for every choice of lift and of $V$,  the property \eqref{Q1Q1E1} ensures that  the obtained $Q^{(1)}$ satisfies Equation \eqref{Q1Q1}. We now want to show that there always exists a choice for $V$ such that $Q^{(1)}$ in \eqref{Q1} becomes $\ad_{Q^{(0)}}$-closed. The resulting structure on $E$ given by $Q^{(0)}+Q^{(1)}$ then defines what we call a graded almost Lie algebroid defined over the initial resolution: 
\begin{definition} A \emph{graded almost Lie algebroid} is a complex of vector bundles \eqref{sequencelinear1} equipped with a bracket
$$ [ \cdot, \cdot] \colon \Gamma(E_{-i}) \times \Gamma (E_{-j}) \to \Gamma(E_{-i-j+1})  $$
 which satisfies the following three axioms 
 \begin{eqnarray}
   [x,fy] &=& f[x,y]+ \rho(x)[f] \, y ,  \\ 
    \dd [x,y] &=& [\dd (x),y] + (-1)^i [x, \dd(y)] , \\
    \rho([x,y])&=& [\rho(x),\rho(y)] 
  \end{eqnarray}
  for all $x \in \Gamma(E_{-i})$, $y \in \Gamma(E_{-j})$, and $f \in C^\infty(M)$. Above, it is understood 
  that $\rho(x)=0$ if $x \in \Gamma(E_{-i})$ for all $ i \geq 2$. 
\end{definition}

\begin{proposition}\label{prop:step1} Every geometric resolution $(E,\dd,\rho)$ and every almost Lie algebroid structure on $E_{-1}\subset E$ can be extended to an almost Lie algebroid structure on $E$. \label{prop:haha}
\end{proposition}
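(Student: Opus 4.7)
My plan is to translate the problem into the NQ-manifold language via Theorem \ref{thm:fromNQtoLinfinity} and the analysis of Section \ref{sec:aritydefo}. The complex $(E,\dd)$ gives the arity-zero homological vector field $Q^{(0)}$, while the given almost Lie algebroid on $E_{-1}$ corresponds to an arity-one, degree-one vector field $Q^{(1)}_{E_{-1}}$. The goal is to extend $Q^{(1)}_{E_{-1}}$ to an arity-one, degree-one vector field $Q^{(1)}$ on $E$ which agrees with $Q^{(1)}_{E_{-1}}$ on $E_{-1}$ and satisfies $[Q^{(0)},Q^{(1)}]=0$. Such a vector field corresponds precisely to the sought-for graded almost Lie algebroid structure.

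First I would pick linear connections on the vector bundles $E_{-i}$ for $i\ge 2$ and use them to lift $Q^{(1)}_{E_{-1}}$ to an arity-one, degree-one vector field $Q^{(1)}_E$ on all of $E$. This extension will in general fail to commute with $Q^{(0)}$; I then aim to find a vertical correction $V$ of arity one and degree one, vanishing in $\Gamma(\wedge^2 E_{-1}^{\ast}\otimes E_{-1})$ (so that the bracket on $E_{-1}$ is preserved), such that $[Q^{(0)},V]=-R$, where $R:=[Q^{(0)},Q^{(1)}_E]$.

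The core step consists in verifying the three hypotheses of Lemma \ref{rmk:root0} for $R$. First, $R$ is $\ad_{Q^{(0)}}$-closed by the graded Jacobi identity and $[Q^{(0)},Q^{(0)}]=0$. Second, $R$ is vertical: for $f\in\mathcal{O}$ one has $Q^{(0)}(f)=0$ and $Q^{(1)}_E(f)=\rho^{\ast}\dd_{\mathrm{dR}}f$, so $R(f)$ reduces to $Q^{(0)}(\rho^{\ast}\dd_{\mathrm{dR}}f)$, which vanishes because by duality it equals $-(\rho\circ\dd)^{\ast}\dd_{\mathrm{dR}}f=0$ from the resolution property. Third, and this is the key step, the root of $R$ vanishes. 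By \eqref{rootdef} this amounts to $R(\rho^{\ast}\dd_{\mathrm{dR}}f)=0$ for all $f\in\mathcal{O}$. Since $Q^{(0)}(\rho^{\ast}\dd_{\mathrm{dR}}f)=0$, only $Q^{(0)}(Q^{(1)}_E(\rho^{\ast}\dd_{\mathrm{dR}}f))$ remains. Formula \eqref{def:bracket} then gives, for $x,y\in\Gamma(E_{-1})$,
\[
\big\langle Q^{(1)}_E(\rho^{\ast}\dd_{\mathrm{dR}}f),\, x\odot y\big\rangle \;=\; [\rho(x),\rho(y)](f)-\rho([x,y])(f),
\]
which vanishes by the algebra homomorphism axiom \eqref{algebroid2} of the almost Lie algebroid on $E_{-1}$. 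Since the degree-two part of $S^2(E^{\ast})$ is exactly $\wedge^2 E_{-1}^{\ast}$, this forces $Q^{(1)}_E(\rho^{\ast}\dd_{\mathrm{dR}}f)=0$ and hence $R(\rho^{\ast}\dd_{\mathrm{dR}}f)=0$. Lemma \ref{rmk:root0} then delivers $V$ with no $E_{-1}$ component.

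I would conclude by setting $Q^{(1)}:=Q^{(1)}_E+V$ and translating back. The Leibniz axiom is automatic for any arity-one, degree-one vector field; compatibility of the resulting bracket with $\dd$ amounts to $[Q^{(0)},Q^{(1)}]=0$, which holds by construction; the anchor morphism condition holds on pure $E_{-1}$ brackets by hypothesis and trivially for mixed-degree brackets, since then both $\rho([x,y])$ and $[\rho(x),\rho(y)]$ vanish for degree reasons. The delicate point will be the vanishing of the root of $R$: it is precisely the algebra homomorphism axiom \eqref{algebroid2} on $E_{-1}$ that allows the root computation to close, which is why the full notion of an almost Lie algebroid on $E_{-1}$ is essential rather than merely an anchored bundle with a Leibniz bracket.
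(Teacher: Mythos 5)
Your proposal is correct and follows essentially the same route as the paper: lift the almost Lie algebroid vector field $Q^{(1)}_{E_{-1}}$ to all of $E$, show that the defect $[Q^{(0)},Q^{(1)}_E]$ is a vertical, $\ad_{Q^{(0)}}$-closed, arity-one vector field with vanishing root, and then remove it using Lemma \ref{rmk:root0} by a correction with no component in $\wedge^2 E_{-1}^*\otimes E_{-1}$. The only (harmless) variation is in the root computation, where you establish $Q^{(1)}_E(\rho^{*}\dd_{\mathrm{dR}}f)=0$ directly from the homomorphism axiom \eqref{algebroid2} via \eqref{def:bracket}, whereas the paper's Lemma \ref{haha} deduces the same vanishing from the verticality of $[Q^{(1)},Q^{(1)}]$ and the graded Jacobi identity --- two equivalent phrasings of the same fact.
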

\isthiswhatyouowant The proof will use the following three lemmata, the proof of the first of which we leave to the reader.

\begin{lemme} \label{hihi} A graded almost Lie algebroid structure is in one-to-one correspondence with a  graded manifold $E$ equipped with a degree one vector field $Q= Q^{(0)}+Q^{(1)}$ of arity at most one such that  Equations \eqref{jacques00}, \eqref{jacques0}, and  \eqref{Q1Q1} are satisfied.
\end{lemme}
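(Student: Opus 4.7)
My plan is to derive the correspondence by specializing Theorem \ref{thm:fromNQtoLinfinity} to the case where $Q$ has no components of arity $\geq 2$. Given such a $Q = Q^{(0)} + Q^{(1)}$, the anchor $\rho$ is read off from \eqref{eq:anchor}, the linear differential $\dd$ from \eqref{Q0}, and a graded-symmetric $2$-bracket $\{\cdot,\cdot\}_2$ from \eqref{def:bracket}. These formulas are invertible, so without imposing any further conditions I already have a bijection between degree-one vector fields of arity at most one on the split graded manifold $E$ and the raw data $(E, \dd, \rho, \{\cdot,\cdot\}_2)$; moreover the Leibniz identity \eqref{robinson} is automatic from the fact that $Q^{(1)}$ is a derivation.

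The next step is to match each of the three equations with the remaining axioms of a graded almost Lie algebroid. Equation \eqref{jacques00} is literally $\dd^2 = 0$ by duality, so it encodes that $(E, \dd, \rho)$ is a complex. For Equation \eqref{Q1Q1}, verticality of $[Q^{(1)}, Q^{(1)}]$ is equivalent to $[Q^{(1)}, Q^{(1)}](f) = 0$ for every $f \in \mathcal{O}$, because this commutator is itself a derivation (of odd-odd type) and any derivation killing $\mathcal{O}$ is automatically $\mathcal{O}$-linear. Evaluating $2 Q^{(1)}(\rho^{\ast} \dd_{\mathrm{dR}} f)$ against $x \odot y$ with $x, y \in \Gamma(E_{-1})$ via \eqref{def:bracket} yields $2\bigl([\rho(x),\rho(y)] - \rho(\{x,y\}_2)\bigr)[f]$, so verticality is exactly the algebra homomorphism axiom \eqref{algebroid2}; higher-degree testing is vacuous because $\rho$ vanishes on $E_{-i}$ for $i \geq 2$.

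The heart of the argument, and the main expected obstacle, is the translation of \eqref{jacques0}, $[Q^{(0)}, Q^{(1)}] = 0$. My plan is to evaluate this identity on a generator $\alpha \in \Gamma(E_{-k}^{\ast})$ and pair the result against $x \odot y \in \Gamma(E_{-i} \odot E_{-j})$ with $i+j = k+1$, substituting \eqref{Q0} and \eqref{def:bracket} extended by the appropriate Koszul signs to arbitrary graded arguments. The sign-and-degree bookkeeping is tedious but mechanical and collects exactly to the derivation identity
\[
\dd\{x,y\}_2 = \{\dd x,y\}_2 + (-1)^{|x|}\{x,\dd y\}_2,
\]
together with the compatibility $\rho \circ \dd = 0$; both are part of the axioms of a graded almost Lie algebroid, and conversely they imply \eqref{jacques0} by reading the same calculation backwards. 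Assembling the three translations yields the claimed bijection.
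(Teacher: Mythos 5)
Your proof is correct and is essentially the intended argument: the paper explicitly leaves the proof of this lemma to the reader, and the surrounding material (the identity $Q^{(1)}f=\rho^{*}\dd_{\mathrm{dR}}f$, the computation $[Q^{(0)},Q^{(1)}](f)=(\dd^{(2)})^{*}\rho^{*}\dd_{\mathrm{dR}}f$ in Lemma \ref{lem:train}, and the remark following Lemma \ref{haha}) confirms that the dictionary you set up via Equations \eqref{eq:anchor}, \eqref{Q0}, and \eqref{def:bracket} is exactly the one the authors have in mind. Your matching of \eqref{jacques00} with $\dd^2=0$, of \eqref{jacques0} (tested on $\mathscr{O}$ and on $\Gamma(E^*)$ separately) with $\rho\circ\dd^{(2)}=0$ plus the derivation property of $\dd$, and of \eqref{Q1Q1} with the anchor being a bracket morphism, is the right decomposition.
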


\begin{lemme}
	\label{lem:train}
	The vector field $ [Q^{(0)} , Q^{(1)}]$ with $Q^{(1)}$ as in \eqref{Q1} defines, up to different choices of $V$,  an element in  $H^2 ({\mathfrak{X}}_{\mathrm{vert}}^{(1)}(E),  \ad_{Q^{(0)}} )$.
\end{lemme}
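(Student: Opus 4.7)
The plan is to verify three claims: (i) $[Q^{(0)},Q^{(1)}]$ is a vertical vector field of arity $1$ and degree $2$; (ii) it is $\ad_{Q^{(0)}}$-closed; and (iii) its cohomology class in $H^{2}(\mathfrak{X}_{\mathrm{vert}}^{(1)}(E),\ad_{Q^{(0)}})$ does not depend on the chosen $V$ (nor on the auxiliary lift of $Q^{(1)}_{E_{-1}}$ to $Q^{(1)}_{E}$).

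For (i), the arity follows from the fact that the graded Lie bracket of two homogeneous derivations adds arities and degrees: $Q^{(0)}$ has arity $0$ and degree $1$, $Q^{(1)}$ has arity $1$ and degree $1$, hence $[Q^{(0)},Q^{(1)}]$ has arity $1$ and degree $2$. Verticality means $\mathcal{O}$-linearity, so it suffices to evaluate on a function $f\in\mathcal{O}$. Since $Q^{(0)}$ kills $\mathcal{O}$ (no arity $0$, degree $1$ elements exist), one has
\begin{equation*}
[Q^{(0)},Q^{(1)}](f)\;=\;Q^{(0)}\bigl(Q^{(1)}(f)\bigr)\;=\;Q^{(0)}\bigl(\rho^{*}\dd_{\mathrm{dR}}f\bigr),
\end{equation*}
using \eqref{eq:anchor}. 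But $Q^{(0)}$ restricted to $\Gamma(E_{-1}^{*})$ is dual to $\dd^{(2)}\colon E_{-2}\to E_{-1}$, and $\rho\circ\dd^{(2)}=0$ by the geometric resolution hypothesis, so $Q^{(0)}\circ\rho^{*}=0$. Thus $[Q^{(0)},Q^{(1)}](f)=0$, and the bracket is vertical. (This is precisely where the ``geometric resolution'' property, as opposed to merely a complex, enters.)

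For (ii), the Jacobi identity for the graded Lie bracket of vector fields, together with $[Q^{(0)},Q^{(0)}]=0$, gives
\begin{equation*}
\ad_{Q^{(0)}}\!\bigl([Q^{(0)},Q^{(1)}]\bigr)\;=\;\bigl[Q^{(0)},[Q^{(0)},Q^{(1)}]\bigr]\;=\;\tfrac12\bigl[[Q^{(0)},Q^{(0)}],Q^{(1)}\bigr]\;=\;0.
\end{equation*}

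For (iii), let $V,V'$ be two admissible choices in $\bigoplus_{i\ge 2}\Gamma(S^{2}(E^{*})_{i+1}\otimes E_{-i})$, producing $Q^{(1)}$ and $Q^{(1)\prime}$. Their difference $V-V'$ is itself a vertical vector field of arity $1$ and degree $1$, and
\begin{equation*}
[Q^{(0)},Q^{(1)}]-[Q^{(0)},Q^{(1)\prime}]\;=\;[Q^{(0)},V-V']\;=\;\ad_{Q^{(0)}}(V-V'),
\end{equation*}
which is $\ad_{Q^{(0)}}$-exact in $\mathfrak{X}_{\mathrm{vert}}^{(1)}(E)$. The same computation also handles a change of the initial lift $Q^{(1)}_{E}$, since any two lifts extending the same almost Lie algebroid structure on $E_{-1}$ differ by a vertical arity $1$, degree $1$ vector field that can be absorbed into $V$. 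The main (and only) subtle point of the argument is step~(i), as it is the verticality that ensures the obstruction genuinely lives in the subcomplex $\mathfrak{X}_{\mathrm{vert}}^{(1)}(E)$ and hence is amenable to the vanishing statement from Corollary~\ref{cor:coho} (note however that Corollary~\ref{cor:coho} does not yet apply here since the arity is $1$; this case must be addressed separately via the root map, which is the next step of the overall construction).
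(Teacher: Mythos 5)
Your proof is correct and follows essentially the same route as the paper: verticality is established by evaluating $[Q^{(0)},Q^{(1)}]$ on functions and using $Q^{(1)}f=\rho^{*}(\dd_{\mathrm{dR}}f)$ together with $\rho\circ\dd^{(2)}=0$, after which closedness (Jacobi) and independence of $V$ (coboundary) are immediate. The paper compresses your steps (ii) and (iii) into ``the rest is obvious,'' so your version is just a more explicit write-up of the same argument, and your closing remark that the arity-one case still requires the root map is consistent with how the paper proceeds.
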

\begin{proof} We first observe that  $Q^{(1)} f = \rho^{\ast}(\dd_{\mathrm{dR}}f)$. 
This yields
$$ [Q^{(0)},Q^{(1)}](f)= ({\rm d}^{(2)})^* \circ \rho^* (\dd_{\mathrm{dR}}f )  $$
and this vanishes due to  property $\rho \circ \dd^{(2)} = 0$ of the resolution. Thus $[Q^{(0)},Q^{(1)}] \in {\mathfrak{X}}_{\mathrm{vert}}$. The rest is obvious then.  
	\end{proof}
\begin{lemme} \label{haha}  For every choice of $V$ in \eqref{Q1}, $\mathrm{rt}([Q^{(0)},Q^{(1)}]) = 0$. \end{lemme}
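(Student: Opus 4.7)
The plan is to exploit the alternative characterisation \eqref{rootdef} of the root map: it suffices to prove that $[Q^{(0)},Q^{(1)}](\rho^\ast\dd_{\mathrm{dR}} f)=0$ for every $f\in\mathcal{O}$, and then conclude $\mathrm{rt}([Q^{(0)},Q^{(1)}])=0$. Expanding the graded commutator of the two odd vector fields gives
\[
[Q^{(0)},Q^{(1)}](\rho^\ast\dd_{\mathrm{dR}} f)=Q^{(0)}\bigl(Q^{(1)}(\rho^\ast\dd_{\mathrm{dR}} f)\bigr)+Q^{(1)}\bigl(Q^{(0)}(\rho^\ast\dd_{\mathrm{dR}} f)\bigr),
\]
so I would show that both summands already vanish separately.

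For the second summand, I will use that $\rho^\ast\dd_{\mathrm{dR}} f\in\Gamma(E_{-1}^\ast)$ has degree $1$, so by \eqref{Q0} the image $Q^{(0)}(\rho^\ast\dd_{\mathrm{dR}} f)$ lives in $\Gamma(E_{-2}^\ast)$ and is characterised by the pairing $\langle Q^{(0)}\rho^\ast\dd_{\mathrm{dR}} f,x\rangle=-\langle\rho^\ast\dd_{\mathrm{dR}} f,\dd^{(2)}x\rangle=-\rho(\dd^{(2)}x)[f]$ for $x\in\Gamma(E_{-2})$. This is identically zero because $\rho\circ\dd^{(2)}=0$ is built into the definition of the complex of a geometric resolution, hence $Q^{(0)}(\rho^\ast\dd_{\mathrm{dR}} f)=0$ and the second summand vanishes.

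For the first summand, the key observation is that by Equation \eqref{eq:anchor} one has $Q^{(1)}(f)=\rho^\ast\dd_{\mathrm{dR}} f$ on $\mathcal{O}$, so $Q^{(1)}(\rho^\ast\dd_{\mathrm{dR}} f)=Q^{(1)}Q^{(1)}(f)=\tfrac12[Q^{(1)},Q^{(1)}](f)$. Now $[Q^{(1)},Q^{(1)}]$ is vertical by Equation \eqref{Q1Q1} — this is precisely Lemma \ref{hihi} together with the almost-Lie-algebroid assumption on $E_{-1}$ used to build $Q^{(1)}$ in \eqref{Q1}, and one should note that this holds for every choice of $V$, because $V$ is itself vertical and vertical vector fields form a graded Lie subalgebra under the commutator, so adding $V$ to $Q^{(1)}_E$ does not spoil verticality of the bracket. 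Since vertical vector fields annihilate $\mathcal{O}$, we get $[Q^{(1)},Q^{(1)}](f)=0$, and hence the first summand vanishes as well.

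Combining these two computations, $[Q^{(0)},Q^{(1)}](\rho^\ast\dd_{\mathrm{dR}} f)=0$ for all $f\in\mathcal{O}$, which via \eqref{rootdef} gives $\mathrm{rt}([Q^{(0)},Q^{(1)}])[f]=0$ identically, and therefore $\mathrm{rt}([Q^{(0)},Q^{(1)}])=0$. The only subtle point I expect is the independence of the statement on $V$: one must be careful to check that the identity $Q^{(1)}(f)=\rho^\ast\dd_{\mathrm{dR}} f$ is preserved by the correction $V$, which follows from the fact that $V$ has no component in $\Gamma(S^2(E^\ast)\otimes E_{-1})$ and thus does not act on $\mathcal{O}$ at all. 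With these facts in place, the verification is essentially a two-line calculation.
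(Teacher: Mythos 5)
Your proof is correct and rests on exactly the same two ingredients as the paper's argument --- the verticality of $[Q^{(1)},Q^{(1)}]$ from Equation \eqref{Q1Q1} and the resolution identity $\rho\circ\dd^{(2)}=0$ --- the only difference being that the paper packages the computation through the graded Jacobi identity $\tfrac12\big[Q^{(0)},[Q^{(1)},Q^{(1)}]\big]=\big[[Q^{(0)},Q^{(1)}],Q^{(1)}\big]$ evaluated on $f$, whereas you expand $[Q^{(0)},Q^{(1)}](\rho^{*}\dd_{\mathrm{dR}}f)$ directly into its two summands. One small slip in your aside on $V$-independence: closure of vertical fields under the bracket only handles $[V,V]$, and the real point is that the cross term $[Q^{(1)}_{E},V]$ annihilates $\mathcal{O}$ because $V$, having no component valued in $E_{-1}$, kills $\rho^{*}\dd_{\mathrm{dR}}f\in\Gamma(E_{-1}^{*})$ --- this is where the ``no $E_{-1}$-component'' condition is actually used, not for $V$ acting on $\mathcal{O}$ (which is automatic from verticality).
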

\begin{proof} We already established above that the self-commutator of $Q^{(1)}$ is vertical, Equation \eqref{Q1Q1}. Thus so is $\big[Q^{(0)},[Q^{(1)},Q^{(1)}]\big]$. For every function $f\in {\mathscr O}$, we then compute:
\begin{equation}
\label{jenaiplusdidee}
0 =\frac{1}{2}\left[Q^{(0)},[Q^{(1)},Q^{(1)}]\right](f)=\left[[Q^{(0)},Q^{(1)}],Q^{(1)}\right](f)=[Q^{(0)},Q^{(1)}] \left( \rho^{\ast}(\dd_{\mathrm{dR}}f) \right).
\end{equation}
Comparison with Equation \eqref{rootdef} then establishes the equation to prove. 
\end{proof}
\begin{remarque} \normalfont The relation  $\mathrm{rt}([Q^{(0)},Q^{(1)}]) = 0$  translates into $
	\rho\big(\{\R{d}(x),y\}_{2}\big)=0 
	$
for every $\ x\in \Gamma(E_{-2}),\ y\in \Gamma(E_{-1})$. The latter equation can be proven also by using first that, in an almost Lie algebroid $E_{-1}$, $\rho$ is a morphism of the brackets and then that $\dd \circ \rho = 0$ by the resolution property.
\end{remarque}

\begin{proof}[Proof of Proposition \ref{prop:haha}] 	
	For every choice of $V$	in \eqref{Q1}, Lemma \ref{lem:train} implies that 
	$[Q^{(0)},Q^{(1)}]$ is a vertical vector field.  Lemmata \ref{haha} and \ref{rmk:root0} imply that there exists a vertical vector field $W$ with $[Q^{(0)},Q^{(1)}]=[Q^{(0)},W]$ whose component in $\wedge^2 E_{-1}^* \otimes E_{-1} $ is zero. This $W$ can now be used to redefine the initially given $V$ above, $V \mapsto V-W$, such that the resulting $Q^{(0)}$ and $Q^{(1)}$ satisfy the required relations of a graded almost Lie algebroid, see Lemma \ref{hihi}.
\end{proof}

 \isthiswhatyouowant
Proposition \ref{prop:step1} together with Proposition \ref{prop:Almost} imply:
 \begin{corollaire} \label{cor:haha}
 	Every geometric resolution $(E,\dd,\rho)$ of a singular foliation $\cF$ admits a graded almost Lie algebroid structure over it, i.e.\ the complex $(E,Q^{(0)})$ permits the definition of a degree and arity one vector field $Q^{(1)}$ such that Equations \eqref{jacques00}, \eqref{jacques0}, and  \eqref{Q1Q1} are satisfied.
 \end{corollaire}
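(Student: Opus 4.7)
My plan is to assemble the corollary directly from the two results the authors have just established, using them as black boxes in exactly the order in which they are proved.

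First, I would invoke item 3 of Proposition \ref{prop:Almost} (in the smooth case; in the real analytic or holomorphic case, in a neighborhood of a point, the analogous statement holds by choosing trivializations) applied to the anchored bundle $(E_{-1},\rho)$. By definition of a geometric resolution, $\rho(\Gamma(E_{-1}))=\mathcal{F}$, so $(E_{-1},\rho)$ is an anchored bundle covering the singular foliation $\mathcal{F}$, and Proposition \ref{prop:Almost} produces a skew-symmetric bracket $[\cdot,\cdot]_{E_{-1}}$ on $\Gamma(E_{-1})$ turning it into an almost-Lie algebroid with anchor $\rho$. In other words, the conditions \eqref{algebroid} and \eqref{algebroid2} hold on $E_{-1}$.

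Next, I would feed this structure into Proposition \ref{prop:step1}: it asserts that any almost-Lie algebroid structure on $E_{-1}$ can be extended to an almost-Lie algebroid structure on the whole complex $E$. This is the essential content, since here the differential $\dd$ of the geometric resolution is required to be a derivation of the extended graded bracket and the anchor must vanish outside of degree $-1$, all of which has already been proved by the authors using Lemmas \ref{lem:train}, \ref{haha}, and \ref{rmk:root0} (and, crucially, Proposition \ref{prop:fondamental3}). No new work is required at this stage.

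Finally, I would translate the obtained graded almost-Lie algebroid structure on $E$ into the $NQ$-manifold language via Lemma \ref{hihi}: a graded almost-Lie algebroid on $(E,\dd,\rho)$ is the same data as a pair $(Q^{(0)},Q^{(1)})$ of degree-one vector fields of arities $0$ and $1$, respectively, such that $[Q^{(0)},Q^{(0)}]=0$ (encoding $\dd^2=0$, which is already given), $[Q^{(0)},Q^{(1)}]=0$ (encoding that $\dd$ is a derivation of the bracket together with $\rho\circ \dd^{(2)}=0$), and $[Q^{(1)},Q^{(1)}]\in \mathfrak{X}_{\mathrm{vert}}(E)$ (encoding the algebra-homomorphism property of $\rho$, since the non-vertical part of $[Q^{(1)},Q^{(1)}]$ is controlled by the failure of \eqref{algebroid2}). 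These are precisely Equations \eqref{jacques00}, \eqref{jacques0}, and \eqref{Q1Q1}, so the corollary follows.

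Since the entire argument is a direct assembly of previously proven facts, there is no real obstacle; the only point one should be careful about is to verify that the extension in Proposition \ref{prop:step1} is indeed compatible, on $E_{-1}$, with the specific almost-Lie algebroid chosen in the first step, which is ensured by the way $Q^{(1)}$ is constructed there as a lift of $Q^{(1)}_{E_{-1}}$ corrected by a vertical term with no component in $\wedge^2 E_{-1}^{\ast}\otimes E_{-1}$.
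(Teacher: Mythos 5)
Your proposal is correct and follows exactly the paper's own route: the paper obtains Corollary \ref{cor:haha} precisely by combining item~3 of Proposition \ref{prop:Almost} (to produce an almost Lie algebroid structure on $E_{-1}$) with Proposition \ref{prop:step1} (to extend it to all of $E$), the translation into the conditions \eqref{jacques00}, \eqref{jacques0}, \eqref{Q1Q1} being Lemma \ref{hihi}. Your closing remark about compatibility on $E_{-1}$ is also consistent with the paper's construction, where the correcting vertical field $W$ has no component in $\wedge^2 E_{-1}^{*}\otimes E_{-1}$ and hence does not alter the chosen bracket on $E_{-1}$.
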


 \begin{remarque} \label{rem:haha} \normalfont
In contrast to Proposition \ref{prop:haha}, Corollary \ref{cor:haha} permits us to deform an initially specified almost Lie algebroid structure  $(E_{-1},Q^{(1)}_{E_{-1}})$. For the proof of this weaker statement, it is sufficient to show that, for every lift $Q^{(1)}_E$ of \emph{some} $Q^{(1)}_{E_{-1}}$, the bracket $[Q^{(0)},Q^{(1)}_E]$ defines a vanishing cohomology class in  \eqref{H2vert} for $n=1$. 
It is interesting to see that while the deformation problem leads to cohomology classes in \eqref{H2vert} for $n \geq 2$, setting its initial data leads to a likewise one 
for $n=1$. 
 \end{remarque}

 \subsubsection{Extension of an almost Lie algebroid  to a  Lie  $\infty$-algebroid on a geometric resolution}\label{preuveexistence}

\isthiswhatyouowant
In this section we extend Proposition \ref{prop:haha} to the following statement:
\begin{proposition} Every graded almost Lie algebroid $(E,Q^{(0)}+Q^{(1)})$ over a geometric resolution $(E,\dd,\rho)$ can be extended to a  Lie $\infty$-algebroid structure on $E$. \label{prop:hahafinal}
\end{proposition}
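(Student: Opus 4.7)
The strategy is to construct the higher arity components $Q^{(n)}$ for $n \geq 2$ by induction, preserving at each step the crucial property that $Q^{(n)}$ is a vertical vector field with \emph{vanishing root}---equivalently, having no component in $\Gamma(S^{n+1}(E^*) \otimes E_{-1})$. At stage $n$, the Maurer-Cartan relation \eqref{jacques} prescribes that $[Q^{(0)}, Q^{(n)}] = R_n$, where
\begin{equation*}
R_n := -\tfrac{1}{2} \sum_{\substack{i+j=n\\ 1 \leq i,j \leq n-1}} [Q^{(i)}, Q^{(j)}],
\end{equation*}
and the central tool will be Lemma \ref{rmk:root0}, which delivers exactly such a $Q^{(n)}$ from $R_n$ as soon as I verify three conditions on $R_n$: verticality, $\ad_{Q^{(0)}}$-closedness, and vanishing root.

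Verticality and closedness will be handled by standard arguments. Brackets of two vertical fields are vertical, so the terms $[Q^{(i)}, Q^{(j)}]$ with $i, j \geq 2$ are automatically vertical. For the cross terms $[Q^{(1)}, Q^{(n-1)}]$ (and their symmetric counterparts), a direct evaluation on $f \in \mathcal{O}$ yields $\pm Q^{(n-1)}(\rho^{\ast} \dd_{\mathrm{dR}} f)$, which vanishes because $Q^{(n-1)}$ has no $E_{-1}$ component by the inductive hypothesis. The closedness $\ad_{Q^{(0)}}(R_n) = 0$ is the standard Maurer-Cartan computation: applying $\ad_{Q^{(0)}}$ and using graded Jacobi together with the already-established relations $[Q^{(0)}, Q^{(k)}] = R_k$ for $2 \leq k \leq n-1$ (and $[Q^{(0)}, Q^{(1)}] = 0$ from the initial graded almost Lie algebroid), all terms reorganize into triple brackets that cancel.

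The most delicate point is the vanishing of the root of $R_n$. Using \eqref{rootdef}, this amounts to showing $R_n(\rho^{\ast} \dd_{\mathrm{dR}} f) = 0$ for every $f \in \mathcal{O}$. Each constituent bracket $[Q^{(i)}, Q^{(j)}](\rho^{\ast} \dd_{\mathrm{dR}} f)$ expands into iterated applications of the form $Q^{(i)} Q^{(j)}(\rho^{\ast} \dd_{\mathrm{dR}} f) \pm Q^{(j)} Q^{(i)}(\rho^{\ast} \dd_{\mathrm{dR}} f)$, and each inner term $Q^{(k)}(\rho^{\ast} \dd_{\mathrm{dR}} f)$ vanishes individually: for $k = 1$, this is precisely the dual form of the compatibility $\rho(\{x, y\}_2) = [\rho(x), \rho(y)]$ from the almost Lie algebroid axiom \eqref{algebroid2} restricted to $E_{-1}$, which gives $\langle Q^{(1)}(\rho^{\ast}\dd f), x \odot y\rangle = [\rho(x),\rho(y)]f - \rho(\{x,y\}_2)f = 0$; for $k \geq 2$, it follows from the inductive zero-root hypothesis on $Q^{(k)}$, since a vertical field without $E_{-1}$ component pairs trivially with a section of $E_{-1}^{\ast}$.

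With the three conditions verified, Lemma \ref{rmk:root0} produces a vertical $Q^{(n)}$ without $E_{-1}$ component satisfying $[Q^{(0)}, Q^{(n)}] = R_n$, completing the induction step. The main obstacle throughout is arranging that the ``no $E_{-1}$ component'' invariant is preserved by the construction: this is precisely what allows the verticality of the cross terms and the root-vanishing computation to propagate cleanly through the recursion, and Lemma \ref{rmk:root0} is designed to deliver exactly such a primitive---note that Corollary \ref{cor:coho}(2), which kills $H^2(\mathfrak{X}^{(n)}_{\mathrm{vert}}(E), \ad_{Q^{(0)}})$ for $n \geq 3$, would only yield existence of $Q^{(n)}$ without any control of its $E_{-1}$ component, whereas the $n=2$ case lies outside its scope anyway. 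Once all $Q^{(n)}$ are assembled, $Q := \sum_{k \geq 0} Q^{(k)}$ satisfies $[Q, Q] = 0$, and by Theorem \ref{thm:fromNQtoLinfinity} this defines the desired Lie $\infty$-algebroid structure on $E$ extending the given graded almost Lie algebroid.
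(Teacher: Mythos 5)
Your proof is correct and follows essentially the same route as the paper's: the recursive construction of $Q^{(n)}$ rests on the same exactness input (Lemma \ref{rmk:root0}, equivalently the quasi-isomorphism of Proposition \ref{prop:fondamental3}), and the only genuinely non-trivial root computation is the one at arity two, which you reduce---just as the paper does via Lemma \ref{lem:forQ2} and Remark \ref{youhou}---to the anchor-morphism identity $Q^{(1)}(\rho^*\dd_{\mathrm{dR}}f)=0$. The only cosmetic difference is that you run a uniform induction carrying the ``no $E_{-1}$-component'' invariant, whereas the paper kills all arities $n\geq 3$ at once via Corollary \ref{cor:coho}; note that your invariant is automatic for degree reasons (a degree-one vertical field of arity $n\geq 2$ has no component in $\Gamma(S^{n+1}(E^*)_{2}\otimes E_{-1})$ because $S^{n+1}(E^*)_{2}=0$), and that ``vanishing root'' is strictly weaker than ``no $E_{-1}$-component'' in general, although both hold for every field appearing in your argument, so no gap results.
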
 
\begin{proof}    We are thus given $Q^{(0)}$ and $Q^{(1)}$ satisfying Equations \eqref{jacques00}, \eqref{jacques0}, and  \eqref{Q1Q1} for the deformation problem described in section \ref{sec:aritydefo}. 
 For the extension problem, Equations \eqref{jacques}, to have a solution, vanishing of the cohomology classes in \eqref{H2vert} for all $n \geq 2$ is necessary and sufficient. 
According to Corollary \ref{cor:coho}, all cohomology spaces for $n>2$ are trivial,
$$ H^2(\mathfrak{X}_{\mathrm{vert}}^{(n)}(E), \ad_{Q^{(0)}}) = 0  \qquad \forall n \geq 3. $$
We are thus left with considering only the case $n=2$. By Proposition \ref{prop:haha}, $[Q^{(1)},Q^{(1)}]$
is vertical and $\ad_{Q^{(0)}}$-closed. In this case $H^2(\mathfrak{X}_{\mathrm{vert}}^{(2)}(E), \ad_{Q^{(0)}})$ may be nontrivial, but the following lemma shows that the 2-class of $[Q^{(1)},Q^{(1)}]$ vanishes.
\begin{lemme}\label{lem:forQ2} For the arity one part $Q^{(1)}$ of the odd vector field characterizing a  graded almost Lie algebroid, one has  
\begin{equation} \mathrm{rt}\left([Q^{(1)},Q^{(1)}]\right)=0. \label{rootQ1Q1}\end{equation}
\end{lemme}
\begin{proof}
 The Jacobi identity implies
$	\big[[Q^{(1)},Q^{(1)}],Q^{(1)}\big] =0$. 
	Application to a function $f \in {\mathcal O}$ and the verticality of $[Q^{(1)},Q^{(1)}]$ implies	 $$ 0 = \big[[Q^{(1)},Q^{(1)}],Q^{(1)}\big] (f) = [Q^{(1)},Q^{(1)}](Q^{(1)}
	(f) )  =[Q^{(1)},Q^{(1)}] ( \rho^* ( \dd_{\mathrm{dR}} f )) .$$   Thus, by Equation \eqref{rootdef},  $ {\mathrm{rt}}  ([Q^{(1)},Q^{(1)}])[f] =0$
	 for all functions $f \in {\mathcal O}$. 
\end{proof} \isthiswhatyouowant This lemma together with the  isomorphism in cohomology resulting from Proposition \ref{prop:fondamental3} shows that also at level $n=2$ there is no obstruction. This then
completes the proof of the proposition. 
\end{proof}


\begin{remarque}\label{youhou}\normalfont
Equation \eqref{rootQ1Q1} is equivalent to the following relation \begin{equation}\label{eq:haha}
\forall\ x,y,z\in\Gamma(E_{-1})\hspace{1.3cm}\rho\big(\mathrm{Jac}(x,y,z)\big)=0,
\end{equation}
where $\mathrm{Jac}$ is the Jacobiator of the almost Lie algebroid bracket on $E_{-1}$. The validity of Equation \eqref{eq:haha} now follows immediately from the fact that $\rho$ is a morphism of brackets and the brackets of vector fields form a Lie algebra. This then provides an alternative proof of Lemma \ref{lem:forQ2}.
\end{remarque}

\isthiswhatyouowant Propositions  \ref{prop:haha} and \ref{prop:hahafinal} together imply:
\begin{corollaire} \label{cor:hahafinal0}
 Every geometric resolution $(E,\dd,\rho)$ and every almost Lie algebroid structure on $E_{-1}\subset E$ can be extended to a Lie $\infty$-algebroid structure on $E$.	
	 \end{corollaire}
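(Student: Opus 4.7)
The statement of Corollary \ref{cor:hahafinal0} is an immediate consequence of the two preceding propositions, so the proof is simply a composition. The plan is therefore short.

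Given a geometric resolution $(E,\dd,\rho)$ together with an almost Lie algebroid structure on $E_{-1}$, I would first invoke Proposition \ref{prop:haha} (= Proposition \ref{prop:step1}). This produces an extension of the given almost Lie algebroid bracket on $E_{-1}$ to a graded almost Lie algebroid on the full complex $E$, which in the $NQ$-manifold language amounts to constructing an arity-one vector field $Q^{(1)}$ on $E$ with the prescribed anchor and binary bracket in degree $-1$, satisfying $[Q^{(0)},Q^{(1)}]=0$ together with the verticality condition $[Q^{(1)},Q^{(1)}]\in \mathfrak{X}_{\mathrm{vert}}(E)$.

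Then I would apply Proposition \ref{prop:hahafinal} to the pair $(E, Q^{(0)}+Q^{(1)})$ obtained in the previous step. This yields a full homological vector field $Q = Q^{(0)}+Q^{(1)}+\sum_{k\geq 2}Q^{(k)}$ on $E$ extending the graded almost Lie algebroid, i.e.\ a {\LieInftyAlgebroid} structure on $E$ whose arity-zero and arity-one components reproduce $(\dd,\rho)$ and the initially chosen almost Lie algebroid bracket on $E_{-1}$, respectively.

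There is no genuine obstacle here: all the work has been done in the proofs of the two referenced propositions, which rely in turn on the vanishing of the obstruction classes in $H^{2}({\mathfrak X}_{\mathrm{vert}}^{(n)}(E),\ad_{Q^{(0)}})$ (Corollary \ref{cor:coho}) and on the fact that the relevant roots vanish (Lemmata \ref{rmk:root0} and \ref{lem:forQ2}). The corollary is simply the concatenation of these two extension steps.
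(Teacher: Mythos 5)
Your proposal is correct and matches the paper exactly: the paper derives Corollary \ref{cor:hahafinal0} precisely as the concatenation of Proposition \ref{prop:haha} (extending the almost Lie algebroid on $E_{-1}$ to a graded almost Lie algebroid on $E$) with Proposition \ref{prop:hahafinal} (extending that graded almost Lie algebroid to a full Lie $\infty$-algebroid structure). Nothing further is needed.
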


\isthiswhatyouowant Using in addition Proposition \ref{prop:Almost}, we may also conclude the following:
\begin{corollaire} \label{cor:hahafinal}
 	Every geometric resolution $(E,\dd,\rho)$ of a singular foliation $\cF$ admits a   Lie $\infty$-algebroid structure over it.
 \end{corollaire}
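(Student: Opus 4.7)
The statement is essentially a direct concatenation of two already established results, so my plan is quite short. First I would extract from the given geometric resolution $(E,\dd,\rho)$ only its bottom piece: the anchored vector bundle $(E_{-1},\rho)$ with $\rho\colon E_{-1}\to TM$. By Definition \ref{def:resolution}, the sequence $\Gamma(E_{-1})\to\mathcal{F}\to 0$ is exact, so $\rho\big(\Gamma(E_{-1})\big)=\mathcal{F}$; in the terminology of Section \ref{DefExSSingFol}, this anchored bundle covers $\mathcal{F}$.

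Second, I would invoke Proposition \ref{prop:Almost}(3), which in the smooth setting guarantees that any anchored vector bundle covering $\mathcal{F}$ admits an almost Lie algebroid bracket with the prescribed anchor $\rho$. The argument there is to build local brackets from local generators and their structure functions (Proposition \ref{prop:Almost}(2)) and then glue via a partition of unity subordinate to a trivialising cover; unlike genuine Lie brackets, the almost Lie algebroid axioms \eqref{algebroid}--\eqref{algebroid2} are convex combinations stable, so the glued object is still an almost Lie algebroid with anchor $\rho$.

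Third, with an almost Lie algebroid structure on $E_{-1}$ now fixed, I would apply Corollary \ref{cor:hahafinal0} to extend it to a Lie $\infty$-algebroid structure on the whole graded bundle $E$. This extension is precisely the content of Propositions \ref{prop:haha} and \ref{prop:hahafinal}, and concludes the proof.

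The main obstacle has already been disposed of in the preceding subsections: the nontrivial input is the vanishing of $H^2\big(\mathfrak{X}^{(n)}_{\mathrm{vert}}(E),\ad_{Q^{(0)}}\big)$ for $n\geq 3$ (Corollary \ref{cor:coho}) together with the root computations (Lemmas \ref{haha} and \ref{lem:forQ2}) that kill the potentially obstructing class in arity $n=2$; these rest in turn on the cohomology-transfer Proposition \ref{prop:fondamental3}, which crucially uses exactness of the resolution. Assuming those, the only subtlety worth flagging for the present corollary is that Proposition \ref{prop:Almost}(3) is formulated in the smooth setting, so the partition-of-unity step is unavailable in the holomorphic or real analytic categories. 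There, however, a local almost Lie algebroid still exists around each point by Proposition \ref{prop:Almost}(2), and the same argument then yields a universal Lie $\infty$-algebroid locally, in accordance with the scope of Theorem \ref{theo:existe}.
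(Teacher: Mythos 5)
Your proof is correct and follows exactly the paper's route: the paper also obtains the almost Lie algebroid structure on $E_{-1}$ from Proposition \ref{prop:Almost} (using that $\rho(\Gamma(E_{-1}))=\mathcal{F}$ by exactness of the resolution) and then invokes Corollary \ref{cor:hahafinal0}, i.e.\ Propositions \ref{prop:haha} and \ref{prop:hahafinal}, to extend it to a Lie $\infty$-algebroid structure on all of $E$. Your remark about the partition-of-unity step restricting the global statement to the smooth setting likewise matches the paper's own caveat at the start of Section \ref{sectionpreuve1}.
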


\isthiswhatyouowant This is the content of the first part of Theorem \ref{theo:existe}, which refers to the smooth setting only.

\isthiswhatyouowant In the real analytic or holomorphic case, the first item in Proposition \ref{bonjourprop} ensure the existence of a finite length geometric resolution in the neighborhood of a point. Propositions   \ref{prop:Almost}, \ref{prop:haha}, and \ref{prop:hahafinal} now yield,  in a possibly smaller neighborhood, the second part of Theorem \ref{theo:existe}.

\isthiswhatyouowant We finally remark that Proposition \ref{prop:hahafinal}, as it is written, holds also in the real analytic and  holomorphic context. 




\subsection{Proof of Theorem \ref{theo:onlyOne} about universality}
\label{sec:context}

Throughout this section: 
\begin{itemize}
	\item $(E,Q)$ and $ (E',Q')$ are Lie $\infty$-algebroids over the same base manifold $M$. Their sheaves of functions are denoted by, respectively, $ {\mathcal E}$ and $ {\mathcal E}'$, and their linear parts are, respectively, the complexes $ (E,\dd,\rho)$ and $ (E',\dd',\rho')$.  
	\item The anchor of $E'$ defines a sub-singular foliation ${\mathcal F'}$ of the singular foliation  $ {\mathcal F}$ defined by $\rho$, \emph{i.e.}~$${\mathcal F'} \subset {\mathcal F},$$ where ${\mathcal F} \equiv \rho(\Gamma(E_{-1}))$ and ${\mathcal F'} \equiv \rho'(\Gamma(E_{-1}'))$.
\end{itemize}
Within this section, we work simultaneously in the smooth, real analytic, and holomorphic settings. 
 The proof of Theorem \ref{theo:onlyOne} is related to two deformation problems, which we will first introduce one after the other.

\subsubsection{Lie $\infty$-algebroid morphisms up to arity $n$} 

\label{sec:LieInftyDefor}



	\isthiswhatyouowant
	\begin{definition}
	Let $ n \in {\mathbb N}_0 \cup \{+\infty\}$.
	A \emph{Lie $\infty$-algebroid morphism up to order $n$} is a ${\mathcal O}$-linear graded algebra morphism
	$\Phi\colon  {\mathcal E} \to {\mathcal E}'$ such that  components of arity $i \leq n$ of $ Q' \circ \Phi -  \Phi \circ Q $ vanish, i.e. 
	\begin{equation} \Qtot(\Phi)^{(i)} = 0 \qquad \forall \, 0 \leq i \leq n . \label{conditionMi}
	\end{equation}		
	\end{definition}
	\isthiswhatyouowant
 For $n = + \infty$, we simply recover a standard Lie $\infty$-algebroid morphism.

	\isthiswhatyouowant
	Throughout this subsection, we are given a chain map   
	\begin{equation}
	\label{eq:Moph0}
	\begin{tikzcd}[column sep=0.9cm,row sep=0.6cm]
	\dots  \ar[r,"\dd'"] &E_{-3}'\ar[d,"\phi_{0}"] \ar[r,"\dd'"]&E_{-2}'\ar[d,"\phi_{0}"] \ar[r,"\dd'"] &E_{-1}'\ar[d,"\phi_{0}"] \ar[r,"\rho' "] &TM\ar[d,"\mathrm{id}"]\\
	\dots  \ar[r,"\dd"]  &E_{-3}\ar[r,"\dd"]&E_{-2} \ar[r,"\dd"]  &E_{-1} \ar[r,"\rho "] &TM 
	\end{tikzcd}.
	\end{equation}
	\isthiswhatyouowant
	The dual chain map $\phi_{0}^* \colon \Gamma(E'^*_\bullet) \to \Gamma(E^*_\bullet) $ admits a unique extension to an  $\mathscr{O}$-linear graded algebra morphism $ \Phi^{(0)} \colon {\mathcal E}' \to {\mathcal E}$. 
	Since, by duality, the arity zero components $Q^{(0)}$ of $Q$ and  $ Q'^{(0)}$ of $Q'$ correspond  to the differentials $\dd$ and $\dd'$, respectively, this map is in fact also a morphism of graded differential algebras $   \Phi^{(0)} \colon ({\mathcal E}, Q^{(0)}) \to  ({\mathcal E}', Q'^{(0)})$.
	It is therefore a Lie $\infty$-algebroid morphism up to arity $0$. Note that for being a Lie $\infty$-algebroid morphism up to arity $0$, one only needs the commutativity of the squares in \eqref{eq:Moph0} except for the one to the utmost right. Using  Equation \eqref{boum}, we can also rewrite the conditions \eqref{conditionMi} in terms of the Taylor coefficients of $\Phi$: 

	
	\begin{lemme}
		\label{prop:defMorph}
The $i$-th condition \eqref{conditionMi} restricted to $\Gamma(E^*)$ is equivalent to 			
	\begin{equation}
	\label{eq:defMorph}
\begin{array}{rcll}
	Q'^{(0)} \circ \phi_i^* - \phi_i^*  \circ \dd^* &=  \underset{a+b=i}{\sum_{1\leq a,b\leq i-1}} 
	Q'^{(a)} \circ \phi_b^* - \Phi^{(b)}  \circ Q^{(a)}  \, ,
	\end{array}
	\end{equation}
	where the Taylor coefficients are viewed upon as maps $\phi_k \colon S^{k+1}(E')_\bullet \to E_{\bullet}$. The whole set of equations \eqref{conditionMi} as it stands is equivalent to the set of equations \eqref{eq:defMorph} for all $0 \leq i \leq n$. 
	\end{lemme}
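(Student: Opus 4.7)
The lemma has two parts. For the first, I would directly expand $\Qtot(\Phi)^{(i)}$ restricted to $\Gamma(E^*)$ in terms of arity. Using $Q = \sum_a Q^{(a)}$, $Q' = \sum_a Q'^{(a)}$, and $\Phi = \sum_b \Phi^{(b)}$, together with $\Phi^{(b)}|_{\Gamma(E^*)} = \phi_b^*$ and $Q^{(0)}|_{\Gamma(E^*)} = \dd^*$ (similarly for $Q'^{(0)}$), one gets
\[
\Qtot(\Phi)^{(i)}\big|_{\Gamma(E^*)} = \sum_{a+b=i}\Big(Q'^{(a)} \circ \phi_b^* - \Phi^{(b)} \circ Q^{(a)}\big|_{\Gamma(E^*)}\Big).
\]
Isolating the extreme summand containing $\phi_i^*$ on the left, which assembles into $Q'^{(0)} \circ \phi_i^* - \phi_i^* \circ \dd^*$, and transferring all remaining contributions to the right, yields precisely \eqref{eq:defMorph}.

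For the second part, the forward implication is immediate by restriction. The converse proceeds by induction on $i$. The structural input is that $\Qtot(\Phi)$ is a $\Phi$-derivation (Lemma \ref{lem:DeltaPhi}(1)), which is moreover $\mathcal{O}$-linear since $\Phi$ covers the identity of $M$ and the chain map property \eqref{eq:Moph0} provides $\rho' = \rho \circ \phi_0$, so Lemma \ref{lem:DeltaPhi}(2) applies. Expanding the $\Phi$-derivation identity on a symmetric product $F \odot G$ and grouping by arity gives
\[
\Qtot(\Phi)^{(i)}(F \odot G) = \sum_{a+b=i}\Big(\Qtot(\Phi)^{(a)}(F) \odot \Phi^{(b)}(G) + (-1)^{|F|}\Phi^{(b)}(F) \odot \Qtot(\Phi)^{(a)}(G)\Big).
\]
Under the inductive hypothesis that $\Qtot(\Phi)^{(j)} = 0$ for all $j < i$, only the summand with $a = i$, $b = 0$ survives, exhibiting $\Qtot(\Phi)^{(i)}$ as a vertical $\Phi^{(0)}$-derivation of degree one. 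Since the graded-commutative $\mathcal{O}$-algebra $\mathcal{E}$ is generated by $\Gamma(E^*)$, any such derivation is determined uniquely by its values on these generators. Consequently the vanishing of \eqref{eq:defMorph} at level $i$, which by the first part is precisely $\Qtot(\Phi)^{(i)}|_{\Gamma(E^*)} = 0$, forces $\Qtot(\Phi)^{(i)} = 0$ on all of $\mathcal{E}$. The base case $i = 0$ is subsumed: no inductive hypothesis is needed since $\Phi^{(0)}$ is already an algebra morphism and $\Qtot(\Phi)^{(0)} = Q'^{(0)} \circ \Phi^{(0)} - \Phi^{(0)} \circ Q^{(0)}$ is a $\Phi^{(0)}$-derivation outright.

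The main obstacle is really only the bookkeeping of arities. The non-trivial observation is that the arity-$i$ slice of a $\Phi$-derivation is \emph{not} in general a $\Phi^{(0)}$-derivation, because $\Phi$ itself contains higher-arity components that contaminate the Leibniz expansion; it is precisely the inductive vanishing of lower arities that kills these cross terms and upgrades $\Qtot(\Phi)^{(i)}$ to an honest $\Phi^{(0)}$-derivation at each step. Once this structural reduction is in place, the passage from generators to the full algebra is the standard fact that vertical derivations on the symmetric algebra of $E^*$ are controlled by their Taylor coefficients.
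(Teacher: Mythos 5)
Your argument is correct and is essentially the paper's own: the lemma is presented there as the routine arity-by-arity unfolding of \eqref{boum}, and the mechanism you use to pass from generators back to all of $\mathcal{E}$ --- taking the arity-$i$ slice of the $\Phi$-derivation identity \eqref{eq:quasider0}, observing that it collapses to a $\Phi^{(0)}$-derivation once the lower-arity components of $\Qtot(\Phi)$ vanish, and invoking $\mathcal{O}$-linearity via $\rho'=\rho\circ\phi_0$ --- is precisely the computation the paper carries out in the proof of Proposition \ref{prop:obsMorph}. The one caveat is inherited from the statement itself rather than from your proof: the index range and signs in \eqref{eq:defMorph} are slightly off as printed (e.g.\ for $i=1$ the right-hand sum is empty, whereas the $(a,b)=(1,0)$ contribution $Q'^{(1)}\circ\phi_0^*-\Phi^{(0)}\circ Q^{(1)}$ must survive the rearrangement), so your ``yields precisely \eqref{eq:defMorph}'' should be read up to that bookkeeping.
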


%


\isthiswhatyouowant	Applying Lemma \ref{lem:complexPhideriv} to $\Phi^{(0)}$, we see that the space $ {\mathfrak X}_{\mathrm{vert}}( \!\!{\xymatrix@C-=0.5cm{  {\mathcal E} \ar[r]^{\Phi^{\scalebox{0.5}{(0)}}} & {\mathcal E}'}}\!\!)  $ of ${\mathcal O}$-linear $ \Phi^{(0)}$-derivations is a complex with respect to the   differential:
	$$   \Qtoto (W) := Q'^{(0)} \circ W - (-1)^{|W|} W \circ Q^{(0)}  \hspace{1cm} \hbox{ $\forall  \, W \in   {\mathfrak X}_{\mathrm{vert}}( \!\!{\xymatrix@C-=0.5cm{  {\mathcal E} \ar[r]^{\Phi^{\scalebox{0.5}{(0)}}} & {\mathcal E}'}}\!\!)$}  .$$
	For every $ n \in {\mathbb N}$, the space ${\mathfrak X}^{(n)}_{\mathrm{vert}}(\!{\xymatrix@C-=0.5cm{  {\mathcal E} \ar[r]^{\Phi^{\scalebox{0.5}{(0)}}} & {\mathcal E}'}}\!\!)$ of ${\mathcal O}$-linear $\Phi^{(0)}$-derivations of arity $n$ is a sub-complex of this complex. 
	\isthiswhatyouowant
		Consider a Lie $\infty$-algebroid morphism $\Phi$ up to order $n$
	whose component of arity $0$ is  $ \Phi^{(0)} \colon  {\mathcal E} \to {\mathcal E}'$ or, equivalently, whose zeroeth Taylor coefficient  is $\phi_0$.
		 We say that  Lie $\infty$-algebroid morphism  $\tilde{\Phi} \colon  {\mathcal E} \to {\mathcal E}'$ up to order $n+1$  is an \emph{extension of $ \Phi$} if $ \tilde{\Phi}^{(i)} = \Phi^{(i)}$ for all $ i \leq n$ or, equivalently, if $ \tilde{\Phi} $ and $ \Phi$ have the same Taylor coefficients up to order $n$. Such as $H^2({\mathfrak X}^{(n+1)}_{\mathrm{vert}}( E) , \ad_{Q^{(0)}}  )$ governs the deformation problem in the context of Equations (\ref{jacques}), the following proposition shows that this role is played by 
	 $$  H^1 \left(  {\mathfrak X}^{(n+1)}_{\mathrm{vert}}( \!\!{\xymatrix@C-=0.5cm{  {\mathcal E} \ar[r]^{\Phi^{\scalebox{0.5}{(0)}}} & {\mathcal E}'}}\!\!),  \Qtoto \right) $$
	 for the Equations (\ref{eq:defMorph}) in the search of $\phi_{n+1}$. Note that for each $i$, the l.h.s.\ of Equation \eqref{eq:defMorph}  can be rewritten as $\Qtoto(\phi^*_i):= Q'^{(0)} \circ \phi^*_i -  \phi^*_i \circ Q^{(0)}$, while the r.h.s.\ contains Taylor coefficients of $\Phi$ up to order $i-1$ only. 	
	\begin{proposition}
		\label{prop:obsMorph}
		For every  Lie $\infty$-algebroid morphism  $\Phi\colon  {\mathcal E} \to {\mathcal E}'$ up to order $n$ one has:
		\begin{enumerate}
			\item  $ c_{n+1}(\Phi) := (\Qtot(\Phi))^{(n+1)} $ is an ${\mathcal O}$-linear, $\Qtoto$-closed  $ \Phi^{(0)}$-derivation.
			\item If the class $[c_{n+1}(\Phi)] $ is zero in $ H^1 ({\mathfrak X}_{\mathrm{vert}}^{(n+1)}( \!\!{\xymatrix@C-=0.5cm{  {\mathcal E} \ar[r]^{\Phi^{\scalebox{0.5}{(0)}}} & {\mathcal E}'}}\!\!) , \Qtoto )$, then $\Phi$ permits an extension
			to a Lie $\infty$-algebroid morphism
			up to order $n+1$.
		\end{enumerate}
	\end{proposition}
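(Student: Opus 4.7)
The plan parallels the obstruction-theoretic framework of Section~\ref{sec:aritydefo}: there we extended the homological vector field $Q$ arity by arity, with obstructions living in $H^2(\mathfrak{X}^{(n)}_{\mathrm{vert}}(E),\ad_{Q^{(0)}})$; here we extend the morphism $\Phi$ arity by arity, and the obstructions will live in the degree-one cohomology of $\Qtoto$ on ${\mathfrak X}^{(n+1)}_{\mathrm{vert}}(\!\!{\xymatrix@C-=0.5cm{ {\mathcal E} \ar[r]^{\Phi^{\scalebox{0.5}{(0)}}} & {\mathcal E}'}}\!\!)$. All three properties in item~1 will be deduced directly from Lemma~\ref{lem:DeltaPhi} and $\Qtot^{2}=0$, while item~2 will be proved by an explicit modification of the Taylor coefficient $\phi_{n+1}$.

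For item~1, the chain-map square at the right of \eqref{eq:Moph0} forces $\rho\circ\phi_{0}=\rho'$, so the second part of Lemma~\ref{lem:DeltaPhi} gives $\mathcal{O}$-linearity of the whole $\Qtot(\Phi)$, hence of its arity-$(n+1)$ component $c_{n+1}(\Phi)$. The first part of Lemma~\ref{lem:DeltaPhi} says $\Qtot(\Phi)$ is a $\Phi$-derivation; projecting the identity \eqref{eq:PhiDiff} to arity $n+1$, the expansion $\sum_{a+b=n+1}\Qtot(\Phi)^{(a)}(F)\odot\Phi^{(b)}(G)+\ldots$ collapses because $\Qtot(\Phi)^{(i)}=0$ for all $i\leq n$; only the boundary terms with $a=n+1,b=0$ and $a=0,b=n+1$ survive, which is exactly the $\Phi^{(0)}$-derivation identity for $c_{n+1}(\Phi)$. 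Applying $\Qtot^{2}=0$ to $\Phi$ and projecting to arity $n+1$, the analogous double sum $\sum_{a+b=n+1} Q'^{(a)}\circ \Qtot(\Phi)^{(b)}-\Qtot(\Phi)^{(b)}\circ Q^{(a)}$ collapses to $\Qtoto(c_{n+1}(\Phi))$, proving $\Qtoto$-closedness.

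For item~2, assume $c_{n+1}(\Phi)=\Qtoto(T)$ for some $\Phi^{(0)}$-derivation $T$ of arity $n+1$ and degree $0$; let $t\in\Gamma\big(S^{n+2}(E'^{*})\otimes E\big)_{0}$ be its unique non-vanishing Taylor coefficient. I will define $\tilde{\Phi}$ to be the unique graded algebra morphism with Taylor coefficients $\tilde{\phi}_{i}=\phi_{i}$ for $i\leq n$ and $\tilde{\phi}_{n+1}=\phi_{n+1}-t$, extended to $\mathcal{E}$ via the product formula \eqref{boum}. Since the Taylor coefficients of order $\leq n$ are unchanged, \eqref{boum} yields $\tilde{\Phi}^{(i)}=\Phi^{(i)}$ as maps for all $i\leq n$, so $\Qtot(\tilde{\Phi})^{(i)}=\Qtot(\Phi)^{(i)}=0$ there. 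At arity $n+1$, the same expansion gives $\tilde{\Phi}^{(n+1)}-\Phi^{(n+1)}=-T^{(n+1)}$; combined with the collapse argument from item~1 this yields $\Qtot(\tilde{\Phi})^{(n+1)}=c_{n+1}(\Phi)-\Qtoto(T)=0$, so $\tilde{\Phi}$ is a Lie $\infty$-algebroid morphism up to order $n+1$ extending $\Phi$.

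The main obstacle is the arity-decomposition argument in item~1: keeping track of the interplay between the derivation identity of $\Qtot(\Phi)$, the vanishing of $\Qtot(\Phi)^{(i)}$ for $i\leq n$, and the Koszul signs produced when projecting convolutions $\sum_{a+b=n+1}(\cdot)^{(a)}\circ(\cdot)^{(b)}$ onto fixed arity, so as to recognize the surviving terms as precisely $\Qtoto$ applied to $c_{n+1}(\Phi)$ (resp.\ the $\Phi^{(0)}$-derivation identity). A secondary technicality in item~2 is that a priori one only controls the obstruction as a map on $\Gamma(E^{*})$, but Lemma~\ref{prop:defMorph} guarantees that if the equation holds on generators then it holds on all of $\mathcal{E}$, so that the explicit modification $\phi_{n+1}\mapsto\phi_{n+1}-t$ genuinely upgrades $\Phi$ to a morphism up to order $n+1$.
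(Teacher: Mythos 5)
Your proposal is correct and follows essentially the same route as the paper's proof: the $\Phi^{(0)}$-derivation property and $\Qtoto$-closedness of $c_{n+1}(\Phi)$ are obtained by projecting the $\Phi$-derivation identity for $\Qtot(\Phi)$ (Lemma \ref{lem:DeltaPhi}) and the identity $\Qtot^2(\Phi)=0$ onto arity $n+1$ and using the vanishing of the lower-arity components, and item~2 is handled by shifting the $(n+1)$-st Taylor coefficient by the primitive of the obstruction exactly as in the paper's Lemma \ref{lem:DeltaPhiExt}. The only (welcome) difference is that you make the ${\mathcal O}$-linearity explicit via the second item of Lemma \ref{lem:DeltaPhi} and the standing hypothesis $\rho'=\rho\circ\phi_0$ from diagram \eqref{eq:Moph0}, a point the paper leaves implicit.
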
	
	\begin{proof}
		Considering the component of arity $n+1+i+j$ in (\ref{eq:quasider0}),
			for all homogeneous functions $F$ and $G$ of arities $i$ and $j$, respectively, yields:
		\begin{equation}
		\label{quasiPhiDer2}
		{\Qtot (\Phi)}^{(n+1)}(F  \odot G)= {\Qtot (\Phi)}^{(n+1)}(F) \odot \Phi^{(0)} (G)  - (-1)^{|F|} \Phi^{(0)} (F)   \odot {\Qtot (\Phi)}^{(n+1)} (G).
		\end{equation} 
		Since $c_{n+1}(\Phi)  = {\Qtot (\Phi)}^{(n+1)}$, this proves that $c_{n+1}(\Phi) $ is an  
		 ${\mathcal O}$-linear $ \Phi^{(0)}$-derivation.
		 
		 \isthiswhatyouowant Since, by assumption, ${\Qtot (\Phi)}^{(i)}=0$ for all $0 \leq i \leq n$, taking the component of arity $n+1$ within the identity $0=\Qtot^2 (\Phi) = Q' \circ \Qtot (\Phi) + \Qtot (\Phi) \circ Q $, we obtain 
		\begin{equation}  Q'^{(0)} \circ {\Qtot (\Phi)}^{(n+1)} + {\Qtot (\Phi)}^{(n+1)} \circ Q^{(0)}=0 .\end{equation}
		In other words, $c_{n+1}(\Phi)  = {\Qtot (\Phi)}^{(n+1)}$ is $\Qtoto $-closed.
		This concludes the proof of the first item. 	

\isthiswhatyouowant		Assume now that there exists an ${\mathcal O}$-linear $\Phi^{(0)}$-derivation $W$ of degree $0$ and of arity $n+1$ such that $c_{n+1}(\Phi) =  {\Qtot (\Phi)}^{(n+1)} =  -\Qtoto (W).$
		Let $ \tilde{\Phi}$ be as in the following lemma, which is shown easily by use of  Equations \eqref{boum} and \eqref{boum2}:	
	\begin{lemme}
			\label{lem:DeltaPhiExt}
		Let $\Phi\colon {\mathcal E} \to {\mathcal E}'$ be an $\mathscr{O}$-linear graded algebra morphism and $W$  a $\Phi^{(0)}$-derivation of arity $n+1$. 
	Consider an $\mathscr{O}$-linear graded algebra morphism $\tilde{\Phi} \colon{\mathcal E} \to {\mathcal E}'$ whose Taylor coefficients coincide with those of $\Phi$ up to order $n$, $\tilde{\phi}_i = \phi_i$ for all $0 \leq i \leq n$ and whose $n$ plus first Taylor coefficient agrees with $W$, $\tilde{\phi}_{n+1}=W$.  Then 
	 $ \tilde{\Phi}^{(i)} =  {\Phi}^{(i)} $ for $0\leq  i \leq n$ and  $ \tilde{\Phi}^{(n+1)} =  {\Phi}^{(n+1)} + W $. 
	\end{lemme}
\isthiswhatyouowant	 This implies that 
		the components of arity $0,\dots,n$ of $ \Qtot ( {\tilde{\Phi}} )$ coincide with those of 
		  $ \Qtot \left( {{\Phi}} \right)$ and therefore vanish as well. 
		 In addition,
		$$ ( \Qtot ( {\tilde{\Phi}} )) ^{(n+1)}
		=\left( {\Qtot (\Phi)}\right)^{(n+1)}+ Q'^{(0)} \circ W - W \circ Q^{(0)}  =c_{n+1}(\Phi) + \Qtoto (W) . $$
		The right hand side of this equation is equal to zero by definition of $ W$. This proves the second item.
		\end{proof}

		\subsubsection{Homotopy deformations up to arity $n$.}
		
		\label{sec:deformationHomot}
	
		\begin{definition}
			\label{def:defhomoto}
			Let $ n \in {\mathbb N}_0 \cup \{+\infty\}$ and	$ \Phi,\Psi $ two Lie $ \infty$-algebroid morphisms
 from $ (E',Q')$ to $(E,Q)$. A \emph{homotopy deformation of  $ \Psi $ into $ \Phi$ up to arity $n$} is a sequence $(\Psi_i)_{-1 \leq i \leq n}$  of  Lie $ \infty$-algebroid morphisms from $(E',Q')$ to $ (E,Q)$
			such that:
			\begin{enumerate}
				\item[\emph{(i)}] $\Psi_{-1} = \Psi$,
				\item[\emph{(ii)}]  $\Psi_{i}$ and $\Phi$ agree  up to arity $i$ for all $0 \leq i \leq n$,
				\item[\emph{(iii)}] $\Psi_i$ and $\Psi_{i+1}$ are homotopic through a homotopy $(\Psi_\tau, H_\tau)_{ \tau \in [i,i+1]}$ where, for all $\tau \in ]i,i+1[$,  $H_\tau$ is a $\Psi_\tau$-derivation
				whose components of arity less or equal to $i$ vanish.
			\end{enumerate}
		\end{definition}

		\isthiswhatyouowant
		The following statement explains the importance of homotopy deformations:
		
		\begin{proposition}
			\label{prop:secondpart}
			If a homotopy deformation up to arity $+\infty$ of  $ \Psi $ into $ \Phi$ exists, they $\Phi$ and $ \Psi$ are homotopic Lie $\infty$-algebroid morphisms.
		\end{proposition}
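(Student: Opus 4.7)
The strategy is to concatenate the given homotopies $(\Psi_\tau, H_\tau)_{\tau \in [i,i+1]}$, for $i = -1, 0, 1, 2, \ldots$, into a single homotopy between $\Psi$ and $\Phi$ in the sense of Definition \ref{def:homotopy}. The technical point is that the collection of homotopies is indexed by the infinite interval $[-1, +\infty)$, whereas a homotopy must be defined on $[0,1]$; the saving feature is that Definition \ref{def:pickyAboutPaths} allows the partition with respect to which the path is piecewise-$C^\infty$ to depend on the arity, which is exactly what will let us absorb the infinite concatenation into a single continuous piecewise-$C^\infty$ path.

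First, I would fix an increasing homeomorphism $\sigma \colon [-1, +\infty) \to [0,1)$ which is $C^\infty$ on each bounded subinterval, for instance $\sigma(\tau) = (\tau+1)/(\tau+2)$. Setting $t_i := \sigma(i)$ for $i \geq -1$, the original intervals $[i, i+1]$ map diffeomorphically to $[t_i, t_{i+1}] \subset [0,1)$ and the endpoints $t_i$ accumulate at $1$. For $t \in [t_i, t_{i+1}]$ with $i \geq -1$, I would then define
\begin{equation*}
\tilde{\Phi}_t := \Psi_{\sigma^{-1}(t)} \qquad \text{and} \qquad \tilde{H}_t := \bigl(\sigma^{-1}\bigr)'(t)\, H_{\sigma^{-1}(t)}
\end{equation*}
using the homotopy $(\Psi_\tau, H_\tau)_{\tau \in [i,i+1]}$ provided by hypothesis (iii). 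A direct application of the chain rule to Equation \eqref{Hortense} shows that on the open interval $(t_i, t_{i+1})$ one has $\frac{d\tilde{\Phi}_t}{dt} = \Qtot(\tilde{H}_t)$, and continuity of $t \mapsto \tilde{\Phi}_t$ at each junction $t_i$ is automatic because the original homotopies are endpoint-matched ($\Psi_{\tau=i} = \Psi_{i-1}$ coincides with the starting morphism of the next piece).

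The heart of the argument is the extension to $t=1$ and the verification of the regularity conditions of Definition \ref{def:pickyAboutPaths}. Fix an arity $k \in \mathbb{N}$. By condition \emph{(ii)}, $\Psi_i$ agrees with $\Phi$ up to arity $i$, so for every $i \geq k$ the arity $k$ Taylor coefficient of $\Psi_i$ equals $\phi_k$. By condition \emph{(iii)}, on each interval $[i,i+1]$ with $i \geq k$ the $\Psi_\tau$-derivation $H_\tau$ has vanishing components of arity $\leq i$ (hence in particular of arity $k$), so the differential equation forces the arity $k$ coefficient of $\tilde{\Phi}_t$ to be \emph{constant} for all $t \in [t_k, 1)$, equal to $\phi_k$. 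Thus I can extend the path by setting $\tilde{\Phi}_1 := \Phi$ and $\tilde{H}_1 := 0$, and the arity $k$ component of $t \mapsto \tilde{\Phi}_t$ is then a piecewise-$C^\infty$ path on $[0,1]$ with subdivision $\{0, t_0, t_1, \ldots, t_{k-1}, 1\}$ — a \emph{finite} subdivision, although it depends on $k$. This is precisely the flexibility built into Definition \ref{def:pickyAboutPaths}. The same analysis applied to $\tilde{H}_t$ shows it is piecewise-$C^\infty$ in each arity. Finally, $\tilde{\Phi}_0 = \Psi_{-1} = \Psi$ by \emph{(i)} and $\tilde{\Phi}_1 = \Phi$ by construction, so $(\tilde{\Phi}_t, \tilde{H}_t)$ is a homotopy from $\Psi$ to $\Phi$.

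The main (minor) obstacle is the verification at the accumulation point $t=1$: one must argue that $t \mapsto \tilde{\Phi}_t$ is continuous at $t=1$, and that \eqref{Hortense} holds there in the appropriate sense. Both reduce to the arity-by-arity stabilization described above, since in each fixed arity the path becomes constant past a finite time. No extra analytic control is needed because the arity-dependent partition scheme of Definition \ref{def:pickyAboutPaths} was precisely engineered to accommodate this.
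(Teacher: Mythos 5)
Your proof is correct and follows essentially the same route as the paper: the paper packages your reparametrization as a lemma asserting that for any strictly increasing smooth surjection $f\colon[0,1[\,\to[-1,+\infty[$ the pair $\bigl(\Psi_{f(t)},f'(t)\,H_{f(t)}\bigr)$, completed by $\Psi_{f(1)}:=\Phi$, is a homotopy, and it justifies the behaviour at the accumulation point $t=1$ by exactly your arity-by-arity stabilization argument (for each fixed arity $n$ the path is eventually constant equal to $\Phi^{(n)}$). The only cosmetic difference is that you work with $\sigma^{-1}$ where the paper works directly with $f$.
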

		\isthiswhatyouowant This statement is an immediate consequence of the following lemma: 
		
\begin{lemme}			Let  $f\colon[0,1[ \to [-1,+\infty[$ 
			be a strictly increasing surjective smooth function. Then  the pair $\left(\Psi_{f(t)},f'(t) \, H_{f(t)} \right) $ together with $\Psi_{f(1)}:=\Phi$ is a homotopy between $ \Phi$ and $\Psi$. 
\end{lemme}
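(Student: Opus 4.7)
The plan is to verify that the reparametrized pair $\big(\Psi_{f(t)}, f'(t)H_{f(t)}\big)$, extended by $\Psi_{f(1)}:=\Phi$ and $H_1:=0$, satisfies all conditions in Definition \ref{def:homotopy}. There are three things to check: the boundary values, the continuous piecewise-$C^\infty$ regularity of each Taylor coefficient (Definition \ref{def:pickyAboutPaths}), and the differential equation \eqref{Hortense}.

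The boundary condition $\Psi_{f(0)}=\Psi_{-1}=\Psi$ holds by item (i) of Definition \ref{def:defhomoto}, and $\Psi_{f(1)}=\Phi$ by our extension. The differential equation is immediate from the chain rule and the $\cO$-linearity of $\Qtot$: on every $\tau\in(i,i+1)$ where $(\Psi_\tau,H_\tau)$ is smooth we have $\frac{\dd\Psi_\tau}{\dd\tau}=\Qtot(H_\tau)$ by item (iii), hence at the corresponding $t=f^{-1}(\tau)$ one has
\begin{equation*}
\frac{\dd\Psi_{f(t)}}{\dd t}=f'(t)\,\Qtot(H_{f(t)})=\Qtot\big(f'(t)H_{f(t)}\big),
\end{equation*}
since $f'(t)$ is an $\cO$-valued scalar factor (independent of sections of $E$).

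The crucial regularity check — and the step I expect to carry the main burden — is that each Taylor coefficient of $t\mapsto\Psi_{f(t)}$ is a \emph{continuous} piecewise-$C^\infty$ path on the closed interval $[0,1]$, including at the endpoint $t=1$ where the reparametrization pushes the parameter to infinity. This is where items (ii) and (iii) of Definition \ref{def:defhomoto} are decisive. Fix an arity $k\in\mathbb{N}$. For any index $i\geq k$, the $\Psi_\tau$-derivation $H_\tau$ appearing in the homotopy joining $\Psi_i$ to $\Psi_{i+1}$ has vanishing components of arity $\leq i$, so in particular of arity $k$; integrating \eqref{Hortense} shows that the arity-$k$ Taylor coefficient of $\Psi_\tau$ is constant on $[i,i+1]$, and by item (ii) this constant equals the arity-$k$ coefficient of $\Phi$. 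Consequently, the arity-$k$ coefficient of $\Psi_{f(t)}$ equals that of $\Phi$ for every $t\geq f^{-1}(k)$, so it extends smoothly (in fact constantly) to $t=1$ and agrees with the extension $\Psi_{f(1)}=\Phi$. On the complement $[0,f^{-1}(k)]$ we have a smooth reparametrization of a continuous piecewise-$C^\infty$ path on the finite interval $[-1,k]$, which has only finitely many break points; the overall path in arity $k$ is therefore continuous piecewise-$C^\infty$ on $[0,1]$. The same argument applied to $H_\tau$ (together with the fact that $f'(t)$ is smooth) yields the requisite piecewise smoothness of $f'(t)H_{f(t)}$, with the extension by zero at $t=1$ being consistent because the arity-$k$ component of $H_{f(t)}$ is already zero for $t$ close enough to $1$.

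Combining these three points shows that $\big(\Psi_{f(t)},f'(t)H_{f(t)}\big)$ is a homotopy in the sense of Definition \ref{def:homotopy} between $\Psi$ and $\Phi$, which is exactly the statement of the lemma and, via its immediate application, of Proposition \ref{prop:secondpart}.
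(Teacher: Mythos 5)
Your proposal is correct and follows essentially the same route as the paper's proof: verify the boundary values, obtain the differential equation by the chain rule, and establish the continuous piecewise-$C^\infty$ regularity at $t=1$ by observing that for each fixed arity $k$ the coefficient $\Psi_\tau^{(k)}$ stabilizes to $\Phi^{(k)}$ for $\tau\geq k$ (the paper states this stabilization directly from items (ii) and (iii), while you additionally spell out why, by integrating \eqref{Hortense} with the vanishing low-arity components of $H_\tau$). The only cosmetic slip is calling $f'(t)$ an ``$\cO$-valued scalar factor''---it is just a real number for each $t$---but the linearity argument it supports is fine.
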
			
			
	\begin{proof}		Assumptions \emph{(ii)} and \emph{(iii)} imply that, for all $\tau \geq n$,  $\Psi_{\tau}^{(n)}$ is equal to $\Phi^{(n)}$. Hence, for each fixed $n$, there exists a neighborhood $U_n \subset [0,1]$ of  $1$ such that for all $t \in U_n$ we have $\Psi_{f(t)}^{(n)}=\Phi^{(n)}$.  In particular,
			$t \mapsto \Psi_{f(t)}^{(n)}$  is continuous and piece-wise-$C^\infty$. Likewise, $f'(t)\, H_{f(t)}^{(n)}$ is piece-wise smooth and vanishes  for all $t \in U_n$.
			
					\isthiswhatyouowant
			By construction, the pair $(\Psi_{f(t)},  f'(t)\, H_{f(t)})$ satisfies 
		the following equation	$$\frac{\dd \Psi_{f(t)}}{\dd t}=\Qtot \left( f'(t)\, H_{f(t)} \right). $$ Comparison with Equation \eqref{Hortense} shows that  this pair provides  a homotopy between $\Psi_{f(0)} = \Psi_{-1}= \Psi$ and $\Psi_{f(1)} =\Phi$. 
		\end{proof}

		\isthiswhatyouowant
		Given a homotopy deformation up to arity $n$, we wish to extend it up to arity $n+1$. As the following proposition shows in parallel to Proposition	\ref{prop:obsMorph}, there is again an obstruction which is cohomological in nature, now living in 
		 $$  H^0 \left(  {\mathfrak X}^{(n+1)}_{\mathrm{vert}}( \!\!{\xymatrix@C-=0.5cm{  {\mathcal E} \ar[r]^{\Phi^{\scalebox{0.5}{(0)}}} & {\mathcal E}'}}\!\!),  \Qtoto \right). $$

		\begin{proposition}
			\label{prop:obsMorphHompt}
			Consider a homotopy deformation $(\Psi_i)_{i =-1, \dots,n}  $  of $\Psi $ into  $\Phi$ up to arity $n \geq 0$. Then:
			\begin{enumerate}
				\item  $ c_{n+1}(\Phi,\Psi_n) := (\Phi-\Psi_n)^{(n+1)} $ is an ${\mathcal O}$-linear $\Qtoto$-closed  $ \Phi^{(0)}$-derivation.
				\item If the class of $ c_{n+1}(\Phi,\Psi_n) $ in $ H^0 ({\mathfrak X}_{\mathrm{vert}}( \!\!{\xymatrix@C-=0.5cm{  {\mathcal E} \ar[r]^{\Phi^{\scalebox{0.5}{(0)}}} & {\mathcal E}'}}\!\!) , \Qtoto )$ vanishes, then there exists a  Lie $\infty$-algebroid morphism $ \Psi_{n+1}$ 
				such that $(\Psi_i)_{i =-1, \dots,n+1}$ provides a homotopy deformation  of $\Psi $ into  $\Phi$ up to order $n+1$.
				
			\end{enumerate}
		\end{proposition}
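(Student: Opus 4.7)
My plan is to establish part (1) by bookkeeping on the morphism property and the closure conditions $\Qtot(\Phi)=\Qtot(\Psi_n)=0$, and then to derive part (2) by applying the flow construction of Example \ref{ex:homotopies} to an explicit primitive of $c_{n+1}(\Phi,\Psi_n)$.

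For part (1), I would begin by observing that, thanks to condition (ii) in Definition \ref{def:defhomoto}, the difference $\Phi-\Psi_n$ starts at arity $n+1$, so $c_{n+1}(\Phi,\Psi_n)=(\Phi-\Psi_n)^{(n+1)}=\Phi^{(n+1)}-\Psi_n^{(n+1)}$. To see that this is a $\Phi^{(0)}$-derivation, I would expand the multiplicativity relations $\Phi^{(n+1)}(F\odot G)$ and $\Psi_n^{(n+1)}(F\odot G)$ using Equation \eqref{boum}, subtract them, and extract arity $n+1$; because $\Phi^{(j)}=\Psi_n^{(j)}$ for all $j\le n$, the only surviving terms are those in which $\Phi^{(n+1)}-\Psi_n^{(n+1)}$ is paired on the other factor with $\Phi^{(0)}=\Psi_n^{(0)}$, which is exactly the $\Phi^{(0)}$-derivation property \eqref{eq:PhiDiff}. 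For $\Qtoto$-closedness, I would apply $\Qtot$ to $\Phi-\Psi_n$; both summands being Lie $\infty$-algebroid morphisms, the result vanishes, and taking the arity-$(n+1)$ component of the identity $\Qtot(\Phi-\Psi_n)=0$ only the zeroth arity pieces $Q^{(0)}$ and $Q'^{(0)}$ can contribute (the others would need to hit a vanishing lower-arity component of $\Phi-\Psi_n$), yielding exactly $\Qtoto(c_{n+1}(\Phi,\Psi_n))=0$.

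For part (2), assume that $c_{n+1}(\Phi,\Psi_n)=\Qtoto(w)$ for some $\mathcal{O}$-linear $\Phi^{(0)}$-derivation $w$ of degree $-1$ and arity $n+1$, that is to say, for some section $w\in\Gamma(S^{n+2}(E'^{*})\otimes E)_{-1}$. I would apply the construction of Example \ref{ex:homotopies} with initial condition $\Psi_n$ to produce a path $\tau\mapsto\Psi_\tau$ defined for $\tau\in[n,n+1]$ by the ODE $\tfrac{d\Psi_\tau}{d\tau}=\Qtot(w^{\Psi_\tau})$. Inspecting the recursion \eqref{eq:recursion2}, once $w$ has arity $n+1$ all its arity components $(w^{\Psi_\tau})^{(j)}$ vanish for $j<n+1$ by Equation \eqref{boum2}, which forces $\tfrac{d\Psi_\tau^{(k)}}{d\tau}=0$ for every $k\le n$ and gives $\tfrac{d\Psi_\tau^{(n+1)}}{d\tau}=\Qtoto(w)$, a constant. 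Therefore $\Psi_\tau^{(k)}=\Psi_n^{(k)}=\Phi^{(k)}$ for $k\le n$ and all $\tau\in[n,n+1]$, while $\Psi_{n+1}^{(n+1)}=\Psi_n^{(n+1)}+\Qtoto(w)=\Psi_n^{(n+1)}+c_{n+1}(\Phi,\Psi_n)=\Phi^{(n+1)}$, so $\Psi_{n+1}$ agrees with $\Phi$ up to arity $n+1$, which is condition (ii) at order $n+1$. The pair $(\Psi_\tau,H_\tau:=w^{\Psi_\tau})_{\tau\in[n,n+1]}$ is by construction a homotopy in the sense of Definition \ref{def:homotopy}, and since $w^{\Psi_\tau}$ has all its components concentrated in arities $\geq n+1$, the boundary-condition requirement of clause (iii) on arities $\le n$ is satisfied.

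The main delicate point, in my view, is verifying that the flow from Example \ref{ex:homotopies} really does freeze all Taylor coefficients of arity $\le n$; everything else reduces to manipulating the expansions \eqref{boum} and \eqref{boum2} term by term. This relies on the fact that, when $w$ has only the $(n+1)$-st Taylor coefficient nonzero, \eqref{boum2} exhibits $(w^{\Psi_\tau})^{(j)}$ as a sum over decompositions in which some factor is $\langle w,\alpha_\ell\rangle$, forcing $j\ge n+1$ regardless of the arities of $\Psi_\tau$ appearing in the other factors. Once this observation is secured, both the closedness statement and the homotopy-deformation extension follow from straightforward, if tedious, combinatorics.
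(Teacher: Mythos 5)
Your proposal is correct and follows essentially the same route as the paper: part (1) via the vanishing of $(\Phi-\Psi_n)^{(i)}$ for $i\le n$ combined with the morphism identities, and part (2) via the flow of Example \ref{ex:homotopies} generated by a primitive $w$ of arity $n+1$, whose lower-arity Taylor coefficients freeze the components $\Psi_\tau^{(k)}$ for $k\le n$ and make $\tfrac{\dd}{\dd\tau}\Psi_\tau^{(n+1)}=\Qtoto(w)$ constant. The "delicate point" you flag is exactly the one the paper also isolates and resolves through Equation \eqref{boum2}.
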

		\begin{proof} For all $F,G \in {\mathcal E}$, one has:
			\begin{equation} (\Phi-\Psi_n)(F\odot G)= (\Phi-\Psi_n)(F) \odot \Phi(G)+ \Psi_n(F) \odot (\Phi-\Psi_n)(G)  .\end{equation}
Since, by assumption,  $(\Phi-\Psi_n)^{(i)} = 0$ for all $0 \leq i \leq n$, taking $F$ and $G$ of arity $i$ and $j$ in the above equation and considering its component of arity $n+1+i+j$, we obtain:
			\begin{equation}
			(\Phi-\Psi_n)^{(n+1)}(F\odot G) =  (\Phi-\Psi_n)^{(n+1)}(F) \odot  \Phi^{(0)}(G)+ \Phi^{(0)}(F) \odot (\Phi-\Psi_n)^{(n+1)}(G). 
			\end{equation}
			This implies that $(\Phi-\Psi_n)^{(n+1)} \colon {\mathcal E} \to {\mathcal E}'$ is a $\Phi^{(0)}$-derivation.
			Moreover, since both $\Phi$ and $\Psi_n$ are Lie $\infty$-algebroid morphisms, one has:
			\begin{equation} (\Phi-\Psi_n) \circ Q = Q' \circ (\Phi-\Psi_n). \end{equation}
			Restricting this equation to the component of arity $n+1$ yields, for the same reason, $ (\Phi-\Psi_n)^{(n+1)} \circ Q^{(0)} = Q'^{(0)} \circ (\Phi-\Psi_n)^{(n+1)} $. Thus $(\Phi-\Psi_n)^{(n+1)}$ 
			is $\Qtoto$-closed, which proves the first item.

			\isthiswhatyouowant
			Assume now that $ (\Phi-\Psi_n)^{(n+1)}$ is  $\Qtoto$-exact, so that there exists
			a $\Phi^{(0)}$-derivation $W$ of arity $n+1$ and degree  minus one  such that:
			\begin{equation} \label{eq:defdelta} (\Phi-\Psi_n)^{(n+1)}  =  \Qtoto(W).\end{equation}
			In view of Example \ref{ex:homotopies},
			the following differential equation admits a solution on the interval $ [n,n+1]$ with initial condition  at $t=n$:
			\begin{equation}\label{eq:diffHomot} \frac{\diff \Psi_t}{\diff t} =  \Qtot \left( w^{\Psi_t} \right), \qquad    \end{equation}
			where $w$ is the Taylor coefficients of $W$ and, as before, $w^{\Psi_t}$ denotes the ${\Psi_t}$-derivation with this coefficient.
			
			\isthiswhatyouowant 
			For notational simplicity, let $ H_t :=  w^{\Psi_t}$. Since $w $ is a section of $ \Gamma(S^{n+2} (E'^*) \otimes E)$, Equation \eqref{boum2}
			implies:
		\begin{equation} \label{huhu1}
			 H_t^{(k)}=0  \quad \mathrm{and} \qquad (\Qtot ( w^{\Psi_t} ))^{(k)} \equiv \Qtot (H_t )^{(k)}  =0 \qquad \forall \: k \leq n. 
			 \end{equation} Thus according to Equation \eqref{eq:diffHomot} and by an appropriate choice of initial conditions satisfying  our assumptions, one has $$ \Psi_t^{(k)}=\Psi_n^{(k)}=\Phi^{(k)}  \qquad \forall t \in [n,n+1], \: \forall k \leq n , $$	 and thus also 
			 \begin{equation}  \label{huhu3} \Psi_{n+1}^{(k)}=\Phi^{(k)}  \qquad \forall k \leq n. \end{equation} 
			Since, according to Equation \eqref{boum2}, for the critical arity  $n+1$, the morphism $\Psi_t$ does not enter the expression for $H_t \equiv  w^{\Psi_t}$, it also coincides with the $\Phi^{(0)}$-derivation having the same Taylor coefficient $w$:
			  \begin{equation} \label{huhu2}  H_t^{(n+1)}=W . \end{equation}
			By Equations \eqref{eq:diffHomot}, \eqref{huhu1}, and \eqref{huhu2} therefore:
			\begin{eqnarray*}\frac{\diff \Psi_t^{(n+1)}}{\diff t} &=& \Qtot \left(  H_t \right)^{(n+1)}
				\\ &=& Q'^{(0)} \circ H_t^{(n+1)} +
				H_t^{(n+1)} \circ Q^{(0)} \\&=&  Q'^{(0)}  \circ W    +W \circ Q^{(0)}\equiv \Qtoto (W) = (\Phi-\Psi_n)^{(n+1)} .
			\end{eqnarray*}
			This implies
			$ \Psi_t^{(n+1)} = \Psi_n^{(n+1)}+(t-n)(\Phi-\Psi_n)^{(n+1)}$,
			so that $$ \Psi_{n+1}^{(n+1)} 
			= \Phi^{(n+1)}, $$ which, together with Equation \eqref{huhu3}, completes the proof of the second item.
		\end{proof}	
		

		\subsubsection{The cohomology of ${\mathcal O}$-linear $\Phi^{(0)}$-derivations} 

\label{sec:coho2}

\isthiswhatyouowant
Let $\phi_0$ be as in \eqref{eq:Moph0} and  $\Phi^{(0)}\colon {\mathcal E} \to {\mathcal E}'$ its extension as in Section \ref{sec:LieInftyDefor}.
 We now study the complex $(\mathfrak{X}^{(n)}_{\mathrm{vert}}(\!\!{\xymatrix@C-=0.5cm{  {\mathcal E} \ar[r]^{\Phi^{\scalebox{0.5}{(0)}}} & {\mathcal E}'}}\!\!)_\bullet,\Qtoto )$, the cohomology of which hosts the obstruction classes of both previous subsections. 
 
 \isthiswhatyouowant
Assigning to an ${\mathcal O}$-linear $\Phi^{(0)}$-derivation of arity $n$ its Taylor coefficients yields
 a graded ${\mathcal O}$-module isomorphism:
 \begin{equation}\label{eq:Phi0AsComplexPhi}
 \mathfrak{X}^{(n)}_{\mathrm{vert}}(\!\!{\xymatrix@C-=0.5cm{  {\mathcal E} \ar[r]^{\Phi^{\scalebox{0.5}{(0)}}} & {\mathcal E}'}}\!\!)_\bullet \, \simeq \, \Gamma( S^{n+1}(E'^*) \otimes E )_\bullet.
\end{equation}
For $E'=E$ and ${\Phi^{\scalebox{0.5}{(0)}}}=\mathrm{id}$, this reproduces \eqref{eq:Phi0AsComplex}.
Under this isomorphism, the differential $\Qtoto$ becomes the total differential of the following bicomplex:	
\begin{equation}
\label{eq:bicomplexPhi}
 \resizebox{0.9\textwidth}{!}{\xymatrix{ & \vdots & \vdots &  \vdots \\ \cdots
	\ar[r]^{ }  &	\Gamma(S^{n+1}(E'^*)_{n+3}  \otimes E_{-3}) \ar[r]^{  {\rm{id}} \otimes \dd } 
	\ar[u]
	&\Gamma(S^{n+1}(E'^*)_{n+3}  \otimes E_{-2}) \ar[u]
	\ar[r]^{  {\rm{id}} \otimes \dd }   & \Gamma( S^{n+1}(E'^*)_{n+3}  \otimes E_{-1} )  \ar[u] 
	\ar[r]& 0 \\  \cdots
	\ar[r]^{ }  &	\Gamma(S^{n+1}(E'^*)_{n+2}  \otimes E_{-3}) \ar[r]^{  {\rm{id}} \otimes \dd } 
	\ar[u]^{  Q'^{(0)}  \otimes  {\rm{id}}}  &\Gamma(S^{n+1}(E'^*)_{n+2}  \otimes E_{-2}) \ar[u]^{  Q'^{(0)}  \otimes  {\rm{id}}}  \ar[r]^{  {\rm{id}} \otimes \dd }   & \Gamma( S^{n+1}(E'^*)_{n+2}  \otimes E_{-1} )   \ar[u]^{  Q'^{(0)}  \otimes  {\rm{id}} }\ar[r]& 0 \\  \cdots
	\ar[r]^{ }  &	\Gamma( S^{n+1}(E'^*)_{n+1} \otimes E_{-3}) \ar[r]^{  {\rm{id}} \otimes \dd } 
	\ar[u]^{  Q'^{(0)} \otimes  {\rm{id}}}  &\Gamma(S^{n+1}(E'^*)_{n+1} \otimes E_{-2}) \ar[u]^{  Q'^{(0)} \otimes  {\rm{id}}}  \ar[r]^{  {\rm{id}} \otimes \dd }   & \Gamma( S^{n+1}(E'^*)_{n+1} \otimes E_{-1} )   \ar[u]^{  Q'^{(0)}  \otimes  {\rm{id}} } \ar[r]& 0 \\
	&0 \ar[u] & 0 \ar[u]&  0 \ar[u] & }}\end{equation}
Again, we can consider the \emph{root map}:
$$ \mathrm{rt} \colon \mathfrak{X}_{\mathrm{vert}}^{(n)} (\!\!{\xymatrix@C-=0.5cm{  {\mathcal E} \ar[r]^{\Phi^{\scalebox{0.5}{(0)}}} & {\mathcal E}'}}\!\!)_\bullet \longrightarrow \Gamma( S^{n+1}(E'^*))_{\bullet}\otimes_{\mathcal O} {\mathcal F}[-1] $$
which is obtained  by applying $ \id \otimes \rho $ to the component $ S^{n+1} (E'^*)_\bullet \otimes E_{-1} $ of an $\cO$-linear ${\Phi^{\scalebox{0.5}{(0)}}}$-derivation $W$. Thus, if $P \colon \mathfrak{X}_{\mathrm{vert}}^{(n)} (\!\!{\xymatrix@C-=0.5cm{  {\mathcal E} \ar[r]^{\Phi^{\scalebox{0.5}{(0)}}} & {\mathcal E}'}}\!\!)_\bullet \longrightarrow 
\Gamma(S^{n+1} (E'^*)_\bullet \otimes E_{-1})$ denotes the corresponding projector, 
simply 
\begin{equation}  \mathrm{rt} \equiv (\id \otimes \rho) \circ P \, . \label{rootformula}
\end{equation} 
The proof of the following proposition and corollary follows the same line of reasoning as the proof of Proposition \ref{prop:fondamental3} and its Corollary \ref{cor:coho} and is left to the reader.
\begin{proposition}
	\label{prop:fondamental3Def}
	If $ (E,\dd, \rho)$ is a geometric resolution of $ {\mathcal F}$, then 
	$$\mathrm{rt} \colon ({\mathfrak{X}}_{\mathrm{vert}}^{(n)}(\!\!{\xymatrix@C-=0.5cm{  {\mathcal E} \ar[r]^{\Phi^{\scalebox{0.5}{(0)}}} & {\mathcal E}'}}\!\!)_\bullet,  \Qtoto ) \longrightarrow ( \Gamma( S^{n+1}(E'^*))_\bullet \otimes_{\mathcal O} {\mathcal F}[-1] ,  Q'^{(0)} \otimes \id) $$
	is a quasi-isomorphism.
\end{proposition}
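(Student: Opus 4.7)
The plan is to imitate verbatim the proof of Proposition \ref{prop:fondamental3}, which the authors themselves flag as the model. The only change of data is that the left tensor factor is now $\Gamma(S^{n+1}(E'^*)_k)$ instead of $\Gamma(S^{n+1}(E^*)_k)$; the differentials in the bi-complex and the role of the resolution $(E,\dd,\rho)$ of $\mathcal F$ on the right are identical. Under the identification \eqref{eq:Phi0AsComplexPhi}, the operator $\Qtoto$ coincides with the total differential of the bi-complex \eqref{eq:bicomplexPhi}, exactly as $\ad_{Q^{(0)}}$ was identified in Lemma \ref{lem:Phi0AsComplex}.

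First I would adjoin to \eqref{eq:bicomplexPhi} one further column on the right, obtained by applying $\mathrm{id}\otimes\rho$ to the rightmost column already present, producing for each fixed height $k\ge n+1$ the sequence
\begin{equation*}
\cdots\;\to\;\Gamma(S^{n+1}(E'^*)_{k}\otimes E_{-2})\;\xrightarrow{\mathrm{id}\otimes\dd}\;\Gamma(S^{n+1}(E'^*)_{k}\otimes E_{-1})\;\xrightarrow{\mathrm{id}\otimes\rho}\;\Gamma(S^{n+1}(E'^*)_{k})\otimes_{\mathcal O}\mathcal F\;\to\;0.
\end{equation*}
Because sections of the vector bundle $S^{n+1}(E'^*)_{k}$ form a projective (hence flat) $\mathcal O$-module, tensoring the exact sequence of Definition \ref{def:resolution} by $\Gamma(S^{n+1}(E'^*)_{k})$ over $\mathcal O$ preserves exactness, so every row of the enlarged bi-complex is an exact complex of $\mathcal O$-modules. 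Consequently the total complex of the enlarged bi-complex has trivial cohomology.

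From this point I would follow either of the two arguments given for Proposition \ref{prop:fondamental3}. The quickest is the short exact sequence of complexes
\begin{equation*}
0\;\to\;\Gamma(S^{n+1}(E'^*))_{\bullet}\otimes_{\mathcal O}\mathcal F\;\to\;\mathcal B_\bullet\;\xrightarrow{\pi}\;{\mathfrak X}_{\mathrm{vert}}^{(n)}(\!\!{\xymatrix@C-=0.5cm{  {\mathcal E} \ar[r]^{\Phi^{\scalebox{0.5}{(0)}}} & {\mathcal E}'}}\!\!)_\bullet\;\to\;0,
\end{equation*}
in which $\mathcal B_\bullet$ is the total complex of the enlarged bi-complex, whose cohomology vanishes by the previous step. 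The associated long exact sequence in cohomology then forces the connecting homomorphism to be an isomorphism
\begin{equation*}
H^{k}\bigl({\mathfrak X}_{\mathrm{vert}}^{(n)}(\!\!{\xymatrix@C-=0.5cm{  {\mathcal E} \ar[r]^{\Phi^{\scalebox{0.5}{(0)}}} & {\mathcal E}'}}\!\!),\Qtoto\bigr)\;\xrightarrow{\sim}\;H^{k+1}\bigl(\Gamma(S^{n+1}(E'^*))\otimes_{\mathcal O}\mathcal F,\,Q'^{(0)}\otimes\mathrm{id}\bigr)
\end{equation*}
for every $k\in\mathbb Z$, which accounts precisely for the $[-1]$ shift on the target.

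The only genuine verification required, and the step I expect to be the technical focal point rather than a conceptual obstacle, is the identification of this connecting map with $\mathrm{rt}$ as defined in \eqref{rootformula}. This is a diagram chase: given a $\Qtoto$-cocycle $W$, its image in $\mathcal B_\bullet$ is obtained by picking any lift, applying the total differential (which lands in the previous column because $W$ is a cocycle), and reading off the result in the augmented column. Because the lift can be chosen to coincide with $W$ itself (thought of in $\mathcal B_\bullet$), the only term of the total differential contributing to the augmented column is $\mathrm{id}\otimes\rho$ applied to the component of $W$ in $\Gamma(S^{n+1}(E'^*)_\bullet\otimes E_{-1})$, which is by definition $\mathrm{rt}(W)$. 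Once this identification is in place, the proposition follows. Corollary \ref{cor:coho}'s analogue for the deformation complex---in particular the vanishing of $H^{0}$ and $H^{1}$ in sufficiently high arity---then drops out as in the model proof for degree reasons applied to $\Gamma(S^{n+1}(E'^*))$ in low total degree.
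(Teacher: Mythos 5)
Your proposal is correct and takes essentially the same approach as the paper: the paper omits this proof, stating only that it follows the same line of reasoning as Proposition \ref{prop:fondamental3}, and your argument is precisely that adaptation. You correctly isolate the two points that make the transfer work --- the resolution hypothesis is only used on the right tensor factor $E$, while projectivity of $\Gamma(S^{n+1}(E'^*)_k)$ suffices to preserve exactness of the rows --- and your identification of the connecting homomorphism with $\mathrm{rt}$ matches the paper's second argument for Proposition \ref{prop:fondamental3}.
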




\begin{corollaire} \label{cor:cohoPhi} In the case of a geometric resolution $ (E,\dd, \rho)$, one has:
	\begin{enumerate}  
		\item $H^1 \left({\mathfrak X}^{(n)}_{\mathrm{vert}}(\!\!{\xymatrix@C-=0.5cm{  {\mathcal E} \ar[r]^{\Phi^{\scalebox{0.5}{(0)}}} & {\mathcal E}'}}\!\!),   \Qtoto \right)=0 $ if  $ n \geq 2$,
		\item $H^0 \left({\mathfrak X}^{(n)}_{\mathrm{vert}}(\!\!{\xymatrix@C-=0.5cm{  {\mathcal E} \ar[r]^{\Phi^{\scalebox{0.5}{(0)}}} & {\mathcal E}'}}\!\!),   \Qtoto \right)=0 $ if  $ n \geq 1$.
	\end{enumerate}
\end{corollaire}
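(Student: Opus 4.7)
The plan is to deduce both vanishing statements as immediate consequences of the quasi-isomorphism provided by Proposition~\ref{prop:fondamental3Def}, combined with an elementary degree count in the target complex. This mirrors precisely the way Corollary~\ref{cor:coho} is obtained from Proposition~\ref{prop:fondamental3}.

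First, I would invoke Proposition~\ref{prop:fondamental3Def}: passing to cohomology, the root map $\mathrm{rt}$ provides, for every integer $k$, an isomorphism
\begin{equation}\label{eq:quasi-iso-cor-plan}
H^k\!\left(\mathfrak{X}^{(n)}_{\mathrm{vert}}, \, \Qtoto\right) \; \simeq \; H^k\!\left(\Gamma(S^{n+1}(E'^*))_\bullet \otimes_{\mathcal O} \mathcal{F}[-1], \, Q'^{(0)} \otimes \id\right),
\end{equation}
where $\mathfrak{X}^{(n)}_{\mathrm{vert}}$ abbreviates the space of arity-$n$ $\mathcal{O}$-linear $\Phi^{(0)}$-derivations. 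It therefore suffices to show that the right-hand chain complex is zero as a graded vector space in the relevant degrees.

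Next, I would perform the degree count. Because $E'^{*} = \bigoplus_{i \geq 1} E'^{*}_{-i}$ is concentrated in strictly positive degrees, the graded symmetric power satisfies $S^{n+1}(E'^*)_h = 0$ whenever $h < n+1$. With the convention---implicit in the statement of Proposition~\ref{prop:fondamental3Def} and dictated by the requirement that $\mathrm{rt}$ be of degree zero---that $\mathcal{F}[-1]$ sits in degree $-1$, the degree $k$ component of $\Gamma(S^{n+1}(E'^*))_\bullet \otimes_{\mathcal O} \mathcal{F}[-1]$ equals $\Gamma(S^{n+1}(E'^*)_{k+1}) \otimes_{\mathcal O} \mathcal{F}$. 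This vanishes as soon as $k+1 < n+1$, i.e.\ as soon as $k < n$, and hence the $k$-th cohomology of the right-hand side of \eqref{eq:quasi-iso-cor-plan} vanishes in that same range.

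To conclude, specializing to $k=1$ yields item~1 under the hypothesis $n \geq 2$, while specializing to $k=0$ yields item~2 under $n \geq 1$. Since the substantive content of the argument---the quasi-isomorphism \eqref{eq:quasi-iso-cor-plan}---is already encoded in Proposition~\ref{prop:fondamental3Def}, there is no genuine obstacle. The only point demanding a little care is the consistent bookkeeping of the degree shift in $\mathcal{F}[-1]$, which must match the convention used in setting up $\mathrm{rt}$; once that is fixed, the vanishing is entirely mechanical.
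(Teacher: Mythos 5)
Your proposal is correct and follows exactly the route the paper intends: invoke the quasi-isomorphism of Proposition \ref{prop:fondamental3Def} and then observe that $S^{n+1}(E'^*)$ is concentrated in degrees $\geq n+1$, so the target complex has no component in the relevant degrees; this is precisely the argument the paper gives for the analogous Corollary \ref{cor:coho} and leaves to the reader here. Your degree bookkeeping (the degree-$k$ component of $\Gamma(S^{n+1}(E'^*))\otimes_{\mathcal O}\mathcal{F}[-1]$ being $\Gamma(S^{n+1}(E'^*)_{k+1})\otimes_{\mathcal O}\mathcal{F}$, vanishing for $k<n$) is consistent with the paper's convention that makes $\mathrm{rt}$ degree zero.
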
	
\begin{remarque}
	\normalfont
	If $(E',\dd',\rho')=(E, \dd, \rho)$ and $\phi_0=\id$, then ${\mathcal O}$-linear $\Phi^{(0)}$-derivations are vertical vector fields on $E$ 
	and Proposition \ref{prop:fondamental3Def} and Corollary \ref{cor:cohoPhi} reduce to Proposition \ref{prop:fondamental3} and Corollary \ref{cor:coho}, respectively.
\end{remarque}

 \subsubsection{First part of Theorem \ref{theo:onlyOne}: Construction of a morphism as a unobstructed deformation problem}



\begin{proposition} \label{prop:finalMorph}
	Let $(E,Q)$ be  a universal {\LieInftyAlgebroid}  \resolving a singular foliation $ \mathcal{F}$, \emph{i.e.}~$ (E,\dd, \rho)$ is a geometric resolution and let $(E',Q')$ be an arbitrary Lie algebroid over the same base $M$. Every  {\LieInftyAlgebroid} morphism up to order $n\in \mathbb{N}$, $\Phi \colon (E',Q') \to (E,Q)$, can be extended to a (strict)  {\LieInftyAlgebroid} morphism. For $n=0$, this statement is true as well if we add the condition $\rho' = \rho \circ \phi_0$ on $E_{-1}'$. 
\end{proposition}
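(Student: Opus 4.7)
The plan is to construct $\tilde\Phi$ by extending $\Phi$ arity by arity using the obstruction theory of Section \ref{sec:LieInftyDefor}. Concretely, I will build inductively a sequence $\Phi_{(n)}:=\Phi, \Phi_{(n+1)}, \Phi_{(n+2)},\ldots$ of {\LieInftyAlgebroid} morphisms up to orders $n, n+1, n+2,\ldots$, where each $\Phi_{(k+1)}$ has the same Taylor coefficients as $\Phi_{(k)}$ up to order $k$. Since the $k$-th Taylor coefficient is frozen after step $k$, the limit $\tilde\Phi$ is a well-defined strict {\LieInftyAlgebroid} morphism agreeing with $\Phi$ up to order $n$. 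Each inductive step is controlled by Proposition \ref{prop:obsMorph}: the extension from $\Phi_{(k)}$ to $\Phi_{(k+1)}$ exists provided the obstruction class $[c_{k+1}(\Phi_{(k)})]$ in $H^1\!\left({\mathfrak X}^{(k+1)}_{\mathrm{vert}}({\mathcal E}\stackrel{\Phi^{(0)}}{\to}{\mathcal E}'),\Qtoto\right)$ vanishes.

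For $k\geq 1$, Corollary \ref{cor:cohoPhi} item 1 tells us that this cohomology group is automatically trivial, since we are working over the geometric resolution $(E,\dd,\rho)$. Hence, if $n\geq 1$, every step of the induction is unobstructed and the proposition follows immediately.

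The remaining case $n=0$ is the main point of interest, since the group $H^1\!\left({\mathfrak X}^{(1)}_{\mathrm{vert}},\Qtoto\right)$ need not vanish. The plan is to use the additional assumption $\rho'=\rho\circ\phi_0$ to show directly that the root of the obstruction, $\mathrm{rt}(c_1(\Phi))$, is identically zero---then by the quasi-isomorphism of Proposition \ref{prop:fondamental3Def} the class $[c_1(\Phi)]$ itself vanishes, and the extension to order $1$ exists, after which the preceding paragraph takes over. Since $\mathrm{rt}(c_1(\Phi))[f]=c_1(\Phi)(\rho^\ast\dd_{\mathrm{dR}}f)$ in analogy with Equation \eqref{rootdef}, and since the identity $\Qtot(\Phi)(Qf)=-Q'(\Qtot(\Phi)(f))$---a direct consequence of $Q^2=(Q')^2=0$---expresses the left-hand side in terms of pieces of $\Qtot(\Phi)(f)$, extracting its arity-$2$ component yields
\[
c_1(\Phi)(\rho^\ast\dd_{\mathrm{dR}}f)\;=\;-\sum_{a+b=2} Q'^{(a)}\!\left(\Qtot(\Phi)^{(b)}(f)\right).
\]
The $b=0$ term vanishes because $\Qtot(\Phi)^{(0)}(f)=0$ on ${\mathcal O}$ for arity reasons; the $b=1$ term is killed by the anchor-compatibility hypothesis $\rho'=\rho\circ\phi_0$, which turns $\Qtot(\Phi)^{(1)}(f)=(\rho'-\rho\circ\phi_0)^\ast\dd_{\mathrm{dR}}f$ into zero; and the $b=2$ term vanishes for degree reasons since $({\mathcal E}')^{(2)}$ has no elements of degree $1$. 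The whole difficulty of the proof is thus concentrated in the $n=0$ case, where identifying the anchor compatibility as exactly the input needed to kill the sole cohomologically nontrivial obstruction is the key step; all higher obstructions are handled uniformly by the cohomological vanishing already established.
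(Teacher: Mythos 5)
Your proposal is correct and follows the paper's proof in all essentials: the same arity-by-arity induction governed by Proposition \ref{prop:obsMorph}, the same appeal to Corollary \ref{cor:cohoPhi} to kill every obstruction of arity at least two, and the same reduction of the sole remaining arity-one obstruction to the vanishing of $\mathrm{rt}(c_1(\Phi))$ via the quasi-isomorphism of Proposition \ref{prop:fondamental3Def}. The only (harmless) difference is the last computation: you obtain $\mathrm{rt}(c_1(\Phi))=0$ from the arity decomposition of the identity $\Qtot(\Phi)\circ Q=-Q'\circ\Qtot(\Phi)$, whereas the paper contracts $c_1(\Phi^{(0)})$ with sections $x,y\in\Gamma(E'_{-1})$ and invokes the anchor-morphism property of the two almost-Lie algebroid brackets directly---two equivalent ways of extracting the same fact.
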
 
\begin{proof} 
	It is sufficient to show that all the obstruction classes $[c_{k+1}]$ in Proposition \ref{prop:obsMorph} for all $k \geq n$ are zero. For $n \in \mathbb{N}$ and all $k>0$, this is automatic due to first item in Corollary \ref{cor:cohoPhi}. We are thus left with the case $k=0$, where we still need to consider the class of	 $$ c_1 (\Phi^{(0)}):= \left( \Phi^{(0)} \circ Q' - Q \circ \Phi^{(0)}\right)^{(1)} =   \Phi^{(0)} \circ Q'^{(1)} - Q^{(1)} \circ \Phi^{(0)}  \, .$$
		Its  component in $S^2(E_{-1}'^*) \otimes E_{-1} $ contracted with $x,y \in \Gamma(E_{-1}')$
		is given by 
		\begin{equation} \label{rhophi}
		\big\{\phi_0(x),\phi_0(y)\big\}-\phi_0\big(\{x,y\}'\big), 
		\end{equation}
		where 
		$ \{\ .\ ,\, .\ \}$ and $\{\ .\ ,\, .\ \}'$ denote the almost Lie algebroid brackets on $E_{-1}$ and $E_{-1}'$ dualizing $Q^{(1)}$ and $Q'^{(1)}$, respectively. According to Equation \eqref{rootformula}, $\mathrm{rt}(c_1 (\Phi^{(0)}))$ is obtained by applying $\rho$ to Equation \eqref{rhophi} 		\begin{equation} \label{rhophi2} \langle \mathrm{rt}(c_1 (\Phi^{(0)})), x \odot y \rangle =\big\{\rho(\phi_0(x)),\rho(\phi_0(y))\big\}-\rho(\phi_0\big(\{x,y\}'\big)) ,
		\end{equation}
		where we used the morphism property of the almost-Lie algebroid $E_{-1}$. The assumption $\rho' = \rho \circ \phi_0$ (which follows automatically for $n >0$ and is added by hand for $n=0$) and  the morphism property of the almost-Lie algebroid $E'_{-1}$ then imply $\mathrm{rt}(c_1 (\Phi^{(0)}))=0$.  Together with Proposition \ref{prop:fondamental3Def}, this shows 
		
		\hspace{5cm}$ [c_1 (\Phi^{(0)})]=0 $. 
\end{proof}	

 \isthiswhatyouowant It is now easy to complete the proof of the first part of Theorem 
  \ref{theo:onlyOne}. Given the assumptions specified at the beginning of this subsection, Section \ref{sec:context}, together with $ (E,\dd, \rho)$ being a geometric resolution, there always exists  a chain map  $\phi_0$
	as in \eqref{eq:Moph0}---globally in the smooth setting (this is a classical result of abelian categories and was cited already as Lemma \ref{lem:existsAChain} above) and locally in the real analytic and holomorphic cases. The corresponding graded algebra morphism from ${\mathcal E}$ to  ${\mathcal E}'$ is also a Lie $\infty$-algebroid morphism up to order zero from $({\mathcal E},Q)$  to  $({\mathcal E}',Q')$: 
	$\Qtot(\Phi)^{(0)} \equiv Q'^{(0)} \circ \Phi^{(0)} - \Phi^{(0)} \circ  Q^{(0)}$ vanishes due to the commutativity of  \eqref{eq:Moph0}. 
	In addition, by the same diagram, also the additional condition for $n=0$  is satisfied in the above proposition. This completes the proof for the first part of the theorem.
 
 \isthiswhatyouowant  We remark in parenthesis that the construction of $\Phi$ out of $\phi_0$  in \eqref{eq:Moph0} can be also viewed as the existence,  in the case that $(E,Q)$ is universal, of a deformation of the Lie $\infty$-algebroid morphism $\Psi:=\Phi^{(0)} \colon ({\mathcal E}, Q^{(0)}) \to  ({\mathcal E}', Q'^{(0)})$ into a Lie $\infty$-algebroid morphism $\Phi \colon ({\mathcal E},Q)  \to ({\mathcal E}',Q')$.  
 

\subsubsection{Second part of Theorem \ref{theo:onlyOne}: Construction of a homotopy as a unobstructed homotopy deformation problem}

	\isthiswhatyouowant
	\begin{proposition} \label{prop:finalMorphHomot}
			Let $\Phi$ and $\Psi$ be Lie $\infty$-algebroid morphisms over the identity of $M$ from  an arbitrary Lie $\infty$-algebroid $(E',Q')$ to  a universal one  $(E,Q)$ \resolving a singular foliation $ \mathcal{F}$.
			Every homotopy deformation $(\Psi_i)_{i =-1, \dots,n}  $  of $\Psi $ into  $\Phi$ up to arity $n \geq 0$ can be extended  up to arity infinity.  
			%
	\end{proposition}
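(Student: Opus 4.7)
The plan is to argue by induction on $k \geq n$, upgrading a homotopy deformation of arity $k$ into one of arity $k+1$ at each step. Given such a deformation $(\Psi_i)_{i=-1,\ldots,k}$, Proposition \ref{prop:obsMorphHompt}(1) asserts that the obstruction $c_{k+1}(\Phi,\Psi_k) := (\Phi - \Psi_k)^{(k+1)}$ is an $\cO$-linear, degree-zero, $\Qtoto$-closed $\Phi^{(0)}$-derivation of arity $k+1$. By Proposition \ref{prop:obsMorphHompt}(2), extending $(\Psi_i)$ to $(\Psi_i)_{i=-1,\ldots,k+1}$ reduces to showing that the cohomology class of this obstruction vanishes in $H^0\big(\mathfrak{X}_{\mathrm{vert}}^{(k+1)}(\mathcal{E} \stackrel{\Phi^{(0)}}{\to} \mathcal{E}'), \Qtoto\big)$.

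The universality hypothesis on $(E,Q)$ enters precisely here: since $(E,\mathrm{d},\rho)$ is a geometric resolution of $\mathcal{F}$, Corollary \ref{cor:cohoPhi}(2) yields
\begin{equation*}
H^0\big(\mathfrak{X}_{\mathrm{vert}}^{(m)}(\mathcal{E} \stackrel{\Phi^{(0)}}{\to} \mathcal{E}'), \Qtoto\big) = 0 \qquad \text{for all } m \geq 1.
\end{equation*}
Because $k \geq n \geq 0$ forces $k+1 \geq 1$, the relevant cohomology vanishes in every arity we meet, so the obstruction class is automatically trivial and $\Psi_{k+1}$ exists. Note that the intermediate homotopy $(\Psi_\tau,H_\tau)_{\tau \in [k,k+1]}$ connecting $\Psi_k$ to $\Psi_{k+1}$ is produced in the proof of Proposition \ref{prop:obsMorphHompt}(2) as the integral curve of the differential equation \eqref{eq:diffHomot}, with $W$ a primitive of $c_{k+1}(\Phi,\Psi_k)$; in particular its Taylor coefficients of arity $\leq k$ vanish, ensuring condition (iii) of Definition \ref{def:defhomoto}.

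Iterating the induction produces an infinite tower $(\Psi_i)_{i \geq -1}$. The three conditions of Definition \ref{def:defhomoto} for $n=+\infty$ are then satisfied: (i) is inherited from the initial data, (ii) is the outcome of each induction step (so that $\psi_i^{(k)} = \phi^{(k)}$ for all $k \leq i$), and (iii) holds subinterval by subinterval by the construction above. This is precisely a homotopy deformation of $\Psi$ into $\Phi$ up to arity $+\infty$.

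The argument has no real obstacle once the cohomological machinery of Section \ref{sec:coho2} is in place; the core analytic content was already invested in Proposition \ref{prop:fondamental3Def}, which uses the exactness of the sequence \eqref{eq:lines} arising from the geometric resolution. The only minor point to observe is that the resulting homotopy is, in general, only continuous piecewise-$C^\infty$ in the deformation parameter, which is exactly the degree of regularity permitted by Definition \ref{def:pickyAboutPaths}. Combined with Proposition \ref{prop:secondpart}, which converts a homotopy deformation of infinite arity into an honest homotopy between $\Phi$ and $\Psi$, this will close out the homotopy-uniqueness half of Theorem \ref{theo:onlyOne}.
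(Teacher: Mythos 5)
Your proof is correct and follows essentially the same route as the paper's: both reduce the extension step to the vanishing of the obstruction class $[c_{k+1}(\Phi,\Psi_k)]$ via Proposition \ref{prop:obsMorphHompt} and then invoke the second item of Corollary \ref{cor:cohoPhi} (valid since $k+1\geq 1$) to kill it. Your version merely makes the induction and the verification of Definition \ref{def:defhomoto} more explicit than the paper's terse presentation.
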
 
	
	\isthiswhatyouowant
\begin{proof}
	For a homotopy deformation up to arity $n+1$  to exist, vanishing of the cohomology class $$[c_{n+1}(\Phi,\Psi_n)]\in H^0\left(\mathfrak{X}_{\mathrm{vert}}^{(n+1)}(\!{\xymatrix@C-=0.5cm{  {\mathcal E} \ar[r]^{\Phi^{\scalebox{0.5}{(0)}}} & {\mathcal E}'}}\!\!), \Qtoto\right)$$ is necessary and sufficient, see Proposition \ref{prop:obsMorphHompt}. 
	However, according to the second item in Corollary \ref{cor:cohoPhi}, for $(E,Q)$ universal, these cohomology spaces are already trivial. Therefore, the extension of a homotopy deformation of $ \Psi$ into $ \Phi$ from up to an arity $n\geq 0$ to arity infinity is unobstructed and exists. 
 \end{proof}

\isthiswhatyouowant
By Proposition \ref{prop:secondpart}, if a homotopy deformation $(\Psi_i)_{i =-1}^{+ \infty}  $  of $\Psi $ into  $\Phi$ up to arity infinity exists, then $\Phi$ and $ \Psi$ are  homotopic Lie $\infty$-algebroid morphisms.
In view of the above proposition, Proposition \ref{prop:finalMorphHomot}, to complete the proof of the second part of Theorem  \ref{theo:onlyOne},
it suffices therefore to construct a homotopy deformation of $ \Psi$ into $ \Phi$ up to arity zero---on the manifold $M$ in the smooth case and in a neighborhood of a point in the real analytic or holomorphic cases.

\isthiswhatyouowant
In the smooth case, Lemma \ref{lem:existsAChain} implies that the linear parts $ \phi_0$ and $\psi_0$ of $\Phi $ and $\Psi$ are homotopic through a homotopy $h\colon E'_\bullet \to E_{\bullet-1}$:
\begin{equation}
\label{eq:ThereIshomot}
\begin{tikzcd}[column sep=1.3cm,row sep=0.7cm]
\ldots\ar[r,"\dd"] & E'_{-3} \ar[r,"\dd"]\ar[dd,shift left =0.5ex,"\phi_0"]\ar[dd,shift right =0.5ex,"\psi_0" left]& E'_{-2} \ar[ddl,dashed, "h\ " above left] \ar[dd,shift left =0.5ex,"\phi_0"]\ar[dd,shift right =0.5ex,"\psi_0" left]\ar[r,"\dd"]& E'_{-1} \ar[ddl,dashed,"h\ " above left] \ar[dd,shift left =0.5ex,"\phi_0"]\ar[dd,shift right =0.5ex,"\psi_0" left] \ar[dr,"\rho'"]  & \\
& & & & TM\\
\ldots\ar[r,"\dd'"]& E_{-3} \ar[r,"\dd'"]& E_{-2}\ar[r,"\dd'"]&  E_{-1} \ar[ur,"\rho"] &
\end{tikzcd}
\end{equation}
In the real analytic or holomorphic cases, such a homotopy $h$ may  exist in a neighborhood of a point only. Theorem 
\ref{theo:onlyOne} now follows from the above discussion and the following lemma.
\begin{lemme}
	\label{lem:arity1}
	Assume a homotopy as in \eqref{eq:ThereIshomot} is given.
	Then the Lie $\infty$-algebroid  morphism $\Psi$ is homotopic to a Lie $\infty$-algebroid  morphism whose linear part is the linear part of~$ \Phi$,
	\emph{i.e.}~there exists a homotopy deformation of $\Psi $ into  $\Phi$ up to arity $ 0$.
\end{lemme}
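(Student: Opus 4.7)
My plan is to construct $\Psi_0$ as the time-one value of a flow on the space of Lie $\infty$-algebroid morphisms generated by the chain homotopy $h$. First I would interpret $h \colon E'_\bullet \to E_{\bullet-1}$ as a single section $w \in \Gamma(E'^* \otimes E)$ of degree $-1$ whose only nonzero Taylor coefficient $w_0$ equals $h$. For every Lie $\infty$-algebroid morphism $\Xi$ over the identity of $M$, let $w^{\Xi}$ denote the corresponding ${\mathcal O}$-linear $\Xi$-derivation of degree $-1$ and arity $0$, as defined in Example \ref{ex:homotopies}. I would then consider the initial value problem
\begin{equation*}
\frac{\dd \Psi_\tau}{\dd \tau} = \Qtot\!\bigl(w^{\Psi_\tau}\bigr), \qquad \Psi_{-1} = \Psi ,
\end{equation*}
which by Example \ref{ex:homotopies} admits a unique solution on the interval $[-1, 0]$ whose values are all Lie $\infty$-algebroid morphisms and for which $(\Psi_\tau, w^{\Psi_\tau})_{\tau \in [-1,0]}$ is, by construction, a homotopy in the sense of Definition \ref{def:homotopy}. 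This handles existence and the homotopy property in one stroke.

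The core of the argument then reduces to showing that the linear part of $\Psi_0$ coincides with $\phi_0$. Because $w$ is concentrated in arity $0$, Equation \eqref{boum2} implies that on $\Gamma(E^*)$ the arity-$0$ component $(w^{\Psi_\tau})^{(0)}$ is independent of $\tau$ and acts simply as the dual of $h$. Projecting the ODE to arity $0$ and dualizing to maps $E'_\bullet \to E_\bullet$ therefore yields
\begin{equation*}
\frac{\dd \, \psi_0^{(\tau)}}{\dd \tau} = \phi_0 - \psi_0,
\end{equation*}
where the right-hand side is identified by means of the chain-homotopy relation satisfied by $h$ in diagram \eqref{eq:ThereIshomot}. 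Since this is constant in $\tau$, integrating from $\tau=-1$ to $\tau=0$ gives $\psi_0^{(0)} = \psi_0 + (\phi_0 - \psi_0) = \phi_0$, as desired.

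Finally, I would check that $(\Psi_{-1}, \Psi_0)$, together with the constructed homotopy path, fulfills Definition \ref{def:defhomoto} at arity $0$: condition (iii) for the initial index $i = -1$ requires only that the derivations in the homotopy have vanishing components of arity $\leq -1$, which is vacuously true. I expect the main obstacle to lie not in producing the flow, which is essentially imported from Example \ref{ex:homotopies}, but in the careful bookkeeping of signs and dualizations needed to identify the right-hand side of the arity-$0$ ODE with $\phi_0 - \psi_0$, especially on $E'_{-1}$ where the chain-map relation is imposed via the anchor rather than via the target differential.
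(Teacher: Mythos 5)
Your proposal is correct and follows essentially the same route as the paper's proof: both promote the chain homotopy $h$ to a derivation with a single arity-zero Taylor coefficient, run the flow of Example \ref{ex:homotopies}, observe that the arity-zero part of the resulting ODE has constant right-hand side equal to $\Phi^{(0)}-\Psi^{(0)}$ by the chain-homotopy relation, and integrate over a unit time interval. The only differences are cosmetic (parametrizing by $[-1,0]$ instead of $[0,1]$, and the explicit check of Definition \ref{def:defhomoto}(iii), where the arity $-1$ component vanishes not vacuously but because $w$ is concentrated in arity $0$).
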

\begin{proof}
	%
	%
	Consider the differential equation:
	$$ \frac{\diff \Psi_t}{\diff t} = \Qtot \left( h^{\Psi_t} \right), $$
	where $h^{\Psi_t}$ is the unique $\Psi_t$-derivation whose unique non-vanishing Taylor coefficient is $h$, \emph{i.e.}~whose restriction to $\Gamma(E^*)$
	is the dual $h^*$ of $h$. 
	According to Example \ref{ex:homotopies}, this differential equation admits a solution  $(\Psi_t,  h^{\Psi_t})_{t \in [0,1]}$ which is a homotopy between $\Psi_0 := \Psi$ and $\Psi_1$.
	Moreover, since the linear part of $h^{\Psi_t}$  is $h^*$ for all $t$, the following differential equation
	is satisfied for all $\alpha \in \Gamma(E^*)$:
	\begin{equation}
	\label{eq:DeriveePremiere}
	\begin{split}\frac{\diff \Psi_t^{(0)}}{\diff t}(\alpha) &=(Q'^{(0)} \circ h^*) (\alpha)+ (h^*   
	\circ Q^{(0)})(\alpha)\\ &= \Phi^{(0)}(\alpha) - \Psi^{(0)}(\alpha),\end{split}
	\end{equation}
	where in the second line we used the homotopy property expressed by diagram \eqref{eq:ThereIshomot}. 
	In view of the initial condition $ \Psi_0^{(0)} = \Psi^{(0)}$, we thus have $\Psi_{t}^{(0)} = \Psi^{(0)}+ t(\Phi^{(0)} - \Psi^{(0)})  $. 
	As a consequence, the component of arity zero of $\Psi_1$ is equal to $\Phi^{(0)} $. This proves the claim.
\end{proof}


\subsection{Examples of  universal Lie \texorpdfstring{$\infty$}{infinity}-algebroid structures of a singular foliation}
\label{sec:examplesLieInfinity}

\isthiswhatyouowant
In this section, we give examples of universal {\LieInftyAlgebroid} structures \resolving a given singular foliation.

\begin{example}\normalfont
For a regular foliation ${\mathcal F}$ on a manifold $M$, the Lie algebroid $T F \subset TM$, whose sections form ${\mathcal F} $,  is a universal {\LieInftyAlgebroid} \resolving ${\mathcal F}$.
\end{example}

\begin{example}\normalfont
	\label{ex:Debord2}
A singular foliation is a Debord foliation if and only if one of its universal {\LieInftyAlgebroid}s  is a Lie algebroid (see Example \ref{ex:debord}).
\end{example}

\begin{example}\normalfont
\label{sl2resolution}
In Example \ref{sl2}, we gave a geometric resolution of length two of the singular foliation coming from the action of $\mathfrak{sl}_{2}$ on $\mathbb{R}^{2}$.
Let us compute now the {\LieInftyAlgebroid} structure on that geometric resolution.
We define the bracket between two constant sections of $E_{-1} \simeq \mathfrak{sl}_{2}$ as being their bracket in $\mathfrak{sl}_{2}$. 
Then we extend this bracket to every section of $E_{-1}$ by the Leibniz identity \eqref{robinson}. 
To define the bracket between sections of $E_{-1}$ and $E_{-2}$, we notice that:
\begin{equation}
\{\widetilde{e},\dd r\}=xy\{\widetilde{e},\widetilde{h}\}+\rho(\widetilde{e})(xy)\widetilde{h}+\rho(\widetilde{e})(y^{2})\widetilde{e}-x^{2}\{\widetilde{e},\widetilde{f}\}=0.
\end{equation}
Since $\dd$ is injective on a dense open subset, this imposes $\{\widetilde{e}, r\}= 0$. 
The same argument also gives $[\widetilde{f} , r ]=[\widetilde{h} , r]=0$.
We then extend these brackets to a bracket between sections of  $E_{-1}$ and $E_{-2}$ by the Leibniz property \eqref{robinson}. 
There is no $k$-ary bracket for $k \geq 3$.

\isthiswhatyouowant
There is an evident Lie algebroid inducing the same foliation, namely the action Lie algebroid $A = M \times \mathfrak{sl}_{2}$. According to Theorem \ref{theo:onlyOne}, there must exist a Lie $\infty$-algebroid morphism $\phi \colon (A,Q_A) \to (E,Q)$, which here can be realized by the obvious inclusion $\mathrm{id} \oplus 0$ of $A$ into $E=E_{-1}\oplus E_{-2}$. 
\end{example}

\begin{example}\normalfont
\label{conjecture2}
The result is similar for the geometric resolution of the foliation $ {\mathcal F}_{ad}$ given by the adjoint action of a semi-simple complex Lie algebra on itself, as in Example \ref{conjecture}.
In that case, $E_{-1} $ is the trivial bundle over  $M= \mathfrak{g} $ with typical fiber $ \mathfrak{g}$, and the bracket of constant sections can be chosen
 to be the bracket of the Lie algebra $\mathfrak{g} $. 
  In fact, $E_{-1}$ by itself is the corresponding action Lie algebroid. 
 The bracket of a constant section of $ E_{-1}$ with a constant section of $E_{-2}$ has  to be zero and there is no $3$-ary bracket.
  \end{example}


\begin{example} \normalfont
\label{ex:vfVanishingAtZero_structure}
Let ${\mathcal F}$ be the singular foliation of all vector fields vanishing at the origin $0\in V$, $V$ a vector space.
Let us consider the geometric resolution given in Example \ref{ex:vfVanishingAtZero}.

\isthiswhatyouowant
A {\LieInftyAlgebroid} structure on that geometric resolution can be described explicitly.
Let all $k$-ary brackets vanish for $k \geq 3$. The $1$-ary bracket is already given by the resolution. It now suffices to define the $2$-ary bracket on constant sections, since they can be extended to all sections by means of the Leibniz property, using the anchor map \eqref{robinson}.
For all $\alpha \in \wedge^{i} V^*$, $\beta \in  \wedge^{j} V^*$, and $u,v \in V$, it is defined as follows: \begin{equation}
 \label{eq:bracketVFVatZero}
 \big\{ \alpha \otimes u , \beta \otimes v \big\}_2 =  \alpha  \wedge {\mathfrak i}_u \beta \otimes v + (-1)^{ij} \beta \wedge {\mathfrak i}_v \alpha \otimes u .
\end{equation}
This bracket is graded symmetric by construction.
The Jacobi identity can be checked by a direct calculation, which one is permitted to perform on constant sections, since the above bracket preserves constant sections. The compatibility with the differential is shown as follows, where first we assume $i,j \ge 2$:
\begin{eqnarray*}  {\rm d}^{(i+j-1)} \big\{ \alpha \otimes u , \beta \otimes v \big\}_2  & = & {\mathfrak i}_e \left( \alpha  \wedge {\mathfrak i}_u \beta  \otimes v  + 
(-1)^{ij} \beta \wedge {\mathfrak i}_v \alpha \otimes u \right) \\
 &=&  ({\mathfrak i}_e  \alpha )  \wedge {\mathfrak i}_u \beta  \otimes v   + (-1)^{ij+j+1} \beta \wedge {\mathfrak i}_v ({\mathfrak i}_e \alpha) \otimes u  \\ & &  + 
(-1)^{i+1}  \alpha  \wedge {\mathfrak i}_u {\mathfrak i}_e \beta  \otimes v  + 
  (-1)^{ij} ({\mathfrak i}_e) \beta \wedge {\mathfrak i}_v \alpha \otimes u  \\
  &= &    \big\{ {\rm d}^{(i)} \left( \alpha \otimes u  \right), \beta \otimes v \big\}_2 - (-1)^i \big\{  \alpha \otimes u , {\rm d}^{(j)} \left(\beta \otimes v\right) \big\}_2
  \end{eqnarray*} 
  For $i=1$ or $j=1$, one replaces $\dd^{(1)}$ by the anchor $\rho$ in the above. 
  
  \isthiswhatyouowant
 Again, in this example, there is an action Lie algebroid giving rise to the same foliation, which can be obtained by restriction to $E_{-1}$. In fact, this Lie algebroid is just $V \times \mathfrak{gl}(V)$. 
\end{example}

\begin{example}
	\normalfont
	\label{ex:vanishingOrder2_v2}
We consider the foliation $ {\mathcal F}$ of vector fields vanishing to order $2$ at $0$ on $ V={\mathbb K}^2$ and  its resolution 
$$ 0 \to E_{-2} \to E_{-1} \to TM 
$$ 	
	constructed in Example \ref{ex:vanishingOrder2}.  The vector bundle $E_{-1} $ is the trivial bundle with fiber $S^2(V^*) \otimes V$. Denote by $  \alpha^*, \beta^* $ the quadratic functions
	  on $V$ associated to  $\alpha, \beta \in S^2 V^* $ and by $\underline{u} ,\underline{v}$  the constant vector fields on $V$ associated to elements $u,v \in V$.
Then the anchor map can be defined by means of 	$\rho ( \alpha \otimes v)  = \alpha^* \, \underline{v} $ and the 2-ary brackets between constant sections by:
	 \begin{equation} \label{eq:Quadra} \{\alpha \otimes u , \beta \otimes v \}_2 =  (\underline{u} \, [\beta^*])  \, (\alpha \otimes v) - 
	   (\underline{v} \, [ \alpha^*]) \, (\beta \otimes u) \, . \end{equation}
The Jacobiator of this bracket is non-zero and thus, in this example, the completion of which we leave to the reader, the $3$-ary bracket is necessarily non-zero as well. In this case, we do not know of a Lie algebroid that generates the same foliation.
\end{example}

 \begin{example}
 \label{ex:Koszul2}
  \normalfont
 We return to Example \ref{ex:Koszul} with its geometric resolution $(E,\dd,\rho)$ where $ \Gamma(E_{-i}) $ is the sheaf of $(i+1)$-multivector fields on ${\mathbb C}^n$.
This can be equipped with the structure of a Lie $\infty$-algebroid by postulating the following $k$-ary brackets:
 \begin{equation}
 \label{eq:Koszul}
 \left\{  \partial_{I_1}, \dots, \partial_{I_k} \right\}_k\quad :=  
 \sum_{i_1 \in I_1, \dots,i_k \in I_k}  \epsilon(i_1, \dots,i_k)  \, \tfrac{\partial^k \varphi}{ \partial x_{i_1} \dots \partial x_{i_k}}\;\: \partial_{I_1^{i_1}\bullet I_2^{i_2}\bullet \ldots \bullet I_k^{i_k} }  .
 \end{equation}
Here for every multi-index $I = (i_1, \dots, i_j)$ of length $j$ of elements in $\{1, \dots,n\}$,  $\partial_I$ is  a shorthand notation for $\frac{\partial}{\partial x_{i_1}} \wedge \dots \wedge \frac{\partial}{\partial x_{i_j}} $. Given a collection of $k$ multi-indices $I_1, I_2, \ldots, I_k$, we can form a new multi-index $I:=I_1\bullet I_2 \bullet \ldots \bullet I_k$ by concatenation. For every $i_1 \in I_1, \dots,i_k \in I_k$, $\epsilon(i_1, \dots,i_k)$ is the signature of the permutation needed inside $I$ to bring them to the first $k$ slots in the given order. For every $i_s \in I$, $I^{i_s}$ denotes the list which results from dropping $i_s$ from the list $I =  (i_1, \dots, i_j)$.

\isthiswhatyouowant
The anchor is defined by 
 \begin{equation} \rho \left( \tfrac{\partial }{\partial x_i}\wedge \tfrac{\partial }{\partial x_j} \right) := \tfrac{\partial \varphi}{ \partial x_{i} } \tfrac{\partial }{\partial x_j} - \tfrac{\partial \varphi}{ \partial x_{i} }\tfrac{\partial }{\partial x_i} \end{equation}
and the brackets, defined in \eqref{eq:Koszul} on constant sections only, extended appropriately.   One may verify, e.g.\ by an adaptation of the techniques in \cite{VoronovTrick,Voronov}, that indeed these data define a {\LieInftyAlgebroid}.
 
 \isthiswhatyouowant
 In this example, in general, the $k$-ary brackets for $k=3, 4 ,\dots$ are not zero and there is no obvious Lie algebroid inducing the same singular foliation. 
 \end{example}

\begin{example}
\normalfont
\label{ex:affine3}
Our construction can be used also to associate three potentially different homotopy classes of Lie $\infty$-algebroids to a given affine variety $W\subset  {\mathbb C}^n$. These result from application of the main theorem to the three different ${\mathscr O}$-modules described in  Example \ref{ex:affine2}.
\end{example}	

\newpage
 
\section{The geometry of a singular foliation through its universal Lie \texorpdfstring{$\infty$}{infinity}-algebroid}
\label{sec:geometry}

\isthiswhatyouowant
In this section we relate the geometry of a singular foliation to its  universal {\LieInftyAlgebroid}.
More precisely, since any two universal {\LieInftyAlgebroid}s  \resolving a singular foliation $ \mathcal{F}$ are homotopy equivalent by Corollary \ref{coro:unique}, every homotopy invariant mathematical notion deduced from it  is in fact associated to the singular foliation $ \mathcal{F}$ itself.

\isthiswhatyouowant
For example, Lemma \ref{lem:alt_sum} shows that the alternate sums of the ranks of the vector bundles underlying a {\LieInftyAlgebroid} is preserved under homotopy equivalence.  Therefore it must have a geometrical meaning associated directly to the singular foliation. And indeed, this was provided in Proposition \ref{prop:Marco}: it coincides with the dimension of regular leaves.

\isthiswhatyouowant
Typical examples of such notions are various cohomologies that {\LieInftyAlgebroid}s are equipped with.
In what follows, we shall first study some global cohomologies and then turn to those attached canonically to every given leaf.

\isthiswhatyouowant We round up this section by relating the fundamental groupoid of the {\LieInftyAlgebroid} 
to the holonomy groupoid of a given singular foliation and conclude with some remarks about an eventual Lie or Leibniz algebroid over it.

\isthiswhatyouowant
Throughout this section, we work in the smooth context. Except for the discussion about the holonomy groupoid, 
the results remain valid in the holomorphic 
and in the real analytic context  in a neighborhood of a point.

\isthiswhatyouowant
Unless otherwise specified, throughout Sections \ref{sec:universal}, \ref{isotropysec}, and \ref{moineau}, $\mathcal{F}$ is a singular foliation on a manifold $M$, $ (E,\dd,\rho)$ is a geometric resolution of finite length,  $(E,Q)$ is a universal {\LieInftyAlgebroid} of  $\mathcal{F}$, and $ {\mathcal E}$ its sheaf of 
functions.

\subsection{Universal foliated cohomology}
\label{sec:universal}

%

\isthiswhatyouowant
Here is a first example of  cohomology associated to a singular foliation
with the help of the universal {\LieInftyAlgebroid}.

\begin{lemme}
\label{cor:cancohom}
Let ${\mathcal F}$ be a singular foliation on $M$. Let $ (E,Q)$ and $(E',Q')$ be two universal
{\LieInftyAlgebroid}s \resolving ${\mathcal F}$ with sheaves of functions $\mathcal{E}$ and $\mathcal{E}'$. The cohomologies of $(\mathcal{E}, Q) $ and $  (\mathcal{E}', Q') $ are canonically isomorphic as graded commutative algebras.
\end{lemme}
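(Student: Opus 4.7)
The plan is to leverage Theorem \ref{theo:onlyOne} and its Corollary \ref{coro:unique} to produce the isomorphism, and then to use Proposition \ref{prop:HomotopyMeansHomotopy} to descend everything to cohomology.

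First, I would invoke Theorem \ref{theo:onlyOne} in both directions: since $(E,Q)$ is universal and the foliation induced by $(E',Q')$ is (equal to and hence) contained in $\mathcal{F}$, there exists a Lie $\infty$-algebroid morphism $\Phi\colon \mathcal{E} \to \mathcal{E}'$ over $\mathrm{id}_M$, and symmetrically a morphism $\Psi\colon \mathcal{E}' \to \mathcal{E}$. By definition, both $\Phi$ and $\Psi$ are graded commutative algebra morphisms that intertwine the respective homological vector fields, i.e.\ they are morphisms of differential graded commutative algebras. Consequently, they pass to graded commutative algebra morphisms $\Phi_*\colon H^\bullet(\mathcal{E},Q) \to H^\bullet(\mathcal{E}',Q')$ and $\Psi_*\colon H^\bullet(\mathcal{E}',Q') \to H^\bullet(\mathcal{E},Q)$ in the obvious manner.

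Next, I would show that these maps are mutually inverse. By Corollary \ref{coro:unique}, the compositions $\Psi\circ\Phi$ and $\Phi\circ\Psi$ are homotopic to $\mathrm{id}_{\mathcal{E}}$ and $\mathrm{id}_{\mathcal{E}'}$ respectively. Applying Proposition \ref{prop:HomotopyMeansHomotopy} to each of these homotopies yields $\mathscr{O}$-linear maps $H,H'$ of degree $-1$ such that
\begin{equation*}
\Psi\circ\Phi - \mathrm{id}_{\mathcal{E}} = Q\circ H + H\circ Q, \qquad \Phi\circ\Psi - \mathrm{id}_{\mathcal{E}'} = Q'\circ H' + H'\circ Q'.
\end{equation*}
Both right-hand sides are $Q$- (resp.\ $Q'$-) exact on $Q$-cocycles, so $\Psi_*\circ\Phi_* = \mathrm{id}$ and $\Phi_*\circ\Psi_* = \mathrm{id}$ on cohomology. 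This establishes that $\Phi_*$ is a graded commutative algebra isomorphism between the two cohomologies.

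Finally, I would address canonicity, which I expect to be the subtler point. If $\tilde\Phi\colon \mathcal{E}\to\mathcal{E}'$ is any other Lie $\infty$-algebroid morphism obtained from universality, then Theorem \ref{theo:onlyOne} guarantees that $\Phi$ and $\tilde\Phi$ are homotopic; once again Proposition \ref{prop:HomotopyMeansHomotopy} shows that $\tilde\Phi - \Phi$ is $\Qtot$-exact, so $\tilde\Phi_* = \Phi_*$. Hence the induced isomorphism on cohomology is independent of the choices of $\Phi$ and $\Psi$, and is therefore canonical. The main technical obstacle is really just the careful bookkeeping that the $\mathscr{O}$-linear map $H$ produced by Proposition \ref{prop:HomotopyMeansHomotopy}, although neither an algebra morphism nor a derivation, still suffices to conclude equality on cohomology classes; this follows because on $Q$-cocycles the two terms $Q\circ H$ and $H\circ Q$ respectively give a $Q'$-coboundary and vanish.
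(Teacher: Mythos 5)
Your proof is correct and follows essentially the same route as the paper's: Corollary \ref{coro:unique} supplies the two morphisms with compositions homotopic to the identity, Proposition \ref{prop:HomotopyMeansHomotopy} turns those homotopies into $\Qtot$-exact differences so that the induced maps on cohomology are mutually inverse, and canonicity follows because any two choices of morphism are homotopic and hence agree on cohomology. Your closing remark — that on a $Q$-cocycle the term $H\circ Q$ vanishes and $Q'\circ H$ is exact, even though $H$ is neither an algebra morphism nor a derivation — is exactly the bookkeeping the paper leaves implicit.
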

\begin{proof}
By Corollary \ref{coro:unique}, 
there exist {\LieInftyAlgebroid} morphisms $\varphi \colon \mathcal{E}'\to\mathcal{E}$ and $\psi \colon \mathcal{E}\to\mathcal{E}'$
whose compositions are homotopic to the identity.
Proposition \ref{prop:HomotopyMeansHomotopy} implies that ${\Phi}:= \varphi \circ \psi$ and ${\Psi}:= \mathrm{id}$ are inverse to one another at the level of cohomology. Two morphisms $\varphi, \varphi'\colon \mathcal{E}'\to\mathcal{E}$ are homotopic to one another, moreover, so that, again at the level of cohomology, they define the same isomorphism.
\end{proof}

\isthiswhatyouowant This justifies the following definition.

\begin{definition} 
	Let ${\mathcal F}$ be a singular foliation on $M$ that admits  a geometric resolution. We call the cohomology of $({\mathcal E}, Q)$, where ${\mathcal E} $ is the sheaf of functions of any universal {\LieInftyAlgebroid} $ (E,Q)$
	of the given ${\mathcal F} $, the \emph{universal foliated cohomology of $ {\mathcal F}$} and denote it by  $H_{\mathscr{U}}(\mathcal F)$.
	 \end{definition}


\begin{remarque}\normalfont By construction,  $H^0_{\mathscr{U}}({\mathcal F})$ is the algebra
 of functions on $M$ 
 constant along the leaves of $ {\mathcal F}$. 
 \end{remarque}

\isthiswhatyouowant
 There is another cohomology of interest. 
Let us call 
 $$ \Omega({\mathcal F}) := \mathrm{Hom}_{\mathscr O}\big(\wedge_{\mathscr O}{\mathcal F},{\mathscr O}\big)= 
 \bigoplus_{k \geq 0} \mathrm{Hom}_{\mathscr O}\big(\wedge_\mathscr{O}^k {\mathcal F},\mathscr{O}\big)$$ the space of \emph{longitudinal forms}. It is 
 equipped with the differential:
\begin{align}
{\dd}_L (\alpha)(X_0, \dots , X_k) &=\sum_{i=0}^k (-1)^{i} X_i\big[ \alpha (X_{0},\ldots,\widehat{X_i},\ldots,X_k)\big] \\&\hspace{1cm}+ \sum_{0\leq i < j\leq k}(-1)^{i+j}\alpha\Big([X_i,X_j],X_0,\ldots, \widehat{X_{i}},\ldots,\widehat{X_{j}},\ldots,X_k\Big) .\nonumber
\end{align}
Here $\alpha \in \Omega^k({\mathcal F}) $, $X_0,\dots,X_k \in {\mathcal F}$,  and $\widehat{X_i}$ means that the term $X_i$ is omitted.
 We call the cohomology of this operator the \emph{longitudinal cohomology of $ {\mathcal F}$} and denote it by $H^\bullet({\mathcal F})$.

\begin{remarque}
	\normalfont
	A $k$-form $\alpha \in  \Omega^k({\mathcal F})$ induces a $k$-form on each 
	regular leaf $L$: for every regular point $m $, it is the unique $k$-form $\alpha_{L_m} $ on the leaf $L_m$ through $m$ such that 
	\begin{equation}
	\label{eq:restrictionKforms}
	\alpha (X_1, \dots, X_k) |_m = \alpha_{L_m}(X_1(m), \dots, X_k(m))
	\end{equation}
for all $X_1, \dots, X_k \in {\mathcal F} $. In contrast,
	a $k$-form satisfying \eqref{eq:restrictionKforms} may not exist on singular leaves.
	For instance, consider the singular foliation on $ {\mathbb R}$ generated by the vector field $ x \tfrac{\partial}{\partial x}$ and the longitudinal $1$-form $ \alpha \colon F x \tfrac{\partial}{\partial x}  \mapsto F$ for all $ F \in {\mathcal O}$. The leaf $ \{0\}$ being of dimension $0$, it does not permit a $1$-form satisfying~\eqref{eq:restrictionKforms}.
\end{remarque}

\isthiswhatyouowant
 There is a natural map $\rho^*$ from $\Omega(\mathcal{F})$ to $ {\mathcal E}$ 
 which associates to each $ \alpha \in \Omega^k(\mathcal{F})$
 the element $\rho^* {\alpha} \in  \Gamma\big(S^k (E_{-1}^*)\big) \subset {\mathcal E}_k $ defined for all $x_1 , \dots,x_k \in \Gamma(E_{-1})$ by:
  \begin{equation}\label{eq:pullbackByAnchor} 
 \rho^* \alpha (x_1, \dots,x_k) = \alpha\big(\rho(x_1), \dots,\rho(x_k)\big) .
  \end{equation}

\begin{lemme}\label{isomorfoliation}
Let $\mathcal{F}$ be a singular foliation on $M$ that admits a universal
{\LieInftyAlgebroid}.
There is a canonical algebra morphism $\rho^*\colon H^\bullet(\mathcal{F})\to  H_{\mathscr{U}}^\bullet(\mathcal{F})$.
\end{lemme}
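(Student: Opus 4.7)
The plan is to unravel what $\rho^*$ is as a map into $\mathcal{E}$, then check the three required properties: being an algebra morphism, being a chain map, and being independent (up to the canonical isomorphism of Lemma \ref{cor:cancohom}) of the chosen universal {\LieInftyAlgebroid}. First, I would note that since $E_{-1}^*$ sits in odd degree $+1$, the graded-symmetric algebra $\mathcal{E}=\Gamma(S(E^*))$ restricted to $\Gamma(S^\bullet(E_{-1}^*))$ is antisymmetric on $E_{-1}^*$, so the pullback formula \eqref{eq:pullbackByAnchor} promotes a $k$-form $\alpha\in\Omega^k(\mathcal{F})$ to an element $\rho^*\alpha\in\Gamma(S^k(E_{-1}^*))_k\subset\mathcal{E}_k$, and compatibility with products is immediate from the definition. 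Hence $\rho^*$ is an $\mathscr{O}$-linear morphism of graded commutative algebras.

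The main step is to prove $Q\circ\rho^*=\rho^*\circ\dd_L$. For $\alpha\in\Omega^k(\mathcal{F})$, the element $\rho^*\alpha$ has arity $k$ and degree $k$, so in the arity decomposition $Q=\sum_{j\geq0}Q^{(j)}$ the piece $Q^{(j)}(\rho^*\alpha)$ has arity $k+j$ and degree $k+1$. Since a nonzero element of arity $m$ and degree $n$ in $\mathcal{E}$ requires $m$ factors each of degree $\geq 1$ summing to $n$, one has $m\leq n$, so only $j=0,1$ can give nonzero contributions. The $Q^{(0)}$-part, computed from \eqref{Q0} and the derivation property, pairs $\alpha$ against $\rho\circ\dd^{(2)}$, which vanishes by the resolution axiom. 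The $Q^{(1)}$-part is computed from \eqref{def:bracket} together with the fact, recorded in Remark \ref{remarque2}, that $\rho$ is a morphism of brackets on $\Gamma(E_{-1})$; I expect the resulting formula to coincide term by term with the usual Cartan-style expression for $\dd_L\alpha$ pulled back by $\rho$. This identification is the main obstacle: not conceptually deep but requiring a careful bookkeeping of Koszul signs and of the Leibniz-like terms coming from \eqref{robinson}. It is here that having a Lie-Rinehart bracket on $\mathcal{F}$ itself, compatible with $\rho$, is essential.

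Once the chain map property is established, $\rho^*$ descends to a morphism of graded commutative algebras $H^\bullet(\mathcal{F})\to H^\bullet(\mathcal{E},Q)= H_{\mathscr{U}}^\bullet(\mathcal{F})$. To prove canonicity, I would pick two universal {\LieInftyAlgebroid}s $(E,Q)$ and $(E',Q')$ and a {\LieInftyAlgebroid} morphism $\varphi\colon\mathcal{E}\to\mathcal{E}'$ between them, which exists and realizes the canonical isomorphism of Lemma \ref{cor:cancohom} by Corollary \ref{coro:unique}. By Remark \ref{rmk:linearparts} the linear part $\phi_0$ of $\varphi$ satisfies $\rho\circ\phi_0=\rho'$ on $E'_{-1}$. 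For $\alpha\in\Omega^k(\mathcal{F})$, a degree/arity count identical to the one above shows that $\varphi^{(j)}(\rho^*\alpha)=0$ for every $j\geq 1$, while $\varphi^{(0)}(\rho^*\alpha)=\rho'^*\alpha$ simply by dualizing $\rho\circ\phi_0=\rho'$. Hence $\varphi\circ\rho^*=\rho'^*$ strictly, not only up to homotopy, which immediately implies that the induced map at the level of cohomology is the same for both choices under the canonical identification of Lemma \ref{cor:cancohom}. This establishes that $\rho^*$ is canonically associated to $\mathcal{F}$.
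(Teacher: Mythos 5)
Your proposal is correct and follows essentially the same route as the paper, which simply declares the chain-map and algebra-morphism properties ``routine'' and asserts the strict relation $\Phi_{\mathcal{E},\mathcal{E}'}\circ(\rho')^*=\rho^*$ to get canonicity via the diagram of Corollary \ref{cor:cancohom}. Your arity/degree count (an element of arity $m$ and degree $n$ forces $m\le n$, so only $Q^{(0)}$ and $Q^{(1)}$ can act nontrivially on $\rho^*\alpha$, and only $\varphi^{(0)}$ survives in the comparison of two universal {\LieInftyAlgebroid}s) is exactly the clean bookkeeping that justifies those unproved assertions, together with $\rho\circ\dd^{(2)}=0$, the bracket-morphism property of $\rho$, and $\rho\circ\phi_0=\rho'$ from Remark \ref{rmk:linearparts}.
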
 


  \begin{proof}
   It is routine to check that $\alpha \mapsto \rho^* ({\alpha})$ is a chain map and a graded commutative algebra morphism. It does induces a map in cohomology. Now, 
   let $ (E,Q)$ and $(E',Q')$ be two universal  {\LieInftyAlgebroid}s \resolving $\mathcal{F}$, 
   with sheaves of  functions  ${\mathcal E}$ and ${\mathcal E}'$ respectively.
  Let $\Phi_{{\mathcal E},{\mathcal E}'}:{\mathcal E}' \to {\mathcal E}$ be a {\LieInftyAlgebroid} morphism from  $ (E,Q)$ and $(E',Q')$ as in Corollary \ref{coro:unique}. The relation $ \Phi_{{\mathcal E},{\mathcal E}'} \circ (\rho')^* = \rho^*$ holds and
   proves the commutativity of the following diagram
   \begin{center}
   	\begin{tikzcd}[column sep=1.3cm,row sep=1.4cm]
   		& H_{}^\bullet ({\mathcal F}) \ar[dl,"\rho^*" above left] \ar[dr, "\rho'^*" above right] & \\
   		H^\bullet ({\mathcal E},Q)  \ar[rr, leftrightarrow ,"\simeq " ] &   & H^\bullet({\mathcal E}',Q')
   	\end{tikzcd} 
   \end{center}
   where the horizontal map is the canonical isomorphism given in Corollary \ref{cor:cancohom}. This proves the claim.
  \end{proof}

  \begin{example}
  	\normalfont
  	For Debord foliations,  the universal foliated cohomology is the Lie algebroid Chevalley-Eilenberg cohomology of $(A,[\cdot,\cdot],\rho)$ and $\rho^*\colon H^\bullet(\mathcal{F})\to  H_{\mathscr{U}}^\bullet(\mathcal{F})$ is an isomorphism. 
  \end{example}	
  
  \begin{remarque}
  	\normalfont
  	\label{ex:affine4}
  	Let $W$ be an affine variety and  ${\mathcal X}(W) $ be as in Example \ref{ex:affine2}. The longitudinal cohomology of  ${\mathcal X}(W) $ is the usual de Rham cohomology of an affine variety.
  	 It would be interesting to relate its universal foliated cohomology  to derived de Rham cohomology \cite{Toen}.
  \end{remarque}	
  
\subsection{The isotropy Lie \texorpdfstring{$\infty$}{infinity}-algebra.}
\label{isotropysec}
\isthiswhatyouowant
Androulidakis and Skandalis were able to define 
\cites{AndrouSkandal, AndrouZambis} a Lie algebra associated canonically to each point $m \in M$, if $M$ is equipped a singular foliation. They call it the \emph{isotropy Lie algebra} of ${\mathcal F}$ at $m$. 
It is defined as the quotient
 of the Lie algebra $\mathcal{F}(m)$ of local sections in $\mathcal{F}$ vanishing at $m \in M$ by the Lie ideal $I_m \mathcal{F}$,
where $I_m \subset {\mathcal O}$ are the functions vanishing at $m$.

\isthiswhatyouowant In this section we will apply the isotropy functor introduced in Section \ref{sec:functor} to the universal Lie $\infty$-algebroid of a given foliation, which will permit us to prove Theorem \ref{theo:lasttheorem}.


\subsubsection{The graded space of the Lie \texorpdfstring{$\infty$}{infinity}-algebra. }
\label{sec:isotropy}

\isthiswhatyouowant
Choose  an arbitrary point $m \in M$. Denote by ${\mathfrak i}_m V$ the fiber of a vector bundle $V \to M $ at $m$,
and by ${\mathfrak i}_m \phi $ the restriction to ${\mathfrak i}_m V$ of a vector bundle morphism $\phi \colon V \mapsto V' $.

\isthiswhatyouowant
Recall that although  $ (E,\dd,\rho)$ is a geometric resolution, the complex $ ({\mathfrak i}_m E, {\mathfrak i}_m \dd ,  {\mathfrak i}_m \rho)$ 
\begin{equation}
\label{eq:complexPointx}
\xymatrix{
\ldots \ar[r]^{{\mathfrak i}_m \dd^{(4)}} &{\mathfrak i}_m E_{-3}  \ar[r]^{{\mathfrak i}_m \dd^{(3)}}  &{\mathfrak i}_m E_{-2}\ar[r]^{{\mathfrak i}_m \dd^{(2)}}  &   {\mathrm{Ker}}({\mathfrak i}_m \rho) \ar[r]^{} &0
} 
\end{equation}
 may have cohomology: Exactness at the level of sections does not imply that
the complex (\ref{eq:complexPointx}) is exact at all points.
For instance, the geometric resolutions constructed in Examples \ref{sl2},\ref{ex:vfVanishingAtZero}, and \ref{ex:Koszul} 
have a non-zero cohomology when $m$ is the origin.

\isthiswhatyouowant
We define the graded vector space:
\begin{equation} \label{eq:H} H^{\bullet} ({\mathcal F},m) = \bigoplus_{i \geq 1} H^{-i} ({\mathcal F},m) \end{equation}
 to be the cohomology of the complex (\ref{eq:complexPointx}). Notice that the chosen notation does not make reference to any particular geometric resolution. This is justified by the first assertion of the following lemma:

\begin{lemme}
	\label{lem:canonical}
	Let $\mathcal{F}$ be a singular foliation that admits geometric resolutions on the neighborhood of some $m \in M$.
	\begin{enumerate}
		\item The cohomology of the complex (\ref{eq:complexPointx}) does not depend on the choice of a geometric resolution of $ {\mathcal F}$.
		\item 
		For every geometric resolution $(E,\dd,\rho)$ \resolving $ {\mathcal F}$ which is minimal at $m$ and every $i \geq 2$, the vector space
		$ H^{-i} ({\mathcal F},m) $ is canonically
		isomorphic to $ {\mathfrak i}_m E_{-i}$. In addition, $ H^{-1} ({\mathcal F},m) $ is canonically isomorphic to the kernel of ${\mathfrak i}_m \rho \colon  {\mathfrak i}_m E_{-1} \to T_m M $.
	\end{enumerate}
\end{lemme}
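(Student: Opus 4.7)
The plan is to prove the two items in opposite order of difficulty: item (2) is essentially a tautology given the definition of minimality, while item (1) is the substantive statement and will occupy the bulk of the argument.

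For item (2), the hypothesis that $(E,\dd,\rho)$ is minimal at $m$ means by definition that $\mathfrak{i}_m \dd^{(i)} = 0$ for all $i \geq 2$. Thus the complex \eqref{eq:complexPointx} has trivial differential, and its cohomology in degree $-i$ agrees with the chain spaces themselves: $H^{-i}(\mathcal{F},m) = \mathfrak{i}_m E_{-i}$ for $i \geq 2$, and $H^{-1}(\mathcal{F},m) = \mathrm{Ker}(\mathfrak{i}_m \rho)$. This identification is canonical because, as I shall explain in item (1), the cohomology depends only on the foliation and not on the chosen resolution.

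For item (1), the strategy is to transport the homotopy equivalence between two geometric resolutions, guaranteed by Lemma \ref{lem:existsAChain2}, down to the fiber at $m$. Specifically, given two geometric resolutions $(E,\dd,\rho)$ and $(E',\dd',\rho')$ of $\mathcal{F}$, I invoke Lemma \ref{lem:existsAChain} to obtain chain maps $\phi\colon E\to E'$ and $\psi\colon E'\to E$ over $\mathcal{F}$ (hence satisfying in particular $\rho'\circ\phi_1=\rho$ and $\rho\circ\psi_1=\rho'$), together with homotopies $h,h'$ making $\phi\circ\psi\sim\mathrm{id}$ and $\psi\circ\phi\sim\mathrm{id}$. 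Restricting $\phi_i, \psi_i$ to the fiber at $m$ yields linear maps between $\mathfrak{i}_m E_{-i}$ and $\mathfrak{i}_m E'_{-i}$ for $i\geq 2$, and the anchor-compatibility ensures that $\mathfrak{i}_m\phi_1$ sends $\mathrm{Ker}(\mathfrak{i}_m\rho)$ into $\mathrm{Ker}(\mathfrak{i}_m\rho')$, and symmetrically for $\psi_1$. Thus the restrictions assemble into chain maps between the two complexes of the form \eqref{eq:complexPointx}. The homotopy relations likewise restrict: the identities $\phi_i-\psi_i = \dd'^{(i+1)}\circ h_i + h_{i-1}\circ\dd^{(i)}$ and $\phi_1-\psi_1 = \dd'^{(2)}\circ h_1$ (and the analogous ones for $\phi\circ\psi$) remain valid after evaluation at $m$, and the latter automatically takes values in $\mathrm{Ker}(\mathfrak{i}_m\rho')$ since $\rho'\circ\dd'^{(2)}=0$. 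Hence the truncated fiber complexes are homotopy equivalent, and their cohomologies are isomorphic.

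To upgrade this to a \emph{canonical} isomorphism, I note that Lemma \ref{lem:existsAChain} also guarantees that any two chain maps between two geometric resolutions are homotopic; therefore the induced map on cohomology is independent of the chosen $\phi$. Composing the canonical isomorphisms between three resolutions gives back the canonical isomorphism for any pair, so the graded vector space $H^{\bullet}(\mathcal{F},m)$ is well-defined up to a unique isomorphism.

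The main subtle point, and where I expect to have to be explicit, is the anchor-compatible truncation in degree $-1$: the fiber complex at position $-1$ is $\mathrm{Ker}(\mathfrak{i}_m\rho)$, not $\mathfrak{i}_m E_{-1}$, so one must track that all the chain data (and not merely their naive fiberwise restrictions) are compatible with this kernel. This is guaranteed by the definition of morphisms and homotopies \emph{over $\mathcal{F}$}, which forces the anchor to be preserved up to boundaries of the form $\dd'^{(2)}\circ h_1$. Once this compatibility is noted, the rest of the argument is a routine transfer of standard homological algebra from the category of vector bundles to the category of fibers at $m$.
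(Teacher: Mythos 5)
Your proposal is correct and follows essentially the same route as the paper: both arguments restrict the homotopy equivalence of resolutions furnished by Lemma \ref{lem:existsAChain2} to the fiber at $m$ (using $\mathscr{O}$-linearity of all the chain data), deduce a canonical isomorphism in cohomology from the uniqueness-up-to-homotopy of the comparison maps, and observe that minimality makes the fiber complex coincide with its cohomology. You merely spell out in more detail the compatibility with the truncation to $\mathrm{Ker}(\mathfrak{i}_m\rho)$ in degree $-1$, which the paper leaves implicit.
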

\begin{proof}
According to Lemma \ref{lem:existsAChain2}, between two geometric resolutions
 $( E,\dd , \rho)$ of
  $( E',  \dd' , \rho')$ of the same ${\mathcal O}$-module ${\mathcal F}$ there is  a distinguished homotopy class of homotopy equivalences relating them. By ${\mathcal O}$-linearity, it restricts to a homotopy class of homotopy equivalences between  the complexes $ ({\mathfrak i}_m E, {\mathfrak i}_m \dd ,  {\mathfrak i}_m \rho)$ and   $ ({\mathfrak i}_m E', {\mathfrak i}_m \dd' ,  {\mathfrak i}_m \rho')$.
  This proves the first assertion of the lemma.
 
%

\isthiswhatyouowant
The second statement follows from the obvious fact that for a minimal resolution at $m$, one has ${\mathfrak i}_m \dd = 0$ and thus the complex \eqref{eq:complexPointx} coincides with its cohomology. 
\end{proof}

\begin{remarque}
\normalfont
In terms of abelian categories,  $ H^{\bullet} ({\mathcal F},m)$ is ${\rm Tor}_{\mathcal O}^{\bullet}({\mathcal F}, {\mathbb K})$.
Here, the field $ {\mathbb K}$ is equipped with the $ {\mathcal O}$-module structure defined by $F \cdot \lambda = F(m) \lambda $ for all $F \in {\mathcal O} , \lambda \in {\mathbb K}$.
\end{remarque}

\begin{proposition}
	\label{prop:Debord}
	Let $\mathcal{F}$ be a singular foliation that admits geometric resolutions on the neighborhood of some $m \in M$.
	Then the following items are equivalent:
	\begin{enumerate}
		\item[(i)] There is a neighborhood of $m \in M$ on which $ \mathcal{F}$ is a Debord foliation. 
		\item[(ii)]  $H^{-i}(\mathcal{F},x)=0$ for all $ i\geq 2$
		and for all $x$ in a neighborhood of $m$.
		\item[(iii)]  $H^{-2}(\mathcal{F},m)=0$. 
	\end{enumerate}
\end{proposition}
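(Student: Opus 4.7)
My plan is to prove the cycle (i) $\Rightarrow$ (ii) $\Rightarrow$ (iii) $\Rightarrow$ (i). The central tools are Lemma \ref{lem:canonical}---which ensures that $H^\bullet(\mathcal F,x)$ is independent of the chosen geometric resolution and, for a resolution minimal at $x$, is computed on the nose by the fibers at $x$---together with the characterization recalled in Example \ref{ex:debord} of Debord foliations as those admitting a geometric resolution of length one.

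For (i) $\Rightarrow$ (ii), the Debord hypothesis and Example \ref{ex:debord} supply, on some neighborhood $V$ of $m$, a geometric resolution $(E,\dd,\rho)$ with $E_{-i}=0$ for all $i\ge 2$. For any $x\in V$ the complex \eqref{eq:complexPointx} reduces to $0\to\mathrm{Ker}(\mathfrak i_x\rho)\to 0$, whose cohomology in degrees $-i$ with $i\ge 2$ vanishes; by Lemma \ref{lem:canonical} this vanishing is independent of the chosen resolution, which is (ii). The implication (ii) $\Rightarrow$ (iii) is immediate by specialising to $x=m$ and $i=2$.

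The substantive step is (iii) $\Rightarrow$ (i). I would begin by applying item~5 of Proposition \ref{bonjourprop} to produce, on some connected neighborhood $V$ of $m$, a geometric resolution $(E,\dd,\rho)$ of $\mathcal F|_V$ of finite length which is minimal at $m$. Combining Lemma \ref{lem:canonical}(2) with the hypothesis (iii) yields the canonical identification
\begin{equation*}
\mathfrak i_m E_{-2}\;\cong\;H^{-2}(\mathcal F,m)\;=\;0.
\end{equation*}
Since the rank of a vector bundle is locally constant, after shrinking $V$ to a smaller connected neighborhood of $m$---a step that does not disturb minimality at $m$, which is a pointwise condition---I may assume $E_{-2}|_V=0$.

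To conclude, once $E_{-2}|_V=0$ the map $\dd^{(2)}$ is trivially zero, and exactness of the sheaf sequence at $E_{-1}|_V$ forces $\mathrm{Ker}(\rho)=\mathrm{Im}(\dd^{(2)})=0$. Hence the short sequence $0\to\Gamma(E_{-1}|_V)\xrightarrow{\;\rho\;}\mathcal F|_V\to 0$ is already exact, furnishing a length-one geometric resolution of $\mathcal F|_V$. Example \ref{ex:debord} then identifies $\mathcal F$ as a Debord foliation on $V$, proving (i). The only delicate point I foresee is the rank-constancy step for $E_{-2}$; it is handled simply by restricting to a connected open neighborhood of $m$, and no further obstacle is expected.
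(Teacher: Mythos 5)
Your cycle (i)~$\Rightarrow$~(ii)~$\Rightarrow$~(iii)~$\Rightarrow$~(i) is correct, and the first two implications coincide with the paper's. For the substantive step (iii)~$\Rightarrow$~(i) you take a genuinely different route. You first replace the resolution by one minimal at $m$ (item~5 of Proposition \ref{bonjourprop}), read off $\mathfrak{i}_m E_{-2}\cong H^{-2}(\mathcal{F},m)=0$ from Lemma \ref{lem:canonical}, and kill $E_{-2}$ on a neighborhood by local constancy of the rank of a vector bundle, so that $\rho\colon\Gamma(E_{-1})\to\mathcal{F}$ becomes an isomorphism of sheaves. The paper instead works with an \emph{arbitrary} geometric resolution and argues by semicontinuity: $\dim\mathrm{im}\,\mathfrak{i}_x\dd^{(3)}$ is lower semicontinuous, $\dim\ker\mathfrak{i}_x\dd^{(2)}$ is upper semicontinuous, and since they agree at $m$ and always satisfy $\mathrm{im}\subset\ker$, they agree on a whole neighborhood; it then forms the quotient bundle $E_{-1}/\dd^{(2)}(E_{-2})$, to which the anchor descends as an isomorphism onto $\mathcal{F}$. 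The two arguments are essentially equivalent in content---the proof of item~5 of Proposition \ref{bonjourprop} performs exactly the kind of quotient-and-shrink operation that the paper's proof carries out by hand---so your version buys brevity by delegating the semicontinuity work to the minimality machinery, while the paper's version avoids invoking that machinery and, as a byproduct, exhibits $H^{-2}(\mathcal{F},x)=0$ on a neighborhood directly. One small caveat: item~5 as stated presupposes a resolution of \emph{finite} length (this is the standing assumption of the section, so you are covered; and in any case you only need minimality in degrees $-2$ and $-3$, which the first two steps of that construction deliver without any finiteness hypothesis). With that noted, your proof is complete.
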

\begin{proof}
	If a foliation is Debord, it admits a resolution of length one, see Example \ref{ex:debord}. This implies that the
	cohomologies $H^{-i}(\mathcal{F},y)$ are all trivial for all $i \geq 2$ and every point $y$ in that neighborhood. Hence \emph{(i)} implies \emph{(ii)}. 
	It is obvious  that \emph{(ii)} implies \emph{(iii)}. Let us assume  that  \emph{(iii)} holds, so that for every geometric resolution  $(E,\dd, \rho)$ of $\mathcal{F}$,
	the image of ${\mathfrak i}_m \dd^{(3)} $ is the kernel of ${\mathfrak i}_m \dd^{(2)} $.
	Since there exists a neighborhood  of $m$ such that
	 \begin{enumerate}
		\item the dimension of the image of $ \dd^{(3)}$ at  every $x \in U $ has to be greater than or equal to its dimension at $m \in M$ and		\item the dimension of the kernel of
		$ \dd^{(2)}\colon E_{-2} \to  E_{-1}$  at  every $x \in U $ has to be lower
 than or equal to its dimension at $m \in M$,
	\end{enumerate}
	but always $\mathrm{im}\,{\mathfrak i}_x \dd^{(3)} \subset \ker {\mathfrak i}_x \dd^{(2)} $, 
	 these dimensions have to coincide inside $U$. Therefore, $H^{-2}(\mathcal{F},x)=0$ 
	for all $x \in U $. This implies that $E_{-1}':= \left. \left(\bigslant{E_{-1}}{\dd^{(2)}(E_{-2})}\right)\right|_U$ is a vector bundle.  The anchor descends to the quotient  defining a  morphism of ${\mathscr O}$-modules
	$\rho \colon \Gamma(E_{-1}') \to \mathcal{F}$. By construction, it is an isomorphism of $\mathscr{O}$-modules and hence \emph{(iii)} implies \emph{(i)}.  
\end{proof}

\subsubsection{The brackets of the Lie \texorpdfstring{$\infty$}{infinity}-algebra. }
\label{sec:isotropy2}


\isthiswhatyouowant
Applying the isotropy functor $\cI_m$ for a chosen point $m \in M$ to a universal Lie $\infty$-algebroid $(E,Q)$ of a given singular foliation $\cF$, we induce a Lie infinity algebra structure on the complex \eqref{eq:complexPointx}. Since this functor maps homotopy equivalences of Lie $\infty$-algebroids to homotopy equivalences of Lie $\infty$-algebras, we prefer to look at the homotopy isotropy functor \eqref{eq:homotopyFunctor}. 
By Corollary \ref{coro:unique}, different choices of the universal Lie $\infty$-algebroid yield homotopy equivalent Lie $\infty$-algebras, i.e.\ isomorphic Lie $\infty$-algebras inside {\bf hLie-}$\infty${\bf-alg}. 

\isthiswhatyouowant
We now may choose the universal Lie $\infty$-algebroid appropriately, so as to directly induce a Lie $\infty$-algebra structure on \eqref{eq:H}, namely, according to 
Lemma \ref{lem:canonical}, by letting it be minimal at $m$. Every other one is isomorphic to this one and, since  \eqref{eq:H} is a trivial complex, it provides a minimal model of the isotropy Lie $\infty$-algebra.

 \begin{definition}
 	 Let $(E,Q)$ be a  universal {\LieInftyAlgebroid} of $ {\mathcal F}$ which is minimal at $m$. Then $h\cI_m (E,Q)$ is a  Lie $\infty$-algebra structure on $ H^\bullet ({\mathcal F}, m)$, which we denote by  $ (H^\bullet ({\mathcal F}, m),Q_m)$ and
	 call the \emph{isotropy \Linfty-algebra of the singular foliation $ {\mathcal F}$ at $m$}.
 \end{definition}

 \isthiswhatyouowant The first part of the following proposition shows that this definition is independent of any choices. 
\begin{proposition}
 	\label{prop:isotropyStrictIso}
 	Any two isotropy \Linfty-algebras at $m $ of $ {\mathcal F}$, constructed out of two universal {\LieInftyAlgebroid} of $ {\mathcal F}$  minimal at $m$, are isomorphic, through an isomorphism whose linear part is the identity of  $ H^\bullet ({\mathcal F}, m)$.  
 
 \isthiswhatyouowant	
 	In particular, the $2$-ary bracket is a graded Lie algebra bracket on   $ H^\bullet ({\mathcal F}, m)$, which does not depend on any choice made in the construction.
 \end{proposition}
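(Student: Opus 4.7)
The plan is to use the universality theorem, Theorem \ref{theo:onlyOne} and its Corollary \ref{coro:unique}, to obtain a Lie $\infty$-algebroid morphism between two minimal universal algebroids, apply the homotopy isotropy functor $h\mathfrak{I}_m$, and show that the resulting Lie $\infty$-algebra morphism is a strict isomorphism whose linear part is the identity of $H^\bullet(\mathcal{F},m)$.

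First, I would invoke Corollary \ref{coro:unique} to get, for two universal {\LieInftyAlgebroid}s $(E,Q)$ and $(E',Q')$ \resolving $\mathcal{F}$ which are minimal at $m$, mutually inverse-up-to-homotopy morphisms $\Phi\colon\mathcal{E}'\to\mathcal{E}$ and $\Psi\colon\mathcal{E}\to\mathcal{E}'$. The homotopy isotropy functor $h\mathfrak{I}_m$ of Section \ref{sec:functor} (well-defined by Lemma \ref{lem:homtFunctr}) then produces Lie $\infty$-algebra morphisms $h\mathfrak{I}_m(\Phi)$ and $h\mathfrak{I}_m(\Psi)$ between the isotropy algebras, which are still inverses up to homotopy. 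The linear part of $h\mathfrak{I}_m(\Phi)$ is obtained by restricting the chain map $\phi_0\colon E'_\bullet\to E_\bullet$ (itself the linear part of $\Phi$) to the fibers over $m$ and then to the subspaces $K^\bullet(E',m)$ and $K^\bullet(E,m)$ of \eqref{eq:inducedBraket}. Since both resolutions are minimal at $m$, the maps $\mathfrak{i}_m\dd^{(i)}$ vanish for all $i\geq 2$, so Lemma \ref{lem:canonical} gives canonical identifications $K^\bullet(E,m)\simeq H^\bullet(\mathcal{F},m)\simeq K^\bullet(E',m)$, and the restriction of $\phi_0$ at $m$ is a quasi-isomorphism between two complexes with zero differential, hence an actual graded vector space isomorphism. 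By construction, this isomorphism is precisely the one implementing the canonical identification of the two realizations of $H^\bullet(\mathcal{F},m)$, so through those identifications it corresponds to the identity of $H^\bullet(\mathcal{F},m)$.

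Finally, I would invoke the following standard fact: a morphism between two Lie $\infty$-algebra structures on the same graded vector space, both having vanishing $1$-ary bracket, whose linear part is the identity, is automatically a strict isomorphism. An inverse may be constructed arity by arity using an obstruction-free recursion entirely analogous to the ones developed in Section \ref{sec:context}; all obstruction classes live in cohomologies of complexes with zero differential, and such cohomologies vanish in the relevant degrees. For the assertion on the $2$-ary bracket, extracting the arity-$1$ component of the morphism equation $\Phi Q = Q'\Phi$ with $\Phi^{(0)}=\mathrm{id}$ and $Q^{(0)}=(Q')^{(0)}=0$ immediately yields that the $2$-ary brackets of the two structures agree on the nose. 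I expect the only delicate point to be unambiguously pinning down the canonical identifications of Lemma \ref{lem:canonical} so as to rigorously conclude that the linear part of $h\mathfrak{I}_m(\Phi)$ really becomes the identity; once this is done, both the strict isomorphism and the equality of $2$-ary brackets follow from purely formal minimal-model arguments.
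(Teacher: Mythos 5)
Your proposal is correct and follows essentially the same route as the paper: apply Corollary \ref{coro:unique} and the isotropy functor to get a homotopy equivalence of the two isotropy Lie $\infty$-algebras, then use the vanishing of the $1$-ary brackets to promote it to a strict isomorphism (the paper packages this as item three of Lemma \ref{lem:whenUnaryBraZero}, observing that a graded algebra morphism is invertible iff its linear part is, and that a homotopy equivalence of complexes with zero differential is an isomorphism), and read off the statement about the $2$-ary brackets from the arity-one part of the morphism equation. The "delicate point" you flag is handled exactly as you suggest: by minimality the differentials vanish at $m$, so all chain maps in the distinguished homotopy class of Lemma \ref{lem:existsAChain2} restrict at $m$ to the same map, namely the canonical identification of Lemma \ref{lem:canonical}.
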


\isthiswhatyouowant	We first need to state a few obvious facts about Lie $\infty$-algebras whose $1$-ary bracket is equal to zero, as is the case for every isotropy Lie $\infty$-algebra defined like above.  

\begin{lemme}
	\label{lem:whenUnaryBraZero}
	Let $(V,Q)$ be a \Linfty-algebra whose $1$-ary bracket is equal to zero:
	\begin{enumerate}
		\item[1.] Its $2$-ary bracket is a graded Lie algebra bracket.  
	\end{enumerate}
	Let $(V',Q')$ be a second \Linfty-algebra whose $1$-ary bracket is equal to zero.
	\begin{enumerate}
		\item[2. ] The linear part of any \Linfty-algebra morphism from  $(V,Q)$ to $(V',Q')$ 
		is a graded Lie algebra morphism of the $2$-ary brackets.
		\item[3. ] The \Linfty-algebras $(V,Q)$ and $(V',Q')$ are isomorphic to one another if and only if they are homotopy equivalent.
	\end{enumerate}
\end{lemme}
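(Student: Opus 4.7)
The plan is to exploit throughout that, in the dual $NQ$-manifold picture, the hypothesis $\{\cdot\}_1=0$ means exactly $Q^{(0)}=0$, so the homological vector fields in play start with their arity one component. This will make each item essentially a bookkeeping exercise in the arity decomposition already set up in Section~\ref{infinityalgebroids}.

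For the first item, I would write down the higher Jacobi identity \eqref{superjacobi} for $n=3$ applied to homogeneous elements $x,y,z \in V$. Every term in that identity contains either the $1$-ary bracket or a nested pair of $2$-ary brackets. Since $\{\cdot\}_1 = 0$, the only surviving contributions are the graded symmetrizations of $\{\{x,y\}_2,z\}_2$, and the identity \eqref{superjacobi} reduces exactly to the graded Jacobi identity for $\{\cdot,\cdot\}_2$. Combined with graded skew-symmetry---which, under the symmetric convention of Definition~\ref{def:LiInfinity}, follows from graded symmetry once one accounts for the degree shift---this gives the graded Lie algebra structure on $V$.

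For the second item, let $\Phi \colon S(V'^*) \to S(V^*)$ be a Lie $\infty$-algebra morphism, so that $\Qtot(\Phi)=0$, and decompose this relation by arity as in Lemma~\ref{prop:defMorph}. Under the hypothesis that both $1$-ary brackets vanish, one has $Q^{(0)}=0=Q'^{(0)}$, so the arity zero condition is automatic, and the arity one condition collapses to
\begin{equation*}
\phi_0 \circ \{\cdot,\cdot\}_2 \;=\; \{\cdot,\cdot\}_2' \circ (\phi_0 \odot \phi_0),
\end{equation*}
which is the graded Lie algebra morphism property for the linear part $\phi_0$. (The terms which would involve $\phi_1$ or the $1$-ary brackets all vanish.)

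The third item is the substantive one. The direction ``isomorphic $\Rightarrow$ homotopy equivalent'' is immediate: take the identity homotopy in Definition~\ref{def:homotopy}. Conversely, suppose $\Phi \colon (V,Q) \to (V',Q')$ is a homotopy equivalence. Passing to cohomology with respect to $Q^{(0)}=0$ shows that the linear part $\phi_0 \colon V \to V'$ must be an isomorphism of graded vector spaces, because the cohomology of a complex with zero differential is the complex itself; here I would invoke Proposition~\ref{prop:HomotopyMeansHomotopy} to extract the chain-level statement from the homotopy data. The remaining task---the main obstacle---is the classical fact that an $L_\infty$-morphism whose linear part is an isomorphism is itself invertible as an $L_\infty$-morphism. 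I would prove it by constructing the inverse $\Psi$ order by order in arity: set $\psi_0 := \phi_0^{-1}$, which is a graded Lie algebra morphism by item~2 applied in reverse, and then at each arity $n\ge 1$ determine $\psi_n$ recursively by solving the equation $\Psi\circ\Phi = \mathrm{id}$ at arity $n$, which, modulo already-constructed lower-arity pieces, is a purely linear equation whose leading term is $\phi_0^{-1}\circ(\text{something involving }\phi_n)$ and therefore uniquely solvable. One then checks that the $\Psi$ so constructed is automatically an $L_\infty$-morphism and that $\Phi\circ\Psi=\mathrm{id}$, both by the same arity-by-arity argument applied to the cocycle condition $\Qtot(\Psi)=0$. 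No cohomological obstruction appears because at every arity the relevant equation is solvable simply by the invertibility of $\phi_0$.
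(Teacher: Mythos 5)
Your proposal is correct and follows essentially the same route as the paper: items 1 and 2 are read off from the $n=3$ higher Jacobi identity and the arity decomposition of the morphism condition, and item 3 rests on the observation that the linear part of a homotopy equivalence between complexes with zero differential must be an isomorphism. The only difference is that where the paper simply cites the fact that a graded commutative algebra morphism is invertible if and only if its linear part is, you prove it via the order-by-order construction of the inverse; both are fine.
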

\begin{proof}
	The first item  is an easy consequence of the higher Jacobi identity \eqref{superjacobi} for $n=3$.
	The second item is easily derived from the axioms of  \Linfty-algebra morphisms.
	
	\isthiswhatyouowant
	A graded commutative algebra morphism $\Phi \colon S((V')^*) \to S(V^*)$	is invertible if and only if its linear part $\phi \colon V \to V$' is invertible. Evidently, if $\Phi \colon (V,Q) \to (V',Q')$ is part of a homotopy equivalence, so is its 
 linear part $\phi \colon V \to V'$.  But a homotopy equivalence between complexes whose differential is zero has to be invertible.
\end{proof}

\begin{proof}[Proof (of Proposition \ref{prop:isotropyStrictIso})]
By the functorial properties of $h\cI_m$, 
two isotropy \Linfty-algebras at $m \in M$ of $ {\mathcal F}$ which are constructed out of two universal {\LieInftyAlgebroid} of $ {\mathcal F}$  minimal at $m$ are homotopy equivalent. The first assertion from the proposition then follows from item three of Lemma \ref{lem:whenUnaryBraZero}, the second one from items one and two. 
\end{proof}

\isthiswhatyouowant
If we restrict the graded Lie algebra structure on $H^\bullet (\mathcal{F},m)$, see item one in Lemma \ref{lem:whenUnaryBraZero}, its restriction to $H^{-1} (\mathcal{F},m)$ yields an ordinary Lie algebra.

\begin{proposition}
\label{prop:hol=hol}
The isotropy Lie algebra  of the singular foliation $ {\mathcal F}$ at a point $m \in M$, as defined by
Androulidakis and Skandalis, is isomorphic to  the degree minus one component $H^{-1} (\mathcal{F},m)$  of the isotropy Lie $\infty$-algebra of ${\mathcal F} $ at $m$.
\end{proposition}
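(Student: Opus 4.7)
The plan is to construct an explicit isomorphism of Lie algebras
$$\Psi \colon H^{-1}(\mathcal{F},m) \xrightarrow{\ \sim\ } \mathcal{F}(m) / I_m \mathcal{F}$$
by choosing a universal Lie $\infty$-algebroid $(E,Q)$ \resolving $\mathcal{F}$ which is minimal at $m$, so that by Lemma \ref{lem:canonical} we may identify $H^{-1}(\mathcal{F},m)$ with $\ker(\mathfrak{i}_m\rho) \subset \mathfrak{i}_m E_{-1}$. For every $e \in \ker(\mathfrak{i}_m \rho)$, choose a local section $\tilde{e} \in \Gamma(E_{-1})$ with $\tilde{e}(m)=e$; then $\rho(\tilde{e}) \in \mathcal{F}$ vanishes at $m$, so it lies in $\mathcal{F}(m)$, and we set $\Psi(e) := [\rho(\tilde e)]$.

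First I would check well-definedness: if $\tilde{e}'$ is another lift of $e$, then $\tilde{e}' - \tilde{e}$ vanishes at $m$, so it can be written locally as $\sum_i f_i s_i$ with $f_i \in I_m$ and $s_i \in \Gamma(E_{-1})$, whence $\rho(\tilde{e}') - \rho(\tilde{e}) = \sum_i f_i \rho(s_i) \in I_m \mathcal{F}$. Surjectivity is immediate: given $X \in \mathcal{F}(m)$, pick any $\tilde X \in \Gamma(E_{-1})$ with $\rho(\tilde X)=X$; then $\rho(\tilde X(m))=0$, so $\tilde X(m) \in \ker(\mathfrak{i}_m \rho)$ and $\Psi(\tilde X(m)) = [X]$. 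For injectivity, suppose $\Psi(e)=0$, so $\rho(\tilde e) = \sum_i f_i \rho(s_i)$ with $f_i \in I_m$. Then $\tilde e - \sum_i f_i s_i \in \ker \rho = \mathrm{Im}\,\dd^{(2)}$, so $\tilde e - \sum_i f_i s_i = \dd^{(2)}(\sigma)$ for some $\sigma \in \Gamma(E_{-2})$. Evaluating at $m$ and using the crucial minimality assumption $\mathfrak{i}_m \dd^{(2)} = 0$ together with $f_i(m)=0$, we obtain $e=\tilde e(m) = 0$.

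It remains to verify that $\Psi$ intertwines the two Lie brackets. By the construction of the isotropy Lie $\infty$-algebra in Section \ref{sec:isotropy2} (based on Equation \eqref{eq:inducedBraket}), the $2$-ary bracket on $H^{-1}(\mathcal{F},m) = \ker(\mathfrak{i}_m \rho)$ is obtained by evaluating the almost-Lie bracket at $m$: for $e_1, e_2 \in \ker(\mathfrak{i}_m \rho)$ and any lifts $\tilde e_1, \tilde e_2$, one has $[e_1,e_2] = \{\tilde e_1, \tilde e_2\}_2(m)$. Applying $\Psi$ and using the algebra-homomorphism condition \eqref{algebroid2} for the underlying almost-Lie algebroid structure on $E_{-1}$, we find
$$\Psi([e_1,e_2]) = [\rho(\{\tilde e_1, \tilde e_2\}_2)] = [[\rho(\tilde e_1), \rho(\tilde e_2)]],$$
which is exactly the Androulidakis--Skandalis bracket of $\Psi(e_1)$ and $\Psi(e_2)$.

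The main subtlety in the argument is injectivity, which is where minimality of the resolution is indispensable: without the hypothesis $\mathfrak{i}_m \dd^{(2)} = 0$, one could only conclude that $e$ lies in the image of $\mathfrak{i}_m \dd^{(2)}$ modulo something in $I_m$, and not that $e$ itself is zero. One should also verify that the bracket induced on $H^{-1}(\mathcal{F},m)$ from two different minimal universal Lie $\infty$-algebroids agree through a canonical isomorphism --- this is exactly the content of Proposition \ref{prop:isotropyStrictIso} applied in degree $-1$, so no additional work is needed there.
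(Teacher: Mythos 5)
Your proof is correct and follows essentially the same route as the paper's: the paper constructs the same map $\tau(e):=[\rho(\tilde e)]$ from $H^{-1}(\mathcal{F},m)$ to $\mathcal{F}(m)/I_m\mathcal{F}$ for a resolution minimal at $m$, and checks well-definedness, surjectivity, and injectivity via the exactness $\ker\rho=\mathrm{Im}\,\dd^{(2)}$ combined with minimality. Your write-up is if anything slightly more explicit than the paper's, in particular in spelling out where $\mathfrak{i}_m\dd^{(2)}=0$ enters the injectivity step and in verifying the compatibility of brackets via the algebra homomorphism condition.
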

\begin{proof}
The isotropy Lie algebra at $m$ of  I.\ Androulidakis and G.\ Skandalis \cite{AndrouSkandal} is the quotient $\mathfrak{g}_m := {\mathcal F}(m)/I_m {\mathcal F}(m)$ 
where ${\mathcal F}(m)$ denotes vector fields in ${\mathcal F} $ which vanish at $m$ and $I_m$ functions vanishing at that point. 
	
\isthiswhatyouowant We first define a Lie algebra morphism $\tau \colon  H^{-1} (\mathcal{F},m) \to \mathfrak{g}_m$. 
Let $(E,Q)$ be a universal Lie $\infty$-algebroid of ${\mathcal F}$ which is minimal at $m$.
 Choose $e \in {\mathfrak i}_m  E_{-1}$ in the kernel of $\rho_m$.
Let $\tilde{e}$ be a local section through $e$. Then $\rho(\tilde{e})$ belongs to $ {\mathcal F}(m)$.
Its class $\tau(e) \in \mathcal{F}(m)/I_m \mathcal{F}$ is well-defined, since another choice for $\tilde{e}$ differs from the first
one by a section in $I_m \Gamma(E_{-1})$.
This defines a linear map $\tau \colon H^{-1} (\mathcal{F},m) \to \mathfrak{g}_m$ which is easily checked to be a Lie algebra morphism.


\isthiswhatyouowant
It is clear that $\tau$ is surjective, since every local section of $\mathcal{F}$ vanishing at $m \in M$ is of the
form $\rho(\tilde{e})$ with $\tilde{e}$ a local section of $E_{-1}$ whose value at $m$ is in the kernel of $\rho$.

\isthiswhatyouowant
Now, let us prove injectivity. Let $e \in {\mathfrak i}_m E_{-1}$. Choose a local section $\tilde{e} $ of $E_{-1}$ through $e$.
If $ \tau(e) =0$, then $\rho(\tilde{e})$ is in the ideal  $I_m \mathcal{F}$. As q consequence, it is a finite sum of the form $\sum_{i=1}^r f_i X_i$,  
with $X_i \in \mathcal{F}$ and $f_i \in I_m$ for all $ i=1,  \dots,r$.
This implies $ \rho( \tilde{e} - \sum_{i=1}^r f_i \tilde{e}_i )=0$,
where $\tilde{e}_i$ is, for every $i=1, \dots,n$, a local section of $E_{-1}$ mapped to $X_i$ through $\rho$. Since  $ (E,\dd,\rho)$ is a geometric resolution, there exists a local section $\tilde{h} \in \Gamma(E_{-2})$ such that:
 \begin{equation}  \tilde{e} - \sum_{i=1}^r  f_i \tilde{e}_i = \dd^{(2)} \tilde{h}.\end{equation}
 Evaluated at $ m\in M$, this last relation gives that $e =0$.
 \isthiswhatyouowant This proves that  $\tau$ is injective and completes the proof.
\end{proof}

\subsection{Leaves and isotropy  Lie \texorpdfstring{$\infty$}{infinity}-algebras}
\label{moineau}


\isthiswhatyouowant
The isotropy \Linfty-algebra structure is attached to a leaf rather than to a point.
More precisely, let us consider a leaf $L$ of  ${\mathcal{F}}$.
The following proposition is a simple consequence of the fact that for every two points $m,m'$ in the same leaf $L$
of a singular foliation $ {\mathcal F}$, there exist neighborhoods $U_m,U_{m'}$ of these points
on which the singular foliations are isomorphic \cite{AndrouSkandal}.

\begin{proposition}\label{prop:doesNotDependsPoint}
Let $ {\mathcal F}$ be a singular foliation that admits a universal Lie $\infty$-algebroid in the neighborhood  of every point.
 For any two points in the same leaf of a singular foliation ${\mathcal F} $, 
 the isotropy \Linfty-algebra structures at these points are  isomorphic.
 \end{proposition}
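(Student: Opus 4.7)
The plan is to exploit the Androulidakis--Skandalis local triviality of singular foliations along a leaf and then transport a universal Lie $\infty$-algebroid from a neighborhood of $m$ to a neighborhood of $m'$, concluding by the functoriality of the isotropy construction.

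First I would fix two points $m,m'$ on the same leaf $L$ and invoke the cited result of Androulidakis--Skandalis: there exist open neighborhoods $U_m\ni m$ and $U_{m'}\ni m'$ together with a diffeomorphism $\varphi\colon U_m\to U_{m'}$ with $\varphi(m)=m'$ and $\varphi_{*}\bigl({\mathcal F}|_{U_m}\bigr)={\mathcal F}|_{U_{m'}}$. Using the hypothesis that $\mathcal F$ admits universal Lie $\infty$-algebroids locally, pick a universal Lie $\infty$-algebroid $(E,Q)$ resolving ${\mathcal F}|_{U_m}$ which is moreover \emph{minimal at $m$} (Proposition \ref{bonjourprop}, item 5). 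Its pushforward $(\varphi_*E,\varphi_*Q)$ is then, tautologically, a Lie $\infty$-algebroid over $U_{m'}$ whose linear part is a geometric resolution of ${\mathcal F}|_{U_{m'}}$ and which is minimal at $m'=\varphi(m)$; hence it is a universal Lie $\infty$-algebroid resolving ${\mathcal F}|_{U_{m'}}$ minimal at $m'$.

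Next I would choose any other universal Lie $\infty$-algebroid $(E',Q')$ resolving ${\mathcal F}|_{U_{m'}}$ which is minimal at $m'$. By Corollary \ref{coro:unique} (an immediate consequence of Theorem \ref{theo:onlyOne}), the Lie $\infty$-algebroids $(\varphi_*E,\varphi_*Q)$ and $(E',Q')$ are homotopy equivalent over $U_{m'}$. Applying the isotropy functor $h{\mathfrak I}_{m'}$ of Section \ref{sec:functor} and using Lemma \ref{lem:homtFunctr}, we obtain a homotopy equivalence of Lie $\infty$-algebras
\[
h{\mathfrak I}_{m'}(\varphi_*E,\varphi_*Q)\;\simeq\; h{\mathfrak I}_{m'}(E',Q')\;=\;\bigl(H^\bullet({\mathcal F},m'),Q_{m'}\bigr).
\]
Since both $(E,Q)$ is minimal at $m$ and $\varphi$ is a diffeomorphism sending $m$ to $m'$, the construction of the isotropy Lie $\infty$-algebra is manifestly compatible with $\varphi_*$, so there is a strict isomorphism
\[
h{\mathfrak I}_{m}(E,Q)\;\cong\;h{\mathfrak I}_{m'}(\varphi_*E,\varphi_*Q),
\]
i.e.\ $\bigl(H^\bullet({\mathcal F},m),Q_m\bigr)\cong\bigl(H^\bullet({\mathcal F},m'),Q_{m'}\bigr)$ as Lie $\infty$-algebras after a homotopy equivalence. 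Because both isotropy Lie $\infty$-algebras have vanishing $1$-ary bracket (their underlying complex is trivial, being the cohomology of a resolution minimal at the respective point, cf.\ Lemma \ref{lem:canonical}), item $3$ of Lemma \ref{lem:whenUnaryBraZero} upgrades the homotopy equivalence to a genuine Lie $\infty$-algebra isomorphism. Composing the two identifications above yields the desired isomorphism between the isotropy Lie $\infty$-algebras at $m$ and at $m'$.

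The only non-routine point is the compatibility claim that $\varphi_*$ intertwines the isotropy constructions; once this is unwound, it reduces to the fact that the isotropy functor is defined purely in terms of restriction to a fiber and the $\mathcal O$-linear part of $Q$, both of which are preserved under a diffeomorphism of the base. All other steps are direct invocations of results already established in the paper (Theorem \ref{theo:onlyOne}, Corollary \ref{coro:unique}, Lemmata \ref{lem:homtFunctr} and \ref{lem:whenUnaryBraZero}).
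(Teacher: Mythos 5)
Your proof is correct and follows essentially the same route as the paper, which simply observes that the statement is a consequence of the Androulidakis--Skandalis local isomorphism of the foliation around any two points of the same leaf. You have merely filled in the details the paper leaves implicit (pushing forward a minimal universal Lie $\infty$-algebroid along the diffeomorphism and invoking Corollary \ref{coro:unique}, the isotropy functor, and Lemma \ref{lem:whenUnaryBraZero}), which is exactly the intended argument.
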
 

\begin{remarque}\normalfont
It deserves to be noticed that the isomorphism in Proposition \ref{prop:doesNotDependsPoint}
is not, in general, canonical: It may depend on the choice of an identification between $U_m$ and $U_{m'}$. An explicit Lie $\infty$-algebra isomorphism can be associated to a path staying inside the leaf $L$ which relates $m$ and $m'$. Homotopy equivalent paths yield homotopy equivalent isomorphisms.
\end{remarque}

\isthiswhatyouowant
The isotropy Lie $\infty$-algebra of a singular foliation is of interest only for singular leaves only:
\begin{lemme} \label{trivial}
The isotropy Lie $\infty$-algebra of a regular leaf is identically zero.
\end{lemme}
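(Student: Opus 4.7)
The plan is to reduce the claim to a straightforward computation of the cohomology spaces $H^{\bullet}({\mathcal F},m)$ at a regular point $m$, showing that they all vanish; then any Lie $\infty$-structure built on the zero graded vector space is necessarily trivial, and by Proposition \ref{prop:doesNotDependsPoint} this propagates to the whole regular leaf.

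First I would pick any regular point $m$ lying in the given regular leaf and restrict attention to a sufficiently small open neighborhood $U$ of $m$ on which ${\mathcal F}$ is a regular foliation of constant rank $r=\dim L$. According to Example \ref{ex:regularFoliation}, on $U$ the foliation admits the geometric resolution of length one given by $E_{-1}=T{\mathcal F}\subset TU$ with anchor the inclusion $\rho\colon T{\mathcal F}\hookrightarrow TU$, and $E_{-i}=0$ for every $i\geq 2$. Since all higher bundles vanish and the anchor is fibrewise injective, every differential ${\mathrm d}^{(i)}$ in the complex
\begin{equation*}
\ldots\longrightarrow {\mathfrak i}_m E_{-3}\longrightarrow {\mathfrak i}_m E_{-2}\longrightarrow \ker({\mathfrak i}_m \rho)\longrightarrow 0
\end{equation*}
is trivially zero, which means that this geometric resolution is automatically minimal at $m$ in the sense of Definition \ref{def:resolution}.

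Next I would read off the cohomology from this minimal model using the second assertion of Lemma \ref{lem:canonical}: one has $H^{-i}({\mathcal F},m)\cong {\mathfrak i}_m E_{-i}=0$ for every $i\geq 2$, and $H^{-1}({\mathcal F},m)\cong \ker({\mathfrak i}_m\rho)=0$ since $\rho$ is the inclusion of a subbundle. Thus the entire graded vector space $H^{\bullet}({\mathcal F},m)$ underlying the isotropy Lie $\infty$-algebra is zero. (Alternatively, one could invoke Proposition \ref{prop:Debord}, as a regular foliation is manifestly Debord, to get the vanishing in degrees $\leq -2$, combined with the trivial observation that $\ker{\mathfrak i}_m\rho=0$ in the regular case.)

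Finally, by construction the isotropy Lie $\infty$-algebra of ${\mathcal F}$ at $m$ is obtained by applying the isotropy functor $h{\mathfrak I}_m$ to a universal Lie $\infty$-algebroid \resolving ${\mathcal F}$ that is minimal at $m$; the underlying graded vector space is $H^{\bullet}({\mathcal F},m)=0$, so all $k$-ary brackets are automatically zero. Proposition \ref{prop:doesNotDependsPoint} then transports this vanishing to every other point of the same leaf, giving that the isotropy Lie $\infty$-algebra of the regular leaf is identically zero. The only subtlety, a very minor one, is to be careful that the argument applies uniformly along the leaf, but this is exactly what Proposition \ref{prop:doesNotDependsPoint} provides; no genuine obstacle is present.
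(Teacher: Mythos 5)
Your proof is correct and follows essentially the same route as the paper: exhibit the length-one resolution $E_{-1}=T\mathcal{F}$ with injective anchor as a minimal model at a regular point, conclude $H^{\bullet}(\mathcal{F},m)=0$, and hence the isotropy Lie $\infty$-algebra vanishes. The only difference is that you spell out the (harmless, and in fact already implicit) propagation along the leaf via Proposition \ref{prop:doesNotDependsPoint}, whereas the paper leaves this to the fact that every point of a regular leaf is regular.
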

\begin{proof} Around every regular point $m \in M$, a minimal resolution, is given by $E_{-1}=T[1]F \subset T[1]M$, where $\cF = \Gamma(TF)$ and $E_{-i}=0$ for all $i >1$. In particular $H^\bullet(\cF,m) = 0$, since the anchor map is an inclusion map.  
\end{proof}

\isthiswhatyouowant
The codimension of the leaf gives an obvious restriction about the length of the graded space $H(\mathcal{F},m)$ on which the isotropy \Linfty-algebra is defined.

\begin{proposition}\label{prop:concentration}
Let $L$ be a leaf of a holomorphic, real analytic or smooth locally real analytic singular foliation ${\mathcal F}$.
The isotropy graded Lie algebra  
at a point $m \in L$ is concentrated in degrees $-1, \dots, -{\rm codim}(L)-1$.
\end{proposition}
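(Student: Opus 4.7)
The strategy is to construct, in a neighborhood of $m\in L$, a geometric resolution of $\mathcal{F}$ that is simultaneously \emph{minimal at $m$} and of \emph{length at most $\mathrm{codim}(L)+1$}. Once such an $(E,\dd,\rho)$ is at hand, Lemma \ref{lem:canonical}(2) identifies $H^{-i}(\mathcal{F},m)$ with $E_{-i}|_m$ for $i\geq 2$ and with $\ker\rho|_m\subset E_{-1}|_m$ for $i=1$; all of these vanish for $i>\mathrm{codim}(L)+1$, which is exactly the concentration claimed. Since the statement concerns the isotropy \emph{graded Lie algebra}---i.e.\ the 2-ary bracket on $H^\bullet(\mathcal{F},m)$, which by Proposition \ref{prop:isotropyStrictIso} does not depend on the auxiliary choices---it suffices to realize this bound on the underlying graded vector space.

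\textbf{Local splitting and transverse resolution.} Writing $d=\dim L$ and $c=\dim M-d=\mathrm{codim}(L)$, the first step is to invoke the local splitting theorem for a singular foliation along one of its leaves (this is the standard normal form near a leaf used already implicitly in Proposition \ref{prop:doesNotDependsPoint}, following Androulidakis--Skandalis, Androulidakis--Zambon): there exist a neighborhood $U$ of $m$, coordinates $(x^1,\dots,x^d,y^1,\dots,y^c)$ on $U$ with $L\cap U=\{y=0\}$, and a singular foliation $\mathcal{G}$ on the transversal $V=\{x=0\}\cap U$ admitting $\{0\}$ as a leaf, such that with $D=\{y=0\}\cap U$ and $\pi\colon U\to V$, $(x,y)\mapsto y$,
\begin{equation*}
\mathcal{F}|_U \;=\; \Gamma(TD)\;\oplus\;\pi^{*}\mathcal{G}.
\end{equation*}
The splitting is performed in the ambient category (holomorphic, real analytic, or, in the smooth locally real analytic case, inside the real analytic chart provided by Definition \ref{Def:locrealanal}), so that $\mathcal{G}$ is of the same regularity on $V$. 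Since $\dim V=c$, Proposition \ref{bonjourprop}(1) (in the holomorphic or real analytic case) or Proposition \ref{bonjourprop}(1)-(2) applied inside a real analytic chart (in the smooth locally real analytic case) furnishes, on a possibly smaller neighborhood of $0\in V$, a geometric resolution $(F_\bullet,\dd_F,\rho_F)$ of $\mathcal{G}$ of length at most $c+1$.

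\textbf{Lifting and minimizing.} Setting
\begin{equation*}
E_{-1}:=TD\oplus \pi^{*}F_{-1},\qquad E_{-i}:=\pi^{*}F_{-i}\text{ for }i\geq 2,
\end{equation*}
with $\dd:=\pi^{*}\dd_F$ and anchor $\rho:=\mathrm{id}_{TD}\oplus \pi^{*}\rho_F$, the exactness of pullback along the trivial fibration $\pi$ (on sheaves of $\mathcal{O}_V$-modules) turns $(E,\dd,\rho)$ into a geometric resolution of $\mathcal{F}|_U$, still of length at most $c+1$. Proposition \ref{bonjourprop}(5) then allows us to replace this resolution, after possibly shrinking $U$, by one that is minimal at $m$ without increasing its length. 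Lemma \ref{lem:canonical}(2) applied to the resulting minimal resolution gives the concentration of $H^{\bullet}(\mathcal{F},m)$ in degrees $-1,\dots,-(c+1)$, as claimed.

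\textbf{Main obstacle.} The only non-trivial input is the local splitting of $\mathcal{F}$ along $L$ in each of the three regularity settings. In the real analytic or holomorphic case, it is obtained by straightening the $d$-dimensional tangent distribution of $L$ at $m$ (spanned by $d$ elements of $\mathcal{F}$ independent at $m$, by definition of a leaf) via a Frobenius-type argument, and then absorbing horizontal corrections in the remaining generators into the $\partial/\partial x^i$'s so that the residual generators are purely vertical and define $\mathcal{G}$ on $V$; in the smooth locally real analytic case the same manipulation is carried out inside a real analytic chart around $m$. The remainder of the argument---pullback of the transverse resolution, minimization at $m$, and reading off the concentration---is a direct application of Proposition \ref{bonjourprop} and Lemma \ref{lem:canonical}.
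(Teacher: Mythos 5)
Your proof is correct and follows essentially the same route as the paper: both arguments invoke the local splitting of $\mathcal{F}$ near a point of $L$ (Proposition 1.12 of Androulidakis--Skandalis) to reduce to a transverse foliation on a $\mathrm{codim}(L)$-dimensional transversal, apply the syzygy bound of Proposition \ref{bonjourprop} there, and add the tangent to the leaf in degree $-1$ to obtain a resolution of length at most $\mathrm{codim}(L)+1$. Your extra minimization step via Proposition \ref{bonjourprop}(5) is harmless but unnecessary, since a resolution with $E_{-i}=0$ for $i>\mathrm{codim}(L)+1$ already forces $H^{-i}(\mathcal{F},m)=0$ in those degrees by Lemma \ref{lem:canonical}(1).
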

\begin{proof} 
According to Proposition 1.12 in \cite{AndrouSkandal}, every singular foliation is, in a neighborhood of a point $m$
in a leaf $L$, the trivial product of a singular foliation on a neighborhood of $0$ in $ {\mathbb R}^{n -{\rm dim}(L)}$ 
(called the transverse foliation) with the regular foliation  $T{\mathcal B}$,
with ${\mathcal B} $ an open ball of dimension ${\rm dim}(L)$.
A geometric resolution of $ {\mathcal F}$ is therefore obtained by adding $T{\mathcal B}$ (in degree  minus one ) to a geometric
resolution of the transverse foliation. 
In the real analytic or holomorphic cases, the transverse foliation admits geometric resolutions
of length less or equal to $ {\rm co dim}(L)+1$ by  Proposition \ref{bonjourprop}. This concludes the proof in the holomorphic or real analytic cases.
The second item in Proposition \ref{bonjourprop} concludes the proof in the smooth locally real analytic case. 
\end{proof}

\isthiswhatyouowant

\isthiswhatyouowant 
Proposition \ref{prop:doesNotDependsPoint} implies that for all $i \geq 2$, the  image of the vector bundle morphism  $\dd^{(i+1)} :  {{\mathfrak i}_L}E_{-i-1} \to  {{\mathfrak i}_L}E_{-i}$
has the same dimension at all points of the leaf  $L$. Here ${{\mathfrak i}_L}$ stands for the restriction of a vector bundle to the submanifold $L \subset M$. 
This allows us to truncate the {\LieInftyAlgebroid} restricted to a leaf $L$ at a certain order $i$ in two different ways.  Both 
\begin{equation}  \label{trunc1}\bigslant{ {{\mathfrak i}_L} E_{-i}}{\mathrm{im}\, {\dd}^{(i+1)} } \longrightarrow 
{{\mathfrak i}_L}E_{-i+1} \longrightarrow \dots \longrightarrow {{\mathfrak i}_L} E_{-1} \longrightarrow TL 
\end{equation}
and 
\begin{equation} \label{trunc2}
 \bigslant{ {{\mathfrak i}_L} E_{-i}}{\ker {\dd}^{(i)} } \longrightarrow 
{{\mathfrak i}_L}E_{-i+1} \longrightarrow \dots \longrightarrow {{\mathfrak i}_L} E_{-1} \longrightarrow TL  
\end{equation} 
define a transitive Lie $i$-algebroid structure over $L$, the second one being a quotient of the former one. Here for $i=1$, we need to replace ${\dd}^{(1)}$ by $\rho$ in \eqref{trunc2}. The lowest possible truncations lead to known Lie algebroids over $L$: while \eqref{trunc2} gives simply $TL$, \eqref{trunc1} yields a transitive action Lie algebroid with fiber the isotropy Lie algebra of the leaf.
\begin{proposition}
	\label{prop:holoLieAlg}
	Let $\mathcal{F}$ be a singular foliation equipped with a universal {\LieInftyAlgebroid}  $(E,Q)$, and let $L$ be a leaf of $\mathcal{F}$.
	The  $1$-truncation of the restriction \eqref{trunc1} of   $(E,Q)$ to $L$  coincides with the holonomy Lie algebroid of $L$ defined by Androulidakis and Skandalis in~\cite{AndrouSkandal}. 
\end{proposition}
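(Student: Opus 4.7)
My plan is to build an explicit isomorphism of Lie algebroids over $L$ between the $1$-truncation $\bigslant{\mathfrak{i}_L E_{-1}}{\mathrm{im}\,\dd^{(2)}} \to TL$ (with anchor induced by $\rho$) and the Androulidakis--Skandalis holonomy Lie algebroid $A_L$. Recall first that $A_L$ is constructed in \cite{AndrouSkandal} as the bundle over $L$ whose sheaf of sections is the quotient $\cF_L / \cI_L \cF_L$, where $\cF_L$ denotes germs of $\cF$ along $L$ and $\cI_L \subset \mathscr{O}$ the ideal of functions vanishing on $L$; its anchor is induced by restriction to $L$ (well-defined because $\cF$ is tangent to the leaf), and its bracket is inherited from the Lie bracket of vector fields.

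First, I would construct a candidate map $\Phi$. On sections, an element of $\Gamma(\mathfrak{i}_L E_{-1})$ is locally the restriction to $L$ of some $\tilde s \in \Gamma(E_{-1})$ defined near $L$, and I set $\Phi([\tilde s|_L]) := [\rho(\tilde s)] \in \cF_L / \cI_L \cF_L$. This is independent of the chosen extension, since two extensions differ by an element of $\cI_L \, \Gamma(E_{-1})$, which $\rho$ sends into $\cI_L \cF$. Moreover $\rho \circ \dd^{(2)} = 0$, so $\Phi$ descends to a well-defined $\mathscr{O}_L$-linear map from $\Gamma\bigl(\bigslant{\mathfrak{i}_L E_{-1}}{\mathrm{im}\,\dd^{(2)}}\bigr)$ to $\Gamma(A_L)$.

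Second, I would prove $\Phi$ is a bijection by adapting the argument used to prove Proposition \ref{prop:hol=hol}, now along the whole leaf. Surjectivity is immediate from $\rho(\Gamma(E_{-1})) = \cF$. For injectivity, suppose $[\rho(\tilde s)] = 0$ in $\cF_L/\cI_L \cF_L$; then $\rho(\tilde s) = \sum_i f_i \rho(\tilde s_i)$ with $f_i \in \cI_L$, so $\rho\bigl(\tilde s - \sum_i f_i \tilde s_i\bigr) = 0$. By exactness of the geometric resolution at $E_{-1}$, there exists $\tilde h \in \Gamma(E_{-2})$ with $\dd^{(2)}(\tilde h) = \tilde s - \sum_i f_i \tilde s_i$; restricting to $L$ kills the $f_i$-terms, giving $\tilde s|_L = \dd^{(2)}(\tilde h|_L) \in \mathrm{im}\,\dd^{(2)}$, as required.

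Third, I would verify that $\Phi$ intertwines anchors and brackets. Anchor compatibility is built into the definition. For brackets, the Lie algebroid bracket on the $1$-truncation is induced by the $2$-ary bracket $\{\cdot,\cdot\}_2$ on $\Gamma(E_{-1})$ of the universal {\LieInftyAlgebroid}, and the almost Lie algebroid identity $\rho(\{x,y\}_2) = [\rho(x),\rho(y)]$ (Remark \ref{remarque2}) shows that $\Phi$ intertwines $\{\cdot,\cdot\}_2$ with the Lie bracket of vector fields modulo $\cI_L \cF$. The main obstacle is checking that the quotient $\bigslant{\mathfrak{i}_L E_{-1}}{\mathrm{im}\,\dd^{(2)}}$ is genuinely a Lie algebroid and not merely an almost Lie one; this relies on the Jacobiator of $\{\cdot,\cdot\}_2$ being controlled by $\dd^{(2)}(\{\cdot,\cdot,\cdot\}_3)$, which lies in $\mathrm{im}\,\dd^{(2)}$ and thus vanishes in the quotient, and analogously for the well-definedness of the bracket on representatives modulo $\mathrm{im}\,\dd^{(2)}$. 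These are the expected consequences of the general truncation procedure underlying \eqref{trunc1}, but they need to be spelled out carefully to conclude that $\Phi$ is an isomorphism of Lie algebroids.
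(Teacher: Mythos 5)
Your proposal is correct and follows exactly the route the paper intends: the paper's own proof merely recalls that $\Gamma(A_L)\simeq \mathcal{F}/I_L\mathcal{F}$ and then declares the argument ``similar to the proof of Proposition \ref{prop:hol=hol} and left to the reader,'' and your map $[\tilde s|_L]\mapsto[\rho(\tilde s)]$ together with the surjectivity/injectivity argument via exactness of the geometric resolution is precisely that adaptation carried out along the leaf instead of at a point. The additional checks you flag (well-definedness of the bracket modulo $\mathrm{im}\,\dd^{(2)}$ and the Jacobi identity via $\dd^{(2)}\{\cdot,\cdot,\cdot\}_3$) are indeed the content of the truncation construction \eqref{trunc1} that the paper takes for granted, so nothing is missing.
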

\begin{proof}
 	We recall the definition of the holonomy Lie algebroid in \cite{AndrouSkandal}.
 	Consider the vector bundle $A_L \to L$ whose fiber over $x \in L$ is  $\mathcal{F}/I_x\mathcal{F}$. Its sections are isomorphic to $\mathcal{F}/I_L\mathcal{F}$, where $I_L \subset {\mathcal O} $ is the ideal of functions vanishing along $L$. The Lie algebra bracket on  $\mathcal{F}/I_L\mathcal{F}$ equips $A_L$ with a Lie algebroid bracket.
The proof is then similar to  the proof of Proposition \ref{prop:hol=hol} and is left to the reader.
\end{proof}


\subsection{Examples of isotropy  Lie \texorpdfstring{$\infty$}{infinity}-algebras}
\label{sec:examplesLieInfinityIso}

\isthiswhatyouowant

\begin{example}
	\normalfont
For regular foliations, the isotropy Lie $\infty$-algebra is trivial at all points $m\in M$
because $H^\bullet ({\mathcal F},m)=0$ (see Lemma \ref{trivial}). 
\end{example}

\begin{example}
	\normalfont
For a Debord foliation, Proposition \ref{prop:Debord} implies that the isotropy Lie $\infty$-algebra at a point $m$ is concentrated in degree minus one, \emph{i.e.~}it is a Lie algebra. \end{example}

\begin{example}
\normalfont
Consider the singular foliation  given by the action of $\mathfrak{sl}_{2}$ on $\mathbb{R}^{2}$. Using the universal Lie $\infty$-algebroid structure of Example
\ref{sl2resolution}, we see that one obtains an isotropy Lie $2$-algebra  on  $H^{\bullet}(\mathcal{F},0)  \simeq \mathbb{R}[2]\oplus\mathfrak{sl}_{2}[1]$.
The restriction of the $2$-ary bracket to  $H^{-1}(\mathcal{F},0) \simeq \mathfrak{sl}_{2}$ is the usual Lie algebra bracket and the $2$-ary bracket  of an element of degree  minus one  with an element of degree minus two vanishes.  There are no $k$-ary brackets for $k \geq 3$.

\isthiswhatyouowant
The appearance of the apparently unnecessary abelian factor $\mathbb{R}[2]$ in the previous example can be seen also as a consequence of the third item in Proposition \ref{prop:Debord}.
\end{example}

\begin{example}
\normalfont
Consider the singular foliation given by all vector fields on a vector space $V$ vanishing at the origin.
Using the universal Lie $\infty$-algebroid structure of  Example \ref{ex:vfVanishingAtZero_structure}, we see that the isotropy Lie $\infty$-algebra at the origin reduces to the graded Lie algebra $\bigoplus_{i \geq 1}^n \wedge^{i} V^* \otimes V $
equipped with the (graded symmetric) Lie bracket defined as in (\ref{eq:bracketVFVatZero}).
There is no $k$-ary bracket for $k \neq 2$.
\end{example}

\begin{example}
\normalfont
\label{conjectural}
Consider the singular foliation ${\mathcal F}_{ad}$ for the adjoint action of a semi-simple complex Lie algebra  ${\mathfrak g} $ on itself, studied in Examples \ref{conjecture} and \ref{conjecture2}. 
The isotropy Lie $\infty$-algebra at $0$ is defined on the graded vector space 
$ H^\bullet({\mathcal F}_{ad},m) = {\mathbb C}^l[2] \oplus \mathfrak{g}[1]$, where $l$ is the rank of $\mathfrak{g}$. The only non-vanishing bracket is the $2$-ary bracket between two elements of  degree  minus one; it coincides with the Lie bracket on  ${\mathfrak g} $.
\end{example}

\begin{example}
	\normalfont
	\label{ex:vanishingOrder2_v3}
	Consider the foliation $ {\mathcal F}$ of vector fields vanishing to order $2$ at $0\in {\mathbb K}^2$, studied in Example \ref{ex:vanishingOrder2_v2}. The isotropy Lie $\infty$-algebra at the origin is a Lie $2$-algebra whose $n$-ary brackets are equal to zero for all $n$, with underlying graded vector space ${\mathbb K}^2[2] \oplus {\mathbb K}^6[1]$.  More precisely, the $2$-ary bracket is zero by Equation \eqref{eq:Quadra}.
		 This equation  also implies that the Jacobiator ${\rm Jac}(X,Y,Z)$  of three elements belongs to $I_0^2 \, \Gamma(E_{-1}) $. Since the $3$-ary bracket satisfies $\dd^{(2)} \circ \{\cdot,\cdot,\cdot\}_3 = {\rm Jac}\, (\cdot,\cdot,\cdot)$ and since $\dd^{(2)}$ is injective except at the origin, the $3$-ary bracket has to be valued in $I_0 \Gamma(E_{-2})$. 
\end{example}

 \begin{example}
 \label{ex:Koszul3}
  \normalfont
 Consider the
  singular foliation ${\mathcal F}_\varphi$ of all
 vector fields $X$  on $M={\mathbb C}^n$ such that $X[\varphi]=0$,
 with $\varphi$ a weight homogeneous function 
 with isolated singularities. 
  Example \ref{ex:Koszul2}  describes a universal {\LieInftyAlgebroid} for this singular foliation.

\isthiswhatyouowant
The origin $0$ of ${\mathbb C}^n$ is a leaf. 
Since all partial derivatives of $\varphi$ vanish at the origin $0$, 
the Koszul  resolution, see Example \ref{ex:Koszul}, is a minimal geometric resolution at $0$.
Hence $H^{-k}({{\mathcal F}_{\varphi}},0) \simeq \wedge^{k+1} {\mathbb C}^n$.
For all $k \geq 2$, the $k$-ary brackets of the universal {\LieInftyAlgebroid}   given by Equation (\ref{eq:Koszul}) restrict  as follows:
\begin{equation}
 \label{eq:Koszul2}
 \left\{  \partial_{I_1}, \dots, \partial_{I_k} \right\}_k\quad :=  
 \sum_{i_1 \in I_1, \dots,i_k \in I_k}  \epsilon(i_1, \dots,i_k)  \, \tfrac{\partial^k \varphi}{ \partial x_{i_1} \dots \partial x_{i_k}}(0)\;\: \partial_{I_1^{i_1}\bullet I_2^{i_2}\bullet \ldots \bullet I_k^{i_k} } 
 \end{equation}
 where notations are as in Example \ref{ex:Koszul2}.
 \end{example}

\subsection{On the existence of a Lie algebroid defining a singular foliation}

\subsubsection{Lie algebroids of minimal rank}

\label{sec:3aryNotZero}

\isthiswhatyouowant
In this section we exploit the cohomologies of the isotropy Lie $\infty$-algebra at a point and define a class in the third Chevalley-Eilenberg cohomology of the Androulidalis-Skandalis isotropy Lie algebra. This class was later on generalized  for an arbitrary  {\Linfty}-algebroid by Ricardo Campos in \cite{ricardo}.


\isthiswhatyouowant
\begin{proposition}
	\label{propdef:NMRLA}
	Let ${\mathcal F}$ be a singular foliation that admits a geometric resolution of finite length
	in a neighborhood of $ m \in M$.
 Equip $  H^\bullet ({\mathcal F},m) = \bigoplus_{ i\geq 1}   H^{-i} ({\mathcal F},m)$ with the isotropy Lie $\infty$-algebra brackets $( \{\cdots \}_k) _{k \geq 2}$ constructed out of some universal Lie $\infty$-algebroid $(E,Q)$ minimal at $m$. 
	\begin{enumerate}
		\item The restriction of the $2$-ary bracket $$ \{\cdot, \cdot \}_2 \colon H^{-1} ({\mathcal F},m) \otimes H^{-2} ({\mathcal F},m) \longrightarrow H^{-2} ({\mathcal F},m)$$ makes $H^{-2} ({\mathcal F},m)$ a module over the isotropy Lie algebra $H^{-1} ({\mathcal F},m)$, which does not depend on the choice of $(E,Q)$.
		\item The restriction  of the $3$-ary bracket
		$$ \{\cdot, \cdot, \cdot \}_3 \colon \Lambda^3  H^{-1} ({\mathcal F},m)  \longrightarrow H^{-2} ({\mathcal F},m)$$
		 is a $3$-cocycle for the Chevalley-Eilenberg complex of 
		$ H^{-1} ({\mathcal F},m)$ valued in  $H^{-2} ({\mathcal F},m)$.
		\item The cohomology class of this cocycle does not depend on the choice of $(E,Q)$.
	\end{enumerate}
	\end{proposition}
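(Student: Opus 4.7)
The plan is to prove the three items by direct applications of the higher Jacobi identities \eqref{superjacobi}, using crucially that minimality of $(E,Q)$ at $m$ forces the $1$-ary bracket on $H^\bullet(\mathcal{F},m)$ to vanish identically, combined with the uniqueness statement of Proposition \ref{prop:isotropyStrictIso}.

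For item $(1)$, I instantiate \eqref{superjacobi} at $n=3$ on two elements $x,y\in H^{-1}(\mathcal{F},m)$ and one element $u\in H^{-2}(\mathcal{F},m)$. Every summand containing $\{\cdot\}_1$ is killed, so what remains is the graded Jacobi identity for $\{\cdot,\cdot\}_2$ restricted to this triple, which is precisely the representation axiom asserting that $\{x,\cdot\}_2 \colon H^{-2}(\mathcal{F},m) \to H^{-2}(\mathcal{F},m)$ defines an action of the Lie algebra $H^{-1}(\mathcal{F},m)$. That this module structure is intrinsic is contained in Proposition \ref{prop:isotropyStrictIso}, which already establishes the intrinsic character of $\{\cdot,\cdot\}_2$ on all of $H^\bullet(\mathcal{F},m)$.

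For item $(2)$, I evaluate \eqref{superjacobi} at $n=4$ on four elements $x_1,x_2,x_3,x_4 \in H^{-1}(\mathcal{F},m)$. The $i=1$ and $i=4$ summands both contain a factor of $\{\cdot\}_1$ and thus vanish, leaving only the contributions with $i=2$ and $i=3$:
\begin{equation*}
\sum_{\sigma\in\mathrm{Un}(2,2)} \epsilon(\sigma)\bigl\{\{x_{\sigma(1)},x_{\sigma(2)}\}_2,\, x_{\sigma(3)},\, x_{\sigma(4)}\bigr\}_3 + \sum_{\sigma\in\mathrm{Un}(3,1)} \epsilon(\sigma)\bigl\{\{x_{\sigma(1)},x_{\sigma(2)},x_{\sigma(3)}\}_3,\, x_{\sigma(4)}\bigr\}_2 = 0.
\end{equation*}
Identifying $\{\cdot,\cdot\}_2$ on $\Lambda^2 H^{-1}(\mathcal{F},m)$ with the isotropy Lie algebra bracket and $\{\cdot,\cdot\}_2 \colon H^{-1}\otimes H^{-2}\to H^{-2}$ with the module action from $(1)$, this equation is exactly the Chevalley-Eilenberg cocycle condition $d_{CE}(\{\cdot,\cdot,\cdot\}_3)=0$ for a $3$-cochain of $H^{-1}(\mathcal{F},m)$ valued in $H^{-2}(\mathcal{F},m)$.

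For item $(3)$, Proposition \ref{prop:isotropyStrictIso} provides, between any two isotropy Lie $\infty$-algebras constructed from universal Lie $\infty$-algebroids minimal at $m$, a Lie $\infty$-isomorphism $\Phi$ whose linear part $\phi_0$ is the identity of $H^\bullet(\mathcal{F},m)$. Expanding the intertwiner relation $\Phi\circ Q = Q'\circ\Phi$ in Taylor components and extracting the piece that maps three inputs in $H^{-1}(\mathcal{F},m)$ to an output in $H^{-2}(\mathcal{F},m)$, all potentially corrective terms involving either $\phi_2$ paired with $\{\cdot\}_1$ or $\phi_1$ paired with $\{\cdot\}_1$ die by minimality; what survives is
\begin{equation*}
\{x,y,z\}'_3 - \{x,y,z\}_3 = (d_{CE}\phi_1)(x,y,z),
\end{equation*}
where $\phi_1 \colon \Lambda^2 H^{-1}(\mathcal{F},m)\to H^{-2}(\mathcal{F},m)$ is the relevant Taylor coefficient of $\Phi$. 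Hence the two $3$-ary brackets differ by a Chevalley-Eilenberg coboundary and the class $[\{\cdot,\cdot,\cdot\}_3]$ is well-defined. The main obstacle I anticipate is the last step: carefully bookkeeping the Koszul signs and checking that no spurious contribution from $\phi_2$ (or from brackets of arity $\geq 4$) survives the restriction to $\Lambda^3 H^{-1}$, so that one lands on the honest Chevalley-Eilenberg differential. The combination $\phi_0 = \mathrm{id}$ and $\{\cdot\}_1 = 0$ on both sides should make every such term collapse.
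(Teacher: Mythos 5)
Your proposal is correct and follows essentially the same route as the paper: item (1) via Proposition \ref{prop:isotropyStrictIso} (with the $n=3$ Jacobi identity supplying the module axiom), item (2) via the higher Jacobi identity \eqref{superjacobi} applied to four elements of degree minus one, and item (3) via the strict isomorphism with identity linear part, whose quadratic Taylor coefficient $\Lambda^2 H^{-1}({\mathcal F},m)\to H^{-2}({\mathcal F},m)$ realizes the difference of the two $3$-ary brackets as a Chevalley--Eilenberg coboundary. Your version is in fact more explicit than the paper's about which summands of \eqref{superjacobi} and of the morphism identity survive when $\{\cdot\}_1=0$ and $\phi_0=\mathrm{id}$.
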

\begin{proof}
	The first item follows from Proposition \ref{prop:isotropyStrictIso}.
	The second item is an easy consequence of the higher Jacobi identity \eqref{superjacobi} applied to four elements of degree minus one. 
	The third item can be obtained as follows: Two different choices made in the construction of
	the universal {\LieInftyAlgebroid} give isotropy \Linfty-algebra structures which are strictly isomorphic by Proposition \ref{prop:isotropyStrictIso}, through isomorphisms
	whose linear parts are the identity. The second Taylor coefficient of this isomorphism has a component which is a
	map $\tilde{\theta} \colon \Lambda^2 \big(H^{-1}({\mathcal F},m)\big)\to H^{-2}({\mathcal F},m)$.
	Writing explicitly the definition of \Linfty-algebra morphisms, applied to three elements in $ H^{-1}({\mathcal F},m)$, one obtains that
	the $3$-ary bracket is a multiple of the Chevalley-Eilenberg differential of~$ \tilde{\theta}$.
\end{proof}
\begin{definition} We call the 3-cohomology class of Proposition \ref{propdef:NMRLA} the No-Minimal-Rank-Lie-Algebroid class or the  \emph{NMRLA class} for short.
\end{definition}

\isthiswhatyouowant
Recall that the rank of $ {\mathcal F}$ at $m \in M$ is the minimal number of generators of 
$ {\mathcal F}$ in a neighborhood of that point.

\begin{proposition}
	\label{prop:NMRLA}
	Let ${\mathcal F}$ be a singular foliation on a manifold $M$ that admits a geometric resolution of finite length.
    Let $r$ be the rank of $\mathcal{F}$ at the point $m$.

\isthiswhatyouowant	If the NMRLA $3$-class does not vanish, then it is not possible to find a Lie algebroid $(A, [\cdot,\cdot],\rho)$
	defined in a neighborhood $U_m$ of $m$ which  satisfies the two following conditions: \begin{enumerate}
		\item the rank of the vector bundle $A$ is $r$ and 
		\item  $\rho\big(\Gamma(A)\big) = {\mathcal F}\vert_{U_m} $.
	\end{enumerate}
\end{proposition}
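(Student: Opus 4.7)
The plan is to argue by contradiction: assume such a Lie algebroid $(A,[\cdot,\cdot],\rho_A)$ of rank $r$ exists over a neighborhood $U_m$ of $m$ with $\rho_A(\Gamma(A)) = \mathcal{F}|_{U_m}$, and then show that the NMRLA $3$-class of $\mathcal{F}$ at $m$ must be exact, contradicting the hypothesis. By Proposition \ref{bonjourprop}(5) one can choose a universal {\LieInftyAlgebroid} $(E,Q)$ of $\mathcal{F}$ which is minimal at $m$. Viewing $A$ as a {\LieInftyAlgebroid} concentrated in degree $-1$, Theorem \ref{theo:onlyOne} provides a {\LieInftyAlgebroid} morphism $\Phi \colon (A,Q_A) \to (E,Q)$ over the identity of $U_m$.

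First I would verify that the linear part $\phi_0 \colon A \to E_{-1}$ is a vector bundle isomorphism near $m$. Both vector bundles have rank $r$: for $A$ by assumption, for $E_{-1}$ because minimality at $m$ gives $\dim E_{-1}|_m = \dim(\mathcal{F}/\mathfrak{m}_m\mathcal{F}) = r$, where $\mathfrak{m}_m$ is the maximal ideal at $m$. A routine diagram-chase using the commuting square
\[
\begin{tikzcd}[column sep=0.9cm,row sep=0.5cm]
A|_m \ar[r,"\phi_0|_m"] \ar[d] & E_{-1}|_m \ar[d] \\
\mathcal{F}/\mathfrak{m}_m\mathcal{F} \ar[r,equal] & \mathcal{F}/\mathfrak{m}_m\mathcal{F}
\end{tikzcd}
\]
then shows $\phi_0|_m$ is an isomorphism between two $r$-dimensional spaces (both vertical maps are surjective of equal dimensions, hence iso, and the composition $\rho_E \circ \phi_0 = \rho_A$ makes the square commute). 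In particular, $\phi_0|_m$ restricts to a linear isomorphism from $\mathfrak{g}_A := \ker(\rho_A|_m)$ onto $\ker(\rho_E|_m) = H^{-1}(\mathcal{F},m)$, and it intertwines the isotropy Lie algebra structure of $A$ with the one of Androulidakis--Skandalis on $H^{-1}(\mathcal{F},m)$, cf.~Proposition \ref{prop:hol=hol}.

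Next I would apply the isotropy functor of Section \ref{sec:functor} to $\Phi$, obtaining a Lie $\infty$-algebra morphism $\mathfrak{I}_m(\Phi) \colon (\mathfrak{g}_A,[\cdot,\cdot]) \to (H^\bullet(\mathcal{F},m),Q_m)$ from a Lie algebra (all higher brackets vanish on the source) to the isotropy Lie $\infty$-algebra of $\mathcal{F}$ at $m$. Since $A$ is concentrated in degree $-1$, the Taylor coefficient $\phi_1$ of $\Phi$ takes values, at the fiber $m$, in a map $\phi_1|_m \colon \Lambda^2 \mathfrak{g}_A \to H^{-2}(\mathcal{F},m)$. Writing out the higher Jacobi identity \eqref{superjacobi} for the morphism equation at arity three, with three arguments $x,y,z \in \mathfrak{g}_A$, all the terms involving the $1$-ary bracket of the target vanish (the isotropy Lie $\infty$-algebra has zero differential) and what remains is an identity of the form
\begin{equation*}
\{\phi_0(x),\phi_0(y),\phi_0(z)\}_3 \;=\; \sum_{\mathrm{cyc}} \{\phi_0(x),\phi_1(y,z)\}_2 \;-\; \sum_{\mathrm{cyc}} \phi_1(x,[y,z])
\end{equation*}
in $H^{-2}(\mathcal{F},m)$. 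The right-hand side is exactly $-(d_{CE}\phi_1|_m)(x,y,z)$, where $d_{CE}$ denotes the Chevalley--Eilenberg differential of $\mathfrak{g}_A$ with values in the $\mathfrak{g}_A$-module $H^{-2}(\mathcal{F},m)$ obtained by transport via $\phi_0|_m$.

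Transporting this identity along the Lie algebra isomorphism $\phi_0|_m \colon \mathfrak{g}_A \to H^{-1}(\mathcal{F},m)$ (which by the first item of Proposition \ref{propdef:NMRLA} is also compatible with the module structure on $H^{-2}(\mathcal{F},m)$), one concludes that the restriction of the $3$-ary bracket $\{\cdot,\cdot,\cdot\}_3$ to $\Lambda^3 H^{-1}(\mathcal{F},m)$ is Chevalley--Eilenberg-exact with primitive $\phi_1|_m \circ (\phi_0|_m)^{-2}$, contradicting the assumption that the NMRLA class does not vanish. The main obstacle I expect is bookkeeping: carefully matching signs and shifts in the higher Jacobi identity so that the cubic morphism equation is recognized verbatim as the Chevalley--Eilenberg coboundary of $\phi_1|_m$, and confirming that the module structure used on the target coincides with the one described in Proposition \ref{propdef:NMRLA}(1).
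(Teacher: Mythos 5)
Your proposal is correct and follows essentially the same route as the paper's proof: argue by contradiction, take a universal {\LieInftyAlgebroid} minimal at $m$, invoke Theorem \ref{theo:onlyOne} to produce a morphism $\Phi$ from $A$, show its linear part is a fiberwise isomorphism at $m$ by comparing ranks, and read off from the arity-three component of the morphism equation that the $3$-ary bracket is a Chevalley--Eilenberg coboundary of the quadratic Taylor coefficient. The only differences are cosmetic: you establish bijectivity of $\phi_0|_m$ via a Nakayama-style diagram chase through $\mathcal{F}/\mathfrak{m}_m\mathcal{F}$ where the paper proves two explicit lifting lemmas (its Lemmas \ref{lem:rank} and \ref{lem:onto}), and you pass through the isotropy functor before writing the cubic identity, whereas the paper evaluates the identity \eqref{cobound} at $m$ at the end — in particular the $\dd^{(2)}\Phi_3$ term you absorb into ``the target has zero differential'' is exactly the term the paper kills by minimality.
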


\isthiswhatyouowant The proof requires some preparation.
	
	\begin{lemme}
		\label{lem:rank}
		For every geometric resolution $(E,\dd,\rho)$ of $ {\mathcal F}$ which is minimal at $m$, 
		the rank of the vector bundle $E_{-1}$ is equal to the rank $r$ of $ {\mathcal F}$ at $m$.
	\end{lemme}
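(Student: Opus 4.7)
The plan is to identify the rank of $\mathcal{F}$ at $m$ with $\dim_\mathbb{R} \mathcal{F}_m / I_m \mathcal{F}_m$, where $\mathcal{F}_m$ is the stalk at $m$ and $I_m \subset \mathscr{O}_m$ is the maximal ideal, and then to show that $\rho$ induces a linear isomorphism $E_{-1}|_m \xrightarrow{\sim} \mathcal{F}_m / I_m \mathcal{F}_m$ precisely when the resolution is minimal at $m$.

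First I would recall the standard fact (a version of Nakayama's lemma valid for the finitely generated stalk $\mathcal{F}_m$ over the local ring $\mathscr{O}_m$, in all three of the smooth, real analytic, and holomorphic settings) that the minimal number of local generators of $\mathcal{F}$ near $m$ equals $\dim_\mathbb{R}(\mathcal{F}_m / I_m \mathcal{F}_m)$; this is the definition of $r$ used throughout the paper, matched with the description of the isotropy Lie algebra à la Androulidakis--Skandalis as in Proposition \ref{prop:hol=hol}. Any local frame $e_1,\dots, e_n$ of $E_{-1}$ near $m$ gives, via $\rho$, a generating set $\rho(e_1), \dots, \rho(e_n)$ of $\mathcal{F}$, so the rank $n$ of $E_{-1}$ automatically satisfies $n \geq r$.

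Next I would establish the reverse inequality by showing that the map $\bar{\rho}\colon E_{-1}|_m \to \mathcal{F}_m/I_m\mathcal{F}_m$, defined by $e \mapsto [\rho(\tilde e)]$ for any local lift $\tilde e$ of $e$, is injective; since it is also surjective (as $\rho\colon \Gamma(E_{-1}) \to \mathcal{F}$ is surjective at the level of sheaves), this will give $n = \dim E_{-1}|_m = r$. The injectivity argument is the same one already used in the proof of Proposition \ref{prop:hol=hol}: if $\rho(\tilde e) \in I_m \mathcal{F}$, write $\rho(\tilde e) = \sum_i f_i \rho(\tilde e_i)$ with $f_i \in I_m$ and local sections $\tilde e_i$ of $E_{-1}$, so that $\rho(\tilde e - \sum_i f_i \tilde e_i) = 0$. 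By exactness of the geometric resolution there exists $\tilde h \in \Gamma(E_{-2})$ with $\tilde e - \sum_i f_i \tilde e_i = \dd^{(2)}(\tilde h)$. Evaluating at $m$ yields $e = \dd^{(2)}_m(h_m)$, and this is where the minimality assumption enters decisively: since $\dd^{(2)}_m = 0$, we conclude $e = 0$.

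The only place one might worry is the Nakayama-type identification of $r$ with $\dim \mathcal{F}_m/I_m\mathcal{F}_m$ in the smooth setting, as $\mathscr{O}_m = C^\infty_m$ is a local ring but not Noetherian; however, the module $\mathcal{F}_m$ is by hypothesis finitely generated, which is exactly what is needed for the classical Nakayama argument to go through. The argument is otherwise entirely formal and presents no further obstacle.
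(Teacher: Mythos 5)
Your proof is correct and is essentially the paper's own argument in a slightly different packaging: the paper obtains $r \le \operatorname{rk}(E_{-1})$ by mapping a local frame to generators, and rules out $r < \operatorname{rk}(E_{-1})$ by the very same step you use for injectivity of $\bar\rho$ — lift a relation through $\dd^{(2)}$ by exactness of the resolution and evaluate at $m$ using minimality. Your explicit Nakayama-type identification of $r$ with $\dim \mathcal{F}_m/I_m\mathcal{F}_m$ is present only implicitly in the paper (in the step asserting that one of $r_E > r$ generators must be an $\mathscr{O}$-linear combination of the others), so the two arguments rest on exactly the same ingredients, yours merely organized as the statement that $\bar\rho\colon E_{-1}|_m \to \mathcal{F}_m/I_m\mathcal{F}_m$ is an isomorphism.
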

	\begin{proof}
		Let $r_E$ and $r_{m}$ be the respective ranks of the vector bundle $E_{-1}$ and of the singular foliation $ {\mathcal F}$ at $m$. Let $ e_1, \dots, e_{r_E}$ be a local trivialization of  $E_{-1}$.
		Since $ \rho(\Gamma(E_{-1})) = {\mathcal F}$, the family  $(\rho ( e_i))_{i=1, \dots,r_E} $ generates $ {\mathcal F}$ as an $ {\mathcal O}$-module. Hence $ r \leq r_E$. 
		If $  r < r_E$, then one of  these generators is a linear combination of the others.
		Without loss of generality, we can assume that it is $e_1$.
		 Therefore,  there exist functions $ f_2, \dots, f_{r_E} \in {\mathcal O}$ such that
		 $$  \rho(e_1) = \sum_{i=2}^{r_E}\, f_i \, \rho(e_i) .$$
		 This implies that $  e_1 = \sum_{i=2}^{r_E} f_i e_i + \dd^{(2)} g $ for some  $g \in \Gamma(E_{-2})$. Since   $(E,\dd= (\dd^{(i)})_{i \geq 2},\rho)$ is minimal at $m$, this relation implies upon evaluation at this point: 
		 $ e_1(m) = \sum_{i=2}^{r_E} f_i (m) \, e_i(m)$. This contradicts the assumption that   $ e_1, \dots, e_{r_E}$ is a local trivialization. 
		 Hence $ r_E = r $.
		\end{proof}

	\begin{proof}[Proof of Proposition \ref{prop:NMRLA}]
 Assume that a Lie algebroid $(A,[\cdot, \cdot] , \rho_A) $ satisfying $\rho_A\big(\Gamma(A)\big) ={\mathcal F}$ exists. 
		Let $(E,Q)$ be a universal {\LieInftyAlgebroid} \resolving ${\mathcal F}$  in a neighborhood of $m$, built on a geometric resolution $(E,\dd, \rho)$ minimal at $m$.
		By Theorem \ref{theo:onlyOne}, a {\LieInftyAlgebroid} morphism $\Phi$ over the identity of $M$ from $A$ to $(E,Q)$ exists. We need a first step: 
		
		\begin{lemme} \label{lem:onto} Upon restriction to $m$, the vector bundle morphism $\Phi_1 \colon A \to E_{-1}$ becomes a surjective linear map.
			\end{lemme}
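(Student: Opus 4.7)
The plan is to exploit minimality of the resolution at $m$ to reduce the surjectivity question to a straightforward comparison of local frames. The key observation is that both $A|_m$ and $E_{-1}|_m$ have the same dimension: by assumption $\mathrm{rk}(A) = r$, while Lemma \ref{lem:rank} applied to the geometric resolution $(E,\dd,\rho)$ minimal at $m$ gives $\mathrm{rk}(E_{-1}) = r$. So it suffices to show that every vector of $E_{-1}|_m$ lies in the image of $\Phi_1|_m$.

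To do this, I would fix a local frame $e_1,\dots,e_r$ of $E_{-1}$ in a neighborhood of $m$ and try to express each $e_j|_m$ as a linear combination of the vectors $\Phi_1(a_i)|_m$, where $a_1,\dots,a_r$ is a local frame of $A$. Since $\Phi$ covers the identity of $M$, its linear part $\Phi_1$ is a chain map, so in particular $\rho\circ \Phi_1=\rho_A$ (cf.\ Remark \ref{rmk:linearparts}). Because $\rho_A(\Gamma(A))=\mathcal{F}=\rho(\Gamma(E_{-1}))$, there exist local functions $g_{ij}\in \mathcal{O}$ such that
\begin{equation*}
\rho(e_j)=\sum_{i=1}^{r} g_{ij}\,\rho_A(a_i)=\rho\!\left(\sum_{i=1}^{r} g_{ij}\,\Phi_1(a_i)\right),
\end{equation*}
so that $e_j-\sum_i g_{ij}\Phi_1(a_i)\in \ker\rho$.

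Now I would invoke the defining exactness of the geometric resolution at $E_{-1}$: $\ker\rho=\mathrm{im}\,\dd^{(2)}$, hence there exist local sections $h_j\in \Gamma(E_{-2})$ with
\begin{equation*}
e_j-\sum_{i=1}^{r} g_{ij}\,\Phi_1(a_i)=\dd^{(2)}(h_j).
\end{equation*}
Evaluating this identity at $m$ and using that $\dd^{(2)}|_m=0$ by minimality of $(E,\dd,\rho)$ at $m$ yields
\begin{equation*}
e_j|_m=\sum_{i=1}^{r} g_{ij}(m)\,\Phi_1(a_i)|_m,
\end{equation*}
so each basis vector $e_j|_m$ lies in $\Phi_1(A|_m)$, establishing surjectivity.

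There is no real obstacle in this argument; the only subtlety worth flagging is that minimality is essential at the very last step, without which one would only obtain that $e_j$ and $\sum_i g_{ij}\Phi_1(a_i)$ agree modulo $\dd^{(2)}(\Gamma(E_{-2}))$, which at $m$ would not force equality of fiber-wise vectors. Since the rank of $\Phi_1|_m$ then equals the common dimension $r$ of the two fibers, one obtains as an immediate byproduct that $\Phi_1|_m$ is in fact an isomorphism, which is the form in which this lemma is presumably going to be used in the sequel.
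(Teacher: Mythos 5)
Your proof is correct and follows essentially the same route as the paper's: use $\rho_A(\Gamma(A))=\mathcal{F}$ together with the chain-map property $\rho\circ\Phi_1=\rho_A$ to show that each (frame) element of $\Gamma(E_{-1})$ differs from something in the image of $\Phi_1$ by an element of $\ker\rho=\mathrm{im}\,\dd^{(2)}$, then evaluate at $m$ and use minimality. The only cosmetic difference is that the paper argues with an arbitrary $e\in{\mathfrak i}_m E_{-1}$ rather than a local frame, and defers the dimension count and the resulting bijectivity (via Lemma \ref{lem:rank}) to the surrounding proof of Proposition \ref{prop:NMRLA}.
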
 

\begin{proof}[Proof of Lemma \ref{lem:onto}]
    Let $ \tilde{e} \in \Gamma( E_{-1})$ be a section through a given $e \in {\mathfrak i}_m E_{-1} $. 	 Then, by the assumption on $A$, there exists a local section $a \in \Gamma(A) $ such that $ \rho_A (a) = \rho (\tilde{e}) $.  By the morphism property, this yields $ \rho (\tilde{e} - \Phi_1(a)) =0 $. Because $(E,\dd,\rho) $ is a geometric resolution of $  {\mathcal F}$, this implies that there exists some $ g \in \Gamma(E_{-2})$ such that $ \tilde{e} - \Phi_1(a)  = \dd^{(2)} g$. Since $ (E,\dd,\rho)$ is minimal at $m$, when evaluated at this point, the last relation gives
     $  e =  {\mathfrak i}_m \Phi_1  ( a(m)) $. Hence  $ {\mathfrak i}_m \Phi_1 \colon  {\mathfrak i}_m A \to  {\mathfrak i}_m E_{-1} $ is surjective. 
     \end{proof}

\isthiswhatyouowant
We complete the proof of Proposition \ref{prop:NMRLA}.
Assume now that the rank of the vector bundle $A$ is $r$, so that $A$ and $E_{-1}$ are vector bundles of the same rank by Lemma \ref{lem:rank}. By Lemma \ref{lem:onto}, upon restriction at $m$, the vector bundle morphism $\Phi_1 \colon A \to E_{-1}$ becomes a bijective linear map. 
	%
	By definition of a Lie $\infty$-algebroid morphism, the linear, quadratic and cubic terms in the Taylor coefficients of $\Phi$
	satisfy for all $a,b,c \in \Gamma(A)$:
	\begin{equation}
	\label{cobound}
	\big\{ \Phi_1 (a) , \Phi_2 (b,c) \big\}_2 - \Phi_2\big(\{a,b\}_2,c\big)    + \CricArrowRight{\hbox{\tiny{$abc$}}} = 
	\big\{\Phi_1(a),\Phi_1(b),\Phi_1(c)\big\}_3  -  {\rm d}^{(2)} \, \Phi_3(a,b,c).
	\end{equation}  
	If we now evaluate this equation at the point $m$, the last term on the r.h.s.\ vanishes by minimality. 	 By the above bijectivity, moreover, we can assume that  ${\mathfrak i}_m A ={\mathfrak i}_m E_{-1}$ and that, in particular, the first order Taylor coefficient ${\mathfrak i}_m \Phi_1 \colon {\mathfrak i}_m A \to {\mathfrak i}_m E_{-1}$ of the Taylor coefficient of $\Phi$ at $m$ is the identity map. 
	If we choose the sections ${\mathfrak i}_m a,{\mathfrak i}_m b,{\mathfrak i}_m c$ in the kernel of ${\mathfrak i}_m \rho$, the equation resulting from Equation \eqref{cobound} in this way shows that $\{.\,,.\,,.\}_3$ is a Chevalley-Eilenberg coboundary.
\end{proof}


\begin{example}
	\label{ex:NMLRA}
	\normalfont Let $n \geq 4$ and $ \varphi \colon  {\mathbb C}^n \to {\mathbb C}, (x_1, \ldots,x_n) \mapsto \sum_{i=1}^n x_i^3 $. 
	Consider the singular foliation ${\mathcal F}_\varphi$ 
	of all vector fields $X$ on $ {\mathbb C}^n $ satisfying $ X[\varphi]=0$, which, according to \eqref{forthecounting}, has rank $ n(n-1)/2$. 
	The function $\varphi$ has an isolated singularity at the origin and therefore  satisfies the requirements of Example \ref{ex:Koszul3}.
	It follows from Equation (\ref{eq:Koszul2}) that, at the origin $0$, one has: 
	\begin{enumerate}
		\item The $2$-ary bracket of the isotropy \Linfty-algebra vanishes. Thus  the isotropy Lie algebra is abelian and its action on $ H^{-2}({\mathcal F},0)$ is trivial.
		\item  The $3$-ary bracket is non-zero  since (see the conventions of Example \ref{ex:Koszul3}):
		$$  \{\partial_{12},\partial_{13},\partial_{14}\}_3 := \partial_{234} .$$
	\end{enumerate}
	This implies that the NMRLA $3$-class of ${\mathcal F}_\varphi$ at the origin is non-zero.
	Therefore, the foliation ${\mathcal F}_\varphi$ cannot be induced by a Lie algebroid of rank $ n(n-1)/2$, around the origin even not locally.
\end{example}

\isthiswhatyouowant
Example \ref{ex:NMLRA} and Proposition \ref{prop:NMRLA} imply the following result.

\begin{proposition}
	\label{prop:nonexist}
	There exist singular foliations of rank $r$ that, even locally, can not be induced by  a Lie algebroid of rank $r$. 
\end{proposition}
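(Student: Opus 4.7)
The plan is to exhibit an explicit singular foliation realizing the claimed obstruction and then to quote Proposition \ref{prop:NMRLA} to conclude. Concretely, I would take any integer $n\geq 4$ and consider on $M={\mathbb C}^n$ the polynomial $\varphi(x_1,\ldots,x_n) = \sum_{i=1}^n x_i^3$, together with the singular foliation
\[
{\mathcal F}_\varphi = \{\, X\in{\mathfrak X}(M) \mid X[\varphi]=0 \,\}
\]
already analyzed in Example \ref{ex:NMLRA}. Since $\varphi$ is a weight-homogeneous polynomial with an isolated singularity at the origin, the sequence $(\partial_1\varphi,\ldots,\partial_n\varphi) = (3x_1^2,\ldots,3x_n^2)$ is regular, so Example \ref{ex:Koszul} applies and the rank of ${\mathcal F}_\varphi$ is $r=n(n-1)/2$ (this is the content of Equation \eqref{forthecounting}).

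The first key step would be to set up a universal Lie $\infty$-algebroid on the Koszul-type geometric resolution described in Example \ref{ex:Koszul2}, whose brackets are given explicitly by formula \eqref{eq:Koszul}. I would then restrict to the origin: because all partial derivatives of $\varphi$ vanish at $0$, this resolution is minimal at $0$ in the sense of Definition \ref{def:resolution}, so Lemma \ref{lem:canonical} identifies $H^\bullet({\mathcal F}_\varphi,0)$ with the fibers of the resolution and the brackets \eqref{eq:Koszul2} are literally the isotropy Lie $\infty$-algebra brackets.

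The main, and essentially only, non-routine step is to verify that the NMRLA 3-class does not vanish. Using \eqref{eq:Koszul2}, the $2$-ary bracket at $0$ vanishes identically (only second derivatives of $\varphi$ at $0$ enter, and they are zero), so the isotropy Lie algebra $H^{-1}({\mathcal F}_\varphi,0)$ is abelian and acts trivially on $H^{-2}({\mathcal F}_\varphi,0)$. Consequently, the Chevalley-Eilenberg differential reduces to the alternating sum, and I would exhibit a triple on which the $3$-ary bracket is nonzero---the calculation
\[
\{\partial_{12},\partial_{13},\partial_{14}\}_3 = \partial_{234}
\]
from Example \ref{ex:NMLRA}---to show that the $3$-cocycle $\{\cdot,\cdot,\cdot\}_3$ is not a coboundary (for instance by noting that the space of $2$-cochains with the trivial action cannot hit $\partial_{234}$ through a differential that is zero on an abelian Lie algebra). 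This is where I expect the potential subtlety: one must genuinely use that the differential of the Chevalley-Eilenberg complex of an abelian Lie algebra (with trivial action) is zero, so nonzero cocycles are automatically non-exact, which makes the obstruction immediate once the $2$-bracket has been shown to vanish.

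Once the NMRLA class is established as nonzero at the origin, Proposition \ref{prop:NMRLA} directly rules out the existence of any Lie algebroid of rank $r=n(n-1)/2$ defining ${\mathcal F}_\varphi$ in any neighborhood of $0\in {\mathbb C}^n$. Hence ${\mathcal F}_\varphi$ is a singular foliation of rank $r$ that, even locally, cannot arise as the image through the anchor map of a Lie algebroid of rank $r$, which is exactly the statement of the proposition.
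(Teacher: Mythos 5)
Your proposal is correct and follows essentially the same route as the paper: Proposition \ref{prop:nonexist} is obtained there precisely by combining Example \ref{ex:NMLRA} (the foliation ${\mathcal F}_\varphi$ for $\varphi=\sum_{i=1}^n x_i^3$, $n\geq 4$, whose isotropy $2$-bracket vanishes at the origin while $\{\partial_{12},\partial_{13},\partial_{14}\}_3=\partial_{234}\neq 0$) with Proposition \ref{prop:NMRLA}. Your explicit remark that the Chevalley--Eilenberg differential is zero for an abelian Lie algebra with trivial module, so that the nonzero $3$-cocycle cannot be a coboundary, is exactly the implicit step in the paper's Example \ref{ex:NMLRA} and is a welcome clarification rather than a deviation.
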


\isthiswhatyouowant
Let us give an even more explicit interpretation of Proposition \ref{prop:nonexist}.
Let $ {\mathcal F}$ be a singular foliation generated by vector fields $X_1, \dots,X_r$.
There may be \emph{relations} between these vector fields, i.e.~$r$-tuples $f_1, \dots,f_r \in {\mathcal O}$ such that $\sum_{i=1}^r f_i X_i=0$. In fact, this always happens except  if, and only if, $ {\mathcal F}$ is Debord, see Example \ref{ex:debord}. If there are such relations, the functions $c_{ij}^k \equiv - c_{ji}^k\in {\mathcal O}$ in 
\begin{equation} [X_i,X_j]= \sum_{k=1}^r c_{ij}^k X_k \label{cijk} \end{equation}
are not unique.
On the other hand, an easy computation gives
\begin{equation} [[X_i,X_j],X_k] +\CricArrowRight{\hbox{\tiny{$ijk$}}}  = \sum_{l=1}^r J_{ijk}^l X_l \label{cycl} \, ,
\end{equation} where we defined the symbols  $J_{ijk}^l\in {\mathcal O}$ by 
$$ J_{ijk}^l :=\left(  \sum_{m=1}^r c_{ij}^m c_{mk}^l  - X_k[ c_{ij}^l] \right  ) +   
\CricArrowRight{\hbox{\tiny{$ijk$}}} .$$
Since vector fields on a manifold satisfy the Jacobi identity, the left hand side of Equation \eqref{cycl} is identically zero.
This implies that the family $J_{ijk}^{1}, \dots, J_{ijk}^{r} $ constitutes a relation between the vector fields $X_1, \dots,X_r$,
but, in general, these functions may be different from zero.
A natural question is: 
\vspace{1mm}

{\centering "Given a fixed singular foliation $ {\mathcal F}$ on a manifold $M$ generated by $X_1, \ldots , X_r$, \\can we choose functions $c_{ij}^k \equiv - c_{ji}^k$ subject to  Equation \eqref{cijk} such that $J_{ijk}^{l}\equiv 0$?" \\}

\vspace{3mm}
\isthiswhatyouowant Suppose the answer to this question is positive. 
Consider the bundle $A:=M \times \mathbb{R}^r$ and denote its constant standard basis of sections by $e_1, \ldots , e_r$. Define an anchor by means of the map $\rho \colon A \to TM, e_i \mapsto X_i$ and use it to extend the 2-bracket $[e_i,e_j] := \sum_{k=1}^r c_{ij}^k e_k$ to a bracket between arbitrary sections of $A$. It is now easy to verify that this turns $A$ into a Lie algebroid. 
Thus, by Proposition \ref{prop:nonexist}, the answer to the above question is negative in general.

\subsubsection{The Leibniz algebroid of a singular foliation}
\label{sec:leibnizoid}

\isthiswhatyouowant
As already mentioned in the introduction, it is not easy to know if a  singular foliation is, locally,
the image of a Lie algebroid under the anchor map. It is known not to be the case globally, cf. \cite{AndrouZambis},
while in a neighborhood of a point the question is open.
We are not able to give a positive or negative answer to decide this question,
but it is an easy consequence of Theorem \ref{theo:existe} that a Leibniz algebroid exists. Had Theorem \ref{theo:existe} not be proven, this would be a far from obvious result.

\begin{definition}
	\cite{Kotov}
	Let $L$ be a vector bundle over $M$. A \emph{Leibniz algebroid structure on $L$} is a bilinear assignment $[\ .\ ,\, .\ ]_L : \Gamma(L) \otimes \Gamma(L) \to \Gamma(L)$
	and a vector bundle morphism $\rho:L\to TM$,
	satisfying the \emph{Loday-Jacobi condition}:
	\begin{equation}\label{loday}
	\big[x,[y,z]_L\big]_L=\big[[x,y]_L,z\big]_L+\big[y,[x,z]_L\big]_L
	\end{equation}
	for all $x,y,z\in \Gamma(L)$, and the \emph{Leibniz identity}:
	\begin{equation}
	[x,fy]_L=f[x,y]_L+\rho(x)[f]\, y
	\end{equation}
	for every $x,y\in L$ and $f\in\mathscr{O}$.
\end{definition}



\isthiswhatyouowant
%
Note that the skew-symmetrization of a Leibniz algebroid bracket does not turn a Leibniz Lie algebroid into an almost Lie algebroid in general. Adapting Proposition 5.4 item 1 and Lemma 5.5 in \cite{Melchior}  to our case, we easily derive  the following result.

\begin{proposition}
	\label{prop:LeibnizReallyExists}
	Let $\mathcal{F}$ be a singular foliation that admits a universal {\LieInftyAlgebroid} $(E,Q)$ with anchor $\rho$. Assume that its associated geometric resolution
	is of finite length. Then  $L=\big(S(E^*) \otimes E\big)\big|_{-1} $ is a vector bundle of finite rank and comes 
	with a Leibniz algebroid structure, when equipped with:
	\begin{enumerate}
		\item the Leibniz bracket defined by: $$ [X,Y]_L := \big[[Q,X],Y\big]  $$ for all $X,Y \in \Gamma(L)$ (identified with vertical 
		vector fields $\partial_{X}$ and $\partial_{Y}$ of degree  minus one  on the graded manifold $E$),
		\item the anchor given by the composition:
		\begin{center}
			\begin{tikzcd}
				&E_{-1}\oplus\big(\bigoplus_{k \geq 1} S^k(E^*) \otimes E\big)\big|_{-1} \ar[r]& E_{-1 } \ar[r,"\rho"]& TM .
			\end{tikzcd}
		\end{center}
	\end{enumerate}
\end{proposition}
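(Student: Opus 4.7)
The plan is to unpack the statement step by step via the standard derived-bracket formalism. \emph{First}, I verify that $L = \bigl(S(E^*)\otimes E\bigr)\big|_{-1}$ is a vector bundle of finite rank. Expanding by arity,
\begin{equation*}
\Gamma(L) \;\simeq\; \bigoplus_{i\geq 1} \Gamma\bigl(S^\bullet(E^*)_{i-1}\otimes E_{-i}\bigr),
\end{equation*}
and since the resolution has finite length only finitely many $E_{-i}$ are nonzero; each $S^\bullet(E^*)_{i-1}$ is therefore a finite direct sum of symmetric tensor products of the $E^*_{-j}$, which is a vector bundle of finite rank. Under the duality of Theorem \ref{thm:fromNQtoLinfinity}, sections of $L$ correspond bijectively to $\mathcal{O}$-linear derivations of $\mathcal{E}$ of degree $-1$, i.e.~to vertical vector fields $\partial_X$ of degree $-1$ on $E$. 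The arity decomposition yields the splitting $L \cong E_{-1} \oplus \bigoplus_{k\geq 1}\mathfrak{X}^{(k)}_{\mathrm{vert}}(E)_{-1}$ appearing in the statement of the anchor.

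\emph{Second}, I show that $[X,Y]_L := [[Q,\partial_X],\partial_Y]$ lands in the space of vertical degree $-1$ vector fields, and identify the anchor. The degree count $(+1)+(-1)+(-1)=-1$ is immediate. For verticality, let $f\in\mathcal{O}$: since $\partial_Y(f)=0$ and $[Q,\partial_X](f) = \partial_X(Qf) \in \mathcal{E}_0 = \mathcal{O}$, one has $[[Q,\partial_X],\partial_Y](f) = -\partial_Y\bigl(\partial_X(Qf)\bigr) = 0$ by the $\mathcal{O}$-linearity of $\partial_Y$. Concerning the anchor, since $Qf\in\mathcal{E}_1$ and $\partial_X$ has degree $-1$, only the arity-$0$ component $X_0\in\Gamma(E_{-1})$ of $\partial_X$ can produce a degree-$0$ output; by the very definition \eqref{eq:anchor} of $\rho$ one gets $[Q,\partial_X](f) = \partial_X(Qf) = \rho(X_0)[f]$, which matches the composition $L\to E_{-1}\to TM$ described in the statement and which I denote by $\rho_L(X)$.

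\emph{Third}, the Loday-Jacobi identity follows from a two-line derived-bracket computation. For $X,Y,Z$ of degree $-1$, the graded Jacobi identity in $\mathfrak{X}(E)$ applied to $[Q,X]$, $[Q,Y]$, $Z$ gives
\begin{equation*}
[[Q,X],[[Q,Y],Z]] \;=\; \bigl[[[Q,X],[Q,Y]],Z\bigr] \;+\; [[Q,Y],[[Q,X],Z]].
\end{equation*}
A second application of graded Jacobi together with $[Q,Q]=0$ (which forces $[Q,[Q,X]]=0$) yields $[[Q,X],[Q,Y]] = [Q,[[Q,X],Y]]$, so the first term on the right equals $[[X,Y]_L,Z]_L$. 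This is exactly the form of Loday-Jacobi in the definition of a Leibniz algebroid. The Leibniz rule then follows at once from the graded Leibniz identity for brackets of vector fields: since $[Q,\partial_X]$ has degree $0$,
\begin{equation*}
[[Q,\partial_X],\,f\partial_Y] \;=\; [Q,\partial_X](f)\,\partial_Y \;+\; f\,[[Q,\partial_X],\partial_Y] \;=\; \rho_L(X)[f]\,\partial_Y + f\,[X,Y]_L \, .
\end{equation*}

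\emph{The only subtlety}, following Melchior as cited, is the arity bookkeeping that isolates $E_{-1}$ as the unique source of non-verticality in $[Q,\partial_X]$: any higher-arity component of $\partial_X$ produces strictly positive-arity outputs when applied to $Qf\in\mathcal{E}_1$, so it cannot contribute to an element of $\mathcal{O}$. This is what makes the anchor factor through $E_{-1}$ and the Leibniz rule involve only $\rho_L(X)$; every other step of the proof is then formal from $[Q,Q]=0$.
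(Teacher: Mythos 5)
Your proof is correct and follows essentially the same route as the paper's: the derived-bracket construction $(X,Y)\mapsto[[Q,X],Y]$ on degree $-1$ vector fields of the $N$-manifold $E$ (which the paper delegates to the cited references of Kosmann-Schwarzbach and to Proposition 5.4 and Lemma 5.5 of \cite{Melchior}), combined with the identification of $\Gamma(L)$ with vertical degree $-1$ vector fields and the arity count isolating $E_{-1}$ in the anchor. You in fact spell out two points the paper's proof leaves implicit --- closure of the vertical degree $-1$ vector fields under the derived bracket, and the explicit Loday--Jacobi computation --- so nothing is missing; only the passing index in $\bigoplus_{k\geq 1}\mathfrak{X}^{(k)}_{\mathrm{vert}}(E)_{-1}$ is off by one (the arity $0$ part, i.e.\ $E^*\otimes E$, should be included), which does not affect the argument.
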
 
\begin{proof}
	For every graded Lie algebra, ${\mathfrak g}:=\bigoplus_{ i\in {\mathbb Z}}{\mathfrak g}_i $ and every homological element $Q\in\mathfrak{g}$ of degree $+1$,
	$ {\mathfrak g}_{-1}$ is a graded Leibniz algebra when equipped with the bracket $(X,Y) \mapsto \big[[Q,X],Y\big]$, cf. \cite{YKS}. Applied to the graded Lie algebra of
	derivations of functions ${\mathcal E}$ 
	on the {\LieInftyAlgebroid} $(E,Q)$ (that is, vector fields on the $N$-manifold $E$) and to the vector field $Q$,
	the bracket given as above  induces a Leibniz algebra bracket on vector fields of degree  minus one. 
	Now, every vertical vector field of degree  minus one  is ${\mathscr O}$-linear derivations of $ {\mathcal E}$.   Vertical vector field of degree  minus one
	can therefore be identified with sections of the vector bundle  $L=\big(S(E^*) \otimes E\big)|_{-1}$, \emph{i.e.}~the degree  minus one  component of $S(E^*) \otimes E$.
	Also, since sections of $S^{k}(E^*) \otimes E $ are of non-negative degree for $k \geq n$, $L$  is a vector bundle 
	of finite rank over $M$. 
	One checks directly that the anchor $\eta$ is given as in item 2.
	By construction, $\eta\big(\Gamma(L)\big)= \rho\big(\Gamma(E_{-1})\big)= {\mathcal F}$.
	This proves the  proposition.\end{proof}

\isthiswhatyouowant
The following proposition is an immediate consequence of Proposition \ref{prop:LeibnizReallyExists} and Theorem~\ref{theo:existe}.

\begin{proposition}
	\label{eq:LeibnizExists}
	Let $\mathcal{F}$ be a singular foliation that admits a geometric resolution of finite length. 
	Then  there exists a Leibniz algebroid structure whose induced singular foliation is  $\mathcal{F}$.
\end{proposition}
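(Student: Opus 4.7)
The plan is to chain together the two results cited just before the statement. Given a singular foliation $\mathcal{F}$ admitting a geometric resolution $(E,\dd,\rho)$ of finite length, Theorem \ref{theo:existe} first provides a universal {\LieInftyAlgebroid} $(E,Q)$ whose linear part is precisely this geometric resolution; in particular, the anchor of $(E,Q)$ coincides with $\rho$, so that $\rho(\Gamma(E_{-1}))=\mathcal{F}$. Note that the construction in Theorem \ref{theo:existe} uses partitions of unity, and therefore the statement applies globally in the smooth setting, which is exactly what is needed here.

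Next, I would feed this $(E,Q)$ into Proposition \ref{prop:LeibnizReallyExists}. Because the geometric resolution has finite length, the graded vector bundle $S(E^*)\otimes E$ has the property that only finitely many of the summands $S^k(E^*)\otimes E$ can contain terms of total degree $-1$ (for $k$ large enough, all such terms live in strictly non-negative degrees). Hence $L := \bigl(S(E^*)\otimes E\bigr)\big|_{-1}$ is a genuine vector bundle of finite rank over $M$, and Proposition \ref{prop:LeibnizReallyExists} endows it with the Leibniz bracket
\[
[X,Y]_L := \bigl[[Q,X],Y\bigr]
\]
and the anchor obtained by projecting $L$ onto its direct summand $E_{-1}$ and then applying~$\rho$.

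It remains only to check that the singular foliation induced by this Leibniz algebroid is exactly $\mathcal{F}$, not merely a sub-module. This is immediate from the decomposition
\[
L = E_{-1} \oplus \Bigl(\bigoplus_{k\geq 1} S^k(E^*)\otimes E\Bigr)\Big|_{-1},
\]
since the anchor of $L$ restricted to the summand $E_{-1}$ coincides with $\rho$, and $\rho\bigl(\Gamma(E_{-1})\bigr) = \mathcal{F}$ by the defining property of the geometric resolution. Therefore the image of the anchor of the Leibniz algebroid $L$ equals $\mathcal{F}$, completing the proof.

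I do not foresee any real obstacle here: the content is genuinely packed into Theorem \ref{theo:existe}, whose proof is the delicate step, and Proposition \ref{prop:LeibnizReallyExists}, which is essentially the classical observation that in any differential graded Lie algebra with homological element $Q$ of degree $+1$, the degree $-1$ subspace carries a graded Leibniz bracket $(X,Y)\mapsto [[Q,X],Y]$. The only point that deserves explicit mention is the finite-rank assertion for $L$, which is precisely where the hypothesis of finite length of the resolution enters.
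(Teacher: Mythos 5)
Your proof is correct and follows exactly the route the paper intends: the paper itself states that the proposition "is an immediate consequence of Proposition \ref{prop:LeibnizReallyExists} and Theorem \ref{theo:existe}," and your argument simply fleshes out that chain, including the finite-rank observation and the identification of the image of the anchor with $\mathcal{F}$, both of which already appear in the proof of Proposition \ref{prop:LeibnizReallyExists}.
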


\isthiswhatyouowant
Indeed, one could imitate even further the construction in \cite{Melchior} and obtain a Vinogradov algebroid, by adding vector fields of degree $-2$ into the picture.



\subsection{The holonomy Lie groupoid as the fundamental  groupoid of the universal Lie $\infty$-algebroid}
\label{sec:sub-grou}

\isthiswhatyouowant
Let us define  the fundamental or \v Severa groupoid of a {\LieInftyAlgebroid} $(E,Q)$ over a manifold $M$.  It is a suitable generalization of the fundamental groupoid of a manifold to the world of Q-manifolds: essentially everywhere the interval $I := [0,1]$ is replaced by its tangent Lie algebroid $(T[1]I,\dd_{dR})$. To our knowledge this construction was first proposed by \v Severa (cf.\ letter 8 of \cite{Severaletters} and \cite{Severahomotopy}). 
It reproduces the previously found symplectic groupoid of  Cattaneo and Felder  \cite{Cattaneo} when applied to the Lie algebroid $T^*M$ associated to a Poisson manifold $(M,\pi)$, where it was found in terms of the reduced phase space of the Hamiltonian formulation of the Poisson sigma model \cite{Ikeda,Schaller_Strobl}. 
Most famously, it was studied for its smoothness properties by Cranic and Fernandes \cite{CrainicFernandes}, under the name of the Weinstein groupoid, where the necessary and sufficient conditions for a (smooth) integration of a Lie algebroid were found.\footnote{In \cite{Severaletters,Severahomotopy} a generalization of this to Lie $n$-algebroids was proposed so as to yield a (not necessarily smooth) integration to $n$-groupoids. Here, however, we will apply the fundamental groupoid construction also to {\LieInftyAlgebroid}s: It is clear that the fundamental groupoid depends only on the components of degree minus one and minus two of a Lie $\infty$-algebroid. }

\isthiswhatyouowant In more detail: 
\begin{enumerate}
	\item We call \emph{$E$-path} a morphism of {\LieInftyAlgebroid}s from $(TI,\dd_{dR})$ to 
	$(E,Q)$. Since $I$ is just one-dimensional, the morphism property only captures information about the anchor. In particular, 
	$E$-paths are in one-to-one correspondence with paths $a \colon I \to E_{-1}$ covering a path $\gamma \colon I \to M$ such that:
	\begin{equation} \frac{{\mathrm d} \gamma(t)}{{\mathrm  d}t} = \rho \big( a \, (t)\big).\end{equation}
	 An $E$-path is said to be \emph{trivial} if, for all $t\in I$, $\gamma(t)$ is a constant path,  $\gamma(t)=x$ for some $x \in M$, and in addition $a(t)=0_x$. 
	\item 
	A \emph{homotopy between two $E$-paths} $a_0$ and $a_1$ is a
	{\LieInftyAlgebroid} morphism from $(TI^2,\dd_{dR})$ to $(E,Q)$ whose restrictions to $\{0\} \times I$ and
	$\{1\} \times I$  are $a_0$ and $a_1$ respectively, while the restrictions to 
	$ I \times \{0\}$ and $ I \times \{1\}$ are trivial $E$-paths.  
\end{enumerate}
  Homotopy defines an equivalence relation on $E$-paths.  Concatenation of paths\footnote{The concatenation of $E$-paths may not be smooth, but every $E$-path is homotopy equivalent to an $E$-path which is trivial in neighborhoods of $0$ and $1$, and the concatenation of such $E$-paths is smooth.} is compatible with this equivalence relation and turns the quotient set into a groupoid over $M$ that we call the  \emph{fundamental groupoid} or \emph{\v Severa groupoid} of $ (E,Q)$.
 %
 The definition is justified because homotopy equivalent {\LieInftyAlgebroid}s obviously have isomorphic  fundamental groupoids. Here is the main result of this section.

\begin{proposition}
	\label{prop:recover_AS}
Let $(E,Q)$ be a universal {\LieInftyAlgebroid} \resolving a singular foliation $\mathcal{F}$.
The  fundamental groupoid of $(E,Q)$ is the universal cover of the holonomy groupoid
described by Androulidakis and Skandalis in \cite{AndrouSkandal}.
\end{proposition}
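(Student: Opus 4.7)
The strategy is to reduce the statement, leaf by leaf, to the standard Crainic--Fernandes integration of the holonomy Lie algebroid of Androulidakis--Skandalis, which by Proposition \ref{prop:holoLieAlg} is recovered from $(E,Q)$ as the $1$-truncation \eqref{trunc1} restricted to each leaf. First I would unwrap the definition of an $E$-path. Since $(TI,\dd_{\mathrm{dR}})$ is a Lie algebroid concentrated in degree $-1$, a {\LieInftyAlgebroid} morphism $\phi\colon (TI,\dd_{\mathrm{dR}})\to(E,Q)$ is, after dualization, determined by a smooth path $a\colon I\to E_{-1}$ covering $\gamma\colon I\to M$ with $\dot\gamma(t)=\rho(a(t))$; for degree reasons all higher Taylor coefficients of $\phi$ vanish since a $k$-form on $I$ is zero for $k\geq 2$. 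Hence $E$-paths are canonically identified with $E_{-1}$-paths in the anchored-bundle sense, as already used in \cite{AndrouSkandal}.

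Next I would analyze the homotopy relation. A Lie $\infty$-morphism $(TI^2,\dd_{\mathrm{dR}})\to (E,Q)$ is encoded by a pair $(A,B)\colon I^2\to E_{-1}$ (the image of $\Gamma(E_{-1}^*)$ in $\Omega^1(I^2)\otimes E_{-1}$) together with a map $C\colon I^2\to E_{-2}$ (the image of $\Gamma(E_{-2}^*)$ in $\Omega^2(I^2)\otimes E_{-2}$); for $k\geq 3$ the component in $\Omega^k(I^2)\otimes E_{-k}$ automatically vanishes. The intertwining condition $\phi\circ Q=\dd_{\mathrm{dR}}\circ\phi$ translates, on functions and on $\Gamma(E_{-1}^*)$, respectively, into
\begin{equation*}
\rho(A)=\partial_t\gamma,\qquad \rho(B)=\partial_s\gamma,\qquad
\partial_t B-\partial_s A-\{A,B\}_2 \;=\; \dd^{(2)}(C),
\end{equation*}
which is precisely a Crainic--Fernandes-type homotopy equation in which the $E_{-2}$-slot absorbs the Jacobi anomaly of the almost Lie bracket on $E_{-1}$. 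In particular only degrees $-1$ and $-2$ of $(E,Q)$ enter the construction, so the fundamental groupoid depends only on the $1$-truncation \eqref{trunc1}.

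Now I would localize to a leaf $L$ of $\cF$. By Proposition \ref{prop:holoLieAlg} the $1$-truncation of \eqref{trunc1} restricted to $L$ is the Androulidakis--Skandalis holonomy Lie algebroid $A_L$. Under the identification above, the homotopy relation on $E$-paths covering $L$ becomes the standard $A_L$-homotopy of \cite{CrainicFernandes}, because the resolution property plus the $E_{-2}$-freedom provide exactly the flexibility needed to match both equivalences. Consequently the restriction to $L$ of the fundamental groupoid of $(E,Q)$ is the Weinstein--\v Severa groupoid of $A_L$, i.e.\ the source-simply-connected Lie groupoid $\mathcal{G}(A_L)$ integrating $A_L$. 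Since the restriction to $L$ of the Androulidakis--Skandalis holonomy groupoid is, by \cite{AndrouSkandal}, a (not necessarily source-simply-connected) Lie groupoid with Lie algebroid $A_L$, the uniqueness of source-simply-connected integrations yields a covering map from our groupoid onto $\mathcal{H}(\cF)|_L$. Gluing over leaves gives the desired global statement.

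The main obstacle is the precise matching of equivalence relations in the third step: one must verify that every $A_L$-homotopy is realized by an honest Lie $\infty$-algebroid homotopy $(A,B,C)$ with the prescribed boundary values, and conversely that homotopic $E$-paths of $(E,Q)$ yield $A_L$-homotopic $E_{-1}$-paths. The forward direction is the solution of the above PDE for $C$ once $A,B$ are fixed, which is exactly controlled by the exactness of the geometric resolution in the image of the Jacobiator (see Remark \ref{youhou}); the backward direction requires the fact that homotopies can be promoted by Proposition \ref{prop:HomotopyMeansHomotopy} up to adding $Q'\circ H+H\circ Q$ terms that are $\cO$-linear, and hence do not alter the underlying $A_L$-homotopy class. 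Once this identification is in place the source-simply-connectedness of the fundamental groupoid is automatic, and the proposition follows from classical integration theory.
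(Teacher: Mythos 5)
Your proposal is correct and follows essentially the same route as the paper: reduce leaf by leaf, identify the restriction of the fundamental groupoid of $(E,Q)$ to a leaf $L$ with the fundamental (Weinstein--\v Severa) groupoid of the holonomy Lie algebroid $A_L$ via Proposition \ref{prop:holoLieAlg}, with the matching of homotopy relations handled exactly as in the paper's Lemma \ref{lem:leafByleaf} (the $E_{-2}$-component absorbs the defect of a lifted $A_L$-homotopy thanks to exactness of the resolution along $L$), and then conclude by uniqueness of source-simply-connected integrations. The explicit Crainic--Fernandes-style structure equations you write down are just the coordinate form of the paper's more abstract computation with $\Psi=\tilde\alpha^*\circ Q-\dd_{\mathrm{dR}}\circ\tilde\alpha^*$.
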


\isthiswhatyouowant
The proof of this proposition uses the following lemma.
\begin{lemme}
	\label{lem:leafByleaf}
	The  restriction of the fundamental groupoid of any universal Lie $\infty$-algebroid $(E,Q)$ of $ {\mathcal F}$ to a leaf $L$ coincides with the fundamental groupoid of the
	holonomy Lie algebroid of $L$.
\end{lemme}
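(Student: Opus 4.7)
The plan is to exhibit a natural bijection between homotopy classes of $E$-paths based in $L$ and homotopy classes of $A_L$-paths, where $A_L$ denotes the holonomy Lie algebroid of $L$. By Proposition \ref{prop:holoLieAlg}, this holonomy Lie algebroid is identified with the $1$-truncation of the restriction $(E,Q)|_L$, i.e., with the Lie algebroid $A_L = E_{-1}|_L / \dd^{(2)}(E_{-2}|_L) \to TL$, whose anchor is induced by $\rho$. The first observation is that $E$-paths only involve the component $E_{-1}$, and that their homotopies involve only data from degrees $-1$ and $-2$ of $(E,Q)$ (the anchor, the differential $\dd^{(2)}$, and the various $2$-ary brackets between sections of $E_{-1}$ and $E_{-2}$); in particular, the higher components of $(E,Q)$ play no role here.

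The map in one direction is the obvious one: composing an $E$-path $a \colon I \to E_{-1}|_L$ above $\gamma \colon I \to L$ with the quotient map $\pi \colon E_{-1}|_L \to A_L$ produces an $A_L$-path $\pi \circ a$, because the induced anchor $\bar{\rho}$ satisfies $\bar{\rho} \circ \pi = \rho|_L$. Any homotopy of $E$-paths (which is a Lie $\infty$-algebroid morphism from $(TI^2,\dd_{dR})$ to $(E,Q)|_L$) descends via $\pi$ to a Lie algebroid morphism from $(TI^2,\dd_{dR})$ to $A_L$, i.e., to an $A_L$-path homotopy, because the Lie algebroid structure on $A_L$ is the quotient of the relevant structure of $(E,Q)|_L$.

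For surjectivity on fundamental groupoids, an arbitrary $A_L$-path is lifted to an $E$-path by choosing any pointwise lift through $\pi$; the anchor equation $\dot{\gamma} = \rho(a)$ is automatic since the anchor of $A_L$ is induced by $\rho$. For injectivity, suppose $a_0$ and $a_1$ are $E$-paths whose projections are homotopic in $A_L$ through some homotopy $\bar h$. One first lifts $\bar h$ pointwise to an $E_{-1}|_L$-valued map on $I^2$ satisfying the anchor equation. Promoting this lift to a true $E$-path homotopy amounts to filling in the required $E_{-2}|_L$-valued data; this is possible because, after fixing the lift of the $E_{-1}|_L$-component, the obstruction lives in $\mathrm{coker}(\dd^{(2)}|_L) = A_L$ and vanishes by construction. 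The remaining case is when $a_0$ and $a_1$ have the same projection, so that $a_1 - a_0$ is a path in $\dd^{(2)}(E_{-2}|_L)$; choosing $b \colon I \to E_{-2}|_L$ with $\dd^{(2)} b = a_1 - a_0$, the pair $(a_s := (1-s) a_0 + s a_1,\, b)$ assembles into a morphism from $(TI^2,\dd_{dR})$ to $(E,Q)|_L$ that constitutes the required homotopy, where the $2$-ary brackets between sections of $E_{-1}$ and $E_{-2}$ together with the compatibility $\dd \circ \{x,y\} = \{\dd x,y\} + [\ldots]$ furnish the precise equations to be satisfied.

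The main obstacle lies in verifying Step 4 (injectivity) cleanly, in particular the lift of an $A_L$-homotopy through $\pi$ to a genuine $E$-path homotopy. The core of the argument is that the exactness of the geometric resolution $(E,\dd,\rho)$ restricted to $L$ in degree $-2$ (up to the kernel of $\rho|_L$) matches precisely the quotient taken in the definition of $A_L$, so that the additional degrees of freedom provided by $\Gamma(E_{-2}|_L)$ kill exactly the ambiguity in lifting. Formulating this as a cohomological obstruction within the bicomplex of Section \ref{sec:coho2}---applied now to the Lie algebroid morphism $TI^2 \to A_L$ and its lift to $(E,Q)|_L$---reduces everything to the vanishing of appropriate cohomology groups $H^0$ and $H^1$ already established in Corollary \ref{cor:cohoPhi}. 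Together, Steps 1--4 then show that $\Pi$ descends to the claimed isomorphism of fundamental groupoids.
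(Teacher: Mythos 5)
Your plan follows the same route as the paper's own proof: you use the exact sequence $0 \to \dd^{(2)}(E_{-2}|_L) \to E_{-1}|_L \xrightarrow{\pi} A_L \to 0$ from Proposition \ref{prop:holoLieAlg} to project $E$-paths and their homotopies to $A_L$-paths, obtain surjectivity by lifting through a section of $\pi$, and obtain injectivity by lifting an $A_L$-homotopy to an $E_{-1}|_L$-valued map whose defect from being a Lie $\infty$-morphism lands in $\ker\pi = \mathrm{im}\,\dd^{(2)}|_L$ and can therefore be absorbed by an $E_{-2}|_L$-valued correction. This is essentially the argument in the paper, so the proposal is correct in approach; only the routine verification that the corrected pair is indeed a Lie $\infty$-algebroid morphism remains to be written out, as you acknowledge.
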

\begin{proof}
	Let $L$ be a leaf, $A_L$ its holonomy Lie algebroid, and $(E_L,Q)$  the restriction of the Lie $\infty$-algebroid $(E,Q)$ to $L$. 
	
	\isthiswhatyouowant
It follows from Proposition \ref{prop:holoLieAlg} that the following sequence of vector bundles is exact:
\begin{equation}
\label{exact}
 0 \to {\dd}^{(2)} \big( {{\mathfrak i}_L} E_{2}\big) \to  {{\mathfrak i}_L} E_{-1} \stackrel{\pi}{\to} A_L \to 0 .
\end{equation} 
Here the map $\pi \colon {{\mathfrak i}_L} E_{-1}\to A_L$ is a morphism of almost Lie algebroids, thus preserving the anchor and the 2-brackets. Therefore 
	the  projection ${\mathfrak i}_L E _{-1}  \to A_L $ 
	maps an $E_L$-path to an $A_L$-path 
	and maps homotopies of $E_L$-paths to homotopies of $A_L$-paths.
	Hence, the  fundamental groupoid of  $(E,Q)$ maps to the fundamental groupoid of $A_L$.
	
	\isthiswhatyouowant
	Let us check that this map is bijective. Surjectivity is obvious:  every section $ \sigma \colon A_L \to {\mathfrak i}_L E_{-1}$ of $\pi$ in (\ref{exact}) lifts an $A_L$ path to an $E_L$-path.
	
	\isthiswhatyouowant
	Let us check injectivity.
	Let $ \alpha \colon TI^2 \to A_L$ be an algebroid morphism whose restriction
	to the boundaries satisfies the usual requirements of homotopies relating two $A_L$-paths $a_1$ and $a_2$.
	It is easy to check that there exists 
	a vector bundle morphism $\tilde{\alpha}  \colon TI^2 \to {{\mathfrak i}_L} E_{-1}$ that satisfies the requirements of a homotopy of $E_L$-paths when restricted to boundaries,
	that relates arbitrary lifts of $a_1$ and $a_2$, and that satisfies $ \pi \circ \tilde{\alpha} = \alpha$. 
	
	\isthiswhatyouowant
	The vector bundle morphism $\tilde{\alpha}$ may not be a {\LieInftyAlgebroid} morphism, \emph{i.e.}~$\Psi : = \tilde{\alpha} ^* \circ Q - {\diff}_{\mathrm{dR}}\circ \tilde{\alpha} ^*$ may not be zero.
	
	\isthiswhatyouowant
	However,  since $\alpha$ is a Lie algebroid morphism, $\Psi \colon \Gamma\big({{\mathfrak i}_L}  E_{-1}^*\big) \to \Omega^2 (I^2) $  is linear over functions
	and vanishes  on the image of  $\pi^* \colon \Gamma ( A^*_L) \to \Gamma\big({{\mathfrak i}_L}  E_{-1}^*\big)$. By the linearity, it is
	 the dual of a vector bundle morphism $ \kappa  \colon \Lambda^2 TI \to {\mathfrak i}_L  E_{-1}$ and, by the second property, $\kappa$ is valued in the kernel of $ \pi$.
	By exactness of the complex (\ref{exact}), there exists a vector bundle morphism
	$\tilde{\kappa} \colon  \Lambda^2 TI \to  {\mathfrak i}_L  E_{-2} $ such that $ \dd^{(2)} \circ \tilde{\kappa} = \kappa$.
	The pair $ (\tilde{\alpha}^*, \tilde{\kappa}^*)$ defines a graded algebra morphism
	from $ {\mathcal E}$ to $ \Omega(I^2)$ which is easily checked to be a Lie $\infty$-algberoid morphism. 
\end{proof} 
\begin{remarque} \normalfont%
	A close look at its proof may convince the reader that  Lemma \ref{lem:leafByleaf} is valid for any Lie $\infty$-algebroid, universal or not,  inducing $ {\mathcal F}$ upon replacing the holonomy Lie algebroid of $L$ with the
	$1$-truncation Lie algebroid $\bigslant{{{\mathfrak i}_L} E_{-1}}{{\dd^{(2)}} ({{\mathfrak i}_L}E_{-2}})$ as in (\ref{trunc1}). 
\end{remarque}

\begin{proof}[Proof of Proposition \ref{prop:recover_AS}]
Given a singular foliation $\mathcal{F}$, the holonomy groupoid of $\mathcal{F}$ described in \cite{AndrouSkandal} is a topological groupoid, whose leaves are the leaves of $\mathcal{F}$.
Moreover, according to \cite{AndrouSkandal},  its restriction to every leaf $L$ of $\mathcal{F}$ is a smooth groupoid
integrating the holonomy Lie algebroid $A_L$ of that leaf. 
Since the fundamental groupoid of $A_L$ is simply connected, it is the universal cover of every source-connected Lie groupoid integrating $A_L$, and the Proposition follows from Lemma \ref{lem:leafByleaf}
\end{proof}


\bibliography{mabibliographie1}

\end{document}